\newtheorem{example}{Example}[section]
\newtheorem{theorem}{Theorem}[section]
\newtheorem{proposition}{Proposition}[section]
\newtheorem{definition}{Definition}[section]
\newtheorem{lemma}{Lemma}[section]
\newtheorem{remark}{Remark}[section]
\newtheorem{notation}{Notation}[section]
\crefname{lemma}{Lemma}{lemma}
\crefname{remark}{Remark}{remark}
\crefname{corollary}{Corollary}{corollary}
\crefname{theorem}{Theorem}{theorem}
\crefname{proposition}{Proposition}{proposition}
\crefname{example}{Example}{example}
\crefname{definition}{Definition}{definition}
\crefname{notation}{Notation}{notation}
\crefname{appendix}{Appendix}{appendix}
\crefname{section}{Section}{section}
\newcommand{\AAA}{\mathfrak A}
\newcommand{\BBB}{\mathcal B}
\newcommand{\CC}{\mathscr C}
\newcommand{\DDD}{\mathcal D}
\newcommand{\FFF}{\mathscr F}
\newcommand{\HHH}{\mathcal H} %for Hilbert space
\newcommand{\LLL}{\mathcal L} % for lattice
\newcommand{\MMM}{\mathcal M}
\newcommand{\MM}{\mathfrak M}
\newcommand{\NNN}{\mathcal N}
\newcommand{\NN}{\mathfrak N}
\newcommand{\JJJ}{\mathcal J}
\newcommand{\OOO}{\mathcal O}
\newcommand{\SSS}{\mathscr S}
\newcommand{\SI}{\mathcal S}
\newcommand{\ZZZ}{\mathcal{Z}}
\newcommand{\C}{\mathbb C} %for complex number
\newcommand{\R}{\mathbb R}  %for real number
\newcommand{\N}{\mathbb N} % for nature number
\newcommand{\ut}{\mathbb{1}}
\DeclareDocumentCommand{\sqv}{O{} O{} O{} O{}}{
\begin{small}
  \begin{tikzcd}[ampersand replacement=\&]
     \cdot \arrow{r}{#1} \arrow{d}{#3} \& \cdot \arrow{d}{#2}\\
     \cdot \arrow{r}{#4} \& \cdot
  \end{tikzcd}  
\end{small}
}
\DeclareDocumentCommand{\csqv}{O{} O{} O{} O{} }{
\begin{small}
\overline{
 \begin{tikzcd}[ampersand replacement=\&]
    \cdot \arrow{r}{#1} \arrow{d}{#3} \& \cdot \arrow{d}{#2}\\
      \cdot \arrow{r}{#4} \& \cdot
  \end{tikzcd}}
\end{small}
}
\newcommand*{\bfrac}[2]{\genfrac{[}{]}{0pt}{}{#1}{#2}}
\newcommand{\titleinfo}{Realization of rigid C$^*$-tensor categories via Tomita bimodules}
\newcommand{\titleinfoshort}{Realization of rigid C$^*$-tensor categories}
\newcommand{\authorinfo}{Luca Giorgetti, Wei Yuan} 
\begin{document}

\title{\LARGE\textbf{\titleinfo}} 
\author{\large\textsc{\authorinfo}} 

\address{Dipartimento di Matematica, Universit\`a di Roma Tor Vergata \\
Via della Ricerca Scientifica, 1, I-00133 Roma, Italy}
\email{giorgett@mat.uniroma2.it}
\address{Institute of Mathematics, Academy of Mathematics and Systems Science \\
Chinese Academy of Sciences, Beijing, 100190, China}
\address{School of Mathematical Sciences, University of Chinese Academy of Sciences, 
Beijing 100049, China}
\email{wyuan@math.ac.cn}

\date{}

\begin{abstract}
Starting from a (small) rigid C$^*$-tensor category $\CC$ with simple unit, 
we construct von Neumann algebras associated to each of its objects. 
These algebras are factors and can be either semifinite (of type II$_1$ or II$_\infty$, 
depending on whether the spectrum of the category is finite or infinite) or they can be 
of type III$_\lambda$, $\lambda\in (0,1]$. The choice of type is tuned by the choice of 
Tomita structure (defined in the paper) on certain bimodules we use in the construction. 
Moreover, if the spectrum is infinite we realize the whole tensor category directly as 
endomorphisms of these algebras, with finite Jones index, by exhibiting a fully faithful 
unitary tensor functor $F:\CC \hookrightarrow End_0(\Phi)$ where $\Phi$ is a factor 
(of type II or III).

The construction relies on methods from free probability (full Fock space, amalgamated 
free products), it does not depend on amenability assumptions, 
and it can be applied to categories with uncountable spectrum (hence it provides an 
alternative answer to a conjecture of Yamagami \cite{Y3}).
Even in the case of uncountably generated categories, we can refine the previous 
equivalence to obtain realizations on $\sigma$-finite factors as endomorphisms 
(in the type III case) and as bimodules (in the type II case).

In the case of trivial Tomita structure, we recover the same algebra obtained in \cite{PopaS} and \cite{AMD},
namely the (countably generated) free group factor $L(F_\infty)$ if the given category
has denumerable spectrum, while we get the free group factor with uncountably many 
generators if the spectrum is infinite and non-denumerable.
\end{abstract}

%Keywords
\subjclass[2010]{18D10, 46L08, 46L10, 46L54}
\keywords{C$^*$-tensor category, pre-Hilbert C$^*$-bimodule, full Fock space construction, free group factor}

\maketitle

%%%
\section{Introduction}
%%%

Tensor categories (also called monoidal categories, see \cite{ML}, \cite{PSDV})
are abstract mathematical structures which naturally arise in different ways 
when dealing with operator algebras on a Hilbert space $\HHH$. In this context, 
they are typically unitary and C$^*$ \cite{GLR}, i.e., they come equipped with
an involution $t \mapsto t^*$ and a norm $t\mapsto \|t\|$ on arrows,
describing respectively the adjunction and the operator norm in $\BBB(\HHH)$.
Examples come from categories of endomorphisms of von Neumann algebras with tensor
structure given on objects by the composition of endomorphisms, or from categories
of bimodules with the relative tensor product (Connes fusion), or from representation 
categories of quantum groups, or again from the analysis of superselection sectors 
in Quantum Field Theory (in the algebraic formulation of QFT due to Haag and Kastler)
where the tensor product describes the composition of elementary particle states.
See \cite{Mue10tour} and references therein. In the theory of subfactors \cite{J}, 
\cite{JoSuBook}, tensor categories (or more generally a 2-category) can be associated 
to a given (finite index) subfactor by looking at its fusion graphs. 

One of the most exciting additional structures that can be given on top of an abstract
(C$^*$-)tensor categories is an intrinsic notion of dimension \cite{L-R-dim}, 
see also \cite{DHR}, \cite{DR}, which associates a real number $d_X \geq 1$ to each 
object $X$ of the category. The dimension is intrinsic in the sense that it is formulated 
by means of objects and arrows in the category only, more precisely by means of conjugate 
objects and (solutions of) the conjugate equations (also called zig-zag equations). 
In the case of the category of finite dimensional Hilbert spaces $V$, the intrinsic dimension 
coincides with the usual notion of dimension of $V$ as a vector space. 
The finiteness of the Jones index of a subfactor \cite{Kosaki-Ind}, \cite{L-R-dim}, 
and the finiteness of the statistics of a superselection sector in QFT \cite{L-indexI} 
are as well instances of the intrinsic dimension applied to concrete C$^*$-tensor categories.
A C$^*$-tensor category whose objects have finite dimension (i.e., admit conjugate objects
in the sense of the conjugate equations) is called rigid. Similar notions appearing in the 
literature on tensor categories, sometimes available in slightly more general contexts than ours, 
are the one of pivotality, sphericality, see, e.g., \cite{PSDV}, \cite{Mue10tour}, and
Frobenius duality \cite{Y4}.

The question (motivated by the previous discussion) of how to realize abstract
rigid C$^*$-tensor categories as endomorphisms (or more generally bimodules) 
of operator algebras can be traced back to the seminal work of Jones on the index of 
type II$_1$ subfactors \cite{J} and it has been studied by many authors over the years. In \cite{HY}, 
Hayashi and Yamagami realize categories admitting an \lq\lq amenable"
dimension function as bimodules of the hyperfinite type II$_1$ factor. 
Later on, a different realization of arbitrary categories with countable 
spectrum (the set of isomorphism classes of simple objects) over amalgamated free product 
factors has been given by Yamagami in \cite{Y3}.
Both these works, and more generally the research in the direction of constructing operator algebras
out of finite dimensional combinatorial data, have their roots in the work of Popa on the 
construction of subfactors associated to standard lattices \cite{Pop95-1}, i.e., 
on the reverse of the machinery which associates to a subfactor its standard invariant
(the system of its higher relative commutants). Moreover, the powerful 
deformation-rigidity theory developed by Popa \cite{Popa-betti}, \cite{Pop-rigid-I}, \cite{Pop-rigid-II} makes 
it possible to completely determine the bimodule categories for certain classes of type II$_1$ factors 
(see for example \cite{V07}, \cite{FVaes}, \cite{DS}, \cite{SFS} for some explicit results on the calculation 
of bimodule categories). More recently, Brothier, Hartglass and Penneys proved in \cite{AMD} that every 
countably generated rigid C$^*$-tensor category can be realized as bimodules of free group factors 
(see also \cite{GJS}, \cite{JSW}, \cite{KSunder}, \cite{GJS11}, \cite{Unshade}, \cite{HFree}).

The purpose of the present work is to re-interpret the construction in \cite{AMD} via Tomita bimodules 
(defined in the paper, see \cref{tomita_bimodule_def}), to generalize it in order to obtain different types of factors 
(possibly III$_\lambda$, $\lambda\in(0,1]$), and to prove the universality of free group factors  
for rigid C$^*$-tensor categories with uncountable spectrum (see \cref{section_lambda1}).

Unlike \cite{AMD}, we do not use the language of
Jones' planar algebras \cite{Jplanar} (a planar diagrammatic axiomatization of Popa's standard lattices).
Instead, we prefer to work with the tensor category itself in order to exploit its 
flexibility: our main trick is to
double the spectrum of the given category and consider the set of all \lq\lq letters" 
corresponding to inequivalent simple objects and to their conjugate objects. 
This allows us to define Tomita structures (see \cref{section_CstartoOA})
on certain pre-Hilbert bimodules $H(X)$ that we associate to the objects $X$ of the category,
without having to cope with ambiguities arising from the choice of solutions of the conjugate
equations in the case of self-conjugate objects (depending on their Frobenius-Schur indicator, i.e., depending on the
reality or pseudo-reality of self-conjugate objects, in the terminology of \cite{L-R-dim}). 
The algebra $\Phi(H(X))''$ associated to the Tomita bimodule $H(X)$ (via Fock space construction, see below) 
is of type II or III$_\lambda$,
$\lambda\in(0,1]$, depending on the choice of Tomita structure, 
both in the finite and infinite spectrum case. Even if we choose the \emph{trivial} Tomita structure, 
our construction is different from the one of \cite{AMD} in the sense that for finitely generated 
categories we obtain different algebras, e.g., the trivial category with only one simple object $\ut$ produces 
the free group factor with two generators $L(F_2)$.

The paper is organized as follows. In \cref{section_prelim}, we consider pre-Hilbert C$^*$-bimodules,
a natural non-complete generalization of Hilbert C$^*$-bimodules,
which we use to treat finite-dimensional purely algebraic issues before passing to the norm or
Hilbert space completions. Moreover, we define Tomita structures on a pre-Hilbert $\AAA$-$\AAA$
bimodule (hence we define Tomita $\AAA$-$\AAA$ bimodules), where $\AAA$ is a von Neumann algebra (\cref{tomita_bimodule_def}).
The terminology is due to the fact that every Tomita algebra is a Tomita $\C$-$\C$ bimodule.
We derive the main properties of Tomita bimodules only in the case of semifinite von Neumann algebras
$\AAA$ and choosing a reference normal semifinite faithful tracial weight $\tau$ on $\AAA$. 
We recall the definition of (full) Fock space $\FFF(H)$ (crucial in Voiculescu's free probability theory 
\cite{NVK}) here associated to a Tomita $\AAA$-$\AAA$ bimodule $H$ and express the 
Tomita-Takesaki's modular objects of the von Neumann algebras $\Phi(H)''$,
generated by $\AAA$ and by the creation and annihilation operators, in terms of the Tomita
structure of $H$ and of the chosen tracial weight on $\AAA$ (\cref{module_auto}). This is the main result of the section.

In \cref{section_CstartoOA}, which is the main part of this work, we associate to every object $X$ 
of a rigid C$^*$-tensor category, with simple unit, a semifinite von Neumann algebra
$\AAA(X)$ and a Tomita $\AAA(X)$-$\AAA(X)$ bimodule $H(X)$ (\cref{prop_tomitafromC}). The Tomita structure 
is determined by an arbitrary choice of strictly positive numbers $\lambda_\alpha$
for every $\alpha\in\SSS$, where $\SSS$ is a representative set of simple objects in the category
(i.e., $\SSS$ labels the spectrum of the category). The algebra $\AAA(X)$ is semifinite with \lq\lq canonical"
tracial weight given by the rigidity structure of the category (the standard left and right inverses of 
\cite{L-R-dim} are indeed tracial), moreover $\AAA(X)$ is a (possibly infinite) direct sum of 
matrix algebras (because the category is automatically semisimple) and the weight of minimal 
projections equals the intrinsic dimension of the elements in $\SSS$. The associated von 
Neumann algebra $\Phi(H(X))''$ on Fock space (or better, on its Hilbert space completion 
with respect to the tracial weight) turns out to be a factor for every $X$ (\cref{factor_thm}), and we study the 
type of $\Phi(H(X))''$ depending on the size of the spectrum and on the chosen Tomita structure on 
$H(X)$ (\cref{type_prop}). For ease of exposition we state the results of this section only in the case $X=\ut$ (tensor unit object),
but these can be generalized to an arbitrary $X\in\CC$ without conceptual difficulty (\cref{rmk_from1toX}).

In \cref{section_lambda1}, we study $\Phi(H(\ut))''$ in the case of categories with
infinite spectrum $\SSS$ and assuming $\lambda_\alpha = 1$ for every $\alpha\in\SSS$ 
(trivial Tomita structure). We prove that the free group factor $L(F_\SSS)$,
either with countably many or with uncountably many generators, sits in a corner of $\Phi(H(\ut))''$ (\cref{thm_freecorner}).
Easy examples of rigid C$^*$-tensor categories which are not amenable and have uncountable spectrum
come from the algebraic group algebras of uncountable discrete and non-amenable groups, see \cite[Example 2.2]{HY}. E.g., consider the pointed discrete categories with fusion ring equal to $\C[\R\times F_2]$ or $\C[F_\infty]$, where $\R$ is endowed with the discrete topology and $F_\infty$ is the free group with uncountably many generators. Obtaining a realization result for such categories, which were previously not covered in the literature, was the original motivation of our work.

In \cref{section_endos}, again in the infinite spectrum case, we re-interpret the algebra $\Phi(H(\ut))''$ as a corner of a bigger auxiliary algebra $\Phi(\CC)$, we associate to every object $X\in\CC$ a non-unital endomorphism of $\Phi(\CC)$, and we cut it down to an endomorphism of $\Phi(H(\ut))''$, denoted by $F(X)$. We conclude by showing that $F$ is a fully faithful unitary tensor functor from $\CC$ into $End(\Phi(H(\ut))'')$, hence a realization of the category $\CC$ as endomorphisms with finite index of a factor (\cref{thm_realization}) which can be chosen to be either of type II$_\infty$ or of type III$_\lambda$, $\lambda\in (0,1]$. Moreover, the realization always happens on a $\sigma$-finite factor (e.g., $L(F_\infty)$ if we choose trivial Tomita structure) by composing $F$ with a suitable equivalence of bimodule (or endomorphism) categories coming from the ampliation (\cref{thm_ampliation}).

In \cref{appendix_am_prod}, as we could not find a reference, we define and study the amalgamated free product
of arbitrary von Neumann algebras (not necessarily $\sigma$-finite), as we need in our construction when dealing with uncountably generated categories.

In \cref{section_nonsigma}, we generalize some results concerning the Jones projection and the structure
of amalgamated free products, well-known in the $\sigma$-finite case, to the case of arbitrary von Neumann algebras.

%%%
\section{Preliminaries}\label{section_prelim}
%%%

Let $\NNN$ be a unital C$^*$-algebra. A \textbf{pre-Hilbert $\NNN$-$\NNN$ bimodule} is an 
$\NNN$-$\NNN$ bimodule $H$ with a sesquilinear $\NNN$-valued inner product 
$\braket{\cdot}{\cdot}_{\NNN}: H \times H \rightarrow \NNN$, fulfilling 
\begin{enumerate}
    \item $\braket{\xi_1}{B \cdot \xi_2 \cdot A}_{\NNN} = \braket{B^* \cdot \xi_1}{\xi_2}_{\NNN} A$,
          for every $\xi_1$, $\xi_2 \in H$ and $A$, $B \in \NNN$,
    \item $\braket{\xi}{\xi}_{\NNN} \geq 0$, and  
          $\braket{\xi}{\xi}_{\NNN}=0$ implies $\xi = 0$,
    \item $\|A \cdot \xi\|_{H} \leq \|A\| \|\xi\|_{H}$,
        where $\|\xi\|_{H} = \|\braket{\xi}{\xi}_{\NNN}\|^{1/2}$. 
\end{enumerate}
Note that (2) implies that $\braket{\xi_1}{\xi_2}_{\NNN} =\braket{\xi_2}{\xi_1}^*_{\NNN}$.
A pre-Hilbert $\NNN$-$\NNN$ bimodule $H$ which is complete in the norm 
$\|\cdot\|_{H}$ is called a \textbf{Hilbert $\NNN$-$\NNN$ bimodule}. 
Let $\LLL(H)$ and $\BBB(H)$ be respectively the set of adjointable and bounded adjointable linear mappings  
from $H$ to $H$. Condition $(3)$ implies that the left action of $\NNN$ on $H$ is a 
*-homomorphism from $\NNN$ into $\BBB(H)$.
If $H$ is a Hilbert $\NNN$-$\NNN$ bimodule, then $\BBB(H) = \LLL(H)$ (see \cite{CL}). 

\begin{remark}
Let $H$ be a pre-Hilbert $\NNN$-$\NNN$ bimodule and $T \in \BBB(H)$.
Note that
\begin{align*}
    \|T^*\xi\|_{H}^2 =\|\braket{T^*\xi}{T^*\xi}_{\NNN}\| = \|\braket{\xi}{TT^*\xi}_\NNN\| \leq
    \|TT^*\xi\|_{H}\|\xi\|_{H} \leq \|T\| \|T^*\xi\|_{H}\|\xi\|_{H}.
\end{align*}
Therefore $T^* \in \BBB(H)$. Thus if $\overline{H}$ is the completion of $H$, then each 
$T \in \BBB(H)$ extends uniquely to an element in $\BBB(\overline{H})$.
But $\BBB(H)$ is not a norm dense subalgebra of $\BBB(\overline{H})$ in general.

\begin{example} \label{density_example}
    Let $\NNN = \C$, $H = l^1(\N)$ and $\overline{H}=l^2(\N)$. 
    Note that $\N = \cup_{n=0}^{\infty}\{k: 2^n \leq k < 2^{n+1}-1\}$.
    Let $\xi_j = 1/\sqrt{2^n} \sum_{i=0}^{2^n-1} e_{2^n+i}$ where
    $\{e_1, e_2, \ldots \}$ is the canonical orthonormal basis of $l^2(\N)$.
    Define a partial isometry $V \in \BBB(l^2(\N))$ by $Ve_j = \xi_j$.

    Assume there exists $T \in \BBB(l^1(\N))$ and $\|T-V\| < 1/2$. Note that $\|Te_n - \xi_n\| < 1/2$, thus 
    \begin{align*}
        \sum_{i=0}^{2^n-1} |\braket{e_{2^n+i}}{Te_n}| 
        \geq& \sqrt{2^{n}}-\sum_{i=0}^{2^n-1} |\braket{e_{2^n+i}}{Te_n} - 1/\sqrt{2^n}|\\
        \geq& \sqrt{2^{n}}(1-\|Te_n - \xi_n\|) \geq \frac{\sqrt{2^{n}}}{2}.
    \end{align*} 
    We now choose inductively two subsequences 
    of non negative numbers $1=n_0 < n_1<  n_2 < \cdots$ and
    $n(1) < n(2) < \cdots$ such that $n(k) \geq k^2$ and 
    \begin{align*}
        \sum_{i \notin [n_{l-1}, n_{l}-1]} |\braket{e_{i}}{Te_{n(l)}}| < 1, \quad
        l = 1, 2, \ldots
    \end{align*}
    Let $k(1) =1$. Since $Te_1 \in l^1(\N)$, we can choose $n_1 \in \N$ that satisfies the above condition.  
    Assume that $\{n_1, \ldots, n_k\}$ and $\{n(1), \ldots, n(k)\}$ are chosen. 
    Recall that $T^*$ also maps $l^1(\N)$ into
    $l^1(\N)$ and $\{T^*e_i\}_{i=1}^{n_k-1} \subset l^1(\N)$.
    We may choose $n(k+1) > \max\{(k+1)^2, n_k, n(k)\}$ such that
    $\sum_{i \in [1, n_{k}-1]} |\braket{e_{i}}{Te_{n(k+1)}}| < 1/2$.
    Now it is clear that we can choose $n_{k+1} > n_k$ such that 
    $\sum_{i \notin [n_{k}, n_{k+1}-1]} |\braket{e_{i}}{Te_{n(k+1)}}| < 1$. 
    Let $\beta = \sum_{k=1}^{\infty} 1/k^2 e_{n(k)} \in l^1(\N)$. Then
    \begin{align*}
        \|T\beta\|_1 = \sum_{l=1}^{\infty} \sum_{i \in [n_{l-1}, n_l -1]}
        |\braket{e_i}{\sum_{k=1}^{\infty} 1/k^2 Te_{n(k)}}|
        \geq \sum_{l=1}^{\infty} \frac{\sqrt{2^{n(l)}}}{2l^2} - 
        \sum_{l=1}^{\infty} \frac{1}{l^2} = +\infty.
    \end{align*}
    Thus $T$ is not in $\BBB(l^1(\N))$, and $\BBB(l^1(\N))$ is not dense in $\BBB(l^2(\N))$.
\end{example}
\end{remark}

In the following, we shall also consider the Hilbert space completion $H_\varphi$ of a pre-Hilbert $\NNN$-$\NNN$ bimodule $H$, 
associated to a choice of weight on $\NNN$. 

\begin{notation}\label{notation_Hvarphi}
Let $H$ be a pre-Hilbert $\NNN$-$\NNN$ bimodule, $\varphi$ a faithful weight on the 
C$^*$-algebra $\NNN$ and denote $\NN(H,\varphi) = \{\xi \in H: \varphi (\braket{\xi}{\xi}_{\NNN}) < +\infty\}$.
The formula 
\begin{align}\label{def_inner_prod}
  \braket{\xi_1}{\xi_2} = \varphi (\braket{\xi_1}{\xi_2}_{\NNN}), \quad
    \xi_1, \xi_2 \in  \NN(H,\varphi)  
\end{align}
defines a positive definite inner product on $\NN(H,\varphi)$.
We denote the norm associated with this inner product by $\| \cdot \|_2$, i.e.,
$\|\xi\|_2 = \varphi(\braket{\xi}{\xi}_{\NNN})^{1/2}$,
and the completion of $\NN(H,\varphi)$ relative to this norm by 
$H_{\varphi}$. 
\end{notation}

By \cite[Proposition 1.2]{CL},
each $T \in \BBB(H)$ corresponds to a bounded operator 
$\pi_\varphi(T)$ on $H_\varphi$ such that $\pi_\varphi(T)\xi= T\xi$, $\xi \in \NN(H,\varphi)$, 
and $T \mapsto \pi_\varphi(T)$ is a *-representation of $\BBB(H)$ on $H_\varphi$.

\begin{lemma}\label{kernal_lemma}
    Let $\AAA$ be a von Neumann algebra and $H$ a pre-Hilbert $\AAA$-$\AAA$ bimodule.
    If $\varphi$ is a normal semifinite faithful (n.s.f.) weight on $\AAA$, then
    $Ker(\pi_\varphi) =\{0\}$.  
\end{lemma}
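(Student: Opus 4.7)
The plan is to exploit two facts: first, that adjointable operators on a pre-Hilbert C$^*$-bimodule are automatically right $\AAA$-linear (indeed, if $\braket{T\xi}{\eta}_\AAA = \braket{\xi}{T^*\eta}_\AAA$ then the identity $\braket{T(\xi \cdot A)}{\eta}_\AAA = A^*\braket{T\xi}{\eta}_\AAA = \braket{(T\xi)\cdot A}{\eta}_\AAA$ forces $T(\xi \cdot A) = (T\xi)\cdot A$); and second, that the definition ideal of a n.s.f. weight on a von Neumann algebra contains an approximate identity converging strongly to the unit. These two ingredients, combined with the axiom $\braket{\eta\cdot A}{\eta\cdot A}_\AAA = A^*\braket{\eta}{\eta}_\AAA A$, let us transfer the vanishing of $T$ on $\NN(H,\varphi)$ to vanishing on arbitrary vectors of $H$.

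More concretely, suppose $T \in \BBB(H)$ satisfies $\pi_\varphi(T) = 0$, equivalently $T\xi = 0$ for every $\xi \in \NN(H,\varphi)$. Fix an arbitrary $\eta \in H$ and set $B := \braket{\eta}{\eta}_\AAA \in \AAA$. For every $A$ in the left ideal $\NN_\varphi = \{A \in \AAA : \varphi(A^*A) < \infty\}$ one has
\begin{equation*}
\varphi(\braket{\eta\cdot A}{\eta\cdot A}_\AAA) = \varphi(A^* B A) \leq \|B\|\,\varphi(A^*A) < +\infty,
\end{equation*}
so $\eta\cdot A \in \NN(H,\varphi)$. Using right $\AAA$-linearity, $(T\eta)\cdot A = T(\eta\cdot A) = 0$, hence
\begin{equation*}
A^*\braket{T\eta}{T\eta}_\AAA A = \braket{(T\eta)\cdot A}{(T\eta)\cdot A}_\AAA = 0
\end{equation*}
for every $A \in \NN_\varphi$.

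To conclude I would invoke the standard structure theory of n.s.f. weights: the definition ideal $\MM_\varphi = \NN_\varphi^*\NN_\varphi$ admits an increasing net of positive elements $e_i \in \MM_\varphi$ with $e_i \nearrow \mathbf 1$ strongly in $\AAA$ (one can even take projections $p_i$ with $\varphi(p_i) < \infty$ strongly converging to $\mathbf 1$, which is what semifiniteness really provides). Since $e_i \in \NN_\varphi$, the previous displayed identity gives $e_i \braket{T\eta}{T\eta}_\AAA e_i = 0$ for all $i$, and strong convergence $e_i \to \mathbf 1$ implies $e_i C e_i \to C$ strongly for any $C \in \AAA$, so $\braket{T\eta}{T\eta}_\AAA = 0$ and therefore $T\eta = 0$ by the positive-definiteness axiom (2). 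As $\eta$ was arbitrary, $T = 0$.

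The main point of care is the existence of the approximate identity: one needs an $e_i \in \NN_\varphi$ (not just in $\AAA$) with $e_i \to \mathbf 1$ strongly, which is the content of semifiniteness of $\varphi$ together with the standard fact that $\MM_\varphi$ contains a bounded positive increasing approximate identity. Beyond that, every other step is essentially formal manipulation of the bimodule axioms and of adjointability.
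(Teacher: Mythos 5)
Your proof is correct and rests on the same mechanism as the paper's: the observation that $\eta\cdot A\in\NN(H,\varphi)$ for $A$ in the definition ideal of $\varphi$, that $T(\eta\cdot A)=(T\eta)\cdot A$ by adjointability, and that semifiniteness plus faithfulness supply enough such $A$ to detect any nonzero $\braket{T\eta}{T\eta}_{\AAA}$. The paper merely runs the argument contrapositively, picking a single self-adjoint $A$ with $0<\varphi(A^2)<\infty$ and $A\braket{T\xi}{T\xi}_{\AAA}A\neq 0$, where you use an approximate identity; the two are interchangeable.
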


\begin{proof}
    If $0 \neq T \in \BBB(H)$, then there exists $\xi \in H$ such that 
    $\braket{T\xi}{T\xi}_{\AAA} \neq 0$.
    Since $\varphi$ is semifinite and faithful,
    we can choose a self-adjoint operator $A \in \AAA$ such that $0 < \varphi(A^2) < \infty$ and 
    $A\braket{T\xi}{T\xi}_{\AAA}A \neq 0$. Note that $\xi \cdot A \in \NN(H,\varphi)$, this implies that 
    $\pi_\varphi(T) \neq 0$. 
\end{proof}

Let $\AAA$ be a von Neumann algebra and $\varphi$ a n.s.f. weight on 
$\AAA$. For the rest of this section, $H$ is a pre-Hilbert $\AAA$-$\AAA$ bimodule with a 
distinguished \lq\lq vacuum vector" 
$\Omega\in H$ such that $\braket{\Omega}{\Omega}_{\AAA}=I$ and
$A \cdot \Omega = \Omega \cdot A$ for all $A \in \AAA$ (as we shall be equipped with in the Fock space construction
at the end of this section). Let 
\begin{align}\label{P_def}
    e_{\AAA}(\xi) = \braket{\Omega}{\xi}_{\AAA}\Omega, \quad \xi \in H. 
\end{align}
It is easy to check that $e_{\AAA}(A \cdot \xi \cdot B) = A\cdot e_{\AAA}(\xi) \cdot B$, 
where $A$, $B \in \AAA$.

\begin{proposition} \label{P_props}
    $e_{\AAA}$ is a projection in $\BBB(H)$ whose range is $\AAA \cdot \Omega$.
\end{proposition}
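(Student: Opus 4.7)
The plan is to verify the four defining properties of an orthogonal projection in $\BBB(H)$: linearity, adjointability/boundedness, self-adjointness, and idempotency, and then compute the range explicitly.

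First, linearity of $\xi \mapsto \langle\Omega,\xi\rangle_{\AAA} \cdot \Omega$ is immediate from sesquilinearity of the inner product. For boundedness I would apply the Cauchy--Schwarz inequality in the pre-Hilbert $\AAA$-$\AAA$ bimodule: $\langle\Omega,\xi\rangle_{\AAA}\langle\xi,\Omega\rangle_{\AAA} \leq \|\xi\|_{H}^{2}\,\langle\Omega,\Omega\rangle_{\AAA} = \|\xi\|_{H}^{2}\,I$, and then use property (3) applied to the left action of $\langle\Omega,\xi\rangle_{\AAA}$ on $\Omega$, giving $\|e_{\AAA}(\xi)\|_{H} \leq \|\langle\Omega,\xi\rangle_{\AAA}\|\,\|\Omega\|_{H} \leq \|\xi\|_{H}$, so in particular $\|e_{\AAA}\| \leq 1$.

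For self-adjointness I would expand both sides in parallel. On one side, using property (1) with $B = \langle\Omega,\xi_{2}\rangle_{\AAA}$ and $A = I$, together with the fact that $\Omega$ commutes with the $\AAA$-actions, one reduces $\langle\xi_{1}, e_{\AAA}(\xi_{2})\rangle_{\AAA}$ to $\langle\xi_{1},\Omega\rangle_{\AAA}\langle\Omega,\xi_{2}\rangle_{\AAA}$. On the other side, using the consequence $\langle \xi \cdot C,\eta\rangle_{\AAA} = C^{*}\langle\xi,\eta\rangle_{\AAA}$ of properties (1) and (2), together with $B^{*} = \langle\Omega,\xi_{1}\rangle_{\AAA}$ and the commutation $B^{*}\Omega = \Omega \cdot B^{*}$, the expression $\langle e_{\AAA}(\xi_{1}),\xi_{2}\rangle_{\AAA}$ simplifies to the same quantity. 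This establishes $e_{\AAA} \in \BBB(H)$ with $e_{\AAA} = e_{\AAA}^{*}$.

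Idempotency follows from a direct computation: $e_{\AAA}(e_{\AAA}(\xi)) = \langle\Omega,\langle\Omega,\xi\rangle_{\AAA}\cdot\Omega\rangle_{\AAA}\cdot\Omega$, and using the commutation $\langle\Omega,\xi\rangle_{\AAA}\cdot\Omega = \Omega \cdot \langle\Omega,\xi\rangle_{\AAA}$ together with property (1) (right-action version) this reduces to $\langle\Omega,\Omega\rangle_{\AAA}\,\langle\Omega,\xi\rangle_{\AAA}\cdot\Omega = e_{\AAA}(\xi)$ since $\langle\Omega,\Omega\rangle_{\AAA} = I$. For the range, the inclusion $\operatorname{Ran}(e_{\AAA}) \subset \AAA\cdot\Omega$ is read off the formula \eqref{P_def}, while the reverse inclusion follows by computing $e_{\AAA}(A\cdot\Omega) = \langle\Omega,A\cdot\Omega\rangle_{\AAA}\cdot\Omega = \langle\Omega,\Omega\cdot A\rangle_{\AAA}\cdot\Omega = A\cdot\Omega$ for every $A\in\AAA$. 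The only mildly delicate point is keeping track of whether $\AAA$ acts on the left or the right when applying property (1); the hypothesis $A\cdot\Omega = \Omega\cdot A$ is what makes both sides of the self-adjointness computation collapse to the same scalar-free expression.
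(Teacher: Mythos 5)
Your proof is correct and follows essentially the same route as the paper's: contractivity via the Cauchy--Schwarz inequality $\braket{\xi}{\Omega}_{\AAA}\braket{\Omega}{\xi}_{\AAA}\leq\|\xi\|_H^2\,I$, self-adjointness by expanding $\braket{\xi_1}{e_{\AAA}(\xi_2)}_{\AAA}$ and $\braket{e_{\AAA}(\xi_1)}{\xi_2}_{\AAA}$ to the common value $\braket{\xi_1}{\Omega}_{\AAA}\braket{\Omega}{\xi_2}_{\AAA}$, and the range identification from $e_{\AAA}(A\cdot\Omega)=A\cdot\Omega$. The only cosmetic difference is that you verify idempotency by a separate direct computation, whereas the paper obtains it implicitly from the range computation together with $\mathrm{Ran}(e_{\AAA})\subset\AAA\cdot\Omega$.
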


\begin{proof}
    Since $\braket{\xi}{\Omega}_{\AAA}\braket{\Omega}{\xi}_{\AAA} \leq \braket{\xi}{\xi}_{\AAA}$,
    we have $\|e_{\AAA}(\xi)\|_{H} \leq \|\xi\|_H$. It is also clear that
    $\braket{\beta}{e_{\AAA}(\xi)}_{\AAA}= \braket{\beta}{\Omega}_{\AAA}\braket{\Omega}{\xi}_{\AAA}
    =\braket{e_{\AAA}(\beta)}{\xi}_{\AAA}$, thus $e_{\AAA}=e_{\AAA}^*$. For any $A \in \AAA$, we have
    $e_{\AAA}(A \cdot \Omega) = A \cdot \Omega$.
\end{proof}

For the rest of this section, $\MMM \subset \BBB(H)$ is a *-algebra containing $\AAA$ (which we regard as represented on $H$ via its left action). 
Then 
\begin{align}\label{cond_exp_def}
  E_0(T) = \braket{\Omega}{T \cdot \Omega}_{\AAA}, \quad T \in \MMM  
\end{align}
is a conditional expectation from $\MMM$ onto $\AAA$ (\cite[Theorem 4.6.15]{RS}).
It is easy to check that $e_{\AAA}Te_{\AAA} = E_0(T)e_{\AAA}$, $T\in\MMM$.
Also note that $\MMM\Omega = span\{T \cdot \Omega: T \in \MMM\}$ is a pre-Hilbert
$\AAA$-$\AAA$ bimodule and that $L^2(\AAA, \phi)$ is canonically embedded into $H_\phi$ for
any n.s.f. weight $\phi$ on $\AAA$ via $A \in \AAA \mapsto A \cdot \Omega$.

\begin{lemma}\label{project_lemma}
    $e_{\AAA}$ defined in \cref{P_def} extends to an orthogonal projection, 
    still denoted by $e_{\AAA}$, from $H_\varphi$ onto the subspace $L^2(\AAA, \varphi)$.  
    Furthermore, $e_{\AAA} \pi_\varphi(T) e_{\AAA} = \pi_\varphi(E_0(T))e_{\AAA}$, for every $T \in \MMM$.
\end{lemma}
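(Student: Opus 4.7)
The plan is to use the *-representation $\pi_\varphi : \BBB(H) \to \BBB(H_\varphi)$ discussed right after \cref{notation_Hvarphi}, combined with what is already known from \cref{P_props}. Since $e_\AAA \in \BBB(H)$ satisfies $e_\AAA = e_\AAA^* = e_\AAA^2$ (\cref{P_props}) and $\pi_\varphi$ is a *-homomorphism, the extension $\pi_\varphi(e_\AAA)$ is automatically a self-adjoint idempotent on $H_\varphi$, hence an orthogonal projection. This justifies calling it again $e_\AAA$ and gives the first half of the first assertion essentially for free.

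Next I would identify the range. The key observation is that for every $\xi \in \NN(H,\varphi)$, one has $e_\AAA(\xi) = \braket{\Omega}{\xi}_\AAA \cdot \Omega \in \AAA \cdot \Omega$, and since $\pi_\varphi(e_\AAA)$ is bounded on $H_\varphi$, $\|e_\AAA(\xi)\|_2 \leq \|\xi\|_2$, so $e_\AAA(\xi) \in \NN(H,\varphi)$ as well. Writing $A := \braket{\Omega}{\xi}_\AAA$, one has $\varphi(A^*A) = \|A\cdot\Omega\|_2^2 < \infty$, i.e., $A$ lies in the left ideal of definition of $\varphi$; under the canonical embedding $A \mapsto A\cdot\Omega$ recalled just before the lemma, this places $e_\AAA(\xi)$ inside the image of $L^2(\AAA,\varphi)$. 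Conversely, every $A\cdot\Omega$ with $\varphi(A^*A)<\infty$ is fixed by $e_\AAA$ (since $\braket{\Omega}{A\Omega}_\AAA = A$ using $\braket{\Omega}{\Omega}_\AAA = I$ and bimodularity of the inner product), so the range contains a dense subset of $L^2(\AAA,\varphi)$. Being closed as the range of a projection, it coincides with $L^2(\AAA,\varphi)$.

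For the \emph{furthermore} clause, I would just apply $\pi_\varphi$ to the identity $e_\AAA T e_\AAA = E_0(T)\, e_\AAA$ in $\BBB(H)$ stated right after \cref{cond_exp_def}, using multiplicativity of $\pi_\varphi$ and the fact that $E_0(T) \in \AAA \subseteq \MMM$ so $\pi_\varphi(E_0(T))$ makes sense. The only step requiring any care is that the identity in $\BBB(H)$ must be verified on elements of $\NN(H,\varphi)$ before extending by density, but since both sides are bounded on $H_\varphi$ and agree on a dense subspace, this is automatic.

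The main potential obstacle is not any deep computation but the bookkeeping between the algebraic object $e_\AAA \in \BBB(H)$ and its Hilbert-space extension $\pi_\varphi(e_\AAA) \in \BBB(H_\varphi)$, and making sure the range identification uses the correct embedding $L^2(\AAA,\varphi) \hookrightarrow H_\varphi$. Once that is kept straight, everything follows from \cref{P_props}, the formula $e_\AAA T e_\AAA = E_0(T) e_\AAA$, and the fact that $\pi_\varphi$ is a *-representation.
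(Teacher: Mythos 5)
Your proposal is correct and follows essentially the same route as the paper: the paper's (very terse) proof just verifies $\braket{\beta}{e_{\AAA}\pi_\varphi(T)e_{\AAA}\xi} = \braket{\beta}{\pi_\varphi(E_0(T))e_{\AAA}\xi}$ directly on $\NN(H,\varphi)$, which is exactly what pushing the algebraic identity $e_{\AAA}Te_{\AAA}=E_0(T)e_{\AAA}$ through the *-representation $\pi_\varphi$ amounts to. Your treatment of the first assertion (self-adjoint idempotent via \cref{P_props}, and the range identification with $L^2(\AAA,\varphi)$ through the embedding $A\mapsto A\cdot\Omega$) is sound and in fact supplies details the paper leaves implicit.
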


\begin{proof}
    $\braket{\beta}{e_{\AAA} \pi_\varphi(T) e_{\AAA} \xi} = 
    \varphi(\braket{\beta}{\Omega}_{\AAA}E_0(T)\braket{\Omega}{\xi}_{\AAA})
    = \braket{\beta}{\pi_\varphi(E_0(T))e_{\AAA}\xi}$, for all 
    $\xi, \beta \in \NN(H, \varphi)$.
    Thus $e_{\AAA} \pi_\varphi(T) e_{\AAA} = \pi_\varphi(E_0(T))e_{\AAA}$.
\end{proof}

Note that $A \in \AAA \mapsto \pi_\varphi(A)e_{\AAA}$ is a *-isomorphism, namely 
the Gelfand-Naimark-Segal (GNS) representation of $\AAA$ associated with $\varphi$. By \cref{project_lemma}, 
$e_{\AAA} \pi_{\varphi}(\MMM)'' e_{\AAA} = \pi_{\varphi}(\AAA)e_{\AAA}$. 
For any $B \in \pi_{\varphi}(\MMM)''$, let $E_\varphi(B)$ be the unique element in $\AAA$ satisfying 
$\pi_\varphi(E_\varphi(B))e_{\AAA} = e_{\AAA}Be_{\AAA}$.
It is not hard to check that $B \mapsto E_\varphi(B)$ is a normal completely positive
$\AAA$-$\AAA$ bimodule map, i.e., $E_\varphi(\pi_\varphi(A_1)B\pi_\varphi(A_2))
= A_1 E_\varphi(B)A_2$ for $A_1$, $A_2 \in \AAA$. By \cref{project_lemma}
$E_\varphi(\pi_\varphi(T)) = E_0(T)$, $T \in \MMM$.

\begin{lemma}\label{GNS_lemma}
   For any other n.s.f. weight $\phi$ on $\AAA$, $\tilde{\phi}= \phi \circ E_{\varphi}$
   is a normal semifinite weight on $\pi_\varphi(\MMM)''$ such that $\tilde{\phi}(\pi_\varphi(T)) = 
    \phi(E_0(T))$, $T \in \MMM$. If $\MMM\Omega$ is dense in $H$ (w.r.t. the norm $\|\cdot \|_H$), then
    there exists a unitary $U: H_\phi \rightarrow L^2(\pi_\varphi(\MMM)'',  \tilde{\phi})$ such that
    $U^*\pi_{\tilde{\phi}}(\pi_\varphi(T))U = \pi_{\phi}(T)$ for every $T \in \MMM$, where
    $\pi_{\tilde{\phi}}$ is the GNS representation of $\pi_\varphi(\MMM)''$ associated 
    with $\tilde{\phi}$.
\end{lemma}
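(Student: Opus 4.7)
The plan is to treat the two assertions in sequence. For the weight assertion, the discussion preceding the lemma already establishes that $E_\varphi\colon\pi_\varphi(\MMM)''\to\AAA$ is a normal, completely positive, $\AAA$-$\AAA$ bimodule map satisfying $E_\varphi\circ\pi_\varphi=E_0$ on $\MMM$. Composing with the n.s.f.\ weight $\phi$ therefore produces a normal weight $\tilde\phi=\phi\circ E_\varphi$ on $\pi_\varphi(\MMM)''$, and the identity $\tilde\phi(\pi_\varphi(T))=\phi(E_0(T))$ is immediate. For semifiniteness I would take $A$ in the definition ideal $\mathfrak n_\phi$ of $\phi$ and use the bimodule property
\[
E_\varphi\bigl(\pi_\varphi(A)^*\pi_\varphi(A)\bigr)=A^*A
\]
to conclude $\tilde\phi(\pi_\varphi(A)^*\pi_\varphi(A))=\phi(A^*A)<\infty$; weak density of $\mathfrak n_\phi^*\mathfrak n_\phi$ in $\AAA$ then promotes this to a weakly dense supply of finite elements in the positive part of $\pi_\varphi(\MMM)''$.

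For the unitary part, I would define $U$ on the subspace of vectors $T\cdot\Omega\in\NN(H,\phi)$ with $T\in\MMM$ by
\[
U(T\cdot\Omega)=\Lambda_{\tilde\phi}(\pi_\varphi(T)),
\]
where $\Lambda_{\tilde\phi}$ denotes the GNS map of $(\pi_\varphi(\MMM)'',\tilde\phi)$. Well-definedness and isometry both reduce to the single computation
\[
\braket{T_1\cdot\Omega}{T_2\cdot\Omega}_{H_\phi}=\phi\bigl(\braket{\Omega}{T_1^*T_2\cdot\Omega}_{\AAA}\bigr)=\phi(E_0(T_1^*T_2))=\tilde\phi\bigl(\pi_\varphi(T_1)^*\pi_\varphi(T_2)\bigr),
\]
which uses adjointability of elements of $\MMM\subset\BBB(H)$, the definition of $E_0$, and the first part of the lemma. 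Density of the domain in $H_\phi$ then follows by combining the hypothesis that $\MMM\Omega$ is $\|\cdot\|_H$-dense in $H$ with a cut-off argument using right-multiplication by elements of $\mathfrak n_\phi$, which produces vectors in $\NN(H,\phi)$ approximating any vector of $H_\phi$ in the $\|\cdot\|_2$-norm.

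It remains to check surjectivity of $U$ onto a dense subspace of $L^2(\pi_\varphi(\MMM)'',\tilde\phi)$ and the intertwining identity. The intertwining is a direct computation on the dense set above: $\pi_{\tilde\phi}(\pi_\varphi(T))\Lambda_{\tilde\phi}(\pi_\varphi(S))=\Lambda_{\tilde\phi}(\pi_\varphi(TS))$ corresponds under $U$ to $TS\cdot\Omega=\pi_\phi(T)(S\cdot\Omega)$. The step I expect to be the main obstacle is density of the range: one must show that $\Lambda_{\tilde\phi}(\pi_\varphi(\MMM)\cap\mathfrak n_{\tilde\phi})$ is norm-dense in $L^2(\pi_\varphi(\MMM)'',\tilde\phi)$, i.e.\ that every $X\in\mathfrak n_{\tilde\phi}$ can be approximated by elements of $\pi_\varphi(\MMM)$. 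I would do this via a Kaplansky-type density argument combined with normality of $\tilde\phi$ and a further cut-off by right-multiplication with elements of $\mathfrak n_\phi$, taking care that the approximation is simultaneously controlled in the GNS $L^2$-norm and compatible with the left $\pi_\varphi(\AAA)$-action.
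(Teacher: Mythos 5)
Your proposal is correct and follows essentially the same route as the paper's proof: the isometry computation $\phi(E_0(T_1^*T_2))=\tilde\phi(\pi_\varphi(T_1)^*\pi_\varphi(T_2))$ defining $U$ on $\MMM\Omega\cap\NN(H,\phi)$, a cut-off by right multiplication with suitable elements of $\mathfrak n_\phi$ (the paper uses $T_m=mK(I+mK)^{-1}$ with $K=\braket{\xi}{\xi}_{\AAA}^{1/2}$, resp.\ $K=E_\varphi(B^*B)^{1/2}$) combined with normality to get density of both the domain and the range, and the obvious intertwining identity on the dense set. The only difference is cosmetic: you spell out the first assertion (normality and semifiniteness of $\tilde\phi$ via the left-ideal property of $\mathfrak n_{\tilde\phi}$), which the paper leaves to the discussion preceding the lemma.
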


\begin{proof}
    We claim that $\MMM \Omega \cap \NN(H, \phi)$ is dense in $H_\phi$ w.r.t. the Hilbert space 
    norm. Indeed, let $\xi \in \NN(H,\phi)$ and $K = \braket{\xi}{\xi}_{\AAA}^{1/2}$.
    If $T_m = mK(I+mK)^{-1}$, then $KT_1 \leq KT_2 \leq KT_3 \leq \cdots$ and 
    $KT_m \rightarrow K$ in norm. Note that $\phi(T_m^2)< \infty$.
    Since $\phi$ is normal, for any $\varepsilon > 0$, we can choose $m_0$ such that  
    \begin{align*}
        \|\xi - \xi \cdot T_{m_0}\|_2=
        |\phi(\braket{\xi \cdot (I-T_{m_0})}{\xi \cdot (I-T_{m_0})}_{\MMM})|^{1/2} < \varepsilon. 
    \end{align*}
    Let $C = \phi(T_{m_{0}}^2)$ and 
    $\beta \in \MMM \Omega$ such that $\|\braket{\beta - \xi}{\beta - \xi}_{\AAA}\| < \varepsilon^2/C$. 
    Therefore
    \begin{align*}
        \|\beta \cdot T_{m_0} - \xi \cdot T_{m_0}\|_2 = 
        \phi(T_{m_0}\braket{\beta - \xi}{\beta - \xi}_{\MMM}T_{m_0})^{1/2} < \varepsilon.
    \end{align*}
    Thus $\MMM \Omega \cap \NN(H, \phi)$ is a dense subspace of $H_\phi$. 

    Similarly, for $B \in \pi_\varphi(\MMM)''$ satisfying $\phi(E_\varphi(B^*B)) < \infty$,
    let $K =  E_\varphi(B^*B)^{1/2}$. Repeat the argument above, for any $\varepsilon>0$, we find
    a positive operator $T_{m_0} \in \AAA$ such that $\phi(T_{m_0}^2) < \infty$ and 
    $\phi((I-T_{m_0})E_\varphi(B^*B)(I-T_{m_0}))^{1/2}< \varepsilon$.
    Since $\phi(T_{m_0}E_\varphi(\cdot) T_{m_0})$ is a normal functional on $\pi_\varphi(\MMM)''$,
    we can find $A \in \MMM$ such that 
    $\phi(T_{m_0}E_\varphi([\pi_\varphi(A)-B]^*[\pi_\varphi(A)-B]) T_{m_0})^{1/2} \leq \varepsilon$.
    Thus $\{\pi_\varphi(T): \phi(E(T^*T))< \infty, T \in \MMM\}$ is 
    dense in $L^2(\pi_\varphi(\MMM)', \tilde{\phi})$.
    
    Since $\braket{A\Omega}{B\Omega} = \phi(E_0(A^*B)) = \phi(E_\varphi(\pi_\varphi(A^*B)))= 
    \braket{\pi_\varphi(A)}{\pi_\varphi(B)}$, for $A\Omega$, $B\Omega \in \MMM \Omega \cap \NN(H, \phi)$.
    The map $A\Omega \mapsto \pi_\varphi(A) \in L^2(\pi_\varphi(\MMM)'',\tilde{\phi})$ 
    can be extended to a unitary from $H_\phi$ to $L^2(\pi_\varphi(\MMM)'', \tilde{\phi})$. 
    Finally, note that
    \begin{align*}
        \pi_{\tilde{\phi}}(\pi_\varphi(T))U A\Omega = \pi_\varphi(TA) = U\pi_\phi(T)A\Omega, \quad
        A\Omega \in \MMM\Omega \cap \NN(H, \phi).
    \end{align*}
\end{proof}

\begin{theorem}\label{rep_ind_weight}
   Let $\varphi$ and $\phi$ be two n.s.f. weights on $\AAA$. 
    If $\MMM\Omega$ is dense in $H$ (w.r.t. the norm $\|\cdot\|_H$), then
    the map $\pi_{\varphi}(T) \mapsto \pi_{\phi}(T)$, $T \in \MMM$, extends
    to a *-isomorphism between the two von Neumann algebras $\pi_\varphi(\MMM)''$ and
    $\pi_{\phi}(\MMM)''$.
\end{theorem}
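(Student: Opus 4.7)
The plan is to apply \cref{GNS_lemma} twice, once with $\varphi$ as reference weight and $\phi$ as auxiliary, and once with the roles swapped, in order to build two mutually inverse normal $*$-homomorphisms between $\pi_\varphi(\MMM)''$ and $\pi_\phi(\MMM)''$.

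First I would invoke \cref{GNS_lemma} to obtain a unitary $U: H_\phi \to L^2(\pi_\varphi(\MMM)'', \tilde{\phi})$ with $U^* \pi_{\tilde{\phi}}(\pi_\varphi(T)) U = \pi_\phi(T)$ for every $T \in \MMM$, and set
\[
   \rho(B) \;=\; U^* \pi_{\tilde{\phi}}(B)\, U, \qquad B \in \pi_\varphi(\MMM)''.
\]
Because $\tilde{\phi} = \phi \circ E_\varphi$ is a normal semifinite weight on $\pi_\varphi(\MMM)''$, its GNS representation $\pi_{\tilde{\phi}}$ is normal, so $\rho$ is a normal $*$-homomorphism from $\pi_\varphi(\MMM)''$ into $\BBB(H_\phi)$ satisfying $\rho(\pi_\varphi(T)) = \pi_\phi(T)$. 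By normality, $\rho(\pi_\varphi(\MMM)'')$ is a von Neumann subalgebra of $\BBB(H_\phi)$ that contains $\pi_\phi(\MMM)$ and is contained in the $\sigma$-weak closure of $\pi_\phi(\MMM)$; hence it equals $\pi_\phi(\MMM)''$, and $\rho$ is surjective.

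Running the same construction with the roles of $\varphi$ and $\phi$ interchanged produces a normal $*$-homomorphism $\rho': \pi_\phi(\MMM)'' \to \pi_\varphi(\MMM)''$ with $\rho'(\pi_\phi(T)) = \pi_\varphi(T)$. Then $\rho \circ \rho'$ is a normal $*$-endomorphism of $\pi_\phi(\MMM)''$ that fixes every generator $\pi_\phi(T)$, and by $\sigma$-weak continuity it must be the identity on the $\sigma$-weak closure $\pi_\phi(\MMM)''$. Symmetrically $\rho' \circ \rho = \mathrm{id}$, so $\rho$ is the required $*$-isomorphism sending $\pi_\varphi(T) \mapsto \pi_\phi(T)$.

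The one point that needs care is the surjectivity of $\rho$ (equivalently, that $\rho(\pi_\varphi(\MMM)'')$ is $\sigma$-weakly closed), which hinges on the normality of $\pi_{\tilde{\phi}}$. Everything else is then formal: the density hypothesis $\overline{\MMM \Omega} = H$ has already been used inside \cref{GNS_lemma} to produce the unitary $U$, and once one has two normal $*$-homomorphisms that are mutually inverse on generators, the two-sided inverse argument finishes the proof without further input.
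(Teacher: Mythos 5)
Your proposal is correct and follows essentially the same route as the paper: apply \cref{GNS_lemma} in both directions to obtain normal $*$-homomorphisms $\rho$ and $\rho'$ intertwining the generators, and conclude by $\sigma$-weak continuity that they are mutually inverse. You simply spell out the normality/surjectivity details that the paper leaves implicit.
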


\begin{proof}
    By \cref{GNS_lemma}, we have normal *-homomorphisms $\rho_1:\pi_\varphi(\MMM)'' \rightarrow
    \pi_\phi(\MMM)''$ and $\rho_2:\pi_\phi(\MMM)'' \rightarrow \pi_\varphi(\MMM)''$ such that
    $\rho_1(\pi_\varphi(T)) = \pi_\phi(T)$ and 
    $\rho_2(\pi_\phi(T)) = \pi_\varphi(T)$ for every $T \in \MMM$. Thus $\rho_1$ is a 
    *-isomorphism and $\rho_2 = \rho_1^{-1}$.
\end{proof}

\subsection{Pre-Hilbert bimodules with normal left action}

\begin{definition}
   Let $H$ be a pre-Hilbert $\AAA$-$\AAA$ bimodule. We say that the left action is normal if 
   the map $A \in \AAA \mapsto \braket{\xi}{A \cdot \xi}_{\AAA}$ is normal for each $\xi \in H$,
    i.e., $A \mapsto \braket{\xi}{A \cdot \xi}_{\AAA}$ is continuous from
    $(\AAA)_1$ to $\AAA$ both in their weak-operator topology, where $(\AAA)_1$ denotes the unit ball of $\AAA$.
\end{definition}

\begin{remark}
    \begin{enumerate}
        \item In general, $\pi_\varphi(\AAA)'' \neq \pi_\varphi(\AAA)$. However, if the left 
            action is normal, then $\pi_\varphi(\AAA)'' = \pi_\varphi(\AAA)$ (see 
            \cite[Proposition 7.1.15]{KRII}).
        \item For every $\beta,\xi\in H$, the map $A \in \AAA \mapsto \braket{\beta}{A \cdot \xi}_{\AAA}$ is continuous
            from $(\AAA)_1$ to $\AAA$ both in their weak-operator topology.
        \item For every $\xi\in H$, the map $A \in \AAA \mapsto \braket{\xi}{A \cdot \xi}_{\AAA}$
       is completely positive. Indeed, let $(K_{ij})_{i,j}$ be a positive element in $M_n(\AAA)$,
       then we only need to check that $\sum_{l=1}^n(\braket{K_{li}\cdot\xi}{K_{lj}\cdot\xi}_{\AAA})_{i,j} 
       \geq 0$, and this is true by \cite[Lemma 4.2]{CL}.
    \end{enumerate}
\end{remark}

The following easy fact is used implicitly in the paper and  
the proof is left as an exercise for the reader.

\begin{proposition}
   Let $\DDD$ be a subset of $H$. Assume that $\AAA \cdot \DDD \cdot \AAA= 
    span\{A \cdot \beta \cdot B: A, B \in \AAA, \beta \in \DDD\}$ is dense in $H$. Then 
   the left action is normal if and only if $A \in \AAA \mapsto \braket{\beta}{A \cdot \beta}_{\AAA}$
   is normal for each $\beta \in \DDD$.
\end{proposition}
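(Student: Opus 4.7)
The ``only if'' direction is immediate. For the converse, my plan is to use a matrix-valued CP argument to promote diagonal normality to the full bilinear inner-product forms, then propagate by bimodule manipulations and a closure argument. The main obstacle is the matrix CP step itself: a naive polarization in $\beta_i,\beta_j$ would be circular (since $\beta_i + i^k \beta_j$ typically lies in $\mathrm{span}(\DDD)\setminus\DDD$, where diagonal normality is not yet established), so one really needs to exploit complete positivity together with the ``positive-with-zero-diagonal is zero'' principle.

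Concretely, I would fix a finite family $\beta_1,\ldots,\beta_n \in \DDD$ and consider the map $\Psi:\AAA \to M_n(\AAA)$ with $\Psi(A)_{ij} = \braket{\beta_i}{A \cdot \beta_j}_\AAA$, which is completely positive by the $n\times n$ version of Remark item~(3). To show $\Psi$ is normal, I test on a bounded increasing net $A_\lambda \uparrow A$ in $\AAA^+$: then $\Psi(A_\lambda)$ is increasing in $M_n(\AAA)^+$ and dominated by $\Psi(A)$, so it has $\sigma$-weak supremum $X \leq \Psi(A)$. The hypothesis gives $X_{ii} = \sup_\lambda \braket{\beta_i}{A_\lambda \cdot \beta_i}_\AAA = \Psi(A)_{ii}$, and since a positive element of $M_n(\AAA)$ with zero diagonal vanishes (factor as $T^*T$; the $(i,i)$-entry is $\sum_k t_{ki}^* t_{ki}$, forcing each column of $T$ to be zero), we conclude $\Psi(A)-X = 0$. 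Hence $\Psi(A_\lambda) \to \Psi(A)$ $\sigma$-weakly, and the standard equivalence between preservation of bounded monotone limits and $\sigma$-weak continuity (applied to $\omega\circ\Psi$ for $\omega \in M_n(\AAA)_*^+$) gives normality of $\Psi$. In particular, every entry $A \mapsto \braket{\beta_i}{A \cdot \beta_j}_\AAA$ is normal.

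For a general vector $\xi = \sum_{k=1}^m B_k \cdot \beta_k \cdot C_k \in \AAA \cdot \DDD \cdot \AAA$, repeated use of property~(1) of the pre-Hilbert bimodule yields
\[
\braket{\xi}{A \cdot \xi}_\AAA \;=\; \sum_{k,l=1}^m C_k^* \braket{\beta_k}{B_k^* A B_l \cdot \beta_l}_\AAA C_l,
\]
a finite sum of maps normal in $A$ (by the previous step and the obvious normality of $A \mapsto B_k^* A B_l$ on bounded sets and of outer multiplication by $C_k^*, C_l$). Hence the set $\NNN := \{\xi \in H : A \mapsto \braket{\xi}{A \cdot \xi}_\AAA \text{ is WOT-continuous on } (\AAA)_1\}$ contains $\AAA \cdot \DDD \cdot \AAA$. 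I would finish by observing that $\NNN$ is norm-closed in $H$: Cauchy--Schwarz for the $\AAA$-valued product gives
\[
\|\braket{\xi}{A \xi}_\AAA - \braket{\xi_n}{A \xi_n}_\AAA\| \;\leq\; \|\xi - \xi_n\|_H \, (\|\xi\|_H + \|\xi_n\|_H)
\]
uniformly in $A \in (\AAA)_1$, so WOT-continuity transfers to norm limits by a standard $3\varepsilon$-argument. Combined with the density hypothesis this yields $\NNN = H$, proving normality of the left action.
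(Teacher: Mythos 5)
Your argument is correct, and its overall architecture coincides with the proof the paper has in mind (extend normality from $\DDD$ to $\AAA \cdot \DDD \cdot \AAA$, then pass to all of $H$ by density together with the uniform-in-$A$ Cauchy--Schwarz estimate). The one place where you add genuine content is the first step, which the paper dismisses as clear: your matrix-CP argument --- normality of the diagonal entries of $\Psi(A)_{ij} = \braket{\beta_i}{A\cdot\beta_j}_{\AAA}$ forces normality of the whole matrix because a positive element of $M_n(\AAA)$ with vanishing diagonal is zero --- is a clean way to obtain normality of the off-diagonal maps $A \mapsto \braket{\beta_i}{A\cdot\beta_j}_{\AAA}$ without the circular polarization you rightly flag, and all the supporting facts you invoke (complete positivity of $\Psi$ via the Gram-matrix positivity of \cite[Lemma 4.2]{CL}, the equivalence of monotone continuity with $\sigma$-weak continuity on bounded sets, and the coincidence of the $\sigma$-weak and weak-operator topologies there) are standard and correctly applied.
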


%
\begin{comment}
\begin{proof}
   It is clear that $A \in \AAA \mapsto \braket{\beta}{A \cdot \beta}_{\AAA}$
   is normal for each $\beta \in \AAA \cdot \DDD \cdot \AAA$. Let $\xi \in H$. Then there exist 
    vectors $\beta_n$ in $\AAA \cdot \DDD \cdot \AAA$ tending to $\xi$, and 
    the maps $A \mapsto \braket{\beta_n}{A \cdot \beta_n}_{\AAA}$ tend uniformly in norm to 
    $A \mapsto \braket{\xi}{A \cdot \xi}_{\AAA}$. Thus 
    $A \in \AAA \mapsto \braket{\xi}{A \cdot \xi}_{\AAA}$ is normal.
\end{proof}
\end{comment}
%

\begin{proposition}
    Let $H$ be a pre-Hilbert $\AAA$-$\AAA$ bimodule. If the left action is normal, then
    $A \mapsto \braket{\xi}{A \cdot \xi}_{\AAA}$ is continuous from
    $(\AAA)_1$ to $\AAA$ both in their strong-operator topology.
\end{proposition}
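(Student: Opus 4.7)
The plan is to exploit the complete positivity of the map $\phi(A):=\braket{\xi}{A\cdot \xi}_{\AAA}$ already noted in the preceding remark, together with the Kadison--Schwarz inequality for CP maps, in order to reduce SOT continuity to the WOT continuity coming from the normality hypothesis.

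First I would establish the (non-unital) Kadison--Schwarz estimate
\begin{equation*}
\phi(B)^*\phi(B) \leq \|\phi(I)\|\,\phi(B^*B), \qquad B\in \AAA,
\end{equation*}
by applying $\phi$ entrywise to the positive element $\begin{pmatrix} I & B \\ B^* & B^*B \end{pmatrix}\in M_2(\AAA)$ and then taking a Schur complement of the resulting positive matrix after regularising $\phi(I)+\varepsilon I$ and letting $\varepsilon\to 0$; the degenerate case $\phi(I)=0$ is trivial as it forces $\xi=0$.

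Next, given a net $A_\lambda \in (\AAA)_1$ converging to $A$ in the strong-operator topology, I put $B_\lambda := A_\lambda - A$, so $\|B_\lambda\|\leq 2$ and $B_\lambda \to 0$ SOT. The inequality $\|B_\lambda^*B_\lambda\eta\|\leq 2\|B_\lambda\eta\|$ then forces $B_\lambda^*B_\lambda \to 0$ SOT, hence also WOT. Extending the normality assumption by linear rescaling from $(\AAA)_1$ to every bounded subset of $\AAA$, one gets $\phi(B_\lambda^*B_\lambda) \to 0$ WOT. Since these operators are positive and uniformly norm-bounded by $4\|\phi(I)\|$, WOT convergence to $0$ upgrades to SOT convergence to $0$ through the elementary bound $\|T_\lambda\eta\|^2 \leq \|T_\lambda\|\,\braket{\eta}{T_\lambda \eta}$ valid for positive $T_\lambda$.

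Finally, feeding this back into Kadison--Schwarz gives
\begin{equation*}
\|(\phi(A_\lambda)-\phi(A))\eta\|^2 = \braket{\eta}{\phi(B_\lambda)^*\phi(B_\lambda)\eta} \leq \|\phi(I)\|\,\braket{\eta}{\phi(B_\lambda^*B_\lambda)\eta} \longrightarrow 0
\end{equation*}
for every vector $\eta$ in the Hilbert space of any faithful representation of $\AAA$, which is exactly SOT convergence $\phi(A_\lambda)\to\phi(A)$. I expect the only genuinely non-routine step to be the non-unital form of Kadison--Schwarz, since $\phi(I)=\braket{\xi}{\xi}_{\AAA}$ need not equal the identity; everything else is bookkeeping with the interplay of SOT and WOT on bounded positive nets.
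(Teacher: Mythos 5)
Your proof is correct and follows essentially the same route as the paper's: the paper also passes from $A_\lambda \to 0$ in the strong-operator topology to $A_\lambda^*A_\lambda \to 0$ in the weak-operator topology, applies normality, and then invokes the Hilbert-module Cauchy--Schwarz inequality $\braket{A_\lambda\cdot\xi}{\xi}_{\AAA}\braket{\xi}{A_\lambda\cdot\xi}_{\AAA} \leq \|\xi\|_{H}^2\,\braket{A_\lambda\cdot\xi}{A_\lambda\cdot\xi}_{\AAA}$ (cited from Lance), which is precisely your Kadison--Schwarz estimate $\phi(B)^*\phi(B)\leq\|\phi(I)\|\,\phi(B^*B)$ since $\|\phi(I)\|=\|\xi\|_{H}^2$. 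The only differences are cosmetic: you rederive that inequality from complete positivity via a $2\times 2$ matrix argument instead of citing it, and you insert a WOT-to-SOT upgrade for $\phi(B_\lambda^*B_\lambda)$ that your final display does not actually need.
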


\begin{proof} 
    Assume that $A_\alpha$ tends to $0$ in the strong-operator topology, 
    then $A_\alpha^*A_\alpha \rightarrow 0$ in the weak-operator topology. 
    Therefore $\braket{A_\alpha \cdot \xi}{A_\alpha \cdot \xi}_{\AAA}$ 
    converges to $0$ in the weak-operator topology.
    Since $\braket{A_\alpha \cdot \xi}{A_\alpha \cdot \xi}_{\AAA} \geq 1/\|\xi\|_{H}^2
    \braket{A_\alpha \cdot \xi}{\xi}_{\AAA} \braket{\xi}{A_\alpha \cdot \xi}_{\AAA}$, by
    \cite[Lemma 5.3]{CL}, we have that $\braket{A_\alpha \cdot \xi}{\xi}_{\AAA}$ tends to $0$ in
    the strong-operator topology.
\end{proof}

\begin{proposition}\label{inner_definite_pro}
   Let $H_1$ and $H_2$ be two pre-Hilbert $\AAA$-$\AAA$ bimodules. If the left action on $H_2$
    is normal, then $H_1 \otimes_\AAA H_2$, the algebraic tensor product over $\AAA$, 
    is a pre-Hilbert $\AAA$-$\AAA$ bimodule with $\AAA$-valued inner product given on simple tensors by
    \begin{align*}
        \braket{\xi_1 \otimes \beta_1}{\xi_2 \otimes \beta_2}_{\AAA} = 
        \braket{\beta_1}{\braket{\xi_1}{\xi_2}_{\AAA}\cdot \beta_2}_{\AAA}, \quad \xi_1, \xi_2 \in H_1,\, 
        \beta_1 \beta_2 \in H_2.
    \end{align*}
\end{proposition}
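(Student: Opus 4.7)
The proof naturally splits into three parts: (i) well-definedness of the sesquilinear form on the balanced algebraic tensor product; (ii) positivity via a Gram-matrix square-root argument in $M_n(\AAA)$; and (iii) definiteness together with boundedness of the left action.

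For (i), I would verify using axiom (1) of a pre-Hilbert bimodule on each factor that the formula respects the balance relation $\xi \cdot A \otimes \beta = \xi \otimes A \cdot \beta$. Indeed,
\[
\braket{\xi A \otimes \beta_1}{\xi_2 \otimes \beta_2}_{\AAA} = \braket{\beta_1}{A^* \braket{\xi}{\xi_2}_{\AAA} \cdot \beta_2}_{\AAA} = \braket{A \cdot \beta_1}{\braket{\xi}{\xi_2}_{\AAA} \cdot \beta_2}_{\AAA},
\]
and symmetrically on the second argument. Sesquilinear extension, Hermiticity, and compatibility with the obvious $\AAA$-$\AAA$ bimodule structure $A \cdot (\xi \otimes \beta) \cdot B := (A \cdot \xi) \otimes (\beta \cdot B)$ then follow routinely from the corresponding properties of $H_1$ and $H_2$.

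For (ii), given $\eta = \sum_{i=1}^n \xi_i \otimes \beta_i$, the Gram matrix $K = (\braket{\xi_i}{\xi_j}_{\AAA})_{i,j}$ is positive in $M_n(\AAA)$ by the standard computation $\sum_{i,j} A_i^* K_{ij} A_j = \braket{\sum_i \xi_i \cdot A_i}{\sum_j \xi_j \cdot A_j}_{\AAA} \geq 0$. Since $\AAA$ is a von Neumann algebra, so is $M_n(\AAA)$, and $K$ admits a self-adjoint square root $C = K^{1/2} \in M_n(\AAA)$ with $K_{ij} = \sum_l C_{li}^* C_{lj}$. Applying axiom (1) on $H_2$ then yields
\[
\braket{\eta}{\eta}_{\AAA} = \sum_l \braket{\gamma_l}{\gamma_l}_{\AAA} \geq 0, \qquad \gamma_l := \sum_i C_{li} \cdot \beta_i \in H_2.
\]

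For (iii), if $\braket{\eta}{\eta}_{\AAA} = 0$ then axiom (2) on $H_2$ forces every $\gamma_l = 0$; using $K = C \cdot C$ and the balance relations, these vanishings can be rearranged to show that $\eta$ already equals zero in $H_1 \otimes_\AAA H_2$ (equivalently, one interprets the tensor product as the quotient of the balanced algebraic tensor product by the null subspace of the semi-inner product, which is the standard convention for the interior tensor product of pre-Hilbert modules). Boundedness of the left action follows from $\braket{A \cdot \xi}{A \cdot \xi}_{\AAA} = \braket{\xi}{A^* A \cdot \xi}_{\AAA} \leq \|A\|^2 \braket{\xi}{\xi}_{\AAA}$ together with the positivity of $T \mapsto \braket{\beta}{T \cdot \beta}_{\AAA}$ recorded in the preceding remark. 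The main technical obstacle is step (iii): the square root $C$ lives in the von Neumann algebra $M_n(\AAA)$ rather than in any finitely generated subalgebra, so the normality hypothesis on the left action of $H_2$ is precisely what is needed to legitimately transfer the $M_n(\AAA)$-level decomposition of $\braket{\eta}{\eta}_{\AAA}$ back into faithful conclusions about vectors in $H_1 \otimes_\AAA H_2$.
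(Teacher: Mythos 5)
Your parts (i) and (ii) match the paper's (implicit) treatment: well-definedness is routine, and positivity is exactly the complete-positivity/Gram-matrix computation the paper records in the remark preceding the proposition. The genuine gap is in your step (iii), which is the \emph{entire} content of the paper's proof. The proposition asserts that the form is already definite on the algebraic balanced tensor product $H_1\otimes_{\AAA}H_2$; your parenthetical fallback of ``quotienting by the null subspace'' is not equivalent --- it proves a weaker statement about a different module, and the paper really needs definiteness on the algebraic tensors (e.g.\ \cref{generate_prop} and the Fock space manipulations work with genuine elementary tensors). And the phrase ``these vanishings can be rearranged to show that $\eta$ already equals zero'' skips the one non-trivial idea: from $\sum_i C_{li}\cdot\beta_i=0$ you cannot in general solve $\xi_j=\sum_l\xi'_l\cdot C_{lj}$ inside $H_1$, because $C=K^{1/2}$ need not be invertible, so there is no direct way to slide $C$ across the balanced tensor product.

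The paper's mechanism is the following, and it is where normality actually enters. From $\braket{\beta}{K\cdot\beta}_{\AAA}=0$ one gets $K^{1/2}\cdot\beta=0$ by definiteness on $H_2^n$, hence by iteration $K^{1/2^m}\cdot\beta=0$ for all $m$. Since $K^{1/2^m}$ converges in the strong-operator topology to the \emph{range projection} $P=(p_{ij})$ of $K$, normality of the left action gives $\braket{\beta}{P\cdot\beta}_{\AAA}=0$, i.e.\ $P\cdot\beta=0$, which reads componentwise as $\sum_k p_{ik}\cdot\beta_k=0$. Now the algebraic identity $(I-P)K(I-P)=0$ unpacks to
\begin{align*}
\bigl(\braket{\textstyle\xi_i-\sum_k\xi_k\cdot p_{ki}}{\textstyle\xi_j-\sum_l\xi_l\cdot p_{lj}}_{\AAA}\bigr)_{i,j}=0,
\end{align*}
so definiteness on $H_1$ forces $\xi_i=\sum_k\xi_k\cdot p_{ki}$, and then
\begin{align*}
\zeta=\sum_i\xi_i\otimes\beta_i=\sum_{i,k}\xi_k\cdot p_{ki}\otimes\beta_i=\sum_{i,k}\xi_k\otimes p_{ki}\cdot\beta_i=0
\end{align*}
by the balance relation and $P\cdot\beta=0$. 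In short: the decomposition you need to transport across $\otimes_{\AAA}$ is not the square root $C$ but the range projection $P$, obtained as an SOT limit (this is the precise point where the normality hypothesis is used), combined with the observation that the $\xi_i$ are automatically fixed by right multiplication by $P$. Without this your step (iii) does not close.
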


\begin{proof}
    We only need to show that if $\zeta= \sum_{i} \xi_i \otimes \beta_i$ satisfying 
    $\braket{\zeta}{\zeta}_\AAA =0$, then $\zeta =0$ in $H_1 \otimes_\AAA H_2$. 
    
    Since $\braket{\beta}{T \cdot \beta}_{\AAA}=0$ where $\beta=(\beta_1, \ldots, \beta_n)
    \in H_2^n$ and $T = (\braket{\xi_i}{\xi_j}_{\AAA})_{i,j} \in M_n(\AAA)$,
    $T^{1/2}\cdot \beta =0$. Thus $\braket{\beta}{T^{1/2} \cdot \beta}_{\AAA}=0$ and this implies
    $T^{1/4} \cdot \beta = 0$. By induction, we have $T^{1/2^n} \cdot \beta =0$, $n = 1, 2, \ldots$.
    Since $T^{1/2^n}$ tends to the range projection $P=(p_{ij})_{i,j}$ of 
    $T$ in the strong-operator topology, we have that $\braket{\beta}{T^{1/2^n} \cdot \beta}_{\AAA}$
    converges to $\braket{\beta}{P\cdot \beta}_{\AAA}$ in the strong-operator topology. Thus
    $P\cdot \beta = 0$ and $\sum_{k=1}^n p_{i,k} \cdot \beta_k = 0$, $i = 1, \ldots, n$.

    Note that
    \begin{align*}
        0=(I-P)(\braket{\xi_k}{\xi_l}_\AAA)_{k,l}(I-P)=  
        (\braket{\xi_i - \sum_k \xi_k \cdot p_{k,i}}{\xi_j - \sum_{l} \xi_l \cdot p_{l,j}}_{\AAA})_{i,j}.
    \end{align*}
    Therefore $\xi_i = \sum_k \xi_k \cdot p_{k,i}$, $i = 1,\ldots, n$.
    Then it is clear that
    \begin{align*}
        \zeta = \sum_{i} \xi_i \otimes \beta_i = \sum_{i,k} \xi_k \cdot p_{k,i} \otimes \beta_i 
        = \sum_{i,k} \xi_k \otimes p_{k,i} \cdot \beta_i =0.
    \end{align*}
\end{proof}

\subsection{Tomita pre-Hilbert bimodules and Fock space construction}

Throughout the rest of this section, $\AAA$ is a semifinite von Neumann algebra and $\tau$ is a
n.s.f. tracial weight on $\AAA$. Recall that $\NN(\AAA, \tau) = \{A \in \AAA:
\tau(A^*A) < \infty\}$ is an ideal. Let $H$ be a pre-Hilbert $\AAA$-$\AAA$ bimodule $H$ with normal 
left action of $\AAA$. Note that 
\begin{align*}
    \braket{\xi}{\beta \cdot A} = \tau(\braket{\xi}{\beta}_\AAA A) = \tau((\braket{\beta}{\xi}_\AAA A^*)^*) =  
    \braket{\xi \cdot A^*}{\beta}.
\end{align*}
Thus the right action $\xi \mapsto \xi \cdot B$, $\xi \in \NN(H,\tau)$, gives
a (normal) *-representation of the opposite algebra $\AAA^{op}$ on the Hilbert space $H_\tau$ (see \cref{notation_Hvarphi}). 
Moreover, $H_\tau$ is an $\AAA$-$\AAA$ bimodule with scalar-valued inner product (also called correspondence in \cite{ACCG}, \cite{L-indexII}, \cite{LS}). 

\begin{definition}\label{tomita_bimodule_def}
   Let $H$ be a pre-Hilbert $\AAA$-$\AAA$ bimodule with normal left action of $\AAA$.
   We say that $H$ is a \textbf{Tomita $\AAA$-$\AAA$ bimodule} if 
   \begin{enumerate}
       \item $\NN(H,\tau) = H$, and in this case we regard $H$ as a dense subspace of $H_\tau$, 
       \item $H$ admits an involution $S$ such that 
           $S(A \cdot \xi \cdot B) = B^* \cdot S(\xi) \cdot A^*$ and $S^2(\xi) = \xi$, for every $\xi \in H$ and $A,B\in \AAA$,
        \item $H$ admits a complex one-parameter group $\{U(\alpha):\alpha \in \C\}$ of
              isomorphisms satisfying the following properties: 
            \begin{enumerate}
                \item For every $\xi$, $\beta \in H$, the map $\alpha \mapsto U(\alpha)\beta$  
                    is continuous with respect to $\|\cdot\|_{H}$
                    and the function $\alpha \mapsto \braket{\xi}{U(\alpha) \beta}$ is entire,
                \item $S(U(\alpha)\xi)=U(\overline{\alpha})S(\xi)$,
                \item $\braket{\xi}{U(\alpha) \beta} = 
                    \braket{U(-\overline{\alpha}) \xi}{\beta}$,
                \item $\braket{S(\xi)}{S(\beta)} = \braket{\beta}{U(-i)\xi}$.
            \end{enumerate}
   \end{enumerate}
\end{definition}

\begin{remark}
It is clear that every Tomita algebra is a Tomita $\C$-$\C$ bimodule. The same definition of Tomita bimodule could be given for $\AAA$ not necessarily semifinite and for $\tau$ not necessarily tracial. In order to derive the following properties, and for the purpose of this paper, we stick to the semifinite case.
\end{remark}

Note that $\|U(\alpha)\beta\|_2^2 = 
\braket{\beta}{U(2i Im \alpha)\beta}$ is a continuous
function of $\alpha$ for each $\beta \in H$, hence it is locally bounded. 
By \cite[A.1]{TKII}, $\alpha \in \C \mapsto U(\alpha)\beta \in H_\tau$ 
is entire in the (Hilbert space) norm for every $\beta \in H$. Furthermore, we have the
following fact.

\begin{lemma}\label{entire_in_A}
    Let $H$ be a Tomita $\AAA$-$\AAA$ bimodule. For any $\xi$, $\beta \in H$,
    the map $\alpha\mapsto \braket{\xi}{U(\alpha)\beta}_{\AAA} \in \AAA$ is entire in the (operator) norm.
\end{lemma}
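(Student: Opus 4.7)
The plan is to upgrade the scalar-valued entirety guaranteed by \cref{tomita_bimodule_def}(3)(a) to $\AAA$-valued entirety of $f(\alpha) := \braket{\xi}{U(\alpha)\beta}_{\AAA}$ in three steps: (i) local operator-norm boundedness, (ii) weak-$*$ holomorphicity with respect to the predual $\AAA_*$, and (iii) a Cauchy-integral bootstrap to norm holomorphicity, viewing $\AAA$ as the Banach dual $(\AAA_*)^*$.

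For (i), the $\AAA$-valued Cauchy--Schwarz inequality gives $\|f(\alpha)\|\leq \|\xi\|_H\|U(\alpha)\beta\|_H$, and by the $\|\cdot\|_H$-continuity of $\alpha\mapsto U(\alpha)\beta$ in \cref{tomita_bimodule_def}(3)(a) this is locally bounded in $\alpha$.

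For (ii), I would use the standard identification $\AAA_*\cong L^1(\AAA,\tau)$ together with the factorization $L^1 = L^2\cdot L^2$ (and norm-density of $\NN(\AAA,\tau)$ in $L^2(\AAA,\tau)$ via spectral cut-offs of the polar part) to approximate every $\omega\in\AAA_*$ in norm by functionals of the form $\omega_{A_1,A_2}(M) := \tau(A_1MA_2)$ with $A_1,A_2\in\NN(\AAA,\tau)$. Since $U(\alpha)$ is a bimodule isomorphism, property (1) of the $\AAA$-valued inner product yields
\[
\omega_{A_1,A_2}(f(\alpha)) = \tau(\braket{\xi\cdot A_1^*}{U(\alpha)(\beta\cdot A_2)}_{\AAA}) = \braket{\xi\cdot A_1^*}{U(\alpha)(\beta\cdot A_2)},
\]
which is entire by \cref{tomita_bimodule_def}(3)(a) applied to the vectors $\xi\cdot A_1^*,\beta\cdot A_2\in H$. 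Combined with (i), uniform convergence on compact sets extends entirety of $\omega\circ f$ to every $\omega\in\AAA_*$.

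For (iii), fix $\alpha_0\in\C$ and $R>0$, let $K := \sup_{|\zeta-\alpha_0|=R}\|f(\zeta)\|$ (finite by (i)), and define the candidate Taylor coefficients
\[
c_n := \frac{1}{2\pi i}\oint_{|\zeta-\alpha_0|=R}\frac{f(\zeta)}{(\zeta-\alpha_0)^{n+1}}\,d\zeta
\]
as a weak-$*$ (Gelfand--Pettis) integral in $(\AAA_*)^* = \AAA$, well-defined because the integrand is weak-$*$ continuous and norm-bounded on the contour, with $\|c_n\|\leq K/R^n$. The series $\sum_n h^n c_n$ then converges absolutely in operator norm for $|h|<R$; pairing with any $\omega\in\AAA_*$ and invoking the scalar Cauchy formula shows $\omega(\sum_n h^nc_n) = \omega(f(\alpha_0+h))$, so the two elements of $\AAA$ coincide by separation of points. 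Hence $f$ is entire in operator norm.

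The main obstacle is step (iii): the Cauchy integral must be interpreted as a Gelfand--Pettis integral and one has to verify that its values $c_n$ indeed lie in $\AAA$ (not merely in $\AAA^{**}$), which is a dual-space variant of Dunford's weak-holomorphy theorem. Steps (i) and (ii) are more routine given the bimodule structure and the semifiniteness of $\tau$.
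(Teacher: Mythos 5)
Your argument is correct and follows essentially the same route as the paper: local norm-boundedness of $\alpha\mapsto\braket{\xi}{U(\alpha)\beta}_{\AAA}$, entirety after pairing with the norm-dense family of normal functionals $M\mapsto\tau(A\,M\,B)$, $A,B\in\NN(\AAA,\tau)$, and then the Dunford-type principle for dual Banach spaces --- for your step (iii) the paper simply cites \cite[A.1]{TKII} instead of re-deriving it via the Gelfand--Pettis Cauchy integral. The only cosmetic caveat is that in step (ii) you use $U(\alpha)(\beta\cdot A_2)=U(\alpha)(\beta)\cdot A_2$, which the paper only establishes later (\cref{bimodule_maps_prop}); this is avoided by using traciality to put both cut-offs on $\xi$, writing $\tau(A_1\braket{\xi}{U(\alpha)\beta}_{\AAA}A_2)=\braket{\xi\cdot A_1^*A_2^*}{U(\alpha)\beta}$, exactly as in the paper's proof.
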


\begin{proof}
By \cref{tomita_bimodule_def}(3)(a), the map is bounded on any compact subset of $\C$.
For every $A$, $B \in \NN(\AAA,\tau)$, $\tau(A^* \braket{\xi}{U(\alpha)\beta}_{\AAA} B) = \braket{\xi\cdot AB^*}{U(\alpha)\beta}$ is 
entire. By \cite[A.1]{TKII} we have the statement.
\end{proof}

\begin{remark}
    Let $\overline{H}$ be the closure of $H$ with respect to $\| \cdot \|_H$. 
    Using a similar argument as in the proof of
    \cref{entire_in_A}, we can show that the map $\alpha \mapsto U(\alpha)\xi \in \overline{H}$ is
    entire for every $\xi \in H$.
\end{remark}

Now, $U(\alpha)$ and $S$ can be viewed as densely defined operators with 
domain $H \subset H_\tau$, and we have the following result.

\begin{proposition}\label{core_prop}
    Let $H$ be a Tomita $\AAA$-$\AAA$ bimodule. Then $S$ is preclosed and we use the same symbol 
    $S$ to denote the closure. If $J \Delta^{1/2}$ is the polar decomposition of $S$,
    then $J^2=I$, $J\Delta J= \Delta^{-1}$, $U(\alpha) = \Delta^{i\alpha}|_H$ 
    and $H$ is a core for each $\Delta^{i\alpha}$, $\alpha \in \C$.
\end{proposition}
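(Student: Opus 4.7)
The plan proceeds in three stages. First, I preclose $S$ and construct a one-parameter unitary group from $\{U(t)\}_{t\in\R}$. Substituting $\beta\mapsto S(\beta)$ in axiom (3)(d), using $S^2=I$ together with the self-adjointness of $U(-i)$ on $H$ (immediate from axiom (3)(c) with $\alpha=-i$, since $-\overline{-i}=-i$), one finds
\begin{align*}
\braket{S(\xi)}{\beta} \;=\; \braket{S(\beta)}{U(-i)\xi} \;=\; \overline{\braket{\xi}{U(-i)S(\beta)}}, \qquad \xi,\beta\in H,
\end{align*}
so $S$ has densely-defined antilinear adjoint and is preclosable; denote its closure by $\overline{S}$. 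For $t\in\R$, axiom (3)(c) yields $U(t)^*=U(-t)$ on $H$ and the group law gives $U(t)U(-t)=I$, so each $U(t)|_H$ is a surjective $\|\cdot\|_2$-isometry of $H$, extending uniquely to a unitary on $H_\tau$. Together with strong continuity (axiom (3)(a) on the dense subspace $H$, plus uniform boundedness), this is a strongly continuous one-parameter unitary group on $H_\tau$, and Stone's theorem gives $U(t)=\Delta_0^{it}$ for a unique positive self-adjoint $\Delta_0$.

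Second, I identify $\Delta_0$ with the modular operator $\Delta$ and derive the polar decomposition identities. For $\xi\in H$, axiom (3)(a) (as already noted in the excerpt) makes $\alpha\mapsto U(\alpha)\xi\in H_\tau$ entire in $\|\cdot\|_2$, so by the standard characterization of the domain of complex powers via analytic extensions of the unitary group one has $\xi\in\mathrm{dom}(\Delta_0^{i\alpha})$ and $\Delta_0^{i\alpha}\xi=U(\alpha)\xi$ for every $\alpha\in\C$. Axiom (3)(d) then reads $\|S(\xi)\|_2^2=\braket{\xi}{U(-i)\xi}=\braket{\xi}{\Delta_0\xi}=\|\Delta_0^{1/2}\xi\|_2^2$ on $H$, so the positive quadratic forms associated to $|\overline{S}|$ and $\Delta_0^{1/2}$ agree on $H$. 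Granting the core property (Stage three), $H$ is a common core for both, forcing $|\overline{S}|=\Delta_0^{1/2}$; setting $\Delta:=\Delta_0$, this gives $U(\alpha)=\Delta^{i\alpha}|_H$. The identities $J^2=I$ and $J\Delta J=\Delta^{-1}$ follow from $S^2=I$ on the core $H$ together with the polar decomposition $\overline{S}=J\Delta^{1/2}$ and antiunitarity of $J$, by the standard Tomita--Takesaki argument (using that $\overline{S}$ has zero kernel and dense range).

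The main obstacle is establishing the core property in Stage three. The plan is Gaussian smoothing: the bounded self-adjoint operators
\begin{align*}
P_n \;:=\; \sqrt{n/\pi}\int_\R e^{-nt^2}\Delta_0^{it}\,dt \;=\; \exp\bigl(-(\log\Delta_0)^2/(4n)\bigr)
\end{align*}
commute with every $\Delta_0^{i\alpha}$ (by spectral calculus) and converge strongly to $I$, so for $\eta\in\mathrm{dom}(\Delta_0^{i\alpha})$ one has $P_n\eta\to\eta$ and $\Delta_0^{i\alpha}P_n\eta=P_n\Delta_0^{i\alpha}\eta\to\Delta_0^{i\alpha}\eta$. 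This reduces the task to approximating $P_n\eta$ in graph norm by elements of $H$. Choosing $\eta_m\in H$ with $\eta_m\to\eta$ in $\|\cdot\|_2$, one approximates $P_n\eta_m$ by Riemann sums $\sqrt{n/\pi}\,\delta\sum_k e^{-n(k\delta)^2}U(k\delta)\eta_m$, which lie in $H$ by axiom (3)(a) and the vector-space structure of $H$. Using entirety of $\alpha\mapsto U(\alpha)\eta_m\in H_\tau$, uniform bounds on $\|U(\alpha)\eta_m\|_2$ on compact strips, and the rapid decay of $e^{-nt^2}$, one shows that $U(\alpha)$ applied to these Riemann sums converges in $\|\cdot\|_2$ to $U(\alpha)P_n\eta_m=\Delta^{i\alpha}P_n\eta_m$, uniformly for $\alpha$ in compact subsets of $\C$. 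A diagonal argument over $m$, $n$, and the mesh $\delta$ then produces a sequence in $H$ converging to $\eta$ in the graph norm of $\Delta^{i\alpha}$, completing the proof.
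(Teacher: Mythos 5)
Your proposal is correct and follows essentially the same route as the paper's proof: preclose $S$ using axiom (3)(d), extend $\{U(t)\}_{t\in\R}$ to a strongly continuous unitary group and apply Stone's theorem, identify $U(\alpha)$ with $\Delta^{i\alpha}|_H$ via analytic extension of the unitary group, and obtain the core property by a regularization argument. The only differences are minor: the paper delegates the analytic-extension step and the core property to Takesaki's Lemmas VI.2.3 and VI.1.21 (your Gaussian smoothing is essentially the standard proof of the latter, the paper's commented-out variant using the kernel $1/(e^{\pi t}+e^{-\pi t})$ instead), and it concludes $K=\Delta$ from maximality of self-adjoint operators ($K\subseteq\Delta$ with both self-adjoint) rather than by comparing quadratic forms on a common core --- both arguments are valid.
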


\begin{proof}
    By \cref{tomita_bimodule_def}(3)(d), $H$ is in the domain of $S^*$. Thus
    $S^*$ is densely defined, $S$ is preclosed and $U(-i) \subset \Delta$.
    Hence \cite[Lemma 1.5]{TKII} implies $J^2=I$, $J\Delta J= \Delta^{-1}$.
    By the same argument used in the proof of \cite[Theorem 2.2(ii)]{TKII}, 
    it can be shown that the closure of $U(it)$ is 
    self-adjoint for every $t \in \R$, $U(\alpha) = \Delta^{i\alpha}|_H$ and  
    $H$ is a core for each $\Delta^{i\alpha}$.

    For the convenience of the reader, we sketch the proof. By condition (3)(c), 
    $\{U(t)\}_{t \in \R}$ can be extended to a one-parameter unitary group on $H_\tau$.
    By Stone's theorem, there exists a non-singular (unbounded) self-adjoint positive operator $K$
    such that $K^{it}|_{H} = U(t)$. By \cite[VI Lemma 2.3]{TKII}, $H \subset \DDD(K^{i \alpha})$ for 
    every $\alpha \in \C$ and $K^{i \alpha}|_{H} = U(\alpha)$. Thus $K|_H = \Delta|_H$.
    By \cite[VI, Lemma 1.21]{TKII}, $H$ is a core for $K^{\alpha}$, $\alpha \in \C$.
    Thus $K \subseteq \Delta$. Since $\Delta^* = \Delta$ and $K^* = K$, we have $K = \Delta$.
\end{proof}

\begin{remark}
    By \cref{core_prop}, $S|_{H} = JU(-i/2)$, thus $JH = H$. 
\end{remark}

%
\begin{comment}
The same argument as in the proof of \cite[Theorem 2.2 (ii)]{TKII} can be used to 
prove the following fact. We sketch the proof for the convenience of the reader.
    
\begin{proposition}\label{core_prop}
    $K=\{\xi_\tau: \xi \in H\}$ is a core for each $\Delta_1^{i\alpha}$, $\alpha \in \C$. 
    And the range of $(I+\Delta_1^{\alpha})K$ is dense in $K$.
\end{proposition}

\begin{proof}
    Assume that $\zeta \in \DDD(\Delta_1^{i\alpha})$ satisfying
    $\braket{\zeta}{\xi_\tau} + \braket{\Delta_1^\alpha \zeta}{\Delta_1^\alpha \xi_\tau} = 0$ for
    all $\xi_\tau\in K$. That is $\braket{\zeta}{(I+\Delta_1^\alpha) \xi_\tau}=0$.
    However, by \cite[Lemma 1.21]{TKII}, we have
    \begin{align*}
        \beta_\tau= (I+\Delta_1^{2\alpha})^{-1}(I+ \Delta_1^{2\alpha})\beta_\tau= 
        (I+ \Delta_1^{2\alpha})\int_{\R} \frac{1}{e^{\pi t}+e^{-\pi t}}
        \Delta_1^{\alpha(2it -1)}\beta_\tau dt, \quad \forall \beta_\tau \in K.
    \end{align*}
    Therefore $K$ is contained in the closure of $(I+ \Delta_1^{2\alpha})K$ and $\zeta =0$.
\end{proof}
\end{comment}
%

\begin{lemma}\label{bimodule_maps_prop}
    Let $H$ be a Tomita $\AAA$-$\AAA$ bimodule. Then 
    \begin{align*}
        &S(A \cdot \xi \cdot B) = B^* \cdot S(\xi) \cdot A^*,\quad
        S^*(A \cdot \beta \cdot B) = B^* \cdot S^*(\beta) \cdot A^*,\\
        &\Delta^{i \alpha}(A \cdot \zeta \cdot B) = A \cdot \Delta^{i \alpha}(\zeta) \cdot B, \quad
        J(A \cdot \zeta \cdot B) = B^* \cdot J(\zeta) \cdot A^*.
    \end{align*}
    where $\xi$, $\beta$, $\zeta$ are in the domains of $S$, $S^*$ and $\Delta^{i\alpha}$ 
    respectively and $A$, $B \in \AAA$.  
\end{lemma}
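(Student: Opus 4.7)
The plan is to establish the four identities in a cascade. On $H$ itself they are essentially built into the definition: axiom (2) of \cref{tomita_bimodule_def} gives the $S$-formula directly, and interpreting ``isomorphism'' in axiom (3) as $\AAA$-$\AAA$-bimodule map, combined with the identification $U(\alpha)=\Delta^{i\alpha}|_H$ from \cref{core_prop}, gives the $\Delta^{i\alpha}$-formula on the dense subspace $H$. Before passing to the closure I would first verify that the one-sided actions by $A,B\in\AAA$ extend to bounded operators on $H_\tau$. The C$^*$-bimodule bound gives $\|A\cdot\xi\|_2\le\|A\|\|\xi\|_2$, while the tracial property of $\tau$ gives
\[
\|\xi\cdot B\|_2^2=\tau\bigl(B^*\braket{\xi}{\xi}_{\AAA}B\bigr)=\tau\bigl(\braket{\xi}{\xi}_{\AAA}^{1/2}BB^*\braket{\xi}{\xi}_{\AAA}^{1/2}\bigr)\le\|B\|^2\|\xi\|_2^2,
\]
for $\xi\in H$, so both actions extend continuously to $H_\tau$.

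Next I extend the $\Delta^{i\alpha}$ identity by a standard core argument. Given $\zeta\in\DDD(\Delta^{i\alpha})$, by \cref{core_prop} there is a sequence $\xi_n\in H$ with $\xi_n\to\zeta$ and $\Delta^{i\alpha}\xi_n\to\Delta^{i\alpha}\zeta$ in $H_\tau$. Applying bounded one-sided multiplication yields $A\cdot\xi_n\cdot B\to A\cdot\zeta\cdot B$ and $A\cdot\Delta^{i\alpha}\xi_n\cdot B\to A\cdot\Delta^{i\alpha}\zeta\cdot B$; closedness of $\Delta^{i\alpha}$ then forces $A\cdot\zeta\cdot B\in\DDD(\Delta^{i\alpha})$ and the claimed identity. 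The $S$-formula extends in exactly the same way, using that $S$ is closed with $\DDD(S)=\DDD(\Delta^{1/2})$ and $H$ as a core, and noting that the anti-linearity of $S$ automatically swaps $A$ with $A^*$ and $B$ with $B^*$ along the approximating sequence.

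Finally, the $J$- and $S^*$-formulas follow from the polar decomposition $S=J\Delta^{1/2}$, equivalently $J=S\Delta^{-1/2}$ and $S^*=\Delta^{1/2}J=J\Delta^{-1/2}$. Since $\Delta^{-1/2}|_H=U(i/2)$, for $\xi\in H$ one has $J\xi=S(U(i/2)\xi)$, and the already established $U$- and $S$-identities on $H$ give $J(A\cdot\xi\cdot B)=B^*\cdot J\xi\cdot A^*$; density of $H$ in $H_\tau$, together with boundedness of $J$ and of the one-sided actions, extends this to all of $H_\tau$. The $S^*$-formula then follows by combining the $\Delta^{\pm1/2}$- and $J$-formulas on the domain $\DDD(S^*)=\DDD(\Delta^{-1/2})$. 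The only mildly delicate point, and the main obstacle such as it is, is the coordination between the algebraic axioms on the purely algebraic $H$ and the spectral-theoretic closures acting on $H_\tau$; once the core property of $H$ for every $\Delta^{i\alpha}$ and the continuity of the one-sided $\AAA$-actions on $H_\tau$ are secured, the lemma becomes a routine closure and polar-decomposition argument.
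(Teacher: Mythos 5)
The scaffolding of your argument is sound --- the boundedness of the one-sided $\AAA$-actions on $H_\tau$ (your trace computation is correct), the core-plus-closedness extension, and the polar-decomposition bookkeeping $J=S\Delta^{-1/2}$, $S^*=J\Delta^{-1/2}$ --- but the proof rests on a premise that is not available. You read ``isomorphisms'' in \cref{tomita_bimodule_def}(3) as \emph{bimodule} isomorphisms, i.e.\ you take $U(\alpha)(A\cdot\xi\cdot B)=A\cdot U(\alpha)(\xi)\cdot B$ as an axiom. Since $U(\alpha)=\Delta^{i\alpha}|_H$ by \cref{core_prop}, that identity \emph{is} the third asserted equation restricted to the core $H$, and every other nontrivial step of your proof (the extension of $\Delta^{i\alpha}$, the formula $J\xi=S(U(i/2)\xi)$, and the derivation of $S^*$ from $J$ and $\Delta^{\pm 1/2}$) feeds off it. The definition only posits a one-parameter group of linear isomorphisms with the listed analytic properties; the bimodule equivariance of $U(\alpha)$ is precisely part of what this lemma is meant to establish. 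Indeed \cref{c_hold_for_mod} and \cref{tensor_pro_t} later cite \cref{bimodule_maps_prop} exactly to obtain $U(\alpha)(\beta\cdot A)=U(\alpha)(\beta)\cdot A$, which would be redundant if it were built into the definition. So, on the only substantive point, your argument is circular.

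The missing step is a short computation using only axiom (2) (the $S$-formula on $H$, which you correctly identify as definitional), property (1) of the $\AAA$-valued inner product, and traciality of $\tau$: for $\xi\in H$ and $\beta\in\DDD(S^*)$,
\begin{align*}
\braket{A\cdot \beta \cdot B}{S(\xi)}
=\tau\bigl(\braket{\beta}{A^*\cdot S(\xi)\cdot B^*}_{\AAA}\bigr)
=\braket{\beta}{S(B\cdot \xi\cdot A)}
=\braket{\xi}{B^*\cdot S^*(\beta)\cdot A^*},
\end{align*}
which, since $H$ is a core for $S$, shows $A\cdot\beta\cdot B\in\DDD(S^*)$ and gives the $S^*$-formula \emph{first}. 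Then $\Delta=S^*S$ commutes with the bounded one-sided multiplications, hence so does every $\Delta^{i\alpha}$ by functional calculus (this is where the bimodule property of $U(\alpha)=\Delta^{i\alpha}|_H$ is actually proved), and $J=\Delta^{1/2}S$ yields the $J$-formula. The logical order must be $S\Rightarrow S^*\Rightarrow\Delta^{i\alpha}\Rightarrow J$, whereas you run $U(\alpha)\Rightarrow(\Delta^{i\alpha},J,S^*)$ from an unproved input. Your continuity and core arguments can be kept essentially verbatim once this computation is inserted.
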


\begin{proof}
    By \cref{core_prop}, we only need to show that the equations hold for $\xi \in H$. 
    Note that 
    $\braket{A\cdot \beta \cdot B}{S(\xi)}=
      \tau(\braket{\beta}{A^*\cdot S(\xi)\cdot B^*}_{\AAA})
        =\braket{\xi}{B^*\cdot S^*(\beta) \cdot A^*}$.
    This implies $\Delta^{i \alpha}(A \cdot \xi \cdot B) = A \cdot \Delta^{i \alpha}(\xi) \cdot B$.
    Thus $J(A \cdot \xi \cdot B) = \Delta^{1/2}S(A \cdot \xi \cdot B)=
    B^* \cdot J(\xi) \cdot A^*$.
\end{proof}

\begin{lemma}\label{c_hold_for_mod}
   Let $H$ be a Tomita $\AAA$-$\AAA$ bimodule. Then for any $\xi$, $\beta \in H$, we have
   \begin{align*}
       \braket{\xi}{U(\alpha)\beta}_\AAA = \braket{U(-\overline{\alpha})\xi}{\beta}_\AAA.
   \end{align*}
\end{lemma}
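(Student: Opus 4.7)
The plan is to reduce the $\AAA$-valued identity to the scalar-valued identity already granted by condition (3)(c) of \cref{tomita_bimodule_def}, by pairing both sides with $\tau$ against arbitrary square-integrable elements of $\AAA$ and invoking non-degeneracy of the trace.

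First, I would record the sesquilinearity rule
$$\braket{\xi \cdot A}{\eta \cdot B}_{\AAA} \;=\; A^{*}\braket{\xi}{\eta}_{\AAA}\,B, \qquad A,B\in\AAA,$$
which is immediate from condition (1) of the definition of pre-Hilbert $\AAA$-$\AAA$ bimodule together with the conjugate symmetry $\braket{\cdot}{\cdot}_\AAA^{*} = \braket{\cdot}{\cdot}_\AAA^{\mathrm{op}}$. I would then note that $U(\alpha)$ is an $\AAA$-$\AAA$-bimodule map on $H$: this is exactly \cref{bimodule_maps_prop} applied to $\Delta^{i\alpha}$, combined with $U(\alpha) = \Delta^{i\alpha}|_H$ from \cref{core_prop}.

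Second, I fix arbitrary $A, B \in \NN(\AAA,\tau)$, so that $\xi \cdot A,\ \beta \cdot B \in H$ lie in $\NN(H,\tau)$. Using the sesquilinearity rule and bimodularity of $U(\alpha)$:
$$\tau\bigl(A^{*}\braket{\xi}{U(\alpha)\beta}_{\AAA}\,B\bigr)
  \;=\;\braket{\xi \cdot A}{U(\alpha)\beta \cdot B}
  \;=\;\braket{\xi \cdot A}{U(\alpha)(\beta \cdot B)}.$$
Now applying condition (3)(c) of \cref{tomita_bimodule_def} to the vectors $\xi \cdot A$ and $\beta \cdot B$, and then undoing the bimodularity,
$$\braket{\xi \cdot A}{U(\alpha)(\beta \cdot B)}
  \;=\;\braket{U(-\overline{\alpha})(\xi\cdot A)}{\beta \cdot B}
  \;=\;\braket{U(-\overline{\alpha})\xi \cdot A}{\beta \cdot B}
  \;=\;\tau\bigl(A^{*}\braket{U(-\overline{\alpha})\xi}{\beta}_{\AAA}\,B\bigr).$$

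Third, I would conclude by a density argument. Setting $X = \braket{\xi}{U(\alpha)\beta}_\AAA - \braket{U(-\overline{\alpha})\xi}{\beta}_\AAA \in \AAA$, the identity above reads $\tau(A^{*} X B) = 0$ for every $A, B \in \NN(\AAA,\tau)$. Viewing this in the GNS space of $\tau$, it says $\langle A, XB\rangle_{L^{2}(\AAA,\tau)} = 0$ for all square-integrable $A, B$, hence $XB = 0$ in $\AAA$ for every $B \in \NN(\AAA,\tau)$. Since $\NN(\AAA,\tau)$ is a $\sigma$-weakly dense two-sided ideal in $\AAA$ (because $\tau$ is semifinite), this forces $X = 0$, proving the claimed equality.

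There is no serious obstacle here: the proof is a pure bookkeeping argument. The only points to be careful about are checking the sign and placement of $A, B$ in the sesquilinearity rule (using both axiom (1) and conjugate symmetry of the $\AAA$-valued inner product), and verifying that $U(\alpha)$ commutes with the bimodule action on all of $H$, not just on a core—this was precisely established in \cref{bimodule_maps_prop}.
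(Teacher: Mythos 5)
Your proof is correct and follows essentially the same route as the paper: both reduce the $\AAA$-valued identity to the scalar condition (3)(c) by pairing with elements of $\AAA$ under $\tau$ and using the bimodularity of $U(\alpha)$ from \cref{bimodule_maps_prop}, then conclude by faithfulness/semifiniteness of $\tau$. The paper simply pairs on one side, computing $\tau(\braket{\xi}{U(\alpha)\beta}_\AAA A)=\tau(\braket{\xi}{U(\alpha)(\beta\cdot A)}_\AAA)=\tau(\braket{U(-\overline{\alpha})\xi}{\beta}_\AAA A)$ for all $A\in\AAA$, whereas you pair on both sides; this is an inessential variation.
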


\begin{proof}
    By \cref{bimodule_maps_prop}, we have 
    \begin{align*}
        \tau(\braket{\xi}{U(\alpha)\beta}_\AAA A)=  \tau(\braket{\xi}{U(\alpha)(\beta \cdot A)}_\AAA)
        =\tau(\braket{U(-\overline{\alpha}) \xi}{\beta}_\AAA A), \quad \forall A \in \AAA.
    \end{align*}
\end{proof}

\begin{lemma}\label{tensor_pro_t}
    Suppose that $H_1$ and $H_2$ are two Tomita $\AAA$-$\AAA$ bimodules. Then
    \begin{align*}
        S(\xi_1 \otimes \xi_2)=S_2(\xi_2)\otimes S_1(\xi_1), \quad
        U(\alpha)(\xi_1 \otimes \xi_2) = U_1(\alpha)(\xi_1) \otimes U_2(\alpha)(\xi_2), 
    \end{align*}
    define a conjugate linear map $S$ and a one-parameter group $\{U(\alpha)\}_{\alpha \in \C}$,
    where $S_i$ and $U_i(\alpha)$ are the involution and one-parameter group for $H_i$, $i=1,2$, and 
    $\xi_1 \otimes \xi_2 \in H_1 \otimes_{\AAA} H_2$.
    Furthermore $\{U(\alpha)\}$ satisfies the conditions (3)(a) and (3)(d) in \cref{tomita_bimodule_def}.
\end{lemma}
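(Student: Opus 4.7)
The plan is to reduce everything to simple tensors via $\AAA$-bilinearity, check the group structure componentwise, and verify the two Tomita conditions separately.

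For well-definedness of $S$ on $H_1 \otimes_\AAA H_2$ (with target $H_2 \otimes_\AAA H_1$, itself a pre-Hilbert $\AAA$-$\AAA$ bimodule by \cref{inner_definite_pro} applied to the swapped factors, since the left action on $H_1$ is normal), one checks the balancing identity on simple tensors:
\[
S(\xi_1 \cdot A \otimes \xi_2) = S_2(\xi_2) \otimes A^* \cdot S_1(\xi_1) = S_2(\xi_2) \cdot A^* \otimes S_1(\xi_1) = S(\xi_1 \otimes A \cdot \xi_2),
\]
which is immediate from property (2) in \cref{tomita_bimodule_def} for each $S_i$. For $U(\alpha)$, \cref{bimodule_maps_prop} together with \cref{core_prop} show that each $U_i(\alpha) = \Delta_i^{i\alpha}|_{H_i}$ is an $\AAA$-$\AAA$ bimodule map, so the $\AAA$-balancing is preserved; the group law $U(\alpha+\beta) = U(\alpha) U(\beta)$ and invertibility follow componentwise.

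To check (3)(a), fix simple tensors $\beta = \xi_1 \otimes \xi_2$, $\xi = \eta_1 \otimes \eta_2$, and reduce to general sums by linearity. Using the formula from \cref{inner_definite_pro},
\[
\braket{\xi}{U(\alpha)\beta} = \braket{T(\alpha)^* \cdot \eta_2}{V(\alpha)}
\]
where $T(\alpha) := \braket{\eta_1}{U_1(\alpha)\xi_1}_\AAA \in \AAA$, $V(\alpha) := U_2(\alpha)\xi_2$, and the right-hand inner product is the scalar product on $H_{2,\tau}$. By \cref{entire_in_A}, $T(\alpha)$ is entire in the operator norm of $\AAA$, and by the discussion after \cref{tomita_bimodule_def}, $V(\alpha)$ is entire in $\|\cdot\|_2$. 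The left action is $\|\cdot\|_2$-bounded, $\|A \cdot \eta_2\|_2 \leq \|A\| \|\eta_2\|_2$ (from complete positivity of $B \mapsto \braket{\eta_2}{B \cdot \eta_2}_\AAA$ together with $A^*A \leq \|A\|^2 I$), so $\alpha \mapsto T(\alpha)^* \cdot \eta_2$ is entire in $\|\cdot\|_2$, and the inner product of two entire Hilbert-space-valued functions is a scalar entire function. Continuity of $\alpha \mapsto U(\alpha)\beta$ in $\|\cdot\|_H$ follows from the submultiplicativity $\|a \otimes b\|_H \leq \|a\|_H \|b\|_H$ and (3)(a) on each factor.

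For (3)(d), take simple tensors $\xi = \xi_1 \otimes \xi_2$, $\beta = \eta_1 \otimes \eta_2$, and set $T := \braket{S_2\xi_2}{S_2\eta_2}_\AAA$ and $T' := \braket{\eta_1}{U_1(-i)\xi_1}_\AAA$. Using \cref{inner_definite_pro} and the identity $T \cdot S_1\eta_1 = S_1(\eta_1 \cdot T^*)$ (from property (2) of $S_1$), followed by (3)(d) on $H_1$,
\[
\braket{S(\xi)}{S(\beta)} = \braket{S_1\xi_1}{S_1(\eta_1 \cdot T^*)} = \braket{\eta_1 \cdot T^*}{U_1(-i)\xi_1} = \tau(T T').
\]
Similarly, using $S_2(T'^* \cdot \eta_2) = S_2(\eta_2) \cdot T'$ and (3)(d) on $H_2$,
\[
\braket{\beta}{U(-i)\xi} = \braket{T'^* \cdot \eta_2}{U_2(-i)\xi_2} = \braket{S_2\xi_2}{S_2\eta_2 \cdot T'} = \tau(T T'),
\]
so the two sides agree directly, without needing to invoke the trace property.

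The main hurdle is the entireness statement in (3)(a): it requires gluing the operator-norm entireness of \cref{entire_in_A} with the Hilbert-space-norm entireness of $U_2(\alpha)\xi_2$, bridged by the boundedness of left multiplication $(\AAA, \|\cdot\|) \to (H_{2,\tau}, \|\cdot\|_2)$. The remaining verifications are routine manipulations of the pre-Hilbert bimodule structure and the balancing.
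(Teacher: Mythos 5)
Your proposal follows the same route as the paper's proof: well-definedness from the bimodule covariance of $S_i$ and $U_i(\alpha)$ (\cref{bimodule_maps_prop}), entireness by splitting the scalar product into an $\AAA$-valued part handled by \cref{entire_in_A} and a Hilbert-space-valued part, and condition (3)(d) by the same direct computation on simple tensors (the paper reduces both sides to $\tau(\braket{S_2\xi_2}{S_2\beta_2}_{\AAA}\braket{\beta_1}{U_1(-i)\xi_1}_{\AAA})$ exactly as you do, just reordering via traciality instead of pulling the coefficient through the first slot). The verification of (3)(d) and of well-definedness is fine.

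One point in the entireness argument needs repair as written. You assert that $\alpha\mapsto T(\alpha)^*\cdot\eta_2$ is entire and then invoke the principle that ``the inner product of two entire Hilbert-space-valued functions is a scalar entire function.'' Neither claim is correct: since $T(\alpha)=\braket{\eta_1}{U_1(\alpha)\xi_1}_{\AAA}$ is entire, its adjoint $T(\alpha)^*$ is \emph{anti}-entire (entire in $\overline{\alpha}$), and a sesquilinear pairing of two genuinely entire vector-valued functions is in general neither holomorphic nor anti-holomorphic. What saves the argument is precisely that the first (conjugate-linear) slot carries an anti-entire function and the second slot an entire one, so the pairing is entire — equivalently, one can use \cref{c_hold_for_mod} to move $U_1(\alpha)$ across and write the expression as $\braket{U_2(-\overline{\alpha})\,\cdot\,}{\braket{\xi_1}{U_1(\alpha)\beta_1}_{\AAA}\,\cdot\,}$, which is how the paper phrases it. This is a local bookkeeping fix, not a conceptual gap; with the conjugations tracked correctly your proof is complete.
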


\begin{proof}
By \cref{inner_definite_pro}, \cref{bimodule_maps_prop}, $S$ and $U(\alpha)$ are well-defined.
It is easy to see that $\alpha \mapsto U(\alpha)(\xi_1 \otimes \xi_2)$ is continuous.
By \cref{bimodule_maps_prop},  
$\braket{\xi_1 \otimes \xi_2}{U_1(\alpha)\beta_1 \otimes U_2(\alpha)\beta_2}
    = \braket{U_2(-\overline{\alpha})\xi_2}{\braket{\xi_1}{U_1(\alpha)\beta_1}_{\AAA}\beta_2}$.
Then \cref{entire_in_A} implies that
$\{U(\alpha)\}$ satisfies the condition (3)(a) in \cref{tomita_bimodule_def}.

For $\xi_i$ and $\beta_i \in H_i$, $i=1,2$, we have
    \begin{align*}
        \braket{S_2(\xi_2) \otimes S_1(\xi_1)}{S_2(\beta_2) \otimes S_1(\beta_1)}
        =&\; \tau(\braket{S_2(\xi_2)}{S_2(\beta_2)}_\AAA \braket{\beta_1}{U_1(-i)\xi_1}_\AAA)\\
        = \tau(\braket{\beta_2}{\braket{\beta_1}{U_1(-i)\xi_1}_\AAA U_2(-i)\xi_2}_{\AAA})
        =& \braket{\beta_1 \otimes \beta_2}{U_1(-i)\xi_1\otimes U_2(-i)\xi_2},
    \end{align*}
    i.e.,
    \begin{align*}
      \braket{S(\xi_1 \otimes \xi_2)}{S(\beta_1 \otimes \beta_2)}=
        \braket{\beta_1 \otimes \beta_2}{U(-i)(\xi_1\otimes\xi_2)}. 
    \end{align*}
\end{proof}

\begin{proposition}\label{tensor_tomita_op}
Given a Tomita $\AAA$-$\AAA$ bimodule $H$, then $H^{\otimes_{\AAA}^n}$, $n\geq 1$, is also a Tomita
$\AAA$-$\AAA$ bimodule with the $\AAA$-valued inner product given by linear extension of  
\begin{align*}
    \braket{\xi_1 \otimes \cdots \otimes \xi_n}{\beta_1 \otimes \cdots \otimes \beta_n}_{\AAA}
    =\braket{\xi_n}{
        \langle \cdots\braket{\xi_2}{\braket{\xi_1}{\beta_1}_{\AAA}\beta_2}_{\AAA} \cdots 
        \rangle_{\AAA}\beta_n}_{\AAA}, \xi_i, \beta_i \in H,
\end{align*}
and the involution $S_n$ and one-parameter group $\{U_n(\alpha)\}_{\alpha \in \C}$ are respectively given by 
$S_n(\xi_1 \otimes \cdots \otimes \xi_n) = S(\xi_n) \otimes \cdots \otimes S(\xi_1)$ and 
$U_n(\alpha)(\xi_1 \otimes \cdots \otimes \xi_n) = U(\alpha)(\xi_1) \otimes \cdots 
\otimes U(\alpha)(\xi_n)$. 
\end{proposition}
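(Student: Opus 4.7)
The natural approach is induction on $n$, identifying $H^{\otimes_\AAA^n}$ with the binary tensor product $H^{\otimes_\AAA^{(n-1)}} \otimes_\AAA H$ and reducing everything to Proposition \ref{inner_definite_pro} and Lemma \ref{tensor_pro_t}. The base case $n=1$ is the hypothesis. For the inductive step I first need to know that both factors have normal left action. The left action on $H$ is normal by assumption; for $H^{\otimes_\AAA^{(n-1)}}$ normality follows from the inductive hypothesis. Proposition \ref{inner_definite_pro} then yields that the algebraic tensor product is a pre-Hilbert $\AAA$-$\AAA$ bimodule, and an easy unwinding of the recursive formula shows the resulting $\AAA$-valued inner product is precisely the one displayed in the statement.

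Next I verify the three Tomita axioms. For axiom (1), I need $\NN(H^{\otimes_\AAA^n},\tau) = H^{\otimes_\AAA^n}$. On a simple tensor $\xi_1 \otimes \cdots \otimes \xi_n$ the inner product with itself equals $\braket{\xi_n}{K \cdot \xi_n}_\AAA$ where $K \in \AAA^+$ is the iterated inner product of the first $n-1$ factors; applying the trace and using the inductive hypothesis together with $\xi_n \in \NN(H,\tau)$ and the norm estimate for the left action gives finiteness. For axiom (2), I define $S_n$ on simple tensors as in the statement; well-definedness follows because $S$ converts left-action on one side into right-action on the other (\cref{bimodule_maps_prop}), which balances correctly across the $\otimes_\AAA$ identifications. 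The facts $S_n^2=\mathrm{id}$ and $S_n(A\cdot\zeta\cdot B)=B^*\cdot S_n(\zeta)\cdot A^*$ are then immediate from the corresponding properties of $S$. For axiom (3), I define $U_n(\alpha)$ tensorially; the group law is obvious, and Lemma \ref{tensor_pro_t} applied to the pair $(H^{\otimes_\AAA^{(n-1)}}, H)$ gives (3)(a) and (3)(d) in one blow, by induction.

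It remains to check (3)(b) and (3)(c). Property (3)(b) is a direct componentwise computation: $S_n U_n(\alpha)$ applied to a simple tensor reverses the order and replaces $U(\alpha)$ by $U(\overline\alpha)$ on each factor, and this coincides with $U_n(\overline\alpha) S_n$. Property (3)(c) is the identity $\braket{\xi}{U_n(\alpha)\beta} = \braket{U_n(-\overline\alpha)\xi}{\beta}$; this can be obtained by iterated application of \cref{c_hold_for_mod}, passing $U(\alpha)$ through each tensor slot while turning each $\braket{\cdot}{\cdot}_\AAA$ accordingly (the inner products are $\AAA$-valued at each intermediate stage, so \cref{c_hold_for_mod} applies verbatim, and the final scalar-valued identity is obtained by applying $\tau$).

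The main bookkeeping obstacle is property (3)(d), which mixes the order-reversal of $S_n$ with the tensorial action of $U_n(-i)$. The cleanest route is to bypass direct computation by invoking the inductive form of \cref{tensor_pro_t}: writing $H^{\otimes_\AAA^n} = H^{\otimes_\AAA^{(n-1)}} \otimes_\AAA H$, the lemma gives $\braket{S_n(\zeta\otimes\xi)}{S_n(\eta\otimes\beta)} = \braket{\eta\otimes\beta}{U_{n-1}(-i)\zeta \otimes U(-i)\xi}$, which is exactly $\braket{\eta\otimes\beta}{U_n(-i)(\zeta\otimes\xi)}$. One subtlety to check is that the inner product on $H^{\otimes_\AAA^{(n-1)}}$ supplied by induction agrees with the one appearing implicitly in \cref{tensor_pro_t}; this is a compatibility verification on simple tensors using associativity of $\otimes_\AAA$ and the recursive definition of the $\AAA$-valued inner product in the statement.
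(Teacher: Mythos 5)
Your proposal is correct and follows essentially the same route as the paper's (much terser) proof: establish the pre-Hilbert bimodule structure with normal left action via \cref{inner_definite_pro}, note that (1), (2), (3)(b) are routine, get (3)(c) from \cref{c_hold_for_mod} (combined with \cref{bimodule_maps_prop} to pass $U(\alpha)$ through the iterated $\AAA$-valued inner products), and obtain (3)(a) and (3)(d) by induction from \cref{tensor_pro_t} applied to the decomposition $H^{\otimes_\AAA^n} = H^{\otimes_\AAA^{(n-1)}} \otimes_\AAA H$. The only difference is that you spell out the bookkeeping (well-definedness of $S_n$, compatibility of the recursive inner product with the binary one) that the paper leaves implicit.
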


\begin{proof}
By \cref{inner_definite_pro}, $H^{\otimes_{\AAA}^n}$ is a pre-Hilbert $\AAA$-$\AAA$ bimodule with normal left
action. Conditions (1), (2), (3)(b) in the definition of Tomita bimodule are trivial to check. By \cref{c_hold_for_mod}, (3)(c) is also clear.
By \cref{tensor_pro_t} and induction on $n$, we get (3)(a) and (3)(d). 
\end{proof}

Let $S_n = J_n \Delta_n^{1/2}$ be the polar decomposition of $S$, $n \geq 1$. By \cref{core_prop}, $H^{\otimes^n_{\AAA}}$ is a common core
for $\Delta_n^{t}$, $t \in \R$, and $\Delta_n^{t}(\xi_1 \otimes \cdots \otimes \xi_n) 
= \Delta^{t}(\xi_1) \otimes \cdots \otimes \Delta^{t}(\xi_n)$.
Therefore $J_n(\xi_1 \otimes \cdots \otimes \xi_n) 
= \Delta_n^{1/2}S_n(\xi_1 \otimes \cdots \otimes \xi_n)=J(\xi_n) \otimes \cdots \otimes J(\xi_1)$.
We recall the following definition, see, e.g., \cite[Section 4.6]{RS} for the same definition given for Hilbert bimodules over a C$^*$-algebra.

\begin{definition}\label{fockspace_def}
Given a Tomita $\AAA$-$\AAA$ bimodule $H$, 
let $\FFF(H) = \oplus_{n\geq 0} H^{\otimes^n_{\AAA}}$ be the associated \textbf{Fock space}, 
where $H^{\otimes^0_{\AAA}} = \AAA$ with the $\AAA$-valued inner product
$\braket{A_1}{A_2}_{\AAA} = A_1^* A_2$ for $A_1$, $A_2 \in \AAA$.
\end{definition}

By \cref{inner_definite_pro}, $\FFF(H)$ is a pre-Hilbert $\AAA$-$\AAA$ bimodule. 
It is clear that the identity $I$ of 
$\AAA$ is a \lq\lq vacuum vector" for $\FFF(H)$, and we will use $\Omega$ to denote this vector 
as in the beginning of this section.

For $\xi \in H$, the creation and annihilation operators $L(\xi)$, $L^*(\xi)$ are defined by
\begin{align*}
    &L(\xi) A = \xi \cdot A, \quad 
    L(\xi) \beta_1 \otimes \cdots \beta_n = \xi \otimes \beta_1 \otimes \cdots \otimes \beta_n,\\
    &L^*(\xi) A = 0, \qquad L^*(\xi) \beta_1 \otimes \cdots \beta_n = 
    \braket{\xi}{\beta_1}_{\AAA} \cdot \beta_2 \otimes \cdots \otimes \beta_n,
\end{align*}
where $A \in \AAA$ and $\beta_i \in H$, $i = 1, \ldots, n$.

\begin{definition}\label{phialgs_def}
Let $\Phi(H)$ be the $^*$-subalgebra of $\BBB(\FFF(H))$ generated by $\AAA$ (acting on $\FFF(H)$ from the left) and 
$\{\Gamma(\xi)=L(\xi) + L^*(S(\xi)): \xi \in H\}$.

Let $\Phi(H)''$ be the von Neumann algebra generated by $\pi_\tau(\Phi(H))$ on the Hilbert space completion $\FFF(H)_\tau$ 
(see \cref{notation_Hvarphi}) and let $\Phi(H)'$ be the commutant of $\Phi(H)''$. 
\end{definition}

\begin{proposition}\label{generate_prop}
    $\Phi(H)\Omega = \FFF(H)$.  
\end{proposition}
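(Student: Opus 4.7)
The plan is a straightforward induction on tensor length. Since every element of $\FFF(H) = \oplus_{n\geq 0} H^{\otimes_\AAA^n}$ is a finite sum of simple tensors $\xi_1 \otimes \cdots \otimes \xi_n$ (with $n=0$ covering $\AAA$), it suffices to show each simple tensor lies in $\Phi(H)\Omega$. For $n=0$, any $A \in \AAA \subset \Phi(H)$ satisfies $A = A \cdot I = A \cdot \Omega \in \Phi(H)\Omega$. For $n=1$, the annihilation operator kills the vacuum, so
\[
\Gamma(\xi)\Omega = L(\xi)\Omega + L^*(S(\xi))\Omega = \xi \cdot I + 0 = \xi,
\]
giving $\xi \in \Phi(H)\Omega$.

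For the inductive step, assume the claim for all tensor lengths strictly less than $n \geq 2$, and let $\xi_1 \otimes \cdots \otimes \xi_n$ be a simple tensor. Applying $\Gamma(\xi_1)$ to $\xi_2 \otimes \cdots \otimes \xi_n$ and using the explicit formulas for $L$ and $L^*$ from \cref{phialgs_def},
\[
\Gamma(\xi_1)(\xi_2 \otimes \cdots \otimes \xi_n) = \xi_1 \otimes \xi_2 \otimes \cdots \otimes \xi_n + \braket{S(\xi_1)}{\xi_2}_\AAA \cdot \xi_3 \otimes \cdots \otimes \xi_n.
\]
The second term, after absorbing $\braket{S(\xi_1)}{\xi_2}_\AAA \in \AAA$ via the left action on $\xi_3$ (or into $\Omega$ when $n=2$), is a simple tensor of length $n-2$, hence lies in $\Phi(H)\Omega$ by the induction hypothesis. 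The induction hypothesis also places $\xi_2 \otimes \cdots \otimes \xi_n \in \Phi(H)\Omega$, so $\Gamma(\xi_1)(\xi_2 \otimes \cdots \otimes \xi_n) \in \Phi(H)\Omega$ since $\Gamma(\xi_1) \in \Phi(H)$. Solving the displayed identity for $\xi_1 \otimes \cdots \otimes \xi_n$ completes the step.

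There is no substantive obstacle: the argument implicitly produces the standard Wick-type expression $\xi_1 \otimes \cdots \otimes \xi_n = \Gamma(\xi_1) \cdots \Gamma(\xi_n)\Omega + (\text{lower-order corrections})$, exactly as in the scalar Fock space case. The only point to watch is that $L^*(S(\xi_1))$ contracts two tensor slots into an element of $\AAA$, strictly reducing the length, which is what allows the induction to close.
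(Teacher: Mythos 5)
Your proof is correct and follows exactly the route the paper intends: the paper's own proof consists of the single remark that the inclusion $\AAA \oplus \oplus_{n\geq 1}\{\xi_1\otimes\cdots\otimes\xi_n\}\subset \Phi(H)\Omega$ follows ``by induction,'' and your argument is precisely that induction spelled out, with the key identity $\Gamma(\xi_1)(\xi_2\otimes\cdots\otimes\xi_n)=\xi_1\otimes\cdots\otimes\xi_n+\braket{S(\xi_1)}{\xi_2}_{\AAA}\cdot\xi_3\otimes\cdots\otimes\xi_n$ being the same one the paper uses explicitly later (e.g.\ in the computation of $\SI$ before \cref{module_auto}). No gaps; the handling of the length-$(n-2)$ correction term and the base cases is exactly right.
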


\begin{proof}
    By induction, it is not hard to check that 
    $\AAA \oplus \oplus_{n \geq 1}\{\xi_1 \otimes \cdots \otimes \xi_n: \xi_i \in H, i = 1,\ldots,n\} \subset  
    \Phi(H)\Omega$.
\end{proof}

The GNS space $L^2(\AAA, \tau)$ can be viewed as the closure of the subspace spanned by 
$\NN(\AAA, \tau)$ in $\FFF(H)_\tau$. 
Let $e_{\AAA}$ be the orthogonal projection from $\FFF(H)_\tau$ onto $L^2(\AAA, \tau)$.
By \cref{project_lemma}, $T \in \Phi(H)'' \mapsto e_{\AAA}Te_{\AAA}|_{L^2(\AAA, \tau)} \in 
\pi_{\tau}(\AAA)e_{\AAA}|_{L^2(\AAA, \tau)}$,
induces a normal completely positive map $E_\tau$ from $\Phi(H)''$ onto $\AAA$.
Since $A \in \AAA \mapsto \pi_\tau(A)$ is a *-isomorphism, $E = \pi_\tau \circ E_\tau$ is a
conditional expectation from $\Phi(H)''$ onto $\pi_\tau(\AAA)$.

By \cref{tensor_tomita_op}, 
$\NN(\FFF(H),\tau) = \NN(\AAA, \tau) \oplus \oplus_{n \geq 1}H^{\otimes^{n}_{\AAA}}$ is a 
Tomita $\AAA$-$\AAA$ bimodule with involution $S_0 \oplus \oplus_{n\geq 1} S_n$ and 
one-parameter group $Id \oplus \oplus_{n\geq 1}U_n(\alpha)$, where $S_0=J_0$
is the modular conjugation associated with $\tau$ on the Hilbert space $L^2(\AAA,\tau)$ and 
$Id$ is the identity map on $\AAA$.
Let $\JJJ = J_0 \oplus \oplus_{n \geq 1} J_n$. By \cref{tensor_tomita_op},
the polar decomposition of the closure of $S_0 \oplus \oplus_{n\geq 1} S_n$ (which we shall denote 
again by $S_0 \oplus \oplus_{n\geq 1} S_n$) is
$\JJJ(I \oplus \oplus_{n \geq 1} \Delta_n^{1/2})$.

\begin{proposition}\label{faith_ce}
    For each $\xi \in H$, $A \in \AAA$, we have $\JJJ A \JJJ B = BA^*$,
    $\JJJ A \JJJ \beta_1 \otimes \cdots \otimes \beta_n 
    = \beta_1 \otimes \cdots \otimes (\beta_n \cdot A^*)$,
    $\JJJ \Gamma(\xi) \JJJ B = B \cdot J(\xi)$ and  
    \begin{align}\label{right_vect_act}
        \JJJ \Gamma(\xi) \JJJ \beta_1 \otimes \cdots \otimes \beta_n = 
        \beta_1 \otimes \cdots \beta_n \otimes J(\xi) + 
        \beta_1 \otimes \cdots  \beta_{n-1} \cdot \braket{J(\beta_n)}{S(\xi)}_{\AAA},
    \end{align}   
    where $B \in \NN(\AAA, \tau)$. Thus $\JJJ\Phi(H)\JJJ \subset \Phi(H)'$ and 
    the conditional expectation
    \begin{align*}
        E= \pi_\tau \circ E_\tau: \Phi(H)'' \rightarrow \pi_\tau (\AAA) 
    \end{align*}
    is faithful. 
\end{proposition}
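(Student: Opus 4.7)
The proof has three parts: (i) verify the action formulas for $\JJJ A \JJJ$ and $\JJJ \Gamma(\xi) \JJJ$ on $\FFF(H)$, (ii) deduce $\JJJ \Phi(H) \JJJ \subset \Phi(H)'$, and (iii) use this to prove faithfulness of $E$.

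For (i), I use $\JJJ = J_0 \oplus \bigoplus_{n \geq 1} J_n$, the fact that $J_0 B = B^*$ for $B \in \NN(\AAA,\tau)$ (since $\tau$ is a trace), and $J_n(\xi_1 \otimes \cdots \otimes \xi_n) = J(\xi_n) \otimes \cdots \otimes J(\xi_1)$ from the discussion following \cref{tensor_tomita_op}. Then the formulas follow by inserting the definitions of $L(\xi)$ and $L^*(S(\xi))$ and applying the bimodule identity $J(A\cdot\zeta\cdot B) = B^*\cdot J(\zeta)\cdot A^*$ from \cref{bimodule_maps_prop}; the identity $\braket{S(\xi)}{J(\beta_n)}_\AAA^* = \braket{J(\beta_n)}{S(\xi)}_\AAA$ rewrites the $L^*(S(\xi))$ contribution into the form displayed in~(\ref{right_vect_act}).

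For (ii), the generators of $\Phi(H)$ are $A' \in \AAA$ and $\Gamma(\xi') = L(\xi') + L^*(S(\xi'))$, and those of $\JJJ \Phi(H) \JJJ$ are, by (i), \lq\lq right" multiplication and \lq\lq right" creation/annihilation operators. Commutation with $A'$ and with the left-creation part $L(\xi')$ is immediate from bimodule associativity; in $[\Gamma(\xi'), \JJJ\Gamma(\xi)\JJJ]$ the creation-creation and annihilation-annihilation contributions cancel on tensors of length $\geq 1$, and on the vacuum component the remaining cross terms reduce to the $\AAA$-valued identity
\begin{align*}
\braket{S(\xi')}{B\cdot J(\xi)}_\AAA = \braket{J(\xi')}{B\cdot S(\xi)}_\AAA,\qquad B\in\NN(\AAA,\tau),
\end{align*}
which is the main technical point. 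I prove it by testing against arbitrary $A \in \NN(\AAA,\tau)$ via $\tau$: using that $J$ is a self-adjoint antiunitary on $H_\tau$ (so $\braket{\eta_1}{J\eta_2} = \braket{\eta_2}{J\eta_1}$), self-adjointness of $\Delta^{1/2}$, the bimodule properties of $\Delta^{1/2}$ and $J$ from \cref{bimodule_maps_prop}, and $S = J\Delta^{1/2}$ from \cref{core_prop}, both sides transform into $\braket{\xi\cdot B^*}{\Delta^{1/2}(A\cdot \xi')}_{H_\tau}$; faithfulness and semifiniteness of $\tau$ then promote the scalar equality to the $\AAA$-valued one. Since $\pi_\tau(\Phi(H))$ acts by bounded operators on $\FFF(H)_\tau$ (by \cite[Proposition 1.2]{CL}) with common invariant dense subspace $\FFF(H) = \Phi(H)\Omega$ (\cref{generate_prop}), algebraic commutation on this subspace passes to the level of von Neumann algebras.

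For (iii), \cref{kernal_lemma} gives that $\pi_\tau$ is injective on $\AAA$, so it suffices to prove $E_\tau$ faithful. If $T \in \Phi(H)''$ satisfies $E_\tau(T^*T)=0$, then $e_\AAA T^*T e_\AAA = 0$, whence $Te_\AAA = 0$, i.e.\ $TB = 0$ for every $B \in \NN(\AAA,\tau)$. For any $X \in \Phi(H)$, (ii) yields $\JJJ X\JJJ \in \Phi(H)'$ and so $T\,\JJJ X \JJJ B = \JJJ X \JJJ (TB) = 0$. To conclude $T=0$ it suffices that the vectors $\JJJ X\JJJ B$ span a dense subspace of $\FFF(H)_\tau$; after applying the antiunitary $\JJJ$, this is equivalent to density of $\Phi(H)\NN(\AAA,\tau)$. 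Choosing $X = L(\xi_1)\cdots L(\xi_n)$ produces the vectors $\xi_1 \otimes \cdots \otimes \xi_{n-1} \otimes (\xi_n \cdot B^*)$; since $\tau$ is semifinite one can approximate $\xi_n$ by $\xi_n \cdot B_\lambda$ in $\|\cdot\|_2$-norm with $B_\lambda \in \NN(\AAA,\tau)$, and combining with the decomposition $\NN(\FFF(H),\tau) = \NN(\AAA,\tau) \oplus \bigoplus_{n\geq 1}H^{\otimes^n_{\AAA}}$ gives the required density, hence $T=0$.
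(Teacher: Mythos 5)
Your proposal is correct and follows essentially the same route as the paper: direct verification of the action formulas, reduction of the commutation $[\Gamma(\xi'),\JJJ\Gamma(\xi)\JJJ]=0$ to the cross-term identity on the vacuum component, and faithfulness of $E_\tau$ via $E_\tau(T^*T)=0\Leftrightarrow Te_\AAA=0$ together with density of $\Phi(H)'L^2(\AAA,\tau)$ in $\FFF(H)_\tau$. The only cosmetic difference is that the paper derives the key identity $\braket{S(\beta)}{B\cdot J(\xi)}_\AAA=\braket{J(\beta\cdot B)}{S(\xi)}_\AAA$ from $S|_H=JU(-i/2)$ and \cref{c_hold_for_mod}, whereas you obtain the equivalent identity by testing against $\tau$ and using the antiunitarity of $J$ and self-adjointness of $\Delta^{1/2}$ — the same computation in slightly different packaging.
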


\begin{proof}
    All the equations can be checked by easy computation. Let $B \in \NN(\AAA,\tau)$ and $\beta \in H$. 
    Note that $S|_{H} = JU(-i/2)$, \cref{c_hold_for_mod} implies
    \begin{align*}
        \Gamma(\beta)\JJJ \Gamma(\xi) \JJJ (B) &= \beta \otimes B \cdot J(\xi) + 
        \braket{S(\beta)}{B \cdot J(\xi)}_{\AAA}\\
        &= \beta \cdot B \otimes J(\xi) + 
        \braket{J(\beta \cdot B)}{S(\xi)}_{\AAA}=\JJJ \Gamma(\xi) \JJJ \Gamma(\beta)(B).
    \end{align*}
    Similar computation shows that $\JJJ\Phi(H)\JJJ \subset \Phi(H)'$.  
    
    Note that \cref{generate_prop} and \cref{right_vect_act} imply that 
    $\Phi(H)' L^2(\AAA,\tau) = \{T \zeta: \zeta \in L^2(\AAA,\tau), T \in \Phi(H)'\}$ is dense in 
    $\FFF(H)_\tau$. Since $E_\tau(T^*T)=0$, $T \in \Phi(H)''$, if and only if 
    $Te_{\AAA} =0$, then $E_\tau$ and consequently $E$ are faithful.
\end{proof}

Since $E_\tau$ is faithful, $\tilde{\tau} = \tau \circ E_\tau$ is a n.s.f. weight on $\Phi(H)''$. 
By \cref{GNS_lemma}, the map $A \Omega \in \NN(\FFF(H),\tau) \mapsto \pi_\tau(A) 
\in L^2(\Phi(H)'', \tilde{\tau})$ extends to a unitary $U$ 
such that $U^* \pi_{\tilde{\tau}}(\pi_\tau(T))U = \pi_\tau(T)$ for every $T \in \Phi(H)$, where
$\pi_{\tilde{\tau}}$ is the GNS representation of $\Phi(H)''$ associated with $\tilde{\tau}$. 

Let $\SI$ be the involution in the Tomita theory associated with $\tilde{\tau}$.
We claim that $U \NN(\FFF(H), \tau)$ is a core for $\SI$. Indeed, let $T$ be a 
self-adjoint operator in $\Phi(H)''$ such that $\tau(E_\tau(T^2)) < \infty$. 
For any $\varepsilon >0$, there is a $\delta > 0$ such that 
$\tau(E(T^2)(I-P)) < \varepsilon$, where $P$ is the spectral projection of $E(T^2)$ 
corresponding to $[\delta, \|T^2\|]$. Note that $P \in \NN(\AAA, \tau)$,
and $\pi_\tau(P \Phi(H)P)$ is dense in $\pi_\tau(P)\Phi(H)'' \pi_\tau(P)$. Therefore there is a 
self-adjoint operator $K \in \pi_\tau(P \Phi(H)P)$ such that 
$\tau(E_\tau((K-T\pi_\tau(P))^*(K-T\pi_\tau(P)))) < \varepsilon$.
Note that $P \Phi(H)P \Omega \subset \NN(\FFF(H), \tau)$.
We have that the graph of $\SI |_{U \NN(\FFF(H), \tau)}$ is dense in the graph of $\SI$. 

By the definition of $\SI$, it is clear that $U^* \SI U (B)= B^*$ and $U^* \SI U(\xi)= 
\Gamma(\xi)^*\Omega= S(\xi)$, where $B \in \NN(\AAA, \tau)$ and $\xi \in H$. Note that
\begin{align*}
    \xi_1 \otimes \xi_2 = \Gamma(\xi_1) \Gamma(\xi_2)\Omega -\braket{S(\xi_1)}{\xi_2}_{\AAA}.
\end{align*}
Since 
$U^*\SI U(\Gamma(\xi_1) \Gamma(\xi_2)\Omega)=  \Gamma(\xi_2)^*
\Gamma(\xi_1)^* \Omega = S(\xi_2) \otimes S(\xi_1)
+ \braket{\xi_2}{S(\xi_1)}_{\AAA}$,
we have $\SI(\xi_1 \otimes \xi_2)= S_2(\xi_1 \otimes \xi_2)$.
By similar computation and induction on the degree of the tensor, it is not hard to 
check that $U^* \SI U(\xi_1 \otimes \cdots \otimes \xi_n) = S_n(\xi_1 \otimes \cdots \otimes \xi_n)$,
for any $\xi_i \in H$, $i = 1, \ldots, n$. Thus $S_0 \oplus \oplus_{n \geq 1} S_n = U^* \SI U$,
where $S_0=J_0$. 

Therefore we have the following result which shows how the Tomita structure of
$H$ determines the modular objects of the associated von Neumann algebra $\Phi(H)''$. This is the main
result of this section and it is crucial for the determination of the type of factors constructed in
the next section.

\begin{theorem}\label{module_auto}
    $\Phi(H)' = \JJJ \Phi(H)'' \JJJ$. For each $\xi \in H$, 
    $\sigma^{\tau \circ E_\tau}_t(\Gamma(\xi)) = \Gamma(U(t)\xi)$ for every $t\in\R$.
\end{theorem}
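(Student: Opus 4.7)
The plan is to invoke Tomita-Takesaki for the n.s.f.\ weight $\tilde\tau = \tau\circ E_\tau$ on $\Phi(H)''$ and to transport its modular objects back to $\FFF(H)_\tau$ through the unitary $U$ introduced just before the theorem. The key preparatory identity $U^*\SI U = \overline{S_0\oplus\bigoplus_{n\geq 1} S_n}$ has already been established in the discussion above (where $\SI = J_{\tilde\tau}\Delta_{\tilde\tau}^{1/2}$ denotes the modular involution of $\tilde\tau$), and by \cref{tensor_tomita_op} together with \cref{core_prop} the polar decomposition of the right-hand side is precisely $\JJJ\cdot\overline{I\oplus\bigoplus_{n\geq 1}\Delta_n^{1/2}}$. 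Uniqueness of polar decomposition of a closed densely defined operator therefore forces
\begin{equation*}
U^* J_{\tilde\tau} U = \JJJ, \qquad U^*\Delta_{\tilde\tau}^{it} U = \widehat\Delta^{it} \quad (t\in\R),
\end{equation*}
where $\widehat\Delta$ denotes the closure of $I\oplus\bigoplus_{n\geq 1}\Delta_n$.

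The first identity then falls out of standard Tomita-Takesaki: on $L^2(\Phi(H)'',\tilde\tau)$ one has $J_{\tilde\tau}\pi_{\tilde\tau}(\Phi(H)'')J_{\tilde\tau} = \pi_{\tilde\tau}(\Phi(H)'')'$, and since $U$ intertwines $\pi_{\tilde\tau}(B)$ with $B$ (viewed inside $\BBB(\FFF(H)_\tau)$) for every $B\in\Phi(H)''$, conjugating by $U^*$ yields $\JJJ\Phi(H)''\JJJ = \Phi(H)'$, strengthening the inclusion already obtained in \cref{faith_ce}.

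For the modular flow I would verify the operator equality $\widehat\Delta^{it}\pi_\tau(\Gamma(\xi))\widehat\Delta^{-it} = \pi_\tau(\Gamma(U(t)\xi))$ by testing on the dense, $\widehat\Delta^{it}$-invariant algebraic Fock space $\AAA\oplus\bigoplus_{n\geq 1} H^{\otimes^n_\AAA}$ (invariance and the formula $\widehat\Delta^{it}(\beta_1\otimes\cdots\otimes\beta_n) = U(t)\beta_1\otimes\cdots\otimes U(t)\beta_n$ follow from \cref{tensor_tomita_op} together with the fact that $U(t)$ preserves $H$ by definition). On an elementary tensor the creation part of $\widehat\Delta^{it}\Gamma(\xi)\widehat\Delta^{-it}$ directly gives $U(t)\xi\otimes\beta_1\otimes\cdots\otimes\beta_n$, while the annihilation part, after using the bimodule compatibility $U(t)(A\cdot\zeta)=A\cdot U(t)\zeta$ of \cref{bimodule_maps_prop} together with the covariance $\braket{S(\xi)}{U(-t)\beta_1}_\AAA = \braket{U(t)S(\xi)}{\beta_1}_\AAA = \braket{S(U(t)\xi)}{\beta_1}_\AAA$ (\cref{c_hold_for_mod} and condition (3)(b) of \cref{tomita_bimodule_def} for real $t$), produces $\braket{S(U(t)\xi)}{\beta_1}_\AAA\cdot\beta_2\otimes\cdots\otimes\beta_n$; the two contributions add up to $\Gamma(U(t)\xi)(\beta_1\otimes\cdots\otimes\beta_n)$, and the $n=0$ case is a simpler direct check. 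The only delicate point is the unbounded-operator bookkeeping in this last step: since $\widehat\Delta^{it}$ is unitary and $\pi_\tau(\Gamma(\xi))$ is bounded, agreement on the dense invariant subspace propagates to an operator identity on all of $\FFF(H)_\tau$, and transporting back via $U$ together with the faithfulness of $\pi_{\tilde\tau}$ on $\Phi(H)''$ identifies $\sigma^{\tilde\tau}_t(\Gamma(\xi))$ with $\Gamma(U(t)\xi)$ inside $\Phi(H)''$.
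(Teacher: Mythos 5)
Your argument is correct and follows essentially the same route as the paper: both identify the modular objects of $\tilde\tau=\tau\circ E_\tau$ with $\JJJ$ and the closure of $I\oplus\bigoplus_{n\geq 1}\Delta_n$ via the unitary $U$ and the already-established identity $U^*\SI U=\overline{S_0\oplus\bigoplus_{n\geq1}S_n}$, and then compute $\widehat\Delta^{it}\Gamma(\xi)\widehat\Delta^{-it}$ on elementary tensors to read off $\Gamma(U(t)\xi)$. You actually spell out more of the bookkeeping (uniqueness of polar decomposition, the covariance chain for the annihilation term, the density argument) than the paper does; the only cosmetic slip is that the dense invariant subspace should have $\NN(\AAA,\tau)$ rather than all of $\AAA$ as its degree-zero component.
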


\begin{proof}
    For every $t\in\R$, $A \in \NN(\AAA, \tau)$ and $\beta_1 \otimes \cdots \beta_n \in H^{\otimes^n_{\AAA}}$, 
    we have
    \begin{align*}
        \sigma^{\tau \circ E_\tau}_t(\Gamma(\xi)) \beta_1 \otimes \cdots \beta_n=
        &(I \oplus \oplus_{n \geq 1} \Delta_n)^{it}\Gamma(\xi)(I \oplus \oplus_{n \geq 1} \Delta_n)^{-it}
    \beta_1 \otimes \cdots \beta_n\\
        =& U(t)\xi \otimes \beta_1 \otimes \cdots \beta_n + 
        \braket{S(U(t)\xi)}{\beta_1}_{\AAA} \beta_2 \otimes \cdots \otimes \beta_n,
    \end{align*} 
    and $\sigma^{\tau \circ E_\tau}_t(\Gamma(\xi)) A = U(t)\xi \cdot A$.
\end{proof}

\subsection{Fock space and amalgamated free products}

Let $\{(H_i, S_i, \{U_i(\alpha)\}_{\alpha \in \C})\}_{i\in I}$ be a family of Tomita 
$\AAA$-$\AAA$ bimodules. Let $H =  \oplus_{i \in I} H_i = \{(\xi_i)_{i\in I}: \xi_i \in H_i,$
$\xi_i$ is zero for all but for a finite number of indices$\}$.
It is obvious that $H$ admits an involution $S= \oplus_i S_i$ and a one-parameter
group $\{U(\alpha)=\oplus_{i}U_i(\alpha)\}_{\alpha \in \C}$. If we define the $\AAA$-valued inner product by 
$\braket{(\xi_i)}{(\beta_i)}_{\AAA} = \sum_i \braket{\xi_i}{\beta_i}_{\AAA}$, then
$H$ is a Tomita $\AAA$-$\AAA$ bimodule with left and right action given by 
$A \cdot (\xi_i) \cdot B = (A\cdot \xi_i \cdot B)$. 

$\FFF(H_i)$ can be canonically embedded into $\FFF(H)$ as an 
$\AAA$-$\AAA$ sub-bimodule. We view $\FFF(H_i)_\tau$ as a Hilbert subspace of 
$\FFF(H)_\tau$. Let $e_{\AAA}$ and $e_i$ be the projection from 
$\FFF(H)_\tau$ onto $L^2(\AAA, \tau)$ and $\FFF(H_i)$ respectively. 
It is clear that, for every $i\in I$, $e_i \in (\AAA \cup \{\Gamma(\xi): \xi \in H_i\})' \cap \BBB(\FFF(H)_\tau)$.
By \cref{faith_ce}, the central carrier of $e_i \in (\AAA \cup \{\Gamma(\xi): \xi \in H_i\})'$ is 
the identity. Therefore 
\begin{align*}
    (\AAA \cup \{\Gamma(\xi): \xi \in H_i\})'' (\subset \BBB(\FFF(H)_\tau))\cong\, &  
    (\AAA \cup \{\Gamma(\xi): \xi \in H_i\})''e_i \\
    \cong\, & \Phi(H_i)''(\subset \BBB(\FFF(H_i)_\tau)).
\end{align*}
Therefore, $\Phi(H_i)'' $ can be identified with a von Neumann subalgebra
of $\Phi(H)''$.    

Let $E$ be the faithful normal conditional expectation from $\Phi(H)''$ onto $\pi_\tau(\AAA)$
defined in \cref{faith_ce} such that $E(T)e_{\AAA} = e_{\AAA}T e_{\AAA}$. 
Consider $T_1, \ldots, T_l \in \Phi(H)$ such that $T_k \in \Phi(H_{i(k)})$, 
$i(1) \neq i(2) \neq \cdots \neq i(l)$ and $E_0(T_k) =0$ for all $k=1, \ldots, l$.
Then each $T_k$ is a finite sum of elements of the form
$L(\xi_1)\cdots L(\xi_n)L^*(\beta_1) \cdots L^*(\beta_m)$, where $\xi_j$, $\beta_j \in H_{i(k)}$ and
$n+m > 0$. It is not hard to check that $e_{\AAA}\pi_\tau(T_1 \cdots T_l)e_{\AAA} = 0$ 
(see \cite[Theorem 4.6.15.]{RS}) and $e_{s}\pi_\tau(T_1 \cdots T_l) e_{s} \neq 0$ 
only if $l=1$ and $s=i(1)$. Therefore $e_s \Phi(H)'' e_s = \Phi(H_s)'' e_s$ and 
let $E_{s}(T)$ be the unique element in $\Phi(H_s)''$ such that
$e_{s} T e_{s} = E_s(T)e_{s}$, for each 
$T \in \Phi(H)''$. By definition of $E_s$, we have
$E_s(\pi_\tau(A)T\pi_\tau(B))= \pi_\tau(A)E_s(T)\pi_\tau(B)$ and 
\begin{align*}
    E \circ E_s(T)e_{\AAA} = e_{\AAA} e_{s}Te_{s}e_{\AAA} = E(T)e_{\AAA},  
\end{align*}
where $A$, $B \in \Phi(H_s)$ and $T \in \Phi(H)''$.
Thus $E_s$ is a faithful normal conditional expectation
from $\Phi(H)''$ onto $\Phi(H_s)''$ satisfying $E \circ E_s = E$.

Summarizing the above discussion, we have the following proposition 
(cf.\ \cite[Theorem 4.6.15]{RS} and \cref{appendix_am_prod} for the definition of amalgamated free product 
among arbitrary von Neumann algebras).

\begin{proposition}\label{Fockisafunctor_prop}
    Let $\{(H_i, S_i, \{U_i(\alpha)\}_{\alpha \in \C})\}_{i\in I}$ be a family of  
    Tomita $\AAA$-$\AAA$ bimodules. Then $(\Phi(\oplus_{i}H_i)'', E) = 
    *_{\pi_\tau(\AAA)} (\Phi(H_i)'', E|_{\Phi(H_i)''})$.
\end{proposition}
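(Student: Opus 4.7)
The plan is to verify the three defining properties of an amalgamated free product (\cref{appendix_am_prod}) for the pair $(\Phi(H)'', E)$ with $H = \oplus_i H_i$, with subalgebras $\Phi(H_i)''$ over the common base $\pi_\tau(\AAA)$. The inclusions $\pi_\tau(\AAA) \subset \Phi(H_i)'' \subset \Phi(H)''$, the existence and normality of the faithful conditional expectations $E_i : \Phi(H)'' \to \Phi(H_i)''$, and the compatibility $E \circ E_i = E$ have all been established in the discussion immediately preceding the proposition, so I would concentrate on generation and freeness.

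Generation is immediate from the linearity of $L$ and $L^*$ in their arguments. Any $\xi \in H$ decomposes as a finite sum $\xi = \sum_i \xi_i$ with $\xi_i \in H_i$, and a direct look at their definition gives $L(\xi) = \sum_i L(\xi_i)$ and $L^*(S(\xi)) = \sum_i L^*(S_i(\xi_i))$, so $\Gamma(\xi) = \sum_i \Gamma(\xi_i)$. Hence the $^*$-algebra $\Phi(H)$ is algebraically generated by $\pi_\tau(\AAA)$ together with $\bigcup_i \Phi(H_i)$, which yields $\Phi(H)'' = \bigvee_i \Phi(H_i)''$.

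For freeness with amalgamation, what must be shown is that whenever $T_k \in \Phi(H_{i(k)})''$ with consecutive alternating indices $i(k) \neq i(k+1)$ and $E(T_k) = 0$, one has $E(T_1 \cdots T_l) = 0$. When the $T_k$ already lie in the algebraic $\Phi(H_{i(k)})$ and $E_0(T_k) = 0$, this is precisely the identity $e_\AAA \pi_\tau(T_1 \cdots T_l) e_\AAA = 0$ already invoked in the discussion: each $T_k$ is a finite sum of monomials $L(\xi_1) \cdots L(\xi_n) L^*(\beta_1) \cdots L^*(\beta_m)$ in letters of $H_{i(k)}$ with $n+m > 0$, and any annihilation operator acting at the boundary between consecutive blocks produces an $\AAA$-valued inner product of vectors lying in orthogonal summands of $H = \oplus_i H_i$, which vanishes; a short induction rules out any scalar contribution returning to the vacuum line.

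To lift the vanishing to arbitrary $T_k \in \Phi(H_{i(k)})''$, I would use Kaplansky density to approximate each $T_k$ in the strong-operator topology by a bounded net $T_k^{(n)} \in \Phi(H_{i(k)})$ and pass to the centered approximants $\tilde T_k^{(n)} := T_k^{(n)} - E(T_k^{(n)}) \in \Phi(H_{i(k)})$, which still converge strongly to $T_k - E(T_k) = T_k$ by normality of $E$ (established in \cref{faith_ce}). Applying the algebraic step to $\tilde T_1^{(n)} \cdots \tilde T_l^{(n)}$ yields zero for every $n$, and joint strong-operator continuity of multiplication on bounded sets combined with normality of $E$ then delivers $E(T_1 \cdots T_l) = 0$. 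The most delicate point is this last limit argument, because $\AAA$ need not be $\sigma$-finite; the general amalgamated free product framework of \cref{appendix_am_prod} is exactly what permits the conclusion without separability hypotheses.
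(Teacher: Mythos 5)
Your proposal is correct and follows essentially the same route as the paper: the paper's ``proof'' is precisely the discussion preceding the proposition, namely the embedding of each $\Phi(H_i)''$ into $\Phi(\oplus_i H_i)''$ via the projections $e_i$, the vanishing of $e_{\AAA}\pi_\tau(T_1\cdots T_l)e_{\AAA}$ for alternating words of Wick monomials (using orthogonality of the summands $H_i$ under $\braket{\cdot}{\cdot}_{\AAA}$), and the compatible expectations $E\circ E_s=E$. The only difference is that you spell out the Kaplansky-density/centering step extending freeness from the algebraic subalgebras $\Phi(H_{i(k)})$ to their weak closures, which the paper leaves implicit; that step is standard and your execution of it (normality of $E$ plus joint strong continuity of multiplication on bounded sets) is sound.
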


%%%
\section{From rigid C$^*$-tensor categories to operator algebras via Tomita bimodules}\label{section_CstartoOA}
%%%

Let $\CC$ be a (small) rigid C$^*$-tensor category with simple (i.e., irreducible) unit $\ut$, finite 
direct sums and subobjects, see \cite{BKLR}, \cite{PSDV}, \cite{L-R-dim}. We may assume $\CC$ to be 
strict by \cite[XI.3, Theorem 1]{ML}. For objects $X$ in $\CC$ we write with abuse of notation $X\in\CC$, we denote by $t\in Hom(X,Y)$ the arrows in $\CC$ between $X,Y\in\CC$ and by $I_X$ the identity arrow in $Hom(X,X)$. Moreover, we write $XY$ for the tensor product of objects (instead of $X\otimes Y$), and $t\circ s$, $t\otimes s$, respectively, for the composition product and the tensor product of arrows.
We use $\overline{X}$ to denote a \emph{conjugate object} of $X$ (also called dual object) equipped with unit and counit  
\begin{align*}
    \eta_{X} \in Hom(\ut, \overline{X}X), \quad \varepsilon_{X} \in Hom(X\overline{X}, \ut) 
\end{align*}
satisfying the so-called conjugate equations, namely\footnote{$\otimes$ is always evaluated before $\circ$.}
\begin{align*}
    \varepsilon_{X} \otimes I_X \circ I_X \otimes \eta_X = I_X , \quad  
    I_{\overline{X}} \otimes \varepsilon_{X} \circ \eta_{X} \otimes I_{\overline{X}}= I_{\overline{X}}.
\end{align*}
Hence conjugation is specified by ordered pairs $(X,\overline X)$, or better by four-tuples $(X,\overline X, \eta_X, \varepsilon_X)$.
Furthermore, we always assume that $(\eta_X, \varepsilon_X^*)$ is a \emph{standard} solution 
of the conjugate equations, see \cite[Section 3]{L-R-dim}.
Important consequences of these assumptions on $\CC$, for which we refer to \cite{L-R-dim}, are
semisimplicity (every object is completely reducible into a finite direct sum of simple ones),
the existence of an additive and multiplicative dimension function on objects
$d_X := \eta_X^* \circ \eta_X = \varepsilon_{X} \circ \varepsilon_{X}^* \in \R$, $d_X \geq 1$, and 
a left (= right) trace defined by
\begin{align}\label{trace_def}
    \eta_{X}^* \circ I_{\overline{X}} \otimes t \circ \eta_X = 
    \varepsilon_{X} \circ t \otimes I_{\overline{X}} \circ \varepsilon_{X}^*, \quad
    \forall t \in Hom(X, X)
\end{align}
on the finite dimensional C$^*$-algebra $Hom(X,X)$. Moreover, for every $X$, $Y \in \CC$ 
$Hom(X, Y)$ is a finite dimensional Hilbert space with inner product given by
\begin{align}\label{hom_inner_prod}
    \braket{\xi_1}{\xi_2} =  \eta_{X}^* \circ I_{\overline{X}} 
    \otimes (\xi_1^* \circ \xi_2) \circ \eta_{X}, \quad \xi_1, \xi_2 \in Hom(X, Y).
\end{align}

\begin{notation}\label{notation_sec4}
    \begin{enumerate}
        \item Let $\SSS$ be a representative set of simple objects in $\CC$ such that $\ut\in\SSS$.
            Let $\Lambda = \SSS \cup \overline{\SSS}$ (disjoint union) where $\overline{\SSS} 
            = \{\overline{\alpha}: \alpha \in \SSS\}$. 
            We shall always consider $\alpha\in\SSS$ and 
            $\overline{\alpha}\in\overline{\SSS}$ as distinct elements (\lq\lq letters") in $\Lambda$, 
            even if $\alpha = \overline{\alpha}$ as objects in $\CC$. 
            In particular $\ut \neq \overline{\ut}$ in $\Lambda$.
        \item We label the fundamental \lq\lq fusion" $Hom$-spaces of $\CC$ by triples in $\SSS\times\SSS\times\Lambda$, namely
            \begin{align*}
                (\beta_1, \beta_2, \alpha) \in \SSS \times \SSS \times \Lambda \mapsto 
                \bfrac{\beta_2 * \alpha}{\beta_1} = Hom(\beta_1, \beta_2 \alpha),
            \end{align*}
            where $\beta_2 \alpha = \beta_2 \otimes \alpha$ and we identify $\alpha\in\Lambda$ with its embedding in $\CC$. 
            Note that the set of letters $\Lambda$ can be mapped (non-injectively) into the objects of $\CC$. 
        \item For every $\alpha \in \Lambda$, let $r_{\alpha} = \eta_{\alpha} \in Hom(\ut, \overline{\alpha} \alpha)$ if $\alpha\in\SSS$,
            and let $r_{\overline{\alpha}} = \varepsilon_{\alpha}^* \in Hom(\ut, \alpha \overline{\alpha})$ for the corresponding $\overline\alpha\in\overline
            \SSS$, where $(\eta_\alpha, \varepsilon_{\alpha}^*)$ is a standard solution of the conjugate equations as above.  
        \item For every $\alpha \in \SSS$, regarded as an element of $\Lambda$, let $\overline{\overline{\alpha}} = \alpha$. Thus 
            $\alpha \in \Lambda \mapsto \overline{\alpha} \in \Lambda$ is an involution on $\Lambda$.
    \end{enumerate}
    In the following, we shall use $\alpha,\beta,\gamma \ldots$ to denote elements in 
    $\Lambda$, or in $\SSS$, and $X,Y,Z,\ldots$ to denote objects in $\CC$. 
    For a set $S$, we denote its cardinality by $|S|$.
\end{notation}

We define a von Neumann algebra $\AAA(\ut)$ associated to the tensor unit $\ut$ by
$$\AAA(\ut) = \bigoplus_{\beta \in \SSS} \BBB(Hom(\beta, \beta)) \cong \bigoplus_{\beta \in \SSS}\C I_{\beta}$$ 
with a (non-normalized, semifinite) trace $\tau$ given by \cref{trace_def}, i.e., the trace values of minimal projections equals the categorical
dimension $\tau(I_{\beta}) = d_{\beta}$.

Let 
\begin{align*}
   H(\ut) = \bigoplus_{\alpha \in \Lambda, \beta_1, \beta_2 \in \SSS} 
    \bfrac{\beta_2 * \alpha}{\beta_1}, 
\end{align*}
where $\oplus$ stands for algebraic direct sum. 
Furthermore, $H(\ut)$ is an $\AAA(\ut)$-$\AAA(\ut)$ bimodule with left and right action given by 
\begin{align*}
    I_{\gamma_2} \cdot \xi \cdot I_{\gamma_1} = \delta_{\gamma_2, \beta_2} \delta_{\gamma_1, \beta_1} \xi, \quad
    \xi \in \bfrac{\beta_2 * \alpha}{\beta_1}, \alpha \in \Lambda, \beta_i, \gamma_i \in \SSS, i = 1,2.
\end{align*}
We define an $\AAA(\ut)$-valued inner product on $H(\ut)$ by
\begin{align*}
    \braket{\xi_2}{\xi_1}_{\AAA(\ut)} = \delta_{\alpha_1, \alpha_2}\delta_{\omega_1, \omega_2} \xi_2^*\xi_1 \in
    Hom(\beta_1, \beta_2), \quad \xi_i \in \bfrac{\omega_i * \alpha_i}{\beta_i}, i = 1, 2. 
\end{align*}
It is easy to check that $H(\ut)$ is a pre-Hilbert $\AAA(\ut)$-$\AAA(\ut)$ bimodule with normal left action, as we defined in \cref{section_prelim}.

Let $\alpha \in \Lambda$ and $\beta_1$, $\beta_2 \in \SSS$.
By Frobenius reciprocity \cite[Lemma 2.1]{L-R-dim}, we can define a bijection 
\begin{align}\label{conj_def}
    \xi \in \bfrac{\beta_2 * \alpha}{\beta_1} \mapsto 
    \overline{\xi}=  \xi^* \otimes I_{\overline{\alpha}} \circ  I_{\beta_2} \otimes r_{\overline{\alpha}}
    \in \bfrac{\beta_1 * \overline{\alpha}}{\beta_2}. 
\end{align}
It is clear that $\overline{\overline{\xi}} = \xi$.
Although $\beta_2 \alpha$ and $\beta_1 \overline{\alpha}$ can be the same object in $\CC$, 
we have that $\xi \neq \overline{\xi}$ as vectors in $H(\ut)$ because $\overline{\alpha}\neq\alpha$
in $\Lambda$ by our assumption.

We now define an involution $S$ on $H(\ut)$ and a complex one-parameter group $\{U(z):z \in \C\}$ of 
isomorphisms that endow $H(\ut)$ with the structure of a Tomita $\AAA(\ut)$-$\AAA(\ut)$
bimodule (see \cref{tomita_bimodule_def}).

For each $\alpha \in \Lambda$, we choose a real number $\lambda_{\alpha} > 0$ if $\alpha\in\SSS$. 
For the corresponding $\overline{\alpha}\in\overline{\SSS}$, let $\lambda_{\overline{\alpha}} = 1/\lambda_{\alpha}$.
We define the action of $S$ and $U(z)$ on every $\xi \in \bfrac{\beta_2 * \alpha}{\beta_1}$, 
$\alpha\in\Lambda$, $\beta_1,\beta_2\in\SSS$ by
\begin{align}\label{tomita_struc}
    S : \xi \mapsto \sqrt{\lambda_{\alpha}}\, \overline{\xi}, \quad
    U(z) : \xi \mapsto \lambda_{\alpha}^{iz}\, \xi,\quad z\in\C,
\end{align}
where $\overline{\xi}$ is defined by \cref{conj_def}. Summing up, we have that

\begin{proposition}\label{prop_tomitafromC}
$H(\ut)$ is a Tomita $\AAA(\ut)$-$\AAA(\ut)$ bimodule, for every choice of $\lambda_{\alpha}$ and 
$\lambda_{\overline{\alpha}} = 1/\lambda_{\alpha}$ associated with the elements $\alpha\in\Lambda$.
\end{proposition}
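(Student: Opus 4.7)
The plan is to verify the four defining conditions of a Tomita bimodule (\cref{tomita_bimodule_def}) directly, using the fact that both the involution $S$ and the one-parameter group $\{U(z)\}_{z\in\C}$ defined in \eqref{tomita_struc} respect the fusion decomposition of $H(\ut)$ into summands $\bfrac{\beta_2*\alpha}{\beta_1}$ (the former up to swapping source and target and replacing $\alpha$ by $\overline\alpha$, the latter by acting as the entire scalar $\lambda_\alpha^{iz}$). Hence all verifications reduce to finite-dimensional computations inside a single such Hom-space, and the only non-trivial categorical input enters through the standardness of the solutions $(r_\alpha, r_{\overline\alpha})$ of the conjugate equations.

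First I would address condition (1): every $\xi\in\bfrac{\beta_2*\alpha}{\beta_1}$ gives $\braket{\xi}{\xi}_{\AAA(\ut)}=\xi^*\xi\in\C I_{\beta_1}$, which lies in $\NN(\AAA(\ut),\tau)$ since $\tau(I_{\beta_1})=d_{\beta_1}<+\infty$, so $\NN(H(\ut),\tau)=H(\ut)$. For condition (2), conjugate-linearity of $\xi\mapsto\overline\xi$, the identity $\overline{\overline\xi}=\xi$ from Frobenius reciprocity, and the relation $\lambda_{\overline\alpha}=\lambda_\alpha^{-1}$ together yield $S^2(\xi)=\sqrt{\lambda_\alpha \lambda_{\overline\alpha}}\,\xi=\xi$; the bimodule rule $S(A\cdot\xi\cdot B)=B^*\cdot S(\xi)\cdot A^*$ needs only be tested on the minimal projections $I_\gamma\in\AAA(\ut)$, where it reduces to a matching-of-indices check reflecting the source/target swap under $\xi\mapsto\overline\xi$.

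For condition (3)(a), $U(z)$ acts on each summand by the entire scalar $\lambda_\alpha^{iz}$, so continuity and entirety are automatic. Properties (3)(b) and (3)(c) are elementary scalar manipulations: $S(U(z)\xi)=\lambda_\alpha^{-i\bar z}\sqrt{\lambda_\alpha}\,\overline\xi$ and $U(\bar z)S(\xi)=\lambda_{\overline\alpha}^{i\bar z}\sqrt{\lambda_\alpha}\,\overline\xi$ coincide by $\lambda_{\overline\alpha}=\lambda_\alpha^{-1}$, and both sides of (3)(c) reduce to $\lambda_\alpha^{iz}\braket{\xi}{\beta}_{\AAA(\ut)}$.

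The main point will be condition (3)(d). After inserting the definitions, the two factors $\sqrt{\lambda_\alpha}$ coming from $S$ on the left and the factor $\lambda_\alpha$ coming from $U(-i)\xi=\lambda_\alpha\xi$ on the right match up, reducing the identity to $\tau(\overline\xi^*\overline\beta)=\tau(\beta^*\xi)$ for $\xi,\beta\in Hom(\beta_1,\beta_2\alpha)$. Via \eqref{hom_inner_prod}, this is precisely the isometry property of Frobenius reciprocity with respect to standard solutions: the conjugate-linear bijection $\xi\mapsto\overline\xi$ intertwines the Hilbert structures on $Hom(\beta_1,\beta_2\alpha)$ and $Hom(\beta_2,\beta_1\overline\alpha)$ up to conjugation of the arguments. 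I expect this step to be the main obstacle of the proof; it should follow from an explicit computation using \eqref{conj_def}, \eqref{trace_def}, and the conjugate equations, together with the defining property that standard solutions implement the categorical trace consistently on both $Hom(\beta_1,\beta_1)$ and $Hom(\beta_2,\beta_2)$.
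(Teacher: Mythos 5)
Your proposal is correct and follows exactly the direct verification that the paper leaves implicit (the proposition is stated with ``Summing up, we have that'' and no written proof, the construction of $\overline{\xi}$, the inner products and the relation $\lambda_{\overline\alpha}=\lambda_\alpha^{-1}$ being set up precisely so that conditions (1)--(3) of \cref{tomita_bimodule_def} can be checked summand by summand). You also correctly isolate the only non-formal step, condition (3)(d), as the identity $\tau(\overline\xi^{\,*}\overline\beta)=\tau(\beta^*\xi)$, which is the tracial/Frobenius-reciprocity property of standard solutions from \cite{L-R-dim} combined with the conjugate equations (the latter also giving $\overline{\overline\xi}=\xi$).
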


\begin{remark}\label{rmk_from1toX}
Similarly, one can define a semifinite (not necessarily abelian) von Neumann algebra $\AAA(X)$ and a Tomita bimodule $H(X)$ associated with an arbitrary object of the category $X\in\CC$, where semifiniteness of $\AAA(X)$ is again due to the rigidity of $\CC$, and the Tomita structure depends as before on the choice of $\lambda_{\alpha}$, $\alpha\in\Lambda$. Namely
\begin{align*}
    \AAA(X) = \bigoplus_{\beta_1\in\SSS} 
    \BBB(\oplus_{\beta_2\in\SSS} Hom(\beta_1, X\beta_2)) 
\end{align*}
and
\begin{align*}
    H(X) = \bigoplus_{\alpha\in\Lambda, \beta_1,\beta_2\in\SSS} Hom(X\beta_1,X\beta_2\alpha),
\end{align*}
where the semifinite tracial weight on $\AAA(X)$, the bimodule actions of $\AAA(X)$ and the $\AAA(X)$-valued inner product on $H(X)$ are defined similarly to the case $X=\ut$. The Tomita structure can be defined on $H(X)$, for every $\xi \in Hom(X\beta_1, X\beta_2\alpha)$ for 
$\alpha\in\Lambda$, $\beta_1,\beta_2\in\SSS$, again by \cref{tomita_struc} but replacing the conjugation on vectors $\xi \mapsto \overline \xi$ defined by \cref{conj_def} on $H(\ut)$ with the following defined on $H(X)$ by 
\begin{align*}
    \xi \in Hom(X\beta_1,X\beta_2\alpha) \mapsto 
    \overline{\xi}=  \xi^* \otimes I_{\overline{\alpha}} \circ  I_{X\beta_2} \otimes r_{\overline{\alpha}}
    \in Hom(X\beta_2,X\beta_1\overline\alpha). 
\end{align*}
For ease of exposition, we state the results of this section in the case $X=\ut$, namely we will show that $\Phi(H(\ut))''$ is a factor (\cref{factor_thm}) and we will characterize its type depending on the chosen Tomita structure (\cref{type_prop}). The same statements hold for an arbitrary object $X\in\CC$, with suitable modification of the proofs. 

In section 6 we define other algebras denoted by $\Phi(X)$ (\cref{amg_free_lemma})
which play the same role of $\Phi(H(X))''$ but live inside a bigger auxiliary
algebra $\Phi(\CC)$. We will only show that $\Phi(H(\ut))'' \cong \Phi(\ut)$ (\cref{lem_phione}) 
and that $\Phi(\ut) \cong \Phi(X)$ for every $X \in\CC$ (\cref{X_equl},
assuming that the spectrum of the category is infinite), but we omit the
proof of $\Phi(H(X))'' \cong \Phi(X)$ showing the equivalence of the two
constructions.
\end{remark}

In the following, we choose and write 
$\OOO^{\beta_2 * \alpha}_{\beta_1} = \{\xi_i\}_{i=1}^{n}$ for a fixed orthonormal basis of isometries in 
$\bfrac{\beta_2 * \alpha}{\beta_1}$, i.e., $\xi_i^* \circ \xi_j = \delta_{i,j} I_{\beta_1}$, 
where $n = dim Hom(\beta_1, \beta_2 \alpha)$, for every $\beta_1, \beta_2 \in \SSS$ and 
$\alpha \in \Lambda$. Note that $\OOO^{\beta_2 * \alpha}_{\beta_1} = \emptyset$ if 
$n = 0$.
In the case $\alpha = \ut$, we let $\OOO^{\beta * \ut}_{\beta} = \{I_\beta\}$. Thus we have that
\begin{align}\label{H(X)_eq}
    H(\ut) = \bigoplus_{\alpha,\beta_1, \beta_2 \in \SSS} \bigoplus_{\xi \in 
    \OOO^{\beta_2 * \alpha}_{\beta_1}} \{a \xi + b \overline{\xi}: a, b \in \C\}.
\end{align}

From now on, we identify $\AAA(\ut)$ with the von Neumann subalgebra $\pi_{\tau}(\AAA(\ut))$ of 
$\Phi(H(\ut))''$ acting on the Fock (Hilbert) space $\FFF(H(\ut))_{\tau}$ (see \cref{fockspace_def} and \cref{phialgs_def}), 
and we denote by $E$ the normal faithful conditional expectation from $\Phi(H(\ut))''$ onto $\AAA(\ut)$ such that 
\begin{align}\label{def_E}
    E(\Gamma(\xi_1) \cdots \Gamma(\xi_n)) = \braket{\Omega}{\Gamma(\xi_1) \cdots \Gamma(\xi_n) \Omega}_{\AAA(\ut)}, 
\end{align}
where $\Omega = I$ is the vacuum vector in $\FFF(H(\ut))$.

\begin{lemma}\label{unit_case}
    Suppose that $\OOO^{\beta * \alpha}_{\beta} \neq \emptyset$ and let 
    $\xi\in\OOO^{\beta * \alpha}_{\beta}$, where $\beta\in\SSS, \alpha\in\Lambda$.
    If $\lambda_{\alpha} \leq 1$ set $\eta =  \xi$. Otherwise $\lambda_{\overline{\alpha}} = 1/\lambda_{\alpha} \leq 1$ 
    and set $\eta = \overline{\xi}$.
    Denote by $\NNN_\beta$ the von Neumann algebra generated by $\Gamma(\eta)$, then
    $$\Phi(\{a \xi + b \overline{\xi}: a, b \in \C\})'' \;\cong\; 
    \NNN_\beta \;\oplus \bigoplus_{\gamma \in \SSS \setminus \{\beta\}} \C I_{\gamma}$$ 
    and 
    \begin{align*}
        \NNN_\beta\; \cong\; 
        \begin{cases}
            L(F_2) & \mbox{ if $\lambda_{\alpha} = 1$,}\\
            \mbox{free Araki-Woods type III$_{\lambda}$ factor} & \mbox{ otherwise},
        \end{cases}
    \end{align*}
    where $\lambda = \min\{\lambda_{\alpha}, \lambda_{\overline{\alpha}}\}$.
    Furthermore, in the polar decomposition $\Gamma(\eta) = VK$, we have $Ker K = \{0\}$ and   
    $V$, $K$ are *-free with respect to $E$. The distribution of $K^2 = \Gamma(\eta)^* \Gamma(\eta)$ 
    with respect to the state $d_{\beta}^{-1} \tau \circ E|_{\NNN_\beta}$ is 
    \begin{align*}
        \frac{\sqrt{4\lambda - (t-(1+\lambda))^2}}{2\pi \lambda t} dt, \quad t \in 
        ((1-\sqrt{\lambda})^2, (1+\sqrt{\lambda})^2).
    \end{align*}
\end{lemma}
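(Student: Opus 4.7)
The plan proceeds in three steps.

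First, I would isolate the non-trivial block. The bimodule $V_0 := \{a\xi + b\overline{\xi} : a,b\in\C\}$ is supported entirely on $I_\beta$, because both $\xi\in\bfrac{\beta*\alpha}{\beta}$ and $\overline\xi\in\bfrac{\beta*\overline\alpha}{\beta}$ are killed from either side by $I_\gamma$ whenever $\gamma\neq\beta$. Consequently every amalgamated tensor power $V_0^{\otimes_{\AAA(\ut)}^n}$ sits in the $(\beta,\beta)$-block of $\FFF(V_0)$, and $\Phi(V_0)''$ splits centrally as $\NNN_\beta \oplus \bigoplus_{\gamma\neq\beta}\C I_\gamma$, where $\NNN_\beta$ is the compression by the central projection $I_\beta$. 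This gives the direct-sum decomposition in the statement.

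Second, I would recognise the pair $(\NNN_\beta, \varphi_\beta)$, with $\varphi_\beta := d_\beta^{-1}\tau\circ E|_{\NNN_\beta}$, as a standard free-probability model. By \cref{tomita_struc} and the convention fixing $\eta$, one has $S(\eta) = \sqrt\lambda\,\overline\eta$ with $\lambda\leq 1$, so
\[
\Gamma(\eta) = L(\eta) + \sqrt\lambda\,L^*(\overline\eta).
\]
Since $\braket{\eta}{\eta}_{\AAA(\ut)} = \braket{\overline\eta}{\overline\eta}_{\AAA(\ut)} = I_\beta$ and $\braket{\eta}{\overline\eta}_{\AAA(\ut)} = 0$, the compression of $\FFF(V_0)$ by $I_\beta$ is unitarily equivalent, via the obvious identification of elementary tensors, to Voiculescu's full Fock space over the two-dimensional complex Hilbert space $\C\eta \oplus \C\overline\eta$, with vacuum vector $I_\beta$ and vacuum state $\varphi_\beta$; under this identification $L(\eta)$ and $L(\overline\eta)$ become the standard free creation operators $\ell(e_1),\ell(e_2)$.

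Third, I would apply known structural results on the operator $c_\lambda := \ell(e_1) + \sqrt\lambda\,\ell^*(e_2)$. For $\lambda = 1$, $c_\lambda$ is Voiculescu's circular element, and its real and imaginary parts are two free semicircular generators of $L(F_2)$. For $\lambda\in(0,1)$, $c_\lambda$ is a free quasi-free (generalised circular) element, and the generated von Neumann algebra equipped with the vacuum state is Shlyakhtenko's free Araki-Woods factor of type III$_\lambda$. The same body of results yields the polar decomposition statement: $\ker|c_\lambda| = \{0\}$, $V$ is a Haar unitary in $(\NNN_\beta,\varphi_\beta)$, $V$ and $K$ are $*$-free, and $K^2 = c_\lambda^*c_\lambda$ is distributed according to the free Poisson / Marchenko-Pastur law of parameter $\lambda$, which is precisely the density displayed in the lemma.

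The delicate point is to check that the identification with the classical free-probability setting matches the modular data. Concretely, the modular group of $\tau\circ E$ on $\NNN_\beta$ must coincide with the modular group of the quasi-free state on $\FFF(\C\eta\oplus\C\overline\eta)$, and this is exactly what \cref{module_auto} supplies: $\sigma^{\tau\circ E}_t(\Gamma(\eta)) = \Gamma(U(t)\eta)$, with $U(t)\eta = \lambda^{it}\eta$ and $U(t)\overline\eta = \lambda^{-it}\overline\eta$, reproducing the one-parameter orthogonal group that implements Shlyakhtenko's quasi-free deformation of parameter $\lambda$.
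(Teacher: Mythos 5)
Your proposal is correct and follows essentially the same route as the paper: reduce to $\lambda_\alpha\leq 1$, observe that everything is supported on the minimal central projection $I_\beta$ so that $E$ restricts to the scalar vacuum state $d_\beta^{-1}\tau\circ E$ on the corner, identify $\Gamma(\eta)=L(\eta)+\sqrt{\lambda}\,L^*(\overline\eta)$ with Shlyakhtenko's generalized circular element on the full Fock space of $\C\eta\oplus\C\overline\eta$ (using $\braket{\xi}{\overline\xi}_{\AAA(\ut)}=0$), and quote the known results on free Araki--Woods factors, the circular element, and the polar decomposition. The paper's proof is just a terser version of this, citing \cite[Remark 4.4, Theorem 4.8 and Theorem 6.1]{DSFQ} and \cite[Example 2.6.2]{HP} for exactly the facts you list in your third step.
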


\begin{proof}
    By exchanging the roles of $\xi$ and $\overline{\xi}$, hence of $\alpha$ and $\overline{\alpha}$, 
    we can assume $\lambda_{\alpha} \leq 1$. It is clear that 
    $\Phi(\{a \xi + b \overline{\xi}\})''$ is generated by $\AAA(\ut)$ and 
    $\Gamma(\eta) = L(\xi) + \sqrt{\lambda_{\alpha}} L^*(\overline{\xi})$.
    Since $I_{\beta}$ is a minimal projection in $\AAA(\ut)$ and $I_\beta \Gamma(\eta) = \Gamma(\eta) 
    = \Gamma(\eta) I_\beta$, 
    we have that $E(T) = d_{\beta}^{-1} \tau(E(T))I_{\beta}$ for any $T \in \NNN_\beta$. 
    Note that $\braket{\xi}{\overline{\xi}}_{\AAA(\ut)} = 0$. Then \cite[Remark 4.4, Theorem 4.8 and Theorem 6.1]{DSFQ}
    imply the result (see also \cite[Example 2.6.2]{HP} for the case $\lambda_\alpha = 1$).
\end{proof}

\begin{remark}\label{central_re}
    With the notation of \cref{unit_case}, we have
    $\sigma_t^{\tau \circ E}(\Gamma(\xi)) = \lambda_{\alpha}^{it}\Gamma(\xi)$
    for every $t\in\R$ by \cref{module_auto}. Therefore $K = (\Gamma(\xi)^*\Gamma(\xi))^{1/2}$ 
    is in the centralizer of $\tau \circ E$, denoted by $\Phi(H(\ut))''_{\tau \circ E}$.
\end{remark}

\begin{lemma}\label{diff_case}
   Let $\xi \in \OOO^{\beta_2 * \alpha}_{\beta_1}$ for $\alpha \in \Lambda$ and $\beta_1$, $\beta_2 \in \SSS$. If 
   $\lambda = \lambda_{\alpha} d_{\beta_1}/d_{\beta_2} \leq 1$,
   then\footnote{We adopt the notation used in \cite{DF} to specify the (non-normalized) trace on 
       a direct sum of algebras. Let $\AAA_1$ and $\AAA_2$ be two finite von Neumann algebra 
       with two given states $\omega_{1}$ and $\omega_2$ respectively. We use 
       $\stackrel[t_1]{p}{\AAA_1} \oplus \stackrel[t_2]{q}{\AAA_1}$
       to denote the direct sum algebra with distinguished positive linear functional $t_1 \omega_1(a) + t_2 \omega_2(b)$, 
       $a \in \AAA_1$, $b \in \AAA_2$,
       where $p$ and $q$ are projections corresponding to the identity elements of $\AAA_1$ and $\AAA_2$ respectively.
       A special case of this is when $\omega_{1}$, $\omega_2$ and $t_1 \omega_1(\cdot) + t_2 \omega_2(\cdot)$ are tracial.}
   \begin{align*}
       \Phi(\{a \xi + b \overline{\xi}:a , b \in \C\})'' \;\cong\; 
        (\stackrel[d_{\beta_1}+ \lambda_{\alpha}d_{\beta_1}]{
        I_{\beta_1} + p_{\beta_2}}{L^{\infty}([0,1]) \otimes M_2(\C)}) 
     \;\;\oplus \stackrel[d_{\beta_2}-\lambda_{\alpha}d_{\beta_1}]{q_{\beta_2}}{\C}
     \oplus \bigoplus_{\gamma \in \SSS \setminus \{\beta_1, \beta_2\}} \C I_\gamma,
    \end{align*}
    where $p_{\beta_2}$ and $q_{\beta_2}$ are two projections such that 
    $p_{\beta_2} + q_{\beta_2} = I_{\beta_2}$ and $\tau \circ E(p_{\beta_2}) = \lambda_\alpha d_{\beta_1}$,
    $\tau \circ E(q_{\beta_2}) = d_{\beta_2} - \lambda_\alpha d_{\beta_1}$.
\end{lemma}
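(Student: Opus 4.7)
The plan is to adapt the argument of \cref{unit_case} to the rectangular setting $\beta_1 \neq \beta_2$. Set $\mathscr{M} := \Phi(\{a\xi+b\overline\xi\})''$. Since $S(\xi) = \sqrt{\lambda_\alpha}\,\overline\xi$ forces $\Gamma(\overline\xi) = \lambda_\alpha^{-1/2}\Gamma(\xi)^*$, $\mathscr{M}$ is generated by $\AAA(\ut)$ together with the single operator $\Gamma(\xi)$; since $\Gamma(\xi) = I_{\beta_2}\Gamma(\xi)I_{\beta_1}$, each $I_\gamma$ with $\gamma \notin \{\beta_1,\beta_2\}$ is central in $\mathscr{M}$, giving the summands $\C I_\gamma$, and the nontrivial analysis takes place on $(I_{\beta_1}+I_{\beta_2})\mathscr{M}(I_{\beta_1}+I_{\beta_2})$.

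Next I would take the polar decomposition $\Gamma(\xi) = VK$ inside $\Phi(H(\ut))''$, so that $K = |\Gamma(\xi)|$ is positive in $I_{\beta_1}\mathscr{M}I_{\beta_1}$ and $V$ is a partial isometry with $V^*V = \mathrm{supp}(K) \leq I_{\beta_1}$ and $VV^* \leq I_{\beta_2}$. Applying the same free-probability results \cite[Remark 4.4, Theorem 4.8, Theorem 6.1]{DSFQ} invoked in \cref{unit_case}, now to the rectangular pair $L(\xi), L(\overline\xi)$ (which satisfies $L(\xi)^*L(\xi) = I_{\beta_1}$, $L(\overline\xi)^*L(\overline\xi) = I_{\beta_2}$, and $L(\xi)^*L(\overline\xi) = 0$ thanks to $\braket{\xi}{\overline\xi}_{\AAA(\ut)} = 0$), I expect to conclude that $V$ and $K$ are $*$-free with respect to $E$, that $\mathrm{Ker}\,K = \{0\}$ (hence $V^*V = I_{\beta_1}$), and that the distribution of $K^2$ with respect to the normalized state $d_{\beta_1}^{-1}\tau\circ E$ on $I_{\beta_1}\mathscr{M}I_{\beta_1}$ is the stated absolutely continuous measure on $((1-\sqrt{\lambda})^2, (1+\sqrt{\lambda})^2)$. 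The specific value $\lambda = \lambda_\alpha d_{\beta_1}/d_{\beta_2}$ is forced by the asymmetric second moments $\tau\circ E(\Gamma(\xi)^*\Gamma(\xi)) = d_{\beta_1}$ and $\tau\circ E(\Gamma(\xi)\Gamma(\xi)^*) = \lambda_\alpha d_{\beta_2}$, which are direct Fock-space computations using $L^*(\xi)\Omega = L^*(\overline\xi)\Omega = 0$.

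To locate the range projection I would then exploit the modular automorphism group: by \cref{module_auto}, $\sigma_t^{\tau\circ E}(\Gamma(\xi)) = \lambda_\alpha^{it}\Gamma(\xi)$, and since $K$ lies in the centralizer this gives $\sigma_t(V) = \lambda_\alpha^{it}V$; the KMS relation applied to the analytic element $V$ then yields $\tau\circ E(VV^*) = \lambda_\alpha\,\tau\circ E(V^*V) = \lambda_\alpha d_{\beta_1}$. Setting $p_{\beta_2} := VV^*$ and $q_{\beta_2} := I_{\beta_2} - p_{\beta_2}$, the hypothesis $\lambda \leq 1$ is equivalent to $\lambda_\alpha d_{\beta_1} \leq d_{\beta_2}$, so $p_{\beta_2} \leq I_{\beta_2}$ is a proper subprojection with $\tau\circ E(q_{\beta_2}) = d_{\beta_2} - \lambda_\alpha d_{\beta_1}$. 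Because $\Gamma(\xi)q_{\beta_2} = 0 = q_{\beta_2}\Gamma(\xi)$, $q_{\beta_2}$ is central in $\mathscr{M}$ and contributes the summand $\C q_{\beta_2}$. On $(I_{\beta_1}+p_{\beta_2})\mathscr{M}(I_{\beta_1}+p_{\beta_2})$ the triple $(I_{\beta_1}, p_{\beta_2}, V)$ provides $2\times 2$ matrix units over the abelian von Neumann algebra $\{K\}'' \cong L^\infty(\mathrm{sp}(K^2)) \cong L^\infty([0,1])$ (diffuseness being supplied by the absolute continuity of the distribution of $K^2$ on a nondegenerate interval), producing the claimed $L^\infty([0,1])\otimes M_2(\C)$ summand with identity $I_{\beta_1}+p_{\beta_2}$ and total weight $d_{\beta_1}+\lambda_\alpha d_{\beta_1}$ with respect to $\tau\circ E$.

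The main obstacle will be the free-probability input: one must check that the rectangular pair $(L(\xi), L(\overline\xi))$ still fits the hypotheses of \cite[Theorem 4.8]{DSFQ} in spite of the different source and range projections $I_{\beta_1} \neq I_{\beta_2}$, so that the $*$-freeness of the polar parts and the Marchenko--Pastur-type density transport unchanged from the \emph{square} case treated in \cref{unit_case}. Once that is in hand, the decomposition follows routinely by reassembling $V$, $K$, and the central projections $q_{\beta_2}$ and $\{I_\gamma\}_{\gamma \neq \beta_1,\beta_2}$.
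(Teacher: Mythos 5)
Your proposal follows essentially the same route as the paper's proof: reduce to the algebra generated by $\AAA(\ut)$ and $\Gamma(\xi)$, invoke \cite[Lemma 4.3 and Remark 4.4]{DSFQ} for the rectangular (generalized circular) element to obtain the non-atomic distribution of $|\Gamma(\xi)|^2$ and $\mathrm{Ker}\,|\Gamma(\xi)|=\{0\}$, use $\sigma_t^{\tau\circ E}(\Gamma(\xi))=\lambda_\alpha^{it}\Gamma(\xi)$ and the KMS relation to get $\tau\circ E(VV^*)=\lambda_\alpha d_{\beta_1}$, and assemble matrix units over $\{|\Gamma(\xi)|\}''\cong L^\infty([0,1])$ together with the central projections $q_{\beta_2}$ and $I_\gamma$, $\gamma\neq\beta_1,\beta_2$. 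The only quibbles are the aside $\tau\circ E(\Gamma(\xi)\Gamma(\xi)^*)=\lambda_\alpha d_{\beta_2}$, which should read $\lambda d_{\beta_2}=\lambda_\alpha d_{\beta_1}$ (consistent with your later, correct KMS computation), and the ``main obstacle'' you flag, which is precisely what the citation of \cite[Lemma 4.3]{DSFQ} resolves since that result already treats elements with distinct left and right support projections.
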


\begin{proof}
    As in the previous lemma, $\Phi(\{a \xi + b \overline{\xi}\})''$ is the von Neumann subalgebra of $\Phi(H(\ut))''$ 
    generated by $\AAA(\ut)$ and $\Gamma(\xi)$.
    By \cite[Lemma 4.3 and Remark 4.4]{DSFQ}, we have that the distributions of $\Gamma(\xi)^*\Gamma(\xi)$ and 
    $\Gamma(\xi)\Gamma(\xi)^*$ with respect to $d_{\beta_1}^{-1}\tau \circ E$ and 
    $d_{\beta_2}^{-1}\tau \circ E$ are
    \begin{align*}
        \frac{\sqrt{4\lambda - (t-(1+ \lambda))^2}}{2\pi \lambda t}dt \mbox{ and } 
        \frac{\sqrt{4\lambda - (t-(1+ \lambda))^2}}{2\pi t}dt + (1-\lambda)\delta_0,
    \end{align*}
    where $\delta_0$ is the Dirac measure concentrated in $0$.
    
    Let $\xi_1 = \xi$ and $\xi_2 = \sqrt{1/\lambda}\, S(\xi) \in \bfrac{\beta_1 * \overline{\alpha}}{\beta_2}$. 
    Then $\xi_2^* \xi_2 = I_{\beta_2}$, because $\tau(\xi_2^* \xi_2) = d_{\beta_2}$, and the operator 
    $\Gamma(\xi) = L(\xi_1) + \sqrt{\lambda} L(\xi_2)^*$ fulfills $I_{\beta_2}\Gamma(\xi) = \Gamma(\xi) = \Gamma(\xi) I_{\beta_1}$.
    Let $\Gamma(\xi) = UH$ be the polar decomposition of $\Gamma(\xi)$.
    By the proof of \cite[Lemma 4.3]{DSFQ}, we know that
    $H\in I_{\beta_1}\Phi(H(\ut))''I_{\beta_1}$ and $KerH = \{0\}$ (even when $\lambda = 1$), and 
    $U^*U = I_{\beta_1}$. Since $\sigma_{t}^{\tau \circ E}(U) = \lambda_{\alpha}^{it}U$, $t\in\R$,
    we have
    \begin{align*}
        d_{\beta_1} = \tau \circ E(U^*U) = 1/\lambda_{\alpha}\tau \circ E(UU^*),
    \end{align*}
    thus $\tau \circ E(UU^*) = \lambda_{\alpha}d_{\beta_1} \leq d_{\beta_2}$.
    Since $\tau \circ E$ is faithful, we know that $UU^*$ is a subprojection of $I_{\beta_2}$. Let 
    $p_{\beta_2} = UU^*$ and $q_{\beta_2} = I_{\beta_2} - UU^*$. 
    Note that $p_{\beta_2} = I_{\beta_2}$ if and only if $\lambda = 1$.
    The fact that distribution of $H^2$ is non-atomic implies that the von Neumann algebra
    generated by $H$ is isomorphic to $L^{\infty}([0,1])$. Thus the von Neumann algebra 
    generated by $I_{\beta_1}, I_{\beta_2}$ and 
    $\Gamma(\xi)$ is isomorphic to 
    \begin{align*}
        (\stackrel[d_{\beta_1}+ \lambda_{\alpha}d_{\beta_1}]{
        I_{\beta_1} + p_{\beta_2}}{L^{\infty}([0,1]) \otimes M_2(\C)}) 
        \;\;\oplus \stackrel[d_{\beta_2}-\lambda_{\alpha}d_{\beta_1}]{q_{\beta_2}}{\C}.
    \end{align*}
\end{proof}

Now consider the sets of isometries $\OOO^{\beta * \alpha}_{\beta}$ in the special case $\alpha=\ut$.
For every $\beta \in \SSS$, let $\xi_{\beta} = I_\beta$ be the only element in $\OOO^{\beta * \ut}_{\beta}$.
By \cref{unit_case}, we have that
\begin{align}\label{M1_eq}
    \Phi(\bigoplus_{\beta \in \SSS}\, \{a \xi_\beta + b \overline{\xi}_\beta: a, b \in \C\})''
    \cong \bigoplus_{\beta \in \SSS} \NNN_\beta
\end{align}
where $\NNN_\beta$ is $L(F_2)$ or the free Araki-Woods type III$_{\lambda}$ factor, 
$\lambda = \min\{\lambda_\ut,\lambda_{\overline{\ut}}\}$, and 
$\NNN_\beta$ is generated by a partial isometry $V_\beta$ and a positive operator $K_\beta$ in the centralizer of 
$\tau \circ E$ (see \cref{central_re}). If we let $U_\beta = f(K_{\beta}^2)$ where
$f(x) = e^{2\pi i h(x)}$ and $h(x) = \int_{(1-\sqrt{\lambda})^2}^x 
\sqrt{4\lambda - (t-(1+\lambda))^2}/(2\pi \lambda t) dt$,  
we get a Haar unitary, i.e., $E(U_\beta^n)=0$ for every $n \neq 0$, which generates $\{K_\beta\}''$.
Moreover, $U_\beta$ and $V_\beta$ are *-free with respect to $E$.

Denote by $\MMM_1$ the l.h.s.\ of \cref{M1_eq} and by $\tilde{\MM}_1$ the strongly dense *-subalgebra of $\MMM_1$
generated by $\AAA(\ut)$ and $\{U_\beta, V_\beta: \beta \in \SSS \}$. By \cref{Fockisafunctor_prop} and \cref{H(X)_eq} 
we have an amalgamated free product decomposition of $\Phi(H(\ut))''$, namely
\begin{align}\label{M2_eq}
(\Phi(H(\ut))'', E) = (\MMM_1, E |_{\MMM_1}) *_{\AAA(\ut)} (\MMM_2, E |_{\MMM_2})
\end{align}
where $\MMM_2 = \Phi(\oplus_{\alpha,\beta_1, \beta_2 \in \SSS, (\beta_1 = \beta_2, 
\alpha\neq\ut \text{ or } \beta_1 \neq \beta_2)} \oplus_{\xi \in \OOO^{\beta_2 * \alpha}_{\beta_1}} 
\{a \xi + b \overline{\xi}: a, b \in \C\})''$.

\begin{lemma}\label{popa_unitary}
    With the notations above, there is a unitary $U \in \Phi(H(\ut))_{\tau \circ E}'' \cap \MMM_1$
    such that $E(t_1 U^n t_2) \rightarrow 0$ for $n\rightarrow+\infty$ in the strong operator topology
    for every $t_1, t_2  \in \tilde{\MM}_1$.
\end{lemma}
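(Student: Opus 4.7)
The plan is to define
\[
U := \sum_{\beta\in\SSS} U_\beta,
\]
the strongly convergent sum of the Haar unitaries $U_\beta \in \NNN_\beta$ from \cref{unit_case} and \cref{M1_eq}.

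First, I would check that the sum yields a well-defined unitary in $\MMM_1$: each $U_\beta$ is a unitary in the corner $\NNN_\beta = I_\beta\MMM_1 I_\beta$, so $U_\beta^*U_\beta = U_\beta U_\beta^* = I_\beta$. The $I_\beta$ are pairwise orthogonal with $\sum_\beta I_\beta = I$ strongly, so for every $\xi\in\FFF(H(\ut))_\tau$,
\[
\sum_\beta \|U_\beta I_\beta\xi\|^2 = \sum_\beta \|I_\beta\xi\|^2 = \|\xi\|^2,
\]
and the defining series for $U\xi$ converges in norm; moreover $U^*U = \sum_\beta I_\beta = I = UU^*$. For centralization, \cref{central_re} gives $K_\beta = |\Gamma(I_\beta)|\in\Phi(H(\ut))''_{\tau\circ E}$, so the bounded Borel function $U_\beta = f(K_\beta^2)$ lies in the centralizer as well; by normality of $\sigma_t^{\tau\circ E}$ the invariance transfers to the strong sum $U$.

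For the mixing property, I would exploit the direct-sum structure $\MMM_1 = \bigoplus_\beta \NNN_\beta$. The relations $I_\gamma I_\beta = \delta_{\gamma\beta}I_\beta$, $I_\gamma U_\beta = \delta_{\gamma\beta}U_\beta$, $I_\gamma V_\beta = \delta_{\gamma\beta}V_\beta$ force every word in the generators of $\tilde{\MM}_1$ to either vanish or lie entirely inside a single $\NNN_\beta$. Consequently every $t\in\tilde{\MM}_1$ admits a finite decomposition $t = \sum_{\beta\in F} y_\beta$ with $F\subset\SSS$ finite and $y_\beta\in\NNN_\beta$ a $*$-polynomial in $U_\beta, V_\beta$. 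Writing $t_1 = \sum_{\beta\in F_1} y_\beta$, $t_2 = \sum_{\beta\in F_2} z_\beta$, and using $U^n = \sum_\gamma U_\gamma^n$ together with $U_\gamma^n = I_\gamma U_\gamma^n I_\gamma$, I would obtain
\[
t_1 U^n t_2 = \sum_{\beta\in F_1\cap F_2} y_\beta\, U_\beta^n\, z_\beta,
\]
a finite sum.

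For each fixed $\beta\in F_1\cap F_2$, \cref{unit_case} gives that $U_\beta$ is a Haar unitary $*$-free from $V_\beta$ with respect to the faithful state $\omega_\beta = d_\beta^{-1}\tau\circ E|_{\NNN_\beta}$. I would then invoke the standard weak-mixing property of Haar unitaries in a free product: expanding $y_\beta$ and $z_\beta$ as linear combinations of alternating words with centered letters in $\{U_\beta\}''$ and in $\text{$*$-alg}(V_\beta)$, and observing that once $n$ exceeds the total $U_\beta$-degree in $y_\beta$ and $z_\beta$ the middle factor $U_\beta^n$ cannot be absorbed into adjacent $\{U_\beta\}''$-letters to give $I_\beta$, freeness forces $\omega_\beta(y_\beta U_\beta^n z_\beta) = 0$ for all sufficiently large $n$. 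Since $E|_{\NNN_\beta}(\,\cdot\,) = \omega_\beta(\,\cdot\,) I_\beta$, summing over the finite set $F_1\cap F_2$ gives $E(t_1 U^n t_2) = 0$ for all $n$ sufficiently large, which trivially implies strong-operator convergence to $0$.

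The main obstacle is the weak-mixing step: one must carefully handle boundary collisions (where trailing $U_\beta$-powers in $y_\beta$ and leading $U_\beta$-powers in $z_\beta$ multiply $U_\beta^n$) and the potentially non-tracial nature of $\omega_\beta$ in the type III$_\lambda$ case, in order to be sure that $U_\beta^0 = I_\beta$ does not sneak into the alternating centered-word expansion for large $n$. This is routine Voiculescu-type bookkeeping but is the computational heart of the argument.
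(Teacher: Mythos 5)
Your construction $U=\sum_{\beta\in\SSS}U_\beta$ is exactly the one the paper uses: its proof of this lemma is a one-line reference to \cref{unit_case} and the preceding discussion, so your write-up is essentially the paper's argument carried out in detail (well-definedness of the strong sum, membership in the centralizer via $U_\beta=f(K_\beta^2)$, reduction to the corners $\NNN_\beta$, and the standard free/Haar weak-mixing computation, which indeed works verbatim for the non-tracial states $\omega_\beta$). The only inaccuracy is your claim that every $t\in\tilde{\MM}_1$ is a \emph{finite} sum $\sum_{\beta\in F}y_\beta$ of corner elements: since $\tilde{\MM}_1$ contains all of $\AAA(\ut)$, the correct decomposition is $t=a+\sum_{\beta\in F}y_\beta$ with $a\in\AAA(\ut)$ possibly of infinite support (e.g.\ $t=I$); this costs nothing, because $a_1U^na_2=\sum_\beta c_\beta U_\beta^n$ is killed by normality of $E$ and the Haar property $E(U_\beta^n)=0$ for $n\neq 0$, while the cross terms reduce to your single-corner computation.
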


\begin{proof}
    By \cref{unit_case} and the discussion above, $U = \sum_{\beta\in\SSS} U_{\beta}$ satisfies the conditions.
\end{proof}

\begin{theorem}\label{factor_thm}
    $\Phi(H(\ut))''$ is a factor.  
\end{theorem}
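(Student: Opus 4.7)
The plan is to show that any $Z \in Z(\Phi(H(\ut))'')$ is a scalar multiple of the identity, splitting the argument into three steps.

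\emph{Step 1: localize $Z$ inside $\MMM_1$.} I would exploit the amalgamated free product decomposition $(\Phi(H(\ut))'', E) = (\MMM_1, E|_{\MMM_1}) *_{\AAA(\ut)} (\MMM_2, E|_{\MMM_2})$ of \cref{M2_eq} together with the Haar unitary $U \in \MMM_1 \cap \Phi(H(\ut))''_{\tau \circ E}$ from \cref{popa_unitary}, which enjoys the strong-operator mixing property $E(t_1 U^n t_2) \to 0$ for every $t_1, t_2 \in \tilde{\MM}_1$. Centrality of $Z$ forces $U^n Z U^{-n} = Z$ for all $n$. Approximating $Z$ in $L^2(\Phi(H(\ut))'', \tau \circ E)$ by reduced words alternating between $\MMM_1 \ominus \AAA(\ut)$ and $\MMM_2 \ominus \AAA(\ut)$, conjugating by $U^n$, and using the mixing property in the style of Popa's original asymptotic freeness argument, the non-trivial $\MMM_2$-contributions to $Z$ must vanish; hence $Z \in \MMM_1$.

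\emph{Step 2: conclude $Z \in \AAA(\ut)$.} Since $Z$ lies in $\MMM_1$ and commutes with every element of $\MMM_1 \subset \Phi(H(\ut))''$ (again by centrality), one has $Z \in Z(\MMM_1)$. With respect to the decomposition $\MMM_1 \cong \bigoplus_{\beta \in \SSS} \NNN_\beta$ from \cref{M1_eq}, \cref{unit_case} asserts that each $\NNN_\beta$ is a factor (either $L(F_2)$ or a free Araki-Woods type III$_\lambda$ factor), so $Z(\NNN_\beta) = \C I_\beta$. Consequently $Z \in \bigoplus_{\beta \in \SSS} \C I_\beta = \AAA(\ut)$, and we may write $Z = \sum_{\beta \in \SSS} c_\beta I_\beta$ with $c_\beta \in \C$.

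\emph{Step 3: equate the scalars.} For each $\gamma \in \SSS$, the standard solution $r_{\overline\gamma} = \varepsilon_\gamma^*$ is a non-zero vector in $\bfrac{\gamma * \overline\gamma}{\ut} = Hom(\ut, \gamma\overline\gamma) \subset H(\ut)$, and $I_\ut \in \FFF(H(\ut))_\tau$ is a unit vector (since $\|I_\ut\|_2^2 = \tau(I_\ut) = d_\ut = 1$). Evaluating the commutation $Z\Gamma(r_{\overline\gamma}) = \Gamma(r_{\overline\gamma})Z$ on $I_\ut$: on one hand, $\Gamma(r_{\overline\gamma})I_\ut = r_{\overline\gamma}\cdot I_\ut = r_{\overline\gamma}$ (the annihilation part of $\Gamma(r_{\overline\gamma})$ kills the vacuum), and the left $\AAA(\ut)$-action on $r_{\overline\gamma} \in \bfrac{\gamma * \overline\gamma}{\ut}$ gives
\begin{align*}
Z\Gamma(r_{\overline\gamma})I_\ut \;=\; Z \cdot r_{\overline\gamma} \;=\; c_\gamma\, r_{\overline\gamma};
\end{align*}
on the other hand, $ZI_\ut = c_\ut I_\ut$, so
\begin{align*}
\Gamma(r_{\overline\gamma})Z I_\ut \;=\; c_\ut\, \Gamma(r_{\overline\gamma})I_\ut \;=\; c_\ut\, r_{\overline\gamma}.
\end{align*}
Hence $c_\gamma = c_\ut$ for every $\gamma \in \SSS$, and $Z$ is a scalar multiple of the identity $\sum_\beta I_\beta$ of $\Phi(H(\ut))''$.

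The main obstacle is Step 1: carrying out the Popa-style reduced-word mixing argument in the present semifinite (or type III) amalgamated free product setting, using precisely the mixing property of the Haar unitary $U$ provided by \cref{popa_unitary}. Steps 2 and 3 are essentially formal once $Z \in \MMM_1$ is established.
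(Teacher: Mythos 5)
Your proposal follows essentially the same route as the paper's proof: there too one shows that a central element lies in $\{U\}' \cap \MMM_1$ using the mixing unitary of \cref{popa_unitary}, then lands in $\AAA(\ut)$ because each $\NNN_\beta$ is a factor, and finally equates the scalars across the blocks using the off-diagonal operators $\Gamma(\xi)$ (the paper cites \cref{diff_case}, your choice $\xi = r_{\overline\gamma}$ is a correct instance). The ``main obstacle'' you flag in Step 1 is precisely what \cref{relative_commutant} in the appendix supplies --- it packages the Popa-style reduced-word mixing argument in the general non-$\sigma$-finite, non-tracial amalgamated free product setting, with hypothesis (2) furnished by \cref{popa_unitary} --- so you should invoke that proposition rather than redo the argument by hand.
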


\begin{proof}
    Let $U$ be the unitary in \cref{popa_unitary}. If $T \in \Phi(H(\ut))'' \cap \Phi(H(\ut))'$, then
    $T \in \{U\}' \cap \MMM_1$ by \cref{relative_commutant}.
    Thus \cref{diff_case} implies $\Phi(H(\ut))'' \cap \Phi(H(\ut))' 
    = \Phi(H(\ut))' \cap \AAA(\ut) = \C I$.
\end{proof}

\begin{lemma}\label{center_of_centralizer}
    $\ZZZ(\Phi(H(\ut))''_{\tau \circ E}) \subseteq \AAA(\ut)$, where $\ZZZ(\cdot)$ denotes the center.
\end{lemma}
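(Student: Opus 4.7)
The plan is to cut $T$ down in three successive steps: first into the subalgebra $\MMM_1$ of the amalgamated free product \cref{M2_eq}, next into each orthogonal summand $\NNN_\beta$, and finally into a scalar multiple of $I_\beta$.

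First, I would observe that the unitary $U=\sum_\beta U_\beta$ of \cref{popa_unitary} lies in the centralizer $\Phi(H(\ut))''_{\tau\circ E}$: each $U_\beta=f(K_\beta^2)$ is a bounded Borel function of $K_\beta$, and $K_\beta$ is in the centralizer by \cref{central_re}. Since $T$ is central in the centralizer, it commutes with $U$. Combined with the asymptotic freeness $E(t_1 U^n t_2)\to 0$ from \cref{popa_unitary}, the relative commutant lemma \cref{relative_commutant} applied to \cref{M2_eq} yields $T\in \{U\}'\cap \MMM_1$. Writing $\MMM_1=\bigoplus_\beta \NNN_\beta$ with central projections $I_\beta$, I decompose $T=\sum_\beta T_\beta$ with $T_\beta = I_\beta T I_\beta \in \NNN_\beta$.

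Next, each $\NNN_\beta$ is invariant under $\sigma^{\tau\circ E}$ because its generator $\Gamma(\eta_\beta)$ is an eigenvector of the modular group by \cref{module_auto}. By Takesaki's theorem $\sigma^{\tau\circ E}|_{\NNN_\beta}$ coincides with the modular group of the restricted faithful normal state $\omega_\beta=d_\beta^{-1}\tau\circ E|_{\NNN_\beta}$, so $T_\beta \in (\NNN_\beta)_{\omega_\beta}\subseteq \Phi(H(\ut))''_{\tau\circ E}$. Conversely, every $S\in (\NNN_\beta)_{\omega_\beta}$ commutes with $T$ by hypothesis, and restricting to $\NNN_\beta$ gives $[S,T_\beta]=0$; hence $T_\beta$ belongs to $\ZZZ((\NNN_\beta)_{\omega_\beta})$.

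Finally, I would identify $\ZZZ((\NNN_\beta)_{\omega_\beta})=\C I_\beta$ using \cref{unit_case}. If $\lambda_\ut=1$, then $\NNN_\beta \cong L(F_2)$ and $\omega_\beta$ is a trace, so the centralizer is the whole factor and has trivial center $\C I_\beta$. If $\lambda_\ut\neq 1$, then $\NNN_\beta$ is a free Araki-Woods factor of type III$_\lambda$ with almost periodic quasi-free state $\omega_\beta$, and by Shlyakhtenko's analysis $(\NNN_\beta)_{\omega_\beta}$ is an interpolated free group (II$_1$) factor, again with trivial center $\C I_\beta$. Either way $T_\beta=c_\beta I_\beta$ and $T=\sum_\beta c_\beta I_\beta\in \AAA(\ut)$. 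The hard part will be precisely this last step in the type III case: it relies on the non-trivial structural result that the centralizer of the almost periodic quasi-free state on a free Araki-Woods factor is a II$_1$ factor.
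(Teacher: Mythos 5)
Your proposal is correct and follows essentially the same route as the paper: reduce to $\MMM_1$ via the Haar-type unitary of \cref{popa_unitary} and \cref{relative_commutant} (exactly as in \cref{factor_thm}), then split into the cases $\lambda_\ut=1$ (where $\MMM_1$ lies in the centralizer and has center $\AAA(\ut)$) and $\lambda_\ut\neq 1$ (where one invokes Shlyakhtenko's identification of the centralizer of the free quasi-free state on the free Araki--Woods factor as a II$_1$ factor). The paper's proof is just a compressed version of this, citing \cite[Corollary 5.5]{DSFQ} together with \cite[Corollary 4]{LB} and \cref{unit_case} for the last step.
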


\begin{proof}
    Proceed as in the proof of \cref{factor_thm}, we have $\ZZZ(\Phi(H(\ut))_{\tau \circ E}'') 
    \subset \MMM_1$. If $\lambda_\ut = 1$, then $\MMM_1 \subset \Phi(H(\ut))_{\tau \circ E}''$
    and we have $\ZZZ(\Phi(H(\ut))_{\tau \circ E}'' \subseteq \AAA(\ut)$.
    By \cite[Corollary 4]{LB}, \cite[Corollary 5.5]{DSFQ} and \cref{unit_case}, it is also 
    clear that $\ZZZ(\Phi(H(\ut))_{\tau \circ E}'' )\subseteq \AAA(\ut)$ if $\lambda_\ut \neq 1$. 
\end{proof}

Recall that the Connes' invariant $S(\MMM)$ of a factor $\MMM$ is given by 
$\cap\{Sp(\Delta_{\varphi}): \varphi \text{ is a n.s.f. weight on $\MMM$}\}$ 
and that $\R^{+}_{*} \cap S(\MMM)= \R^{+}_{*}\cap\{Sp(\Delta_{\varphi_e}): 0 \neq e \in Proj(\ZZZ(\MMM_{\varphi})\}$
is a closed subgroup of $\R^{+}_{*} = \{t\in\R, t>0\}$, where $\varphi_{e} = \varphi|_{e\MMM e}$. See \cite[Chapter III]{AT}, \cite[Chapter VI]{SS}. Here $Proj(\cdot)$ denotes the set of orthogonal projections of a von Neumann algebra and $Sp(\cdot)$ the spectrum of an operator.

\begin{lemma}\label{connes_s}
    Let $G$ be the closed subgroup of $\R^{+}_{*}$ generated by $\{\lambda_\alpha \lambda_{\beta_2}/\lambda_{\beta_1}:
    \alpha \in \Lambda, \beta_1, \beta_2 \in \SSS \mbox{ such that } \bfrac{\beta_2 * \alpha}{\beta_1} \neq \{0\}\}$.
    Then $\R^{+}_{*} \cap S(\Phi(H(\ut))'') = G$.  
\end{lemma}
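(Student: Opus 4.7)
My plan is to invoke the Connes cutdown formula $\R^{+}_{*}\cap S(\MMM) = \cap\{Sp(\Delta_{\tilde\varphi_e})\cap\R^{+}_{*}: 0\neq e\in Proj(\ZZZ(\MMM_{\tilde\varphi}))\}$ (recalled in the paragraph preceding the lemma) for the specific n.s.f.\ weight $\tilde\phi = \phi\circ E$ on $\MMM := \Phi(H(\ut))''$, where $\phi(I_\beta) := \lambda_\beta d_\beta$ for $\beta\in\SSS$. Since the Radon--Nikodym derivative $h = d\phi/d\tau = \sum_{\beta\in\SSS}\lambda_\beta I_\beta$ lies in $\AAA(\ut)\subset\MMM_{\tau\circ E}$, the usual cocycle identity gives $\sigma_t^{\tilde\phi} = \mathrm{Ad}(h^{it})\circ\sigma_t^{\tau\circ E}$, and combining with \cref{module_auto} yields
\begin{align*}
\sigma_t^{\tilde\phi}(\Gamma(\xi)) \;=\; (\lambda_\alpha\lambda_{\beta_2}/\lambda_{\beta_1})^{it}\,\Gamma(\xi), \qquad \xi\in\bfrac{\beta_2 * \alpha}{\beta_1},
\end{align*}
while $\sigma_t^{\tilde\phi}$ fixes $\AAA(\ut)$ pointwise.

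The proof of \cref{center_of_centralizer} adapts verbatim to $\tilde\phi$ (the Popa unitary $U=\sum_\beta U_\beta$ of \cref{popa_unitary} satisfies $I_\beta U_\beta I_\beta = U_\beta$, hence $h^{it}U_\beta h^{-it} = U_\beta$ and $U\in\MMM_{\tilde\phi}$), giving $\ZZZ(\MMM_{\tilde\phi}) \subseteq \AAA(\ut)$; consequently each nonzero central projection $e$ of $\MMM_{\tilde\phi}$ is a (possibly infinite) sum of $I_\beta$'s. A monomial $\Gamma(\xi_1)\cdots\Gamma(\xi_n)$ with $\xi_i\in\bfrac{\beta_2^i * \alpha_i}{\beta_1^i}$ lies in $I_{\beta}\MMM I_{\beta}$ precisely when the $\AAA(\ut)$-bimodule indices form a closed fusion loop $\beta=\beta_2^1\to\beta_1^1 = \beta_2^2\to\cdots\to\beta_1^n = \beta$; in that case the $\lambda_{\beta_j^i}$'s in the product $\prod_i(\lambda_{\alpha_i}\lambda_{\beta_2^i}/\lambda_{\beta_1^i})$ telescope to $1$, so the modular eigenvalue is $\prod_{i=1}^n\lambda_{\alpha_i}$. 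Since the corners $I_\beta\MMM I_\beta$ are factors and are generated (as von Neumann algebras) by such monomials together with $I_\beta$, we obtain $Sp(\Delta_{\tilde\phi_{I_\beta}})\cap\R^{+}_{*} = G_\beta$, where $G_\beta$ is the closed subgroup of $\R^{+}_{*}$ generated by $\prod_i\lambda_{\alpha_i}$ over closed fusion loops at $\beta$. Using that $Sp(\Delta_{\tilde\phi_{I_\beta}})\subseteq Sp(\Delta_{\tilde\phi_e})$ whenever $I_\beta\leq e$, and that every eigenvalue of $\sigma^{\tilde\phi}$ on $\MMM$ is automatically in $G$ by the displayed formula, the Connes intersection reduces to $\cap_{\beta\in\SSS} G_\beta$, with each term sandwiched between $G_\beta$ and $G$.

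The lemma is therefore reduced to showing $G_\beta = G$ for every $\beta\in\SSS$. The inclusion $G_\beta\subseteq G$ is the telescoping identity above, since $\prod_i\lambda_{\alpha_i} = \prod_i(\lambda_{\alpha_i}\lambda_{\gamma_i}/\lambda_{\gamma_{i-1}})$ with $\gamma_0=\gamma_n=\beta$. For the opposite inclusion, pick a generator $\mu = \lambda_\alpha\lambda_{\gamma_2}/\lambda_{\gamma_1}$ of $G$ (so $\gamma_1\prec\gamma_2\alpha$); the length-three closed loop at $\ut$ through the edges $(\ut,\gamma_1,\overline{\gamma_1})$, $(\gamma_1,\gamma_2,\alpha)$, $(\gamma_2,\ut,\gamma_2)$ is well-defined (using $\ut\prec\gamma_1\overline{\gamma_1}$, the hypothesis, and $\gamma_2\prec\ut\cdot\gamma_2$ respectively) and carries modular eigenvalue $\lambda_{\overline{\gamma_1}}\lambda_\alpha\lambda_{\gamma_2} = \mu$. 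For an arbitrary $\beta\in\SSS$, transport this loop to $\beta$ by pre-composing with the single edge $(\beta,\ut,\beta)$ and post-composing with its reverse $(\ut,\beta,\overline\beta)$; the new closed loop at $\beta$ still has eigenvalue $\lambda_\beta\cdot\mu\cdot\lambda_{\overline\beta}=\mu$, by the involutive relation $\lambda_{\overline\delta} = 1/\lambda_\delta$. Hence $\mu\in G_\beta$, whence $G_\beta = G$ and $\R^{+}_{*}\cap S(\MMM) = G$.

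The main obstacle is precisely this last combinatorial step: realizing every ratio-generator of $G$ as a modular eigenvalue of a closed loop at each $\beta$. It relies crucially on Frobenius reciprocity (so that every fusion edge admits a ``reverse" edge with inverted eigenvalue) and on the assumption $\ut\in\SSS$ (so that the auxiliary length-three loop through $\ut$, using $\gamma_i$ and $\overline{\gamma_i}$ as letters, is available).
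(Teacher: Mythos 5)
Your proof is correct, and its computational core coincides with the paper's: both rest on the Connes cutdown over central projections of the centralizer, on $\ZZZ$ of the centralizer being contained in $\AAA(\ut)$ (so that those projections are sums of $I_\beta$'s), on reading off modular eigenvalues of monomials $\Gamma(\xi_n)\cdots\Gamma(\xi_1)$, and on the very same length-three loop through $\ut$ (the paper's $\Gamma(\zeta_2)\Gamma(\xi)\Gamma(\zeta_1)$ with $\zeta_1\in\bfrac{\beta_1*\overline{\beta}_1}{\ut}$, $\zeta_2\in\bfrac{\ut*\beta_2}{\beta_2}$) to realize each generator $\lambda_\alpha\lambda_{\beta_2}/\lambda_{\beta_1}$. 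You differ in two places. First, the paper works throughout with $\varphi=\tau\circ E$, so that $\sigma_t^\varphi(\Gamma(\xi))=\lambda_\alpha^{it}\Gamma(\xi)$ and the generators only appear after telescoping along a closed loop; your perturbation by $h=\sum_\beta\lambda_\beta I_\beta$ puts the generator on each single edge, which makes the inclusion $Sp(\Delta_{\tilde\phi_e})\cap\R^{+}_{*}\subseteq G$ transparent even for non-closed paths inside a corner $e=\sum_{\beta\in T}I_\beta$ (on each fixed corner $I_\beta$ the two weights are proportional, so this is a repackaging rather than a change of substance). Second, and more genuinely, the paper collapses the intersection $\cap_\beta Sp(\Delta_{\varphi_{I_\beta}})$ to the single corner at $\ut$ by observing that $(I_\beta\Phi(H(\ut))''I_\beta)_{\varphi_{I_\beta}}$ has trivial center and citing \cite[Section 16.1, Corollary 16.2.1]{SS}; you instead prove $G_\beta=G$ for every $\beta$ by conjugating the loop at $\ut$ with the edges $(\beta,\ut,\beta)$ and $(\ut,\beta,\overline\beta)$, whose eigenvalues cancel by $\lambda_{\overline\beta}=1/\lambda_\beta$. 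Your route is more self-contained (it avoids the factoriality of the corner centralizers and the Str\u{a}til\u{a} citation) at the price of a little extra fusion-graph combinatorics; the paper's is shorter given the cited machinery. Both are valid.
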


\begin{proof}
    We use $\varphi$ to denote $\tau \circ E$. By \cref{center_of_centralizer} and 
    \cite[Section 15.4]{SS}, we know that
    $\R^{+}_{*} \cap S(\Phi(H(\ut))'') = \R^{+}_{*} \cap \{Sp(\Delta_{\varphi_{I_\beta}}): \beta \in \SSS\}$.
    Note that $(I_\beta \Phi(H(\ut))'' I_\beta)_{\varphi_{I_\beta}} = 
    I_\beta(\Phi(H(\ut))'')_{\varphi} I_\beta$. Thus the center of 
    $(I_\beta \Phi(H(\ut))''I_\beta)_{\varphi_{I_\beta}}$ is trivial by
    \cite[Proposition 5.5.6]{KRII}. Then \cite[Section 16.1, Corollary 16.2.1]{SS} implies 
    $\R^{+}_{*} \cap S(\Phi(H(\ut))'') = \R^{+}_{*} \cap Sp(\Delta_{\varphi_{I_\ut}})$. 

    Let $\xi_1 \in \bfrac{\beta_1 * \alpha_1}{\ut},\xi_2 \in \bfrac{\beta_2 * \alpha_2}{\beta_1}, \dots, 
    \xi_n \in \bfrac{\ut *\alpha_n}{\beta_{n-1}}$ and $\alpha_i \in \Lambda, \beta_i \in \SSS$, $n=1,2,\ldots$.  
    By \cref{module_auto}, for every $t\in\R$, we have 
    \begin{align*}
        \sigma_{t}^{\varphi}(\Gamma(\xi_n) \cdots \Gamma(\xi_1))
        = (\frac{\lambda_{\ut}\lambda_{\alpha_n}}{\lambda_{\beta_{n-1}}})^{it} \hspace{-1mm}\dots
        (\frac{\lambda_{\beta_2}\lambda_{\alpha_2}}{\lambda_{\beta_1}})^{it}
        (\frac{\lambda_{\beta_1}\lambda_{\alpha_1}}{\lambda_{\ut}})^{it}
        \,\Gamma(\xi_n)\cdots \Gamma(\xi_1).
    \end{align*}

    Also note that $\sigma_{t}^{\varphi}(\Gamma(\zeta_2) \Gamma(\xi) \Gamma(\zeta_1))
    = (\frac{\lambda_{\beta_2}\lambda_{\alpha}}{\lambda_{\beta_{1}}})^{it}
    \Gamma(\zeta_2) \Gamma(\xi) \Gamma(\zeta_1)$, 
    where $\zeta_1 \in \bfrac{\beta_1 * \overline{\beta}_1}{\ut}$, $\xi \in \bfrac{\beta_2 * \alpha}{\beta_1}$, 
    $\zeta_2 \in \bfrac{\ut * \beta_2}{\beta_2}$.  
    Since the linear span of 
    $\{p_\ut\} \cup \{\Gamma(\xi_n) \cdots \Gamma(\xi_1)\}$ as above is dense in the space 
    $L^2(I_\ut \Phi(H(\ut))'' I_\ut, \varphi_{I_\ut})$, we have $Sp(\Delta_{\varphi_{I_\ut}}) \setminus \{0\} = G$.
\end{proof}

\begin{proposition}\label{type_prop}
    $\Phi(H(\ut))''$ is semifinite if and only if 
    $\lambda_{\beta_1} = \lambda_{\beta_2}\lambda_{\alpha}$ for every $\alpha\in\Lambda$ and $\beta_1,\beta_2\in\SSS$
    such that $\bfrac{\beta_2 * \alpha}{\beta_1} \neq \{0\}$. 
    If $\Phi(H(\ut))''$ is not semifinite, then
    it is a type III$_\lambda$ factor for $\lambda \in (0, 1]$, where $\lambda$ depends on the choice of (non-trivial) Tomita structure. 
\end{proposition}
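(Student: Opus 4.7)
The plan is to deduce both implications from \cref{connes_s} together with the explicit formula for $\sigma^{\tau\circ E}_t$ provided by \cref{module_auto}. Recall that for a factor $\MMM$ one has $\R^+_*\cap S(\MMM)=\{1\}$ in the semifinite case, $\{\mu^n:n\in\Z\}$ for type III$_\mu$ with $\mu\in(0,1)$, and all of $\R^+_*$ for type III$_1$, so the closed subgroup $\R^+_*\cap S(\MMM)$ detects the type except that the value $\{1\}$ is shared by semifinite and type III$_0$ factors. The main task will therefore be to show that under the stated cocycle-type condition the weight $\tau\circ E$ is itself tracial, which in particular rules out the III$_0$ possibility.

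For the sufficiency direction, I would assume $\lambda_{\beta_1}=\lambda_{\beta_2}\lambda_\alpha$ whenever $\bfrac{\beta_2*\alpha}{\beta_1}\neq\{0\}$ and show that $\tau\circ E$ is tracial. By \cref{faith_ce} and the fact that $\tau$ is n.s.f.\ on $\AAA(\ut)$, $\tau\circ E$ is already n.s.f.\ on $\Phi(H(\ut))''$; by \cref{module_auto} and the computation carried out in the proof of \cref{connes_s}, $\sigma^{\tau\circ E}_t$ acts on a monomial $\Gamma(\xi_n)\cdots\Gamma(\xi_1)$ with $\xi_k\in\bfrac{\beta_k*\alpha_k}{\beta_{k-1}}$ consecutively composable by multiplication by the telescoping scalar $\prod_{k=1}^{n}\bigl(\lambda_{\beta_k}\lambda_{\alpha_k}/\lambda_{\beta_{k-1}}\bigr)^{it}$, each factor of which equals $1$ by hypothesis; and $\sigma^{\tau\circ E}_t$ acts trivially on $\AAA(\ut)$ because $\tau$ is already tracial there. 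Since $\AAA(\ut)$ together with such monomials is $\sigma$-weakly dense in $\Phi(H(\ut))''$, $\sigma^{\tau\circ E}_t$ is trivial, $\tau\circ E$ is tracial, and $\Phi(H(\ut))''$ is semifinite.

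For the necessity I would argue that if $\Phi(H(\ut))''$ is semifinite then $\R^+_*\cap S(\Phi(H(\ut))'')=\{1\}$, and \cref{connes_s} forces each generator $\lambda_\alpha\lambda_{\beta_2}/\lambda_{\beta_1}$ to be $1$, which is the stated condition. Finally, in the non-semifinite case, the previous step shows the condition fails, so by \cref{connes_s} the closed subgroup $G=\R^+_*\cap S(\Phi(H(\ut))'')$ of $\R^+_*$ is nontrivial, hence equal either to $\{\mu^n:n\in\Z\}$ for some $\mu\in(0,1)$ or to all of $\R^+_*$; combined with \cref{factor_thm} and Connes' classification this gives type III$_\mu$ or III$_1$, i.e., III$_\lambda$ for some $\lambda\in(0,1]$. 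The only subtle point in the whole argument is the exclusion of type III$_0$, which is handled precisely by the explicit construction of the tracial weight $\tau\circ E$ in the $G=\{1\}$ case above.
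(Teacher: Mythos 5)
Your overall strategy coincides with the paper's: both directions are reduced to \cref{connes_s}, and you correctly identify that the only delicate point is excluding type III$_0$, which requires exhibiting an explicit tracial weight when $\lambda_{\beta_1}=\lambda_{\beta_2}\lambda_\alpha$ holds. However, your execution of that key step contains a genuine error. By \cref{module_auto}, $\sigma^{\tau\circ E}_t(\Gamma(\xi))=\lambda_\alpha^{it}\Gamma(\xi)$ for $\xi\in\bfrac{\beta_2*\alpha}{\beta_1}$, so the scalar picked up by a composable monomial $\Gamma(\xi_n)\cdots\Gamma(\xi_1)$ is $\prod_k\lambda_{\alpha_k}^{it}$, not the telescoping product $\prod_k(\lambda_{\beta_k}\lambda_{\alpha_k}/\lambda_{\beta_{k-1}})^{it}$ that you quote; the two differ by the factor $(\lambda_{\beta_n}/\lambda_{\beta_0})^{it}$ and agree only for words that begin and end at the same simple object (which is why the telescoping form is legitimate in the proof of \cref{connes_s}, where everything is compressed to the corner $I_\ut\Phi(H(\ut))''I_\ut$). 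Under your hypothesis one only gets $\lambda_{\alpha}=\lambda_{\beta_1}/\lambda_{\beta_2}$, which need not equal $1$, so $\sigma^{\tau\circ E}_t$ is not trivial and $\tau\circ E$ is not a trace; indeed \cref{decompose_remark} records that $\tau\circ E$ is tracial if and only if $\lambda_\alpha=1$ for every $\alpha\in\Lambda$.

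The repair is the one the paper uses: perturb the weight by the positive operator $K=\sum_{\beta\in\SSS}\lambda_\beta I_\beta$ affiliated with $\AAA(\ut)$ and consider $\tau_K(\cdot)=\tau(K^{1/2}\cdot K^{1/2})$. By \cite[VIII, Theorem 2.11]{TKII} one has $\sigma^{\tau_K\circ E}_t(\Gamma(\xi))=K^{it}\sigma^{\tau\circ E}_t(\Gamma(\xi))K^{-it}=(\lambda_{\beta_2}\lambda_\alpha/\lambda_{\beta_1})^{it}\,\Gamma(\xi)=\Gamma(\xi)$ under the hypothesis, while $\sigma^{\tau_K\circ E}_t$ is trivial on $\AAA(\ut)$; hence $\tau_K\circ E$ is the desired tracial weight and $\Phi(H(\ut))''$ is semifinite. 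With this correction the rest of your argument (necessity via \cref{connes_s}, and the classification of nontrivial closed subgroups of $\R^{+}_{*}$ yielding type III$_\lambda$ with $\lambda\in(0,1]$) goes through exactly as in the paper.
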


\begin{proof}
    By \cite[Proposition 28.2]{SS}, $\Phi(H)''$ is semifinite if and only if $S(\Phi(H(\ut))'') = \{1\}$. 
    If $\Phi(H(\ut))''$ is semifinite, \cref{connes_s} implies that 
    $\lambda_{\beta_1} = \lambda_{\beta_2}\lambda_{\alpha}$ for every $\alpha \in \Lambda$ and 
    $\beta_1,\beta_2\in\SSS$ such that $\bfrac{\beta_2 * \alpha}{\beta_1} \neq \{0\}$. 

    Conversely, if $\lambda_{\beta_1} = \lambda_{\beta_2}\lambda_{\alpha}$, by
    \cite[VIII, Theorem 2.11]{TKII}, we have
    \begin{align*}
        \sigma_{t}^{\tau_{K} \circ E}(\Gamma(\xi))= K^{it}\sigma^{\tau \circ E}_t(\Gamma(\xi))K^{-it}
        =\Gamma(\xi),
    \end{align*}
    for every $\xi \in \bfrac{\beta_2 * \alpha}{\beta_1}$, $\alpha\in\Lambda$, $\beta_1, \beta_2 \in \SSS$,
    where $K = \sum_{\beta \in \SSS} \lambda_{\beta} I_\beta$ and 
    $\tau_{K}(\cdot) = \tau(K^{1/2} \cdot K^{1/2})$. Thus $\Phi(H(\ut))''$ is semifinite.
\end{proof}

\begin{remark}\label{decompose_remark}
If $\Phi(H(\ut))''$ is semifinite, then $\lambda_{\ut}^2 = \lambda_{\ut}$ implies $\lambda_\ut =1$.
If $\alpha\in\SSS$ is self-conjugate, i.e., $\alpha \cong \overline{\alpha}$ in $\CC$, 
then $\lambda_{\alpha}^2 = 1$, which implies $\lambda_\alpha =1$.
If $\alpha$ is not self-conjugate, then there is $\beta \in \SSS$ which is a conjugate of $\alpha$
and we have $\lambda_{\alpha} \lambda_{\beta} = 1$.

If the spectrum $\SSS$ is finite, then $\Phi(H(\ut))''$ is a type II$_1$ factor.
If $\SSS$ is infinite, then $\tau_{K} \circ E(I) = 
\sum_{\beta\in\SSS} \lambda_\beta d_{\beta} = +\infty$, where $\tau_{K}$ is defined in the proof of
\cref{type_prop}. Thus $\Phi(H(\ut))''$ is a II$_\infty$ factor. 
Observe that $\tau_K \circ E$ is a trace (tracial weight), whereas $\tau\circ E$ is a trace if and only 
if $\lambda_\alpha = 1$ for every $\alpha\in\Lambda$. 
\end{remark}

%%%
\section{The case $\lambda_\alpha =1$}\label{section_lambda1}
%%%

In this section, we assume that the spectrum $\SSS$ of the category $\CC$ is infinite (not necessarily denumerable)
and study the algebra $\Phi(H(\ut))''$ constructed before,
in the special case $\lambda_\alpha = 1$ for every $\alpha \in \SSS$ (hence for every $\alpha\in\Lambda$), i.e., 
when the $\AAA(\ut)$-$\AAA(\ut)$ pre-Hilbert bimodule $H(\ut)$ has trivial Tomita bimodule structure. The main result is that, 
in the trivial Tomita structure case, the free group factor $L(F_\SSS)$ (with countably infinite or uncountably 
many generators) sits in a corner of $\Phi(H(\ut))''$, namely $I_\ut \Phi(H(\ut))'' I_\ut \cong L(F_\SSS)$
(\cref{thm_freecorner}).

All von Neumann algebras in this section are semifinite and all
conditional expectations are trace-preserving. Since the conditional expectation onto a von Neumann
subalgebra with respect to a fixed tracial weight is unique (see \cite[Theorem 4.2, IX]{TKII}), 
we sometimes omit the conditional expectations from the expressions of amalgamated free products to simplify the
notation, when there is no ambiguity.

Recall that $\AAA(\ut) = \oplus_{\beta \in \SSS} \C I_{\beta}$ is the abelian von Neumann algebra with 
tracial weight $\tau$ such that $\tau(I_{\beta}) = d_{\beta}$, where $I_\beta$ is the identity
of $Hom(\beta, \beta)$, and $d_\beta$ is the categorical dimension of $\beta \in \SSS$.
By \cref{factor_thm} and \cref{type_prop},
\begin{align*}
    (\Phi(H(\ut))'', E) = *_{\AAA(\ut),\, \alpha, \beta_1, \beta_2 \in \SSS,\, \xi 
    \in \OOO^{\beta_2 * \alpha}_{\beta_1}} (\Phi(\{a \xi + b \overline{\xi}: a, b \in \C\})'', E)
\end{align*}
is a II$_\infty$ factor (semifiniteness is due to the assumption $\lambda_\alpha = 1, \alpha\in\SSS$).

We use $L(F_s)$, $1 < s < \infty$, to denote the interpolated free group factor \cite{Radu}, \cite{DFI}. 
In general, for every non-empty set $S$, let $L(F_S)$ be free group factor of the free group over $S$ with
the canonical tracial state $\tau_S$ \cite[Section 6]{Pop}.  

We let $\MMM(F_S) = \oplus_{\beta \in \SSS} L(F_S)$ and identify $\AAA(\ut)$ 
with the subalgebra $\oplus_{\beta \in \SSS} \C$ of $\MMM(F_S)$. Let $E_S$ be the faithful normal 
conditional expectation from $\MMM(F_S)$ onto $\AAA(\ut)$ such that
$E_S(\oplus_{\beta \in \SSS} a_\beta) = \sum_{\beta \in \SSS} \tau_S(a_\beta)I_{\beta}$,
where $a_\beta \in L(F_S)$. By \cref{unit_case}, we have 
\begin{align*}
    (\MMM(F_2), E_2) =  *_{\AAA(\ut),\, \beta \in \SSS, \xi 
    \in \OOO^{\beta * \ut}_{\beta}} (\Phi(\{a \xi + b \overline{\xi}: a, b \in \C\})'', E).
\end{align*}

The following fact is well-known at least when $S$ is denumerable \cite{RF} and it shows that the fundamental group 
of an uncountably generated free group factor is $\R^{+}_{*} = \{t\in\R, t>0\}$ as well.

\begin{lemma}\label{fun_grp_free_group}
    Let $S$ be an infinite set (not necessarily denumerable). Then
    $L(F_S) \cong pL(F_S)p$ for every non-zero projection $p \in L(F_S)$.
\end{lemma}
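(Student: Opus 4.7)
My plan is to reduce the uncountable case to the classical denumerable one. For $|S|=\aleph_0$ the statement is equivalent to saying that $L(F_\infty)$ has fundamental group $\R^{+}_{*}$, which is the content of the interpolated free group factors theory of R\u{a}dulescu \cite{Radu} and Dykema \cite{DFI}. So assume $|S|$ is uncountable.

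The key observation is free-product absorption: since $|S\sqcup\N|=|S|$ for $S$ infinite, one has $L(F_S)\cong L(F_S)*L(F_\infty)$, realized concretely by partitioning the generators and taking the canonical free-product decomposition. Fix such a decomposition $L(F_S) = N_0 * N_1$ with $N_0\cong L(F_\infty)$ and $N_1\cong L(F_S)$ in free position. Since $L(F_S)$ is a II$_1$ factor, any two non-zero projections of the same trace are unitarily equivalent, so after a unitary conjugation I may assume $p\in N_0$. The computation of $p(N_0 * N_1)p$ then proceeds via the standard free-product compression calculus: choose $n$ with $1/n\leq\tau(p)$ and use the denumerable case inside $N_0$ to produce a matrix-unit system $\{e_{ij}\}_{i,j=1}^n$ with $e_{11}\leq p$, $\tau(e_{11})=1/n$, and such that $\{e_{ij}\}''\cong M_n(\C)$ is free (inside $N_0$) from a complementary copy of $L(F_\infty)$. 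Voiculescu's formula for the compression of a free product by a corner of a free matrix subalgebra yields $e_{11}\,L(F_S)\,e_{11}\cong L(F_\infty)*L(F_S)\cong L(F_S)$. Running the same scheme directly on $p$ rather than on $e_{11}$ — by transporting an isomorphism $pN_0p\cong N_0$ (from the denumerable case) across the free product with $N_1$ — gives $p\,L(F_S)\,p\cong L(F_S)$, as required.

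The hard part will be making the free-product compression step rigorous in the uncountable regime without invoking the separability-dependent parts of the R\u{a}dulescu--Dykema interpolation theory. The compression identities needed here, namely the amplification formula for a corner of a free matrix subalgebra and the free-product absorption of $L(F_\infty)$, only involve finite-dimensional or countable algebras acting from one side of the free product, while the uncountable factor $N_1$ is carried along unchanged. Consequently these identities should transfer from the separable setting verbatim, and no new technical input beyond the classical arguments is required; the unitary-equivalence step of the second paragraph is what allows this separability-insensitive transfer to apply to a projection $p$ originally given inside the uncountably generated $L(F_S)$.
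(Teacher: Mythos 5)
Your reduction to the denumerable case is the right instinct, and the first two steps — the free-product absorption $L(F_S)\cong L(F_\infty)*L(F_S)$ and the unitary conjugation moving $p$ into the $L(F_\infty)$ free factor (legitimate, since $L(F_S)$ is a II$_1$ factor) — are sound. The gap is the compression step. Compressing a free product $N_0*N_1$ by a projection $p\in N_0$ does \emph{not} ``carry $N_1$ along unchanged'': there is no identity of the form $p(N_0*N_1)p\cong (pN_0p)*N_1$, so an isomorphism $pN_0p\cong N_0$ cannot be ``transported across the free product''. The known formulas for a corner $e_{11}\bigl(M_n(\C)*Q\bigr)e_{11}$ express the answer in terms of the \emph{compression} $Q_{1/n}$ of the other free factor together with additional free pieces; in your setup $Q\cong L(F_\infty)*N_1\cong L(F_S)$, so the quantity $Q_{1/n}\cong L(F_S)_{1/n}$ appearing on the right-hand side is exactly the unknown you are trying to compute, and the argument is circular. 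Moreover those compression formulas are themselves proved for separable $Q$ (via matrix models or countable semicircular systems), so even granting the formula you would still face the separability issue you set out to avoid.

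The paper's proof sidesteps free products of factors entirely: it realizes $L(F_S)\cong (R\cup\{X_{ij}: i\in\Theta,\, j\in S_i\})''$ with $R$ hyperfinite and $\{X_{ij}\}$ a semicircular family free from $R$, where $S=\bigcup_{i\in\Theta}S_i$ is a partition into countable blocks (\cite[Theorem 4.1]{DFM} holds for arbitrary index sets). Taking $q\in R$ with $\tau(q)=\tau(p)$, it applies the countable-case result \cite[Proposition 2.2]{DFI} \emph{blockwise}, inside each separable algebra $(R\cup\{qX_{ij}q: j\in S_i\})''$, to produce new semicircular families $\{Y_{ij}\}_{j\in S_i}$ free from $R$ generating the same algebras; freeness of the full union $\{Y_{ij}\}_{i,j}$ from $R$ is then checked directly, which works because freeness is a condition on finitely many elements at a time and therefore survives the uncountable union. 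The compressed algebra $(qRq\cup\{qX_{ij}q\})''$ is thus again of the form ``hyperfinite core plus free semicircular family indexed by $S$'', and \cite[Theorem 1.3]{DFI} concludes. If you want to salvage your strategy, you should position $p$ inside a hyperfinite subalgebra relative to which the generators are free, rather than inside a free factor of a free product decomposition.
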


\begin{proof}
    Let $S = \cup_{i \in \Theta}S_i$ be a partition of $S$ such that $|S_i| = |\N|$ and 
    $|\Theta| = |S|$. Let $(\MMM, \tau)$ be a W*-probability space with a normal faithful tracial state $\tau$, containing a copy of the 
    hyperfinite II$_1$-factor $R$ and a semicircular family $\{X_{ij}:i \in \Theta, j \in S_i\}$ (see 
    \cite[Definition 5.1.1]{NVK}) such that $\{X_{ij}:i \in \Theta, j \in S_i\}$ and $R$ are free.
    Consider the von Neumann algebra $\NNN = (R \cup \{qX_{ij}q:i \in \Theta, j \in S_i\})''$, where
    $q \in R$ is a projection with the same trace value as $p$.

    Let $\NNN_i = (R \cup \{qX_{ij}q:j \in S_i\})''$.
    For each $i \in \Theta$, by \cite[Proposition 2.2]{DFI}, there exists a 
    semicircular family $\{Y_{ij}:j \in S_i\} \subset \NNN_i$
    such that $\NNN_i = (R \cup \{Y_{ij}:j \in S_i\})''$ and $\{Y_{ij}:j \in S_i\}$ and $R$ are free.

    Note that $\{Y_{i_0 j}:j \in S_{i_0}\}$ and 
    $R \cup \{X_{ij}: i \in \Theta \setminus\{i_0\}, j \in S_i\}$ are free for every $i_0 \in \Theta$.
    Thus $\{Y_{ij}: i \in \Theta, j \in S_i\}$ is a semicircular family which is free with $R$.

    By \cite[Theorem 4.1]{DFM}, $L(F_S) \cong (R \cup \{Y_{ij}: i \in \Theta, j \in S_i\})''= \NNN$. 
    Since $q\NNN q = (qRq \cup \{qX_{ij}q:i \in \Theta, j \in S_i\})''$ and $qRq \cong R$, we have 
    $L(F_S) \cong p L(F_S) p$ by \cite[Theorem 1.3]{DFI}.
\end{proof} 

\begin{lemma}\label{step_one}
    For every set $S$ such that $1 \leq |S| \leq |\SSS|$, let
    \begin{align*}
        (\MMM, \mathcal{E}) = (\MMM(F_S), E_S) *_{\AAA(\ut),\, \alpha \in \SSS \setminus \{\ut\}, 
    \xi \in \OOO_{\alpha}^{\ut * \alpha}}
        (\Phi(\{a \xi + b \overline{\xi}: a, b \in \C\})'', E).
    \end{align*}
    Then $\MMM$ is a II$_\infty$ factor with a tracial weight 
    $\tau \circ \mathcal{E}$ and $I_\ut \MMM I_\ut \cong L(F_{\SSS})$.
\end{lemma}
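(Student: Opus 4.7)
The plan is first to verify the factor-type statement, then attack the harder corner identification. Since $\lambda_\alpha = 1$ for every $\alpha\in\Lambda$, \cref{module_auto} together with the computation in the proof of \cref{type_prop} show that $\tau \circ \mathcal{E}$ is a faithful normal tracial weight on $\MMM$; it is semifinite because $\tau$ is semifinite on $\AAA(\ut)$, and $\tau \circ \mathcal{E}(I) = \sum_{\beta \in \SSS} d_\beta = +\infty$ since $\SSS$ is infinite, so once factoriality is established $\MMM$ must be of type $\mathrm{II}_\infty$. Factoriality is proved by adapting \cref{factor_thm}: one produces a Haar unitary inside the $\ut$-summand of $\MMM(F_S) = \bigoplus_\beta L(F_S)$ which, combined with the partial isometries living in the other amalgamated factors, yields a unitary $U \in \MMM$ whose conditional-expectation-moments $\mathcal{E}(t_1 U^n t_2)$ tend to zero as $n \to \infty$, forcing (via the same use of \cref{relative_commutant} as in \cref{factor_thm}) any central element to lie in $\MMM(F_S)$ where it is immediately scalar.

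For the corner identification, the next step is to apply \cref{diff_case} to each $\Phi(\{a\xi_\alpha + b\overline{\xi}_\alpha\})''$ with $\xi_\alpha \in \OOO_\alpha^{\ut * \alpha}$. Here one has to swap the roles of $\xi_\alpha$ and $\overline{\xi}_\alpha$ (equivalently $\alpha$ and $\overline{\alpha}$) so that the hypothesis of \cref{diff_case} is satisfied with $\lambda = 1/d_\alpha \leq 1$. The resulting decomposition exhibits $\Phi(\{a\xi_\alpha + b\overline{\xi}_\alpha\})''$ as $(L^\infty([0,1]) \otimes M_2(\C)) \oplus \C q_\alpha \oplus \bigoplus_{\gamma \neq \ut, \alpha} \C I_\gamma$, where $I_\ut$ and a subprojection $p_\alpha \leq I_\alpha$ of trace $1$ play the role of the two minimal projections of $M_2(\C)$. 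From this one extracts both a partial isometry $v_\alpha$ with $v_\alpha^* v_\alpha = I_\ut$, $v_\alpha v_\alpha^* = p_\alpha$, and the direct corner $I_\ut \Phi(\{a\xi_\alpha + b\overline{\xi}_\alpha\})'' I_\ut \cong L^\infty([0,1]) \cong L(F_1)$.

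The crucial step is then to re-express $I_\ut \MMM I_\ut$ as an ordinary (non-amalgamated) free product over $\C$: walks from $I_\ut$ back to $I_\ut$ that traverse the vertex $\alpha$ are conjugated by $v_\alpha$ into $I_\ut$-corners, so the $\AAA(\ut)$-amalgamation at the vertex $\ut$ becomes trivial. Using the compression techniques in the spirit of \cite[Theorem 1.2]{DFI} and adapting the arguments of \cite{AMD} to the trivial Tomita case, combined with \cref{Fockisafunctor_prop} and the amalgamated free product theory developed in \cref{appendix_am_prod}, one should obtain
$$I_\ut \MMM I_\ut \;\cong\; L(F_S) \;\ast\; \bigl(\,\ast_{\alpha \in \SSS \setminus \{\ut\}}\, L(F_1)\bigr).$$
Since $|S| \leq |\SSS|$ and $\SSS$ is infinite, $|S| + |\SSS \setminus \{\ut\}| = |\SSS|$, so the standard free-product additivity of generators of free group factors (together with \cref{fun_grp_free_group}) collapses the right hand side to $L(F_\SSS)$.

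The main obstacle is the third paragraph: performing the corner computation rigorously. The delicate point is passing from the amalgamated free product over the (possibly non-$\sigma$-finite) abelian algebra $\AAA(\ut)$, cut at $I_\ut$, to an ordinary free product of simpler pieces, especially in the case $|\SSS|$ uncountable where classical Dykema--Radulescu compression results do not directly apply. A careful bookkeeping of walks in the \lq\lq graph" with vertices $\SSS$ and edges given by the $v_\alpha$, combined with the non-$\sigma$-finite amalgamated free product theory of \cref{appendix_am_prod}, is what is needed to push this through.
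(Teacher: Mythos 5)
Your overall route --- apply \cref{diff_case} to each edge algebra (with the correct swap $\xi_\alpha \leftrightarrow \overline{\xi}_\alpha$ so that $\lambda = 1/d_\alpha \leq 1$, producing $v_\alpha$ with $v_\alpha^*v_\alpha = I_\ut$, $v_\alpha v_\alpha^* = p_\alpha$), cut to the corner $I_\ut\MMM I_\ut$, identify it as a plain free product over $\C$, and collapse to $L(F_\SSS)$ using the infinitude of $\SSS$ --- is the same as the paper's. But your key intermediate isomorphism $I_\ut\MMM I_\ut \cong L(F_S) * \bigl(*_{\alpha\in\SSS\setminus\{\ut\}} L(F_1)\bigr)$ is not correct as stated: it accounts for the $\ut$-summand of $\MMM(F_S)$ and for the corners $I_\ut\Phi(\{a\xi_\alpha+b\overline{\xi}_\alpha\})''I_\ut\cong L(F_1)$, but it drops the copies $v_\alpha^*\bigl(p_\alpha L(F_S)p_\alpha\bigr)v_\alpha$ of the compressed $\alpha$-summands of $\MMM(F_S)$, which also land in $I_\ut\MMM I_\ut$ once the walks through vertex $\alpha$ are conjugated back by $v_\alpha$. (Ignoring the infiniteness of $\SSS$ for a moment: with $\SSS=\{\ut,\alpha\}$, $d_\alpha=1$ and $S$ finite, the corner is $L(F_S)*L(F_S)*L(F_1)\cong L(F_{2|S|+1})$, not $L(F_{|S|+1})$.) The paper avoids this by regrouping before compressing: each $\MMM_\alpha$ is the amalgamated free product of the $\alpha$-summand of $\MMM(F_S)$ with the corresponding edge algebra, so that $I_\alpha\MMM_\alpha I_\alpha = L(F_S)*(L(F_1)\oplus\C)$ by \cite[Lemma 4.3]{DI}, an interpolated free group factor by \cite[Proposition 1.7 (i)]{DF}; compressing by $p_\alpha$ and using \cite[Theorem 2.4]{DFI} together with \cref{fun_grp_free_group} gives $I_\ut\MMM_\alpha I_\ut\cong L(F_{t_\alpha})$, and then $I_\ut\MMM I_\ut = L(F_S)*\bigl(*_{\alpha}L(F_{t_\alpha})\bigr)\cong L(F_\SSS)$. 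Since $\SSS$ is infinite your final answer is unaffected, but the decomposition you display is exactly the statement that needs proof, and it is false; the ``walks" bookkeeping you defer to the last paragraph is where the missing generators must be accounted for.

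Your factoriality argument is also both heavier than necessary and incomplete. Applying \cref{relative_commutant} to a mixing unitary built from Haar unitaries in $\MMM(F_S)$ only places $\ZZZ(\MMM)$ inside $\MMM(F_S)=\oplus_\beta L(F_S)$, whose center is all of $\AAA(\ut)$ --- ``immediately scalar" does not follow, and you would still need the partial isometries $v_\alpha$ to see that a central element is constant across the summands (moreover a Haar unitary only in the $\ut$-summand is not a unitary of $\MMM$). The paper gets factoriality for free at the end: each $\MMM_\alpha$ connects $I_\ut$ to $I_\alpha$, so the central carrier of $I_\ut$ in $\MMM$ is $I$, whence $\MMM$ is a factor because its corner $I_\ut\MMM I_\ut\cong L(F_\SSS)$ is one, and it is of type II$_\infty$ because $\tau\circ\mathcal{E}(I)=\sum_{\beta\in\SSS}d_\beta=+\infty$.
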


\begin{proof} 
    \cref{modular_auto} in the appendix implies that $\tau \circ \mathcal{E}$ is a tracial weight.
    By \cref{diff_case}, we have
    \begin{align*}
        \MMM = (\stackrel[d_\ut]{I_\ut}{L(F_S)} 
        \oplus_{\alpha \in \SSS \setminus \{\ut\}} 
        \stackrel[d_\alpha]{I_\alpha}{\C})*_{\AAA(\ut),\, \beta \in \SSS \setminus \{\ut\}}\MMM_\beta,
    \end{align*}
    where $\MMM_\beta$ is 
    \begin{align*}
        \MMM_\beta = (\stackrel[d_\beta]{I_\beta}{L(F_S)}\oplus_{\alpha \in \SSS \setminus \{\beta\}} 
        \stackrel[d_\alpha]{I_\alpha}{\C})*_{\AAA(\ut)}
        ( (\stackrel[2]{
            I_{\ut} + p_{\beta}}{L(F_1) \otimes M_2(\C)}) 
        \; \oplus \hspace{-1mm} \stackrel[d_{\beta}-1]{q_{\beta}}{\C}
        \; \oplus_{z \in \SSS \setminus \{\ut, \beta\}} 
        \stackrel[d_z]{I_z}{\C}  ),
    \end{align*}
    and $p_\beta + q_{\beta} = I_\beta$. It is clear that the central carrier of $I_\beta$ in $\MMM_\beta$ is
    $I_\ut + I_\beta$. By \cite[Lemma 4.3]{DI}, 
    \begin{align*}
        I_\beta \MMM_\beta I_\beta = \stackrel[d_\beta]{I_\beta}{L(F_S)} * \stackrel[1]{p_\beta}{(L(F_1)}\oplus 
        \stackrel[d_\beta -1]{q_\beta}{\C}).
    \end{align*} 
    By \cite[Proposition 1.7 (i)]{DF}, $I_\beta \MMM_\beta I_\beta \cong L(F_{t_{\beta}'})$ where 
    $t_{\beta}'$ is a real number bigger than $1$ if $S$ is a finite set (see \cite{DFI}) or 
    $t_{\beta} = S$ if $S$ is an infinite set. 
    Then \cite[Theorem 2.4]{DFI} and \cref{fun_grp_free_group} imply 
    $I_\ut \MMM_\alpha I_\ut \cong L(F_{t_{\beta}})$ where 
    $t_{\beta}$ is a real number bigger than $1$ if $S$ is a finite set or 
    $t_{\beta} = S$ if $S$ is an infinite set.

    Note that $I_\ut \MMM I_\ut = L(F_S) *_{\beta \in \SSS \setminus \{\ut\}} I_\ut \MMM_\beta I_\ut$. 
    Let $\SSS = \cup_{j \in \Theta} S_j$ where $\Theta$ is a set such that $|\Theta| = |\SSS|$ 
    and $\{S_j\}_{j \in J}$ are disjoint countably infinite subsets of $\SSS$. 
    If $S$ is a finite set, by \cite[Proposition 4.3]{DF}, $I_\ut \MMM I_\ut \cong *_{j \in J}L(F_\N) \cong
    L(F_\SSS)$. Otherwise, $I_\ut \MMM I_\ut \cong *_{j \in J}L(F_S) \cong
    L(F_\SSS)$. Finally, $\MMM$ is a type II$_\infty$ factor because the central carrier of $I_\ut$ is $I$. 
\end{proof}

\begin{lemma}\label{simplify_step_one}
    For every $S$ such that $1 \leq |S| \leq |\SSS|$, let
    \begin{align*}
        (\MMM, \mathcal{E}) = (\MMM(F_S), E_S) *_{\AAA(\ut),\, \alpha \in \SSS \setminus \{\ut\}, 
        \xi \in \OOO_{\alpha}^{\ut * \alpha}}
    (\stackrel[2]{
        I_{\ut}+p_{\alpha}}{M_2(\C)} 
        \oplus \stackrel[d_{\alpha}-1]{q_{\alpha}}{\C}
        \oplus_{\beta \in \SSS \setminus \{\ut, \alpha\}} 
        \stackrel[d_{\beta}]{I_{\beta}}{\C}),
    \end{align*}
    where $p_\alpha + q_\alpha = I_\alpha$. Then $\MMM$ is a II$_\infty$ factor 
    with a tracial weight $\tau \circ \mathcal{E}$ and $I_\ut \MMM I_\ut \cong L(F_{\SSS})$.
\end{lemma}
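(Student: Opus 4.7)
The plan is to mirror the proof of \cref{step_one} step by step, with the simpler finite-dimensional blocks $M_2(\C) \oplus \C \oplus_{\gamma} \C I_\gamma$ replacing the blocks $(L^\infty([0,1]) \otimes M_2(\C)) \oplus \C \oplus_\gamma \C I_\gamma$ coming from \cref{diff_case}. First I would verify via \cref{modular_auto} that $\tau \circ \mathcal{E}$ is a tracial weight, so that the amalgamated free product theory over the tracial base $(\AAA(\ut), \tau)$ applies in the usual way.

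Next, for each $\beta \in \SSS \setminus \{\ut\}$, I would regroup the amalgamated factors into
\begin{align*}
\MMM_\beta \;=\; (\MMM(F_S), E_S) *_{\AAA(\ut)} \NNN_\beta,
\end{align*}
where $\NNN_\beta$ denotes the $\beta$-th purely atomic block
$\stackrel[2]{I_\ut + p_\beta}{M_2(\C)} \oplus \stackrel[d_\beta-1]{q_\beta}{\C} \oplus_{\gamma \neq \ut, \beta} \stackrel[d_\gamma]{I_\gamma}{\C}$, so that $\MMM = *_{\AAA(\ut),\, \beta \in \SSS \setminus \{\ut\}} \MMM_\beta$. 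The central carrier of $I_\beta$ in $\NNN_\beta$ (hence in $\MMM_\beta$) is $I_\ut + I_\beta$ through the $M_2(\C)$-block, so by \cite[Lemma 4.3]{DI} applied to the corner at $I_\beta$, one obtains
\begin{align*}
I_\beta \MMM_\beta I_\beta \;\cong\; \stackrel[d_\beta]{I_\beta}{L(F_S)} * \bigl(\stackrel[1]{p_\beta}{\C} \oplus \stackrel[d_\beta-1]{q_\beta}{\C}\bigr).
\end{align*}
By \cite[Proposition 1.7(i)]{DF} (in its purely atomic version, with no $L^\infty([0,1])$ diffuse piece on the right), this is an interpolated free group factor $L(F_{t'_\beta})$ with parameter $t'_\beta > 1$ when $S$ is finite and with $|t'_\beta| = |S|$ otherwise. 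Then \cite[Theorem 2.4]{DFI} together with \cref{fun_grp_free_group} transfer this to $I_\ut \MMM_\beta I_\ut \cong L(F_{t_\beta})$ with the analogous parameter $t_\beta$.

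Finally I would aggregate exactly as in \cref{step_one}: the amalgamated free product structure gives
\begin{align*}
I_\ut \MMM I_\ut \;\cong\; L(F_S) *_{\beta \in \SSS \setminus \{\ut\}} I_\ut \MMM_\beta I_\ut,
\end{align*}
and partitioning $\SSS$ into $|\SSS|$ many countably infinite subsets and repeatedly applying the free product identities of \cite[Proposition 4.3]{DF} collapses the right-hand side to $L(F_\SSS)$ (countably or uncountably generated according to $|\SSS|$). Factoriality of $\MMM$ as a II$_\infty$ factor follows because the central carrier of $I_\ut$ in $\MMM$ is $I$, by the same $M_2(\C)$-block connectedness argument used throughout. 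The main obstacle is confirming that the interpolation formula \cite[Proposition 1.7(i)]{DF} still delivers the same free group factor parameter $t'_\beta$ after the $L^\infty([0,1])$ diffuse component of the \cref{diff_case} block has been removed: here the required diffuse generators of $L(F_{t'_\beta})$ must be produced entirely from the free coupling between $L(F_S)$ and the purely atomic two-point algebra $\C p_\beta \oplus \C q_\beta$, rather than being inherited from an internal $L^\infty([0,1])$. This is exactly the content of Dykema's absorption/interpolation computations, so the verification is bookkeeping of trace parameters rather than new analysis.
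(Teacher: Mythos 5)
Your proposal is correct and takes essentially the same route as the paper: the paper's entire proof of this lemma is the instruction to invoke \cite[Lemma 2.2]{DF} in place of \cite[Proposition 1.7 (i)]{DF} and then repeat the argument of \cref{step_one} verbatim. The one point you leave slightly vague --- which Dykema result computes the corner $\stackrel[d_\beta]{}{L(F_S)} * (\stackrel[1]{}{\C} \oplus \stackrel[d_\beta-1]{}{\C})$ once the diffuse $L^{\infty}([0,1])$ component has been removed --- is settled not by a ``purely atomic version'' of Proposition 1.7 (i) but by the separate statement \cite[Lemma 2.2]{DF}, which is exactly the bookkeeping of trace parameters you anticipate.
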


\begin{proof}
    Invoke \cite[Lemma 2.2]{DF} instead of \cite[Proposition 1.7 (i)]{DF}, and 
    the lemma can be proved by the same argument used in the proof of \cref{step_one}. 
\end{proof}

The following fact is well-known, we sketch the proof for the readers' convenience.

\begin{lemma}\label{projection_case}
    Let $0 < t < 1$ and $\MMM = *_{i \in \N} (\stackrel[t]{}{\C} \oplus \stackrel[1-t]{}{\C})$.
    Then $\MMM \cong L(F_\N)$.
\end{lemma}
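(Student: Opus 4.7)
The strategy is to realize $\MMM$ as the inductive limit of finite free products and apply Dykema's theory of interpolated free group factors. For $N \geq 1$, set $\MMM_N = *_{i=1}^N (\stackrel[t]{}{\C} \oplus \stackrel[1-t]{}{\C})$ with the natural product tracial state. The unital trace-preserving inclusions $\MMM_N \subset \MMM_{N+1}$ exhibit $\MMM$ as the $\sigma$-weak closure of $\bigcup_N \MMM_N$.

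By Dykema's formulas for free products of finite-dimensional algebras (see \cite{DFI}, \cite{DF}), there exists $N_0$ such that $\MMM_N$ is a genuine interpolated free group factor $L(F_{s_N})$ for every $N \geq N_0$. Indeed, the early terms of the chain may carry an atomic part (for instance, when $t = 1/2$, $\MMM_2 \cong L(D_\infty)$ where $D_\infty = \Z_2 * \Z_2$ is the infinite dihedral group, which fails to be ICC and hence does not yield a factor), but after finitely many free products with $\stackrel[t]{}{\C} \oplus \stackrel[1-t]{}{\C}$ the atomic part is absorbed into a diffuse II$_1$ factor. Once the chain enters the free group factor regime, Dykema's additivity of free dimension under free products gives
\begin{align*}
    L(F_{s_N}) * (\stackrel[t]{}{\C} \oplus \stackrel[1-t]{}{\C}) \cong L(F_{s_N + 2t(1-t)}),
\end{align*}
so each step increases $s_N$ by the strictly positive amount $2t(1-t)$.

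Since $s_N \to \infty$, the inductive limit of the trace-preserving chain $L(F_{s_{N_0}}) \hookrightarrow L(F_{s_{N_0 + 1}}) \hookrightarrow \cdots$ is the free group factor on countably many generators $L(F_\N)$ (for example by the same argument used at the end of the proof of \cref{step_one}, writing $L(F_\N)$ as a free product of countably many copies of $L(F_{r_j})$), hence $\MMM \cong L(F_\N)$.

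The main obstacle is bootstrapping the induction: one has to verify that after finitely many steps the atomic part has been completely absorbed and $\MMM_{N_0}$ is an interpolated free group factor with the correct free dimension $s_{N_0}$. This is a direct but somewhat delicate application of Dykema's free product formulas, requiring special attention to the case $t = 1/2$ where one must pass through $N = 3$ before reaching an ICC (hence factorial) situation, since $\Z_2^{*k}$ is ICC precisely for $k \geq 3$.
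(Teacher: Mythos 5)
Your argument is correct and follows essentially the same route as the paper: both reduce the lemma to Dykema's free product formulas for finite-dimensional algebras, showing that the $N$-fold free product is eventually a genuine interpolated free group factor (the paper carries out your ``bootstrapping'' step explicitly by computing the two- and four-fold products via \cite[Theorem 1.1]{DF} and \cite[Theorem 4.6]{DF} and then invoking \cite[Proposition 2.4]{DF}), before passing to the infinite free product exactly as you do, via \cite[Propositions 4.3 and 4.4]{DF}. Your explicit free-dimension increment $2t(1-t)$ and the ICC discussion at $t=1/2$ are correct but are only needed to guarantee $s_N\to\infty$, which the paper obtains the same way.
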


\begin{proof}
    We assume $t \geq 1/2$. Then by \cite[Theorem 1.1]{DF},
    \begin{align*}
        (\stackrel[t]{}{\C} \oplus \stackrel[1-t]{}{\C})*(\stackrel[t]{}{\C} \oplus \stackrel[1-t]{}{\C})
        \cong (\stackrel[2-2t]{}{L(F_1)\otimes M_2(\C)})\oplus \stackrel[\max\{2t-1,0\}]{}{\C}.
    \end{align*}
    By \cite[Theorem 4.6]{DF},  
    \begin{align*}
        ((\stackrel[2-2t]{}{L(F_1)\otimes M_2(\C)})\; \oplus \stackrel[\max\{2t-1,0\}]{}{\C}) &*
        ((\stackrel[2-2t]{}{L(F_1)\otimes M_2(\C)})\; \oplus \stackrel[\max\{2t-1,0\}]{}{\C})\\
        &\cong \stackrel[1-\max\{4t-3,0\}]{}{L(F_{s_1})} \oplus  \stackrel[\max\{4t-3,0\}]{}{\C},
    \end{align*}
    where $s_1 \geq 1$. By \cite[Proposition 2.4]{DF}, there exist $n \in \N$ such that 
    \begin{align*}
        *_{i =1 \ldots n} (\stackrel[t]{}{\C} \oplus \stackrel[1-t]{}{\C}) \cong L(F_{s_2}),
    \end{align*}
    where $s_2 \geq 1$. Then \cite[Propositions 4.3 and 4.4]{DF} imply the result.         
\end{proof}

\begin{lemma}\label{finite_free_product}
    Let $(\MMM, \tau)$ be a W$^*$-noncommutative probability space with normal faithful tracial state $\tau$ and let
    \begin{align*}
        \AAA = \; \stackrel[2/(1+d)]{e_{11}+ e_{22}}{M_2(\C)} \; \oplus \stackrel[(d - 1)/(1+ d)]{q}{\C}
    \end{align*}  
    be a unital subalgebra of $\MMM$, and $d > 1$.
    We use $\{e_{ij}\}_{i,j}$ to denote the canonical matrix units of $M_2(\C)$ in $\AAA$. 
    Let $\AAA_0$ be the abelian subalgebra generated by $e_{11}$ and $P = e_{22}+q$ and 
    $E$ be the trace-preserving conditional expectation from $\MMM$ onto $\AAA_0$.

    Assume that $\{e_{22}\} \cup \{p_s\}$ is a *-free family of projections in $(e_{22}+q)\MMM (e_{22}+q)$
    such that $\tau(p_s) = 1/(1+ d)$. We denote by $\NNN$ the von Neumann subalgebra of 
    $(e_{22}+q)\MMM (e_{22}+q)$ generated by $\{e_{22}\} \cup \{p_s\}_{s \in S}$.
    Then there exist partial isometries $v_s$ in $\NNN$ such that 
    $v_s^*v_s= p_s$ and $v_s v_s^* = e_{22}$.
    Suppose that $\{w_s\}_{s \in S}$ is a free family of Haar unitaries in 
    $e_{22}\MMM e_{22}$ which is *-free with $e_{22}\NNN e_{22}$.  
    Then 
    \begin{align*}
        ([\AAA \cup (\cup_{s \in S} \{e_{11}, e_{12}w_sv_s, v_s^* w_s^*e_{21}, p_s,P\})]'', E)
        \cong (\AAA, E)*_{\AAA_0} (*_{\AAA_0, s \in S} (\AAA, E)).
    \end{align*}
\end{lemma}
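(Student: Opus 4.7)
The strategy is to recognize the algebra on the left-hand side as an amalgamated free product over $\AAA_0$ of $|S|+1$ trace-preserving copies of $\AAA$: the original copy $\AAA$ and, for each $s\in S$, a copy $\AAA^{(s)}$ generated by the additional data. Once these copies are verified to be $\AAA_0$-free with respect to $E$, the claim follows from the universality of amalgamated free products (\cref{appendix_am_prod}).

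First I take $\AAA^{(s)}$ to be the $*$-subalgebra of $\MMM$ generated by $\{e_{11},p_s,P-p_s,e_{12}w_sv_s,v_s^*w_s^*e_{21}\}$, and claim that the assignment
\begin{align*}
e_{11}\mapsto e_{11},\quad e_{22}\mapsto p_s,\quad q\mapsto P-p_s,\quad e_{12}\mapsto e_{12}w_sv_s,\quad e_{21}\mapsto v_s^*w_s^*e_{21}
\end{align*}
extends to a trace-preserving $*$-isomorphism $\AAA\to\AAA^{(s)}$ that fixes $\AAA_0=\C e_{11}\oplus\C P$ pointwise. Using $v_s^*v_s=p_s$, $v_sv_s^*=e_{22}$, $w_s^*w_s=w_sw_s^*=e_{22}$ and the matrix-unit relations in $\AAA$, one computes directly that $(e_{12}w_sv_s)(v_s^*w_s^*e_{21})=e_{12}e_{22}e_{21}=e_{11}$ and that $(e_{12}w_sv_s)^2=0$ via $v_se_{12}=v_sp_se_{11}e_{12}=0$ (using $p_se_{11}=0$, since $p_s\leq P\perp e_{11}$); the remaining orthogonality relations and the trace identities (notably $\tau(p_s)=\tau(e_{22})=1/(1+d)$) are equally direct.

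The heart of the argument is the freeness of $\AAA$ and $\{\AAA^{(s)}\}_{s\in S}$ over $\AAA_0$ with respect to $E$. This reduces to showing that $E(a_0a_1\cdots a_n)=0$ for every alternating word with $a_i$ an $E$-centered element of $\AAA^{(k_i)}$ and $k_0\neq k_1\neq\cdots\neq k_n$ (writing $k=0$ for the original $\AAA$). The $E$-centered subspace of each copy is three-dimensional, spanned by the two off-diagonal matrix units and by the diagonal element $e_{22}^{(k)}-(1/d)P$. I plan to expand such a moment by pushing all factors $w_s^{\pm 1}$ into $e_{22}\MMM e_{22}$ and all factors $v_s$ into $\NNN\subset P\MMM P$, so that $\tau(a_0\cdots a_n)$ becomes a trace of an alternating product of Haar unitaries $w_s^{\pm 1}$ interspersed with elements of $e_{22}\NNN e_{22}$ arising from the $v_s$'s and the diagonal generators. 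A secondary expansion, using the freeness of $\{e_{22}\}\cup\{p_s\}_{s\in S}$ in $P\MMM P$, allows these $\NNN$-elements to be centered in the normalized trace on $e_{22}\MMM e_{22}$; the freeness of $\{w_s\}_{s\in S}$ from $e_{22}\NNN e_{22}$ and from each other, combined with $\tau_{e_{22}}(w_s)=0$, then forces the whole moment to vanish by the standard Voiculescu freeness formula.

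The main technical obstacle is the combinatorial bookkeeping: one has to translate the alternation of the outer indices $k_i$ into a genuinely non-trivial alternation of $w$-indices in the resulting scalar moment, while simultaneously verifying that the intervening $\NNN$-elements remain $e_{22}\MMM e_{22}$-centered after the secondary expansion. This is also the conceptual reason for the auxiliary Haar unitaries $w_s$: the partial isometries $v_s$ alone would generate hidden correlations between the various copies via the joint distribution of $\{e_{22}\}\cup\{p_s\}_{s\in S}$ inside $\NNN$, and freeness over $\AAA_0$ could not possibly hold; it is precisely the free randomness supplied by the $w_s$'s that decouples the copies.
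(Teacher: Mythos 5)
Your setup is correct and coincides with the paper's: the five listed generators do form a trace-preserving copy of $\AAA$ over $\AAA_0$ (your computations $(e_{12}w_sv_s)(v_s^*w_s^*e_{21})=e_{11}$, $(v_s^*w_s^*e_{21})(e_{12}w_sv_s)=p_s$, together with $\tau(p_s)=\tau(e_{22})$, are exactly what is needed), and the lemma does reduce, via the uniqueness of the amalgamated free product (\cref{unique_lemma}), to showing that $E$ annihilates every alternating word in the $E$-centered subspaces of the $|S|+1$ copies, each of which is spanned by the two off-diagonal elements and by $de_{22}-P$ (resp.\ $dp_s-P$). But that vanishing statement is the entire content of the lemma, and you do not prove it: you describe a plan (``I plan to expand\dots'') and then explicitly label as an unresolved ``technical obstacle'' precisely the two points on which the argument turns. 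This is a genuine gap, not bookkeeping.

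Concretely, ``push the $w$'s into $e_{22}\MMM e_{22}$, center, and invoke the freeness formula'' does not close in at least two situations, which is why the paper's proof is an explicit case analysis rather than a one-line appeal to freeness. First, an alternating centered word may contain no $w$-factors at all (a word in $e_{12}$, $e_{21}$ and the diagonal elements); there the $w_s$ contribute nothing and the vanishing must come entirely from the freeness of $\{e_{22}\}\cup\{p_s\}_{s\in S}$ in $P\MMM P$, after rewriting $e_{21}e_{12}=e_{22}=\tfrac{1}{d}(de_{22}-P)+\tfrac{1}{d}P$ (the paper's Case 1). Second, when a matched pair $e_{12}w_sv_s,\dots,v_s^*w_s^*e_{21}$ encloses only diagonal factors, your ``secondary expansion'' produces a term in which $w_s$ and $w_s^*$ become adjacent and cancel, so the freeness of the $w$'s gives nothing; the surviving coefficient is $\tau\bigl(v_s(\text{inner word})v_s^*\bigr)=\tau\bigl((\text{inner word})\,p_s\bigr)$, and one must check separately that it vanishes, using that alternation of the outer indices forces the inner word to end with a factor different from $dp_s-P$, so that appending $dp_s-P$ keeps it alternating and freeness of the projections applies. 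The paper's proof consists exactly of this execution: Case 1 (no $w$-factors), Case 2 (unequal numbers of $w$- and $w^*$-factors, killed by freeness of $\{w_s\}$ from $e_{22}\NNN e_{22}$), and an innermost-matched-pair reduction back to Case 1. Your architecture is the same as the paper's; what is missing is this case analysis, and without it the proof does not close.
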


\begin{proof}
    The existence of $v_s$ is implied by \cite[Theorem 1.1]{DF}.
    It is clear that 
    \begin{align*}
        \{e_{11}, e_{12}w_sv_s, v_s^* w_s^*e_{21}, p_s,P\}'' \cong \AAA.
    \end{align*} 
    We could assume that $0 \notin S$ and denote by $Z_0 = \{e_{12}, e_{21}, de_{22} - P\}$ and
    $Z_s = \{e_{12}w_sv_s, v_s^* w_s^*e_{21}, dp_s - P\}$ for every $s\in S$.
    Note  that $E(a)= (1+d)\tau(e_{11}a e_{11})e_{11} + [(1+d)/d] \tau(PaP)P$.
    We only need to show that $E(t_1 \cdots t_n) = 0$. 
    and $t_i \in Z_{k(i)}$ such that $k(1) \neq k(2) \neq \cdots \neq k(n)$, where
    $k(i) \in \{0\} \cup S$, $i\in1,\ldots,n$, $n\geq 1$.

    Note that $E(t_1 \cdots t_n)=0$ if $t_1 \cdots t_n \in e_{11} \MMM P$ or $P\MMM e_{11}$.
    Therefore we assume that $0 \neq t_1 \cdots t_n$ is in $e_{11}\MMM e_{11}$ or in $P\MMM P$ and show that
    $\tau(t_1 \cdots t_n) = 0$.

    Case 1: $t_i \notin \cup_{s \in S} \{e_{12}w_sv_s, v_s^* w_s^* e_{21}\}$. 
    If $e_{12}$ and $e_{21}$ are not in $\{t_1, \ldots, t_n\}$, then it is clear that
    $\tau(t_1 \cdots t_n) = 0$.
    If $e_{12}$ or $e_{21}$ is in $\{t_1, \ldots, t_n\}$, then $t_1 = e_{12}$, $t_n = e_{21}$.
    Note that $t_i = d e_{22} - P$ or $t_i \in \{d p_{s}-P\}_{s\in S}$, $i = 2, \ldots, n-1$
    and $t_{n-1} \neq d e_{22}-P$, $i = 2, \ldots, n-1$.
    Since $\{p_s\}_{s \in S}$ and $e_{22}$ are free, then
    \begin{align*}
        \tau(t_1 \cdots t_n) = \tau(t_2 \cdots t_{n-1}e_{22}) = 0. 
    \end{align*}

    Case 2: If the number of the elements in 
    $\{i: 1\leq i\leq n, t_{i} \in  \cup_{s \in S} \{e_{12}w_sv_s\}\}$ and
    $\{i: 1\leq i\leq n, t_{i} \in \cup_{s \in S} \{v_s^* w_s^* e_{21}\}\}$ are different, 
    then $\tau(t_1 \cdots t_n) = 0$ since $\{w_s\}_{s \in S}$ and 
    $e_{22} \NNN e_{22} = e_{22} (\AAA \cup \NNN)'' e_{22}$ are free. 
   
    Therefore, if $\tau(t_1 \cdots t_n) \neq 0$, there must exist $i_1 < i_2$ such that
    \begin{align*}
        t_{i_1} = v_s^* w_s^* e_{21}, t_{i_2} = e_{12}w_sv_s \mbox{ or }
        t_{i_1} = e_{12}w_sv_s, t_{i_2} = v_s^* w_s^* e_{21}. 
    \end{align*}
    and $t_{i} \notin \cup_{s \in S} \{e_{12}w_sv_s, v_s^* w_s^* e_{21}\}$, $i_1 < i < i_2$.

    Assume that $t_{i_1} = v_s^* w_s^* e_{21}, t_{i_2} = e_{12}w_sv_s$. 
    Since $t_{i_1} \cdots t_{i_2} \neq 0$, $t_{(i_1 + 1)} = e_{12}$,
    $t_{(i_2 - 1)} = e_{21}$, $t_{i} \in \{d p_s - P\}_{s \in S} \cup \{de_{22} - P\}$
    $i_1 + 1 < i < i_2 -1$ and $t_{(i_1 + 2)} \neq de_{22} - P$, $t_{(i_1 -2)} \neq de_{22} - P$,
    then by Case 1, we have
    \begin{align*}
        \tau(e_{22}t_{(i_1 +2)} \cdots t_{(i_2 -2)}e_{22}) = 0.
    \end{align*}
    Now assume that $t_{i_1} = e_{12}w_sv_s, t_{i_2} = v_s^* w_s^* e_{21}$. 
    Since $t_{i_1} \cdots t_{i_2} \neq 0$, $t_{j} \notin \{e_{12}, e_{21}\}$, $i_1 \leq j \leq i_2$.
    Then by Case 1,
    \begin{align*}
        \tau(v_s t_{(i_1 +2)}\cdots t_{(i_2 -2)}v_s^*)
        = \tau(p_s t_{(i_1 +2)}\cdots t_{(i_2 -2)}) = 0.
    \end{align*} 
    This, however, implies that $\tau(t_1 \cdots t_n) = 0$.
\end{proof}

\begin{lemma}\label{ext_lemma}
    Suppose that $S$ is a set such that $|S| = |\SSS|$, let
   \begin{align*}
       \MMM = *_{\AAA_0, s \in S}
           (\stackrel[2]{I_{1}+ p}{M_2(\C)}\oplus \stackrel[d-1]{q}{\C})
   \end{align*} 
    where $\AAA_0= \stackrel[1]{I_1}{\C} \oplus \stackrel[d]{I_2}{\C}$ and $I_2 =p+q$, $d\geq 1$.
    Then $\MMM$ is a II$_1$ factor and $I_1 \MMM I_1 \cong L(F_\SSS)$.
\end{lemma}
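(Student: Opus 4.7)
The plan is to apply \cref{finite_free_product} to realize $\MMM$ concretely inside a larger tracial W$^*$-probability space, and then to compute the corner $I_1 \MMM I_1 = e_{11} \MMM e_{11}$ by adapting the free product techniques used in the proofs of \cref{step_one} and \cref{simplify_step_one}.

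Since $|S|=|\SSS|$ is infinite, I would first pick $S_0 \subset S$ with $|S_0| = |S|$ and $|S\setminus S_0| = 1$. By \cref{finite_free_product}, $\MMM$ can be identified with the subalgebra of some $(\tilde\MMM, \tilde\tau)$ generated by one distinguished copy of $\AAA = M_2(\C) \oplus \C$ (with matrix units $\{e_{ij}\}$, projection $q$, and $P = e_{22}+q$), together with a *-free family $\{p_s\}_{s \in S_0}$ of projections in $P\tilde\MMM P$ with $\tilde\tau(p_s) = 1/(1+d)$ that is *-free with $e_{22}$, and a *-free family $\{w_s\}_{s\in S_0}$ of Haar unitaries in $e_{22}\tilde\MMM e_{22}$ which is *-free with $\NNN = (\{e_{22}\} \cup \{p_s\})''$. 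Since $1 + |S_0| = |S|$, the number of copies matches.

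To see that $\MMM$ is a II$_1$ factor, I would compute the central carrier of $I_1 = e_{11}$ and show it equals $I$. The partial isometries $e_{12}v_s$, where $v_s \in \NNN$ satisfies $v_s^*v_s = p_s$ and $v_s v_s^* = e_{22}$, implement Murray--von Neumann equivalences $p_s \sim e_{11}$ inside $\MMM$, so the central carrier of $e_{11}$ contains every $p_s$. A standard moment computation in $P\tilde\MMM P$ shows that a *-free family $\{p_s\}_{s \in S_0}$ of projections of trace $1/d$ (normalized in $P\tilde\MMM P$) has supremum $P$ once $|S_0|$ is infinite, so the central carrier of $e_{11}$ contains $P$, and therefore equals $I$. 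Since the trace is finite, $\MMM$ is of type II$_1$.

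For the corner, $e_{11}\MMM e_{11}$ is generated by $e_{11}$ together with words $e_{12} \omega e_{21}$ with $\omega \in e_{22}\MMM e_{22}$. Conjugation by $e_{12}$ turns the *-free family $\{w_s\}_{s \in S_0}$ of Haar unitaries into a *-free family $\{e_{12}w_s e_{21}\}_{s \in S_0}$ of Haar unitaries in $e_{11}\MMM e_{11}$, generating a copy of $L(F_{S_0}) \cong L(F_\SSS)$. The remaining generators of $e_{22}\MMM e_{22}$, coming from $e_{22} \NNN e_{22}$, contribute further elements of $e_{11}\MMM e_{11}$ which, by the absorption properties of $L(F_\SSS)$ (\cref{fun_grp_free_group} and \cite[Propositions 4.3, 4.4]{DF}) together with the compression-to-free-product formulas of \cite[Theorem 2.4]{DFI}, are absorbed into the free group factor, yielding $e_{11}\MMM e_{11} \cong L(F_\SSS)$. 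The main obstacle is keeping careful track of trace normalizations and verifying that the family of generators produced in $e_{11}\MMM e_{11}$ after compression, once absorption is taken into account, yields precisely $|\SSS|$ free generators; the infinite cardinality of $S$ is essential here, since the stability statements in \cref{fun_grp_free_group} and \cite[Proposition 4.3]{DF} apply only in that regime.
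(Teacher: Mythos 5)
Your proposal follows essentially the same route as the paper's proof: both invoke \cref{finite_free_product} to model $\MMM$ concretely via one distinguished copy of $\AAA$ together with a free family of projections and Haar unitaries, both obtain factoriality from the central support of $I_1$ (using that the $p_s$ are equivalent to $e_{11}$ and that their supremum is $P$), and both compute the corner as a free product of $\{w_s\}''\cong L(F_{\SSS})$ with a compression of $\{p_s\}''$, absorbing the latter via \cref{projection_case}, \cref{fun_grp_free_group} and the interpolated free group factor results of \cite{DF}, \cite{DFI}. The one point you should repair is the boundary case $d=1$: \cref{finite_free_product} is stated only for $d>1$, and for $d=1$ its hypotheses degenerate (then $q=0$ and $\tau(p_s)=\tau(P)$ forces $p_s=P$, so there is no nontrivial free family of projections), so it cannot be cited verbatim; the paper accordingly treats $d=1$ as a separate and easier case, where $I_1\MMM I_1$ is generated directly by a free family of Haar unitaries indexed by $S\setminus\{s_0\}$, which still has cardinality $|\SSS|$.
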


\begin{proof}
    Let $s_0 \in S$.

    Case 1: $d = 1$. Then a similar argument as in the proof of \cref{finite_free_product}
    implies that $I_1 \MMM I_1$ is generated by a free family of 
    Haar unitaries $\{u_s\}_{s \in S \setminus \{s_0\}}$. Thus $I_1 \MMM I_1 \cong L(F_\SSS)$.

    Case 2: $d > 1$. By \cref{finite_free_product}, $I_2 \MMM I_2$ is generated by
    a free family of projections $\{p_s\}$ and a family of Haar unitaries
    $\{w_s\} \subset p_{s_0}\MMM p_{s_0}$ such that $\tau(p_s) = 1$, $\{w_s\}$ and 
    $p_{s_0}\{p_s\}_{s\in S}''p_{s_0}$ are free. By \cref{projection_case}, 
    $\{p_s\}_{s\in S}'' \cong L(F_\SSS)$. Then \cref{fun_grp_free_group} implies
    $p_{s_0} \MMM p_{s_0} \cong  L(F_\SSS) * L(F_\SSS) \cong L(F_\SSS)$.
    Note that the central carrier of $p_{s_0}$ is $I_1 + I_2$, thus $\MMM$ is a factor and 
    $I_1 \MMM I_1 \cong L(F_\SSS)$.
\end{proof}

\begin{theorem}\label{thm_freecorner}
    $I_\ut \Phi(H(\ut))'' I_\ut \cong L(F_\SSS)$.
\end{theorem}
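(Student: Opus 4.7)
The plan is to combine the previous lemmas of this section via the amalgamated free product decomposition of $\Phi(H(\ut))''$ coming from \cref{Fockisafunctor_prop}, applied to the direct sum decomposition \cref{H(X)_eq}. Concretely, this gives
\begin{align*}
(\Phi(H(\ut))'',E) \;=\; *_{\AAA(\ut),\,\alpha,\beta_1,\beta_2 \in \SSS,\,\xi \in \OOO^{\beta_2*\alpha}_{\beta_1}} \; (\Phi(\{a\xi+b\overline{\xi}:a,b\in\C\})'',E).
\end{align*}
I would then split the index set into two groups: (i) the \emph{diagonal-unit} isometries $I_\beta \in \OOO^{\beta * \ut}_{\beta}$ for $\beta \in \SSS$, and (ii) all other isometries. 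By \cref{unit_case} and \cref{M1_eq}, the amalgamated free product of the pieces from (i) over $\AAA(\ut)$ is precisely $\MMM(F_2) = \bigoplus_{\beta \in \SSS} L(F_2)$. By \cref{diff_case}, each piece from (ii) decomposes as an $L^{\infty}([0,1]) \otimes M_2(\C)$ block plus scalar blocks. Note also that the ``non-diagonal pure-unit'' pieces (those with $\alpha=\ut$ and $\beta_1\neq\beta_2$) are trivially absent, since $Hom(\beta_1,\beta_2)=0$ for distinct simple objects.

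The next step is to single out within (ii) the sub-family of isometries $I_\alpha \in \OOO^{\ut * \alpha}_{\alpha}$, $\alpha \in \SSS \setminus \{\ut\}$. Together with (i) this matches exactly the setting of \cref{step_one}. Applying \cref{step_one} with $S = \{1,2\}$ produces a type II$_\infty$ subfactor whose corner at $I_\ut$ is already $L(F_\SSS)$. By the associativity of amalgamated free products, this subfactor sits inside $\Phi(H(\ut))''$ as an amalgamated free product factor over $\AAA(\ut)$, so the full algebra is itself an amalgamated free product of this subfactor with the remaining pieces in (ii).

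It therefore remains to prove that free-multiplying by the remaining pieces (genuinely ``non-unit'' isometries with $\alpha\neq\ut$ and $(\beta_1,\beta_2) \neq (\alpha,\ut)$) does not alter the isomorphism class of the $I_\ut$ corner. For this I would use two ingredients. First, each remaining piece contains, by \cref{diff_case}, an abelian block $L^{\infty}([0,1])$ which, inside the amalgamated free product, can be absorbed as a single additional free semicircular generator via the standard identification of \cite[Proposition 2.2]{DFI}. Second, after this absorption, the remaining matrix and scalar blocks can be handled by \cref{projection_case} and \cref{ext_lemma} together with the free product identities of \cite{DF,DFI} and \cref{fun_grp_free_group}, yielding that the $I_\ut$ corner is still $L(F_\SSS)$.

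The main obstacle I anticipate is the bookkeeping in the uncountable spectrum case. One must verify at each step of the iteration that the central carrier of $I_\ut$ in the intermediate algebras remains the identity (so that the corner is a factor and \cref{diff_case}-style decompositions may be recombined), and that the total cardinality of the accumulated free generators is exactly $|\SSS|$, so that \cref{fun_grp_free_group} absorbs all extra generators into $L(F_\SSS)$ without over- or under-counting. This is where the generality $|\SSS|\leq |\SSS|$ built into \cref{step_one}, \cref{simplify_step_one} and \cref{ext_lemma} becomes essential.
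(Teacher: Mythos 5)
Your setup is the same as the paper's: the amalgamated free product decomposition from \cref{Fockisafunctor_prop} and \cref{H(X)_eq}, the identification of the diagonal-unit pieces via \cref{unit_case}, and the use of \cref{step_one} to produce a subalgebra whose $I_\ut$-corner is already $L(F_\SSS)$. The gap is in your final step, which is where all the real work lies. You claim that each remaining piece $\xi\in\OOO^{\beta_2*\alpha}_{\beta_1}$ (with $\beta_1\neq\beta_2$, or $\beta_1=\beta_2$ and $\alpha\neq\ut$) ``can be absorbed as a single additional free semicircular generator'' in the corner. But the $L^{\infty}([0,1])\otimes M_2(\C)$ block produced by \cref{diff_case} links $I_{\beta_1}$ to a subprojection of $I_{\beta_2}$, not to $I_\ut$; the corner of an amalgamated free product over $\AAA(\ut)$ at $I_\ut$ is \emph{not} the free product of the corners, and to see the contribution of such a piece at $I_\ut$ one must compress through partial isometries living in \emph{other} free factors. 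The resulting elements of $I_\ut\Phi(H(\ut))''I_\ut$ are not free semicirculars on the nose, and computing their joint distribution is exactly what \cite[Lemma 4.3]{DI}, \cite[Proposition 1.7]{DF} and \cite[Theorem 2.4]{DFI} are invoked for inside \cref{step_one}. Your proposal gives no mechanism for doing this uniformly over the (possibly uncountable) family of remaining pieces; an iterative ``add one piece at a time'' argument would require a transfinite induction through inductive limits of von Neumann algebras, which none of the cited lemmas support.

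The paper avoids this entirely with a self-absorption trick that is absent from your proposal. It first forms the auxiliary algebra $(\MMM,\mathcal{E})=(\MMM(F_{\SSS}),E_{\SSS})*_{\AAA(\ut)}(\cdots)$ and proves, via \cref{ext_lemma} and \cref{iso_cor}, the key identity $(\MMM,\mathcal{E})\cong *_{\AAA(\ut),\,s\in S}(\MMM,\mathcal{E})$ with $|S|=|\SSS|$. This lets it distribute one full copy of $\MMM$ to \emph{each} remaining piece $\xi$, forming algebras $\MMM_\xi$ that each contain the connecting $M_2(\C)$-blocks from $I_\ut$ to every $I_\beta$; the argument of \cref{step_one} then applies to each $\MMM_\xi$ separately and gives $I_\ut\MMM_\xi I_\ut\cong L(F_\SSS)$, whence $\MMM_\xi\cong\MMM$ by \cref{iso_cor}, and finally $\Phi(H(\ut))''\cong *_{\AAA(\ut),\xi}(\MMM,\mathcal{E})\cong(\MMM,\mathcal{E})$ in one step. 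Note also that \cref{iso_cor} is doing essential work here that your proposal never invokes: it upgrades an isomorphism of corners to an isomorphism of the ambient semifinite factors compatible with the conditional expectations, which is what allows the free product to be reassembled after each local computation. Without this, or some replacement for the self-absorption identity, your absorption step does not go through.
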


\begin{proof}
    First note that $|\SSS \times \SSS| = |\SSS|$ (we assume the Axiom of Choice). Let 
    \begin{align*}
        (\MMM, \mathcal{E}) = (\MMM(F_{\SSS}), E_{\SSS}) *_{\AAA(\ut),\, \alpha \in \SSS \setminus \{\ut\}, 
    \xi \in \OOO_{\alpha}^{\ut * \alpha}}
    (\stackrel[2]{I_{\ut}+ p_{\alpha}}{M_2(\C)}\oplus \stackrel[d_{\alpha}-1]{q_{\alpha}}{\C}
           \oplus_{\beta \in \SSS \setminus \{\ut, \alpha\}}\C, E),
    \end{align*}
    where $p_\alpha +q_\alpha = I_\alpha$. Note that
    \begin{align*}
        (\MMM, \mathcal{E}) = (\MMM(F_{\SSS}), E_{\SSS})*_{\AAA(\ut),\, \alpha \in \SSS \setminus \{\ut\}} \MMM_\alpha, 
    \end{align*}
    where
    \begin{align*}
        \MMM_\alpha = (\stackrel[d_\alpha]{I_\alpha}{L(F_\SSS)}
        \oplus_{\beta \in \SSS \setminus \{\alpha \}} 
        \stackrel[d_\beta]{I_\beta}{\C})*_{\AAA(\ut)}
         ((\stackrel[2]{
            I_{\ut} + p_{\alpha}}{M_2(\C)} 
        \; \oplus \hspace{-1mm} \stackrel[d_{\alpha}-1]{q_{\alpha}}{\C})
        \; \oplus_{\gamma \in \SSS \setminus \{\ut, \alpha\}} \stackrel[d_\gamma]{I_\gamma}{\C}).
    \end{align*}
    Note that $\MMM_\alpha \cong (\stackrel[d_\ut + d_\alpha]{I_\ut + I_\alpha}{L(F_\SSS)}
        \oplus_{\beta \in \SSS \setminus \{\ut, \alpha \}} 
        \stackrel[d_\beta]{I_\beta}{\C})$ by \cite[Lemma 4.3]{DI}.
    By \cref{ext_lemma} and \cref{iso_cor}, we have
    \begin{align*}
        (\MMM, \mathcal{E}) \cong 
        (\MMM(F_{\SSS}), E_{\SSS}) *_{\AAA(\ut),\, \alpha \in \SSS \setminus \{\ut\}, s \in S}
     (\stackrel[2]{I_{\ut}+ p_{\alpha}}{M_2(\C)}\oplus \stackrel[d_{\alpha}-1]{q_{\alpha}}{\C}
           \oplus_{\beta \in \SSS \setminus \{\ut, \alpha\}}\C,E),
    \end{align*}
    where $S$ is an index set such that $|S| = |\SSS|$. 
    Thus $(\MMM, \mathcal{E}) \cong *_{\AAA(\ut),\, s \in S}(\MMM, \mathcal{E})$.

    By \cref{unit_case}, \cref{diff_case}, \cref{step_one}, \cref{simplify_step_one}
    and \cref{iso_cor}, we have  
    \begin{align*}
        (\Phi(H(\ut))'', E) \cong (\MMM, \mathcal{E})
        *_{\AAA(\ut),\, \alpha \in \SSS,\, \beta_1 \neq \beta_2 \in \SSS,\, \beta_2 \neq \ut,\, \xi 
        \in \OOO^{\beta_2 * \alpha}_{\beta_1}} (\Phi(\{a \xi + b \overline{\xi}: a, b \in \C\})'', E).
    \end{align*}
    Thus $\Phi(H(\ut))'' \cong *_{\AAA(\ut),\, \alpha \in \SSS, \beta_1 \neq \beta_2 \in \SSS, \beta_2 \neq \ut, \xi 
    \in \OOO^{\beta_2 * \alpha }_{\beta_1}} \MMM_{\xi}$ where we set 
    \begin{align*}
        \MMM_\xi = (\MMM(F_{\SSS}), E_{\SSS})
        &*_{\AAA(\ut), \, \beta \in \SSS \setminus \{\ut,\beta_2\}}
           (\stackrel[2]{I_{\ut}+ p_{\beta}}{M_2(\C)}\oplus \stackrel[d_{\beta}-1]{q_{\beta}}{\C}
           \oplus_{\eta \in \SSS \setminus \{\ut, \beta\}}\C, E)\\
           &*_{\AAA(\ut)}
           (\stackrel[2d_{\beta_1}]{I_{\beta_1}+ p_{\beta_2}}{(L(F_1) \otimes M_2(\C))}\oplus 
           \stackrel[d_{\beta_2}-d_{\beta_1}]{q_{\beta_2}}{\C} \oplus_
           {\gamma \in \SSS \setminus \{\beta_1, \beta_2\}}\C, E)
    \end{align*}
    if $d_{\beta_1} \leq d_{\beta_2}$, and $\MMM_\xi$ is similarly defined in the case $d_{\beta_2} \leq d_{\beta_1}$.
    Arguing as in the proof of \cref{step_one}, we have that $\MMM_\xi$ is a factor and
    $I_\ut \MMM_\xi I_\ut \cong L(F_\SSS)$.
 
    By \cref{iso_cor}, $\Phi(H(\ut))'' 
    \cong *_{\AAA(\ut),\, \alpha \in \SSS, \beta_1 \neq \beta_2 \in \SSS, \beta_2 \neq \ut, \xi 
    \in \OOO^{\beta_2 * \alpha}_{\beta_1}}(\MMM, \mathcal{E}) \cong (\MMM, \mathcal{E})$
    and \cref{simplify_step_one} implies the result.
\end{proof}

%%%
\section{Realization as endomorphisms of $\Phi(H(\ut))''$}\label{section_endos}
%%%

Let $\CC$ be a rigid C$^*$-tensor category with simple unit. 
In this section, we assume that the spectrum $\SSS$ is infinite and show that $\CC$ can be realized 
as endomorphisms of the factor $\Phi(H(\ut))''$ constructed in \cref{section_CstartoOA}. 
We first introduce for convenience an auxiliary algebra 
living on a bigger Hilbert space. 

Let $W(\CC)$ be the \lq\lq words" constructed from the objects of $\CC$ (finite sequences of letters in $\CC$).  
We denote by $W_1 * W_2$ the concatenation of two words $W_1, W_2\in W(\CC)$. 
Two words are equal if they have the same length and they are letterwise equal.
Note that, e.g., $\ut*\ut$ and $\ut \ut =\ut \otimes \ut = \ut$ 
are different in $W(\CC)$, since $\ut \ut$ is a one-letter word and $\ut*\ut$ is a two-letter word. 
As in the proof of Mac Lane's coherence theorem \cite[XI. 3]{ML}, we define 
the arrows between words $W_1$ and $W_2$, denoted by $Hom(W_1, W_2)$, to be
$Hom(\iota(W_1), \iota(W_2))$ with the composition just as in $\CC$ and with tensor multiplication 
denoted by $f_1*f_2 = f_1\otimes f_2\in Hom(W_1*Z_1,W_2*Z_2)$ if $f_1\in Hom(W_1,W_2)$, $f_2\in Hom(Z_1,Z_2)$. 
Here $\iota$ is the map
\begin{align*}
    \iota: \emptyset \mapsto \ut, \quad X_1 * X_2 * \cdots * X_n \mapsto X_1 X_2 \cdots X_n,
\end{align*}
where $X_i \in \CC$, $i=1,\ldots,n$, $n\geq 1$, and $\emptyset$ denotes the empty word.
Thus $W(\CC)$ is a tensor category, moreover $\CC$ can be viewed as the full C$^*$-subcategory of $W(\CC)$ 
(not tensor anymore) consisting of all one-letter words.
The reason for which we introduce words is to keep track of the distinction between
$X*Y$ and $X*Z$, when $Y\neq Z$, even if $XY = XZ$ in $\CC$.

From now on, we use $\Phi$ and $L^2(\Phi)$ to denote $\Phi(H(\ut))''$ and $L^2(\Phi(H(\ut))'', \tau \circ E)$,
unless stated otherwise. Recall that $L^2(\Phi)$ is a $\Phi$-$\Phi$ bimodule and $I_\beta\in\Phi$, $\beta\in\SSS$. Let 
$$\HHH(\CC) = \bigoplus_{W \in W(\CC)} \HHH^{W},$$ 
where 
\begin{align*}
    \HHH^{W}= \bigoplus_{\beta,\gamma \in \SSS} Hom(\beta, W*\gamma) \otimes (I_{\beta} L^2(\Phi)). 
\end{align*} 
Here by $\oplus$ and $\otimes$ we mean the Hilbert space completions of the algebraic direct sums and tensor products. 
The inner product on $\HHH(\CC)$ is given by
\begin{align*}
    \braket{u_1\otimes \xi_1}{u_2 \otimes \xi_2} = \frac{1}{d_{\beta}}
    \braket{u_1}{u_2}\braket{\xi_1}{\xi_2}=\braket{\xi_1}{(u_1^*u_2)\xi_2}, 
\end{align*}
where $\xi_1,\xi_2 \in I_{\beta}L^2(\Phi)$, $u_1, u_2 \in Hom(\beta, W*\gamma)$, and 
$\braket{u_1}{u_2}$ is given as in \cref{section_CstartoOA} by \cref{hom_inner_prod}.
To simplify the notation, we will use $u \xi$ to denote $u \otimes I_{\beta} \xi$ for every
$u \in Hom(\beta, W*\gamma)$ and $\xi \in L^2(\Phi)$. Moreover, note that $L^2(\Phi)$ is canonically identified with $\HHH^\emptyset$.

Each $f \in Hom(W_1*\beta_1, W_2 * \beta_2 )$, for $\beta_1,\beta_2 \in \SSS$, $W_1,W_2 \in W(\CC)$, 
acts on $\HHH(\CC)$ as follows. On the linear span of the $u\xi$'s as above
let 
$$f\cdot (u\xi) = \delta_{W_1 * \beta_1,\, W * \gamma}\, (f\circ u)\xi, \quad u \in Hom(\beta, W*\gamma).$$ 
It is routine to check that
\begin{align*}
    \braket{\sum_{i}^{n} (f \circ u_i)\xi_i}{\sum_{j}^{n} (f \circ u_j)\xi_j}
    \leq \|f\|^2\sum_{i,j}^{n}\braket{\xi_i}{(u_i^*u_j)\xi_j},
\end{align*}
since $(u_i^*f^*fu_j)_{i,j} \leq \|f\|^2(u_i^*u_j)_{i,j}$.
Thus the map extends by continuity to a bounded linear operator in $\BBB(\HHH(\CC))$, again denoted by $f$.

Let $f_i \in Hom(W_i*\alpha_i, Z_i* \beta_i)$, for $\alpha_i, \beta_i \in \SSS$, $W_i$, $Z_i \in W(\CC)$, 
$i = 1, 2$. By the above definition, for every $\zeta$, $\zeta_1,\zeta_2 \in \HHH(\CC)$, we have
$$f_1\cdot (f_2 \cdot \zeta) = \delta_{W_1 * \alpha_1,\, Z_2 * \beta_2} (f_1 \circ f_2) \cdot \zeta$$
and 
$$\braket{\zeta_1}{f_1 \cdot \zeta_2} =  \braket{f_1^* \cdot \zeta_1}{\zeta_2}.$$
In particular, for $u \in Hom(\beta, W*\gamma)$ and $\xi \in L^2(\Phi) \subset \HHH(\CC)$, we have
$u \cdot \xi = u \xi$. Therefore, we write $f \cdot \zeta$ as $f \zeta$ from now on. 

It is also clear that 
\begin{align}\label{HC_decomp}
    \HHH(\CC) = \bigoplus_{W \in W(\CC),\, \beta \in \SSS} \bigoplus_{u \in \OOO^{W * \SSS}_{\beta}} u L^2(\Phi),
\end{align}
where $\OOO^{W * \SSS}_{\beta} = \cup_{\gamma\in\SSS} \OOO^{W * \gamma}_{\beta}$ (disjoint union)
and $\OOO^{W * \gamma}_{\beta}$ is a fixed orthonormal basis of isometries in 
$Hom (\beta, W*\gamma)$, for every $\beta, \gamma\in\SSS$, $W\in W(\CC)$, while
$\OOO^{W * \gamma}_{\beta} = \emptyset$ if $dim Hom(\beta, W*\gamma) = 0$.
 
For every $\xi \in \bfrac{\beta_2*\alpha}{\beta_1} \subset H(\ut)$, $\alpha \in \Lambda$, $\beta_1,\beta_2 \in \SSS$, 
the operator $\Gamma(\xi) \in \Phi$ acts 
from the left on $\HHH(\CC)$ in the following way
\begin{align*} 
    \begin{cases}
        \Gamma(\xi)(\zeta)=\Gamma(\xi)\zeta &  \mbox{if $\zeta \in \HHH^{\emptyset}$},\\
        \Gamma(\xi)(u\zeta)=0 &  \mbox{if $u\zeta \notin \HHH^{\emptyset}$},
    \end{cases}
\end{align*}
and we denote again by $\Gamma(\xi)$ the operator acting on $\HHH(\CC)$.

\begin{definition}
Let $\Phi(\CC)$ be the von Neumann subalgebra of $\BBB(\HHH(\CC))$ generated by 
$\{f \in Hom(W_1*\beta_1, W_2*\beta_2): \beta_1,\beta_2 \in \SSS, W_1,W_2 \in W(\CC)\}$ and
    $\{\Gamma(\xi): \xi \in \bfrac{\beta_2 * \alpha}{\beta_1}, \alpha \in \Lambda, \beta_1,\beta_2 \in \SSS\}$.
\end{definition}

Let $P_{W} : \HHH(\CC) \rightarrow \HHH^{W}$ be the orthogonal projection onto $\HHH^{W}$ for $W \in W(\CC)$. 
We have the following easy result and its proof is omitted.

\begin{lemma}\label{lem_phione}
    The algebra $\Phi$ constructed in \cref{section_CstartoOA} from $X=\ut$ is a corner of $\Phi(\CC)$, namely
    $\Phi \cong P_{\emptyset}\Phi(\CC)P_{\emptyset}$ and a canonical *-isomorphism is given by
    \begin{align*}
         I_\beta \mapsto I_\beta|_{\HHH^\emptyset}, \quad \Gamma(\xi) 
        \mapsto \Gamma(\xi)|_{\HHH^{\emptyset}},
    \end{align*}
    for every $\xi \in \bfrac{\beta_2*\alpha}{\beta_1}$, where $\alpha \in \Lambda$, $\beta_1,\beta_2 \in \SSS$. 
\end{lemma}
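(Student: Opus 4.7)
The plan is to first identify $\HHH^\emptyset$ with $L^2(\Phi)$, then establish $\Phi\hookrightarrow P_\emptyset\Phi(\CC)P_\emptyset$ using the fact that $P_\emptyset$ commutes with the relevant generators, and finally prove the reverse inclusion by a word-decomposition argument exploiting the simplicity of the objects of $\SSS$. Since every $\beta\in\SSS$ is simple, $\dim Hom(\beta,\gamma)=\delta_{\beta,\gamma}$, so that $\HHH^\emptyset=\bigoplus_{\beta,\gamma\in\SSS}Hom(\beta,\gamma)\otimes I_\beta L^2(\Phi)$ collapses to $\bigoplus_{\beta\in\SSS}I_\beta L^2(\Phi)=L^2(\Phi)$ via $I_\beta\otimes\xi\mapsto\xi$; this identification is isometric because $\braket{I_\beta}{I_\beta}=d_\beta$ from \cref{hom_inner_prod} cancels the prefactor $1/d_\beta$ in the inner product on $\HHH(\CC)$. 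Viewing $I_\beta$ as an arrow in $Hom(\beta,\beta)\subset\Phi(\CC)$, the formula $I_\beta\cdot(u\xi)=\delta_{\beta,W*\gamma}(I_\beta\circ u)\xi$ shows that $I_\beta$ is precisely the projection of $\HHH^\emptyset$ onto the summand $I_\beta L^2(\Phi)$, while annihilating every $\HHH^W$ with $W\neq\emptyset$. Hence $P_\emptyset$ is the strong sum $\sum_{\beta\in\SSS}I_\beta$, which lies in $\Phi(\CC)$ because the latter is a von Neumann algebra, so $P_\emptyset\Phi(\CC)P_\emptyset$ is a genuine von Neumann corner.

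From the defining action, $\Gamma(\xi)$ also annihilates every $\HHH^W$ with $W\neq\emptyset$; consequently both $I_\beta$ and $\Gamma(\xi)$ commute with $P_\emptyset$, and their restrictions to $\HHH^\emptyset\cong L^2(\Phi)$ coincide with the left action of $\Phi$ on $L^2(\Phi)$ coming from the GNS construction of $\tau\circ E$ (as in \cref{GNS_lemma}). Since this standard representation is faithful, the stated assignment extends to a faithful normal *-homomorphism $\Phi\hookrightarrow P_\emptyset\Phi(\CC)P_\emptyset$.

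For the reverse inclusion, by weak continuity of $T\mapsto P_\emptyset T P_\emptyset$ and weak density it suffices to check that $P_\emptyset w P_\emptyset$ lies in the image of $\Phi$ for every monomial $w=g_n\cdots g_1$, with each $g_i$ either some $\Gamma(\xi_i)$ or an arrow $f_i\in Hom(W_1^{(i)}*\beta_1^{(i)},W_2^{(i)}*\beta_2^{(i)})$. Since $\Gamma(\xi_i)=P_\emptyset\Gamma(\xi_i)=\Gamma(\xi_i)P_\emptyset$, one can insert $P_\emptyset$ around each $\Gamma$-factor in $w$ without changing $P_\emptyset w P_\emptyset$, reducing the problem to showing $P_\emptyset D P_\emptyset\in\AAA(\ut)$ for any composition $D=f_l\circ\cdots\circ f_1$ of arrow-generators. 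Applying $f\cdot(u\zeta)=\delta_{W_1*\beta_1,W*\gamma}(f\circ u)\zeta$ inductively to a vector $I_\beta\otimes\xi\in\HHH^\emptyset$, the intermediate states lie in $Hom(\beta,W*\gamma)\otimes I_\beta L^2(\Phi)$ for appropriate $W,\gamma$, and the only contribution surviving the final $P_\emptyset$ comes from the term whose arrow-composition lands in $Hom(\beta,\gamma_D)$ for some $\gamma_D\in\SSS$ — a scalar multiple of $I_\beta$ by simplicity. Hence $P_\emptyset D P_\emptyset$ acts on each $I_\beta L^2(\Phi)$ by a scalar, and therefore belongs to $\AAA(\ut)\subset\Phi$. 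The main obstacle is precisely this bookkeeping for arrow-blocks; once handled, the two inclusions combine to give the desired *-isomorphism.
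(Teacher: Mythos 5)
Your proof is correct and is essentially the argument the paper has in mind (the paper omits the proof as an "easy result", so there is nothing more detailed to compare against): identify $\HHH^\emptyset$ isometrically with $L^2(\Phi)$ using $\dim Hom(\beta,\gamma)=\delta_{\beta,\gamma}$, observe that $I_\beta$ and $\Gamma(\xi)$ are supported under $P_\emptyset=\sum_\beta I_\beta$ and restrict to the standard action of $\Phi$ on $L^2(\Phi)$, and obtain the reverse inclusion by compressing monomials, where a block of arrow generators multiplies out to a single arrow whose $P_\emptyset$-compression is forced into $\bigoplus_\beta\C I_\beta=\AAA(\ut)$ by simplicity. All the steps check out against the definitions in \cref{section_endos}.
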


%
\begin{comment}
\begin{proof}
    By the definition of $\Phi(\CC)$, it is clear that $P_{\emptyset}\Phi(\CC)P_{\emptyset}$ is generated 
    by $\{I_\beta: \beta \in \SSS\}$ and $\{\Gamma(\xi): \xi \in H(\beta_1, \beta_2 \alpha),
    \alpha \in \Lambda, \beta_1,\beta_2 \in \SSS\}$. It is clear that 
    \begin{align*}
        I_\beta \mapsto I_\beta|_{\HHH^{\emptyset}_{\emptyset}}, \quad \Gamma(\xi) 
        \mapsto \Gamma(\xi)|_{\HHH^{\emptyset}_{\emptyset}}
    \end{align*}
    induce a *-isomorphism from $\Phi$ to $P_{\emptyset}\Phi(\CC)P_{\emptyset} Q_{\emptyset}$. 
    Since the central carrier of $Q_\emptyset$ is $I$ in $\Phi(\CC)'$, the lemma is proved.
\end{proof}
\end{comment}
%

From now on, we identify $L^2(\Phi)$ and $\Phi$ with $\HHH^{\emptyset}$ and $P_{\emptyset}\Phi(\CC)P_{\emptyset}$
respectively.

\begin{theorem}\label{X_equl}
    For every word $W \in W(\CC)$ (hence in particular for every object $X\in\CC$ of the category, regarded as a one-letter word), the projection 
    $P_W$ is equivalent to $P_\emptyset$ in $\Phi(\CC)$. 
    Thus $P_W \Phi(\CC) P_W \cong P_\emptyset \Phi(\CC) P_\emptyset = \Phi$. 
\end{theorem}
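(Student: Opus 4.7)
The plan is to prove $P_W\sim P_\emptyset$ in $\Phi(\CC)$ via the Schr\"oder--Bernstein theorem for von Neumann algebras, by establishing both sub-equivalences. The natural partial isometries come from the category itself: for every $\beta,\gamma\in\SSS$ and $u\in\OOO^{W*\gamma}_\beta$, the arrow $u\in Hom(\emptyset*\beta,W*\gamma)$ acts as a partial isometry in $\Phi(\CC)$ with $u^*u=I_\beta$ (the projection onto $I_\beta L^2(\Phi)\subset\HHH^\emptyset$) and $uu^*\in Hom(W*\gamma,W*\gamma)$ a subprojection of $P_W$. A direct computation using the composition rule in $\Phi(\CC)$ together with the orthonormality of the chosen bases gives $u'^*u=\delta_{u,u'}I_{\beta(u)}$, so that the projections $\{uu^*\}_u$ are pairwise orthogonal in $\Phi(\CC)$ and $P_W=\sum_{\beta,\gamma,u\in\OOO^{W*\gamma}_\beta}uu^*$ as a strongly convergent sum.

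For the easy direction $P_\emptyset\preceq P_W$: for each $\beta\in\SSS$, rigidity and semisimplicity of $\CC$ imply that the nonzero object $\overline{\iota(W)}\beta$ has at least one simple summand, so by Frobenius reciprocity some $u_\beta\in\bigcup_\gamma\OOO^{W*\gamma}_\beta$ exists; the strongly convergent sum $V_1:=\sum_\beta u_\beta$ then satisfies $V_1^*V_1=\sum_\beta I_\beta=P_\emptyset$ and $V_1V_1^*=\sum_\beta u_\beta u_\beta^*\le P_W$.

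For the harder direction $P_W\preceq P_\emptyset$, the obstruction is that several $u$'s can share the same source $I_\beta$, so one has to split each $I_\beta$ into enough orthogonal ``copies'' living inside $\Phi$. By \cref{factor_thm} and \cref{type_prop}, since $|\SSS|$ is infinite, $\Phi$ is a factor of type II$_\infty$ or III$_\lambda$ with $P_\emptyset=1_\Phi$ properly infinite; moreover, in the uncountable case $\Phi$ is non-$\sigma$-finite, because $\AAA(\ut)\subset\Phi$ already contains $|\SSS|$ mutually orthogonal projections. Comparison theory in properly infinite factors then produces a mutually orthogonal family of projections $(e_u)_u\subset\Phi$ indexed by $u\in\bigcup_{\beta,\gamma}\OOO^{W*\gamma}_\beta$ with $e_u\sim I_{\beta(u)}$ in $\Phi$ (hence partial isometries $w_u\in\Phi$ with $w_u^*w_u=e_u$ and $w_uw_u^*=I_{\beta(u)}$) and $\sum_u e_u\le P_\emptyset$. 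Setting $V_2:=\sum_u uw_u$, which converges strongly because the $e_u$'s and the $uu^*$'s are each pairwise orthogonal, one computes $V_2^*V_2=\sum_u w_u^*I_{\beta(u)}w_u=\sum_u e_u\le P_\emptyset$ and $V_2V_2^*=\sum_u u I_{\beta(u)}u^*=\sum_u uu^*=P_W$ (the cross terms $w_uw_{u'}^*=0$ for $u\ne u'$ because $e_ue_{u'}=0$). This yields $P_W\preceq P_\emptyset$, and Schr\"oder--Bernstein concludes $P_W\sim P_\emptyset$ in $\Phi(\CC)$; hence $P_W\Phi(\CC)P_W\cong P_\emptyset\Phi(\CC)P_\emptyset=\Phi$ by \cref{lem_phione}. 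The main obstacle is constructing the family $(e_u)$, which requires both the proper infiniteness of $\Phi$ and, in the uncountable-spectrum case, its non-$\sigma$-finiteness of density character at least $|\SSS|$ to accommodate all the indices.
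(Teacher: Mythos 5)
Your proposal is correct and follows essentially the same route as the paper: both decompose $P_\emptyset=\sum_{\beta}I_\beta$ and $P_W=\sum_{\beta}\sum_{u\in\OOO^{W*\SSS}_\beta}uu^*$ into orthogonal families indexed by sets of cardinality $|\SSS|$ and compare them via comparison theory in the properly infinite (possibly non-$\sigma$-finite) factor, with a partition of $\SSS$ into $|\SSS|$ infinite pieces providing the room needed for the hard direction. The only difference is organizational: the paper splits into the type III case (where all nonzero countably decomposable projections are equivalent) and the type II$_\infty$ case (where it runs the partition/trace argument), whereas you package the two sub-equivalences as a single Schr\"oder--Bernstein argument with explicit partial isometries $V_1$ and $V_2$.
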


\begin{proof}
    Note that $P_\emptyset = \sum_{\beta \in \SSS} I_{\beta}$ and 
    $P_W = \sum_{\beta \in \SSS} \sum_{u \in \OOO^{W * \SSS}_\beta} uu^*$.
    Since the central carrier of $P_\emptyset$ is $I$, $\Phi(\CC)$ is either a 
    type II$_\infty$ or a type III factor by \cite[Proposition 5.5.6, Corollary 9.1.4]{KRII}.

    Assume first that $\Phi(\CC)$ is a type III factor. By \cite[Corollary 6.3.5]{KRII},
    $I_\beta$ and $\sum_{u \in \OOO^{W*\SSS}_\beta} uu^*$ are equivalent for every $\beta\in\SSS$, since
    they are both countably decomposable projections. Therefore $P_W$ is equivalent to $P_\emptyset$.

    Assume now that $\Phi(\CC)$ is type II$_\infty$. Let $\SSS = \cup_{i \in \Theta}\SSS_i$
    be a partition of $\SSS$ such that each $\SSS_i$ contains countably many elements and $|\SSS|= |\Theta|$.
    Note that each projection $\oplus_{u \in \OOO_\beta^{W * \SSS}} uu^*$ is a finite projection and
    equivalent to a subprojection of $\oplus_{\gamma \in \SSS_{\theta(\beta)}} I_\gamma$, 
    where $\theta$ is a one-to-one correspondence between $\SSS$ and $\Theta$. 
    This implies that $P_W$ is equivalent to $P_\emptyset$.
\end{proof}

Let $\Delta$ be the modular operator of $\Phi$ w.r.t. $\tau \circ E$. Then we can define a one-parameter 
family of unitaries in $\BBB(\HHH(\CC))$ as follows
\begin{align*}
    U_t: u \zeta \rightarrow u(\Delta^{it}\zeta), \quad t\in\R,
\end{align*}
for every $\zeta \in L^2(\Phi), u \in \OOO^{W * \SSS}_{\beta}$.
It is easy to check that
\begin{align*}
    U_t f U_t^* = f, \quad U_t \Gamma(\xi) U_t^* = \lambda_{\alpha}^{it} \Gamma(\xi), \quad t\in\R,
\end{align*}
where $f \in Hom(W_1 * \beta_1, W_2 * \beta_2)$ and $\xi \in \bfrac{\beta_2 * \alpha}{\beta_1}$, 
for $\alpha \in \Lambda$, $\beta_1,\beta_2 \in \SSS$, $W_1$, $W_2 \in W(\CC)$. 
Therefore $\text{Ad} U_t \in Aut(\Phi(\CC))$.

Note that $\{uI_\beta : u \in \OOO^{W * \SSS}_\beta, I_\beta \in L^2(\Phi), W \in W(\CC), \beta \in \SSS\}$
is a total set of vectors for $\Phi(\CC)'$. Thus
\begin{align}\label{varphi_def}
    \varphi(A) = \sum_{\beta \in \SSS, W \in W(\CC)} \sum_{u \in \OOO^{W * \SSS}_\beta} 
    \braket{u I_{\beta}}{A (u I_\beta)}, \quad A \in \Phi(\CC),\,A \geq 0, 
\end{align}
defines a n.s.f. weight on $\Phi(\CC)$. Moreover, $\varphi(U_t  \cdot U_t^*) = \varphi(\cdot)$ 
for every $t\in\R$, since $U_t(u I_\beta) = u I_\beta$. Now we show that $U_t$, $t\in\R$, implement the modular group of $\Phi(\CC)$
w.r.t. $\varphi$.

\begin{lemma}\label{KMS_aut}
    For every $A \in \Phi(\CC)$ we have $\sigma_t^{\varphi}(A) = U_t A U_t^*$, $t\in\R$. 
\end{lemma}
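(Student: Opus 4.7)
The plan is to verify the KMS condition for the n.s.f.\ weight $\varphi$ with respect to the one-parameter group $\alpha_t := \operatorname{Ad} U_t$ of automorphisms of $\Phi(\CC)$, and then invoke uniqueness of the modular automorphism group (\cite[VIII, Theorem 1.2]{TKII}) to conclude $\sigma_t^\varphi = \alpha_t$. Invariance $\varphi \circ \alpha_t = \varphi$ is immediate from the recorded identity $U_t(u I_\beta) = u I_\beta$, which follows from $\Delta^{it}\widehat{I_\beta} = \widehat{I_\beta}$ because $I_\beta \in \AAA(\ut)$ lies in the centralizer of $\tau \circ E$. I would take as Tomita algebra the dense $*$-subalgebra $\TTT \subset \Phi(\CC)$ spanned by products of generators $f \in Hom(W_1*\beta_1, W_2*\beta_2)$ and $\Gamma(\xi)$ with $\xi \in \bfrac{\beta_2 * \alpha}{\beta_1}$: each generator is an $\alpha_t$-eigenvector (with eigenvalue $1$ or $\lambda_\alpha^{it}$), so every monomial in $\TTT$ extends to an entire analytic family $\alpha_z$, $z \in \C$.

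The crucial reduction is that $\varphi$ restricted to the corner $\Phi = P_\emptyset \Phi(\CC) P_\emptyset$ equals $\tau \circ E$. For $A \in \Phi$ only the $W = \emptyset$ summand contributes to \cref{varphi_def}; for $W = \emptyset$ and simple $\beta, \gamma \in \SSS$ the set $\OOO^{\emptyset * \gamma}_\beta$ is $\{I_\beta\}$ if $\beta = \gamma$ and empty otherwise, so unfolding the identification of $\HHH^\emptyset$ with $L^2(\Phi)$ gives
\begin{align*}
\varphi(A) = \sum_{\beta \in \SSS} \braket{I_\beta \otimes \widehat{I_\beta}}{A (I_\beta \otimes \widehat{I_\beta})} = \sum_{\beta \in \SSS} \tau \circ E(I_\beta A I_\beta) = \tau \circ E(A).
\end{align*}
By \cref{module_auto}, the modular group of $\tau \circ E$ on $\Phi$ sends $\Gamma(\xi) \mapsto \lambda_\alpha^{it}\Gamma(\xi) = \alpha_t(\Gamma(\xi))$, so the KMS identity $\varphi(AB) = \varphi(B\alpha_{-i}(A))$ is already established whenever $A, B$ lie in $\TTT \cap \Phi$.

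To extend to all of $\TTT$ I would use the decomposition $\varphi(A) = \sum_W \varphi(P_W A P_W)$ (each vector state $\omega_{u I_\beta}$ sees only the block $P_W A P_W$ with $u I_\beta \in \HHH^W$) together with \cref{X_equl}, which provides, for every $W$, partial isometries in $\Phi(\CC)$ implementing $P_W \sim P_\emptyset$ and assembled from $\alpha_t$-fixed morphisms in $\OOO^{W * \SSS}_\beta$; these intertwine $\varphi|_{P_W \Phi(\CC) P_W}$ with a scalar multiple of $\tau \circ E$ on $\Phi$. For arbitrary $A, B \in \TTT$, the products $AB$ decompose into cross-terms $P_W A P_{W'} \cdot P_{W'} B P_W$, and the inner-product identity $\braket{u \xi_1}{u \xi_2} = \braket{\xi_1}{(u^* u) \xi_2}$ from the definition of $\HHH(\CC)$ reduces every matrix element $\braket{u I_\beta}{\,\cdot\,}$ to an inner product in $L^2(\Phi)$ multiplied by scalars coming from compositions of morphisms. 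Because the morphism factors are $\alpha_t$-fixed, these scalars match on both sides of the KMS identity; the identity on $\Phi$ thus lifts to one on $\TTT$, and density together with standard approximation extends it to all of $\mathfrak{N}_\varphi \cap \mathfrak{N}_\varphi^*$.

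The main obstacle is this last combinatorial step: organising the expansion of $\varphi(AB)$ and $\varphi(B\alpha_{-i}(A))$ against the sum over $(u, \beta, W)$ in \cref{varphi_def} so that the cross-terms between different words $W, W'$ cancel or pair up correctly. The structural reason it works is that the $\Gamma(\xi)$'s act only on $\HHH^\emptyset$ while the $f$'s are $\alpha_t$-fixed, so every monomial in $\TTT$ admits a canonical normal form exhibiting a ``core'' element of $\Phi$ sandwiched between morphisms, and the KMS verification on $\Phi(\CC)$ reduces to that for $\tau \circ E$ on $\Phi$ supplied by \cref{module_auto}.
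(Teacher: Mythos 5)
Your two preparatory computations are exactly the ones the paper relies on: the identity $\varphi(uA)=\varphi(Au)$ for $u\in\OOO^{W*\SSS}_{\beta}$ (equivalently $U_t(uI_\beta)=uI_\beta$), and the identification $\varphi|_{\Phi}=\tau\circ E$. The difficulty is that the step you yourself flag as ``the main obstacle'' --- organising the expansions of $\varphi(AB)$ and $\varphi(B\alpha_{-i}(A))$ over the sectors $\HHH^{W}$ so that the cross-terms between different words pair up --- is precisely the substance of the KMS verification, and you never carry it out; it is only asserted to work ``for structural reasons''. As written this is not a proof. A secondary inaccuracy: the partial isometries of \cref{X_equl} come from abstract comparison theory of projections in the factor $\Phi(\CC)$ and are \emph{not} assembled from the $\alpha_t$-fixed morphisms; what your reduction actually needs is the resolution $\sum_{W,\beta,u}uu^{*}=I$ by the isometries $u$ themselves, which rewrites $\varphi$ as $\sum_{W,\beta,u}\tau\circ E(u^{*}\,\cdot\,u)$ with $u^{*}Au\in\Phi$.

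The paper's proof shows that the entire combinatorial verification can be bypassed. From $\varphi(uA)=\varphi(Au)$ and the centralizer criterion \cite[VIII, Theorem 2.6]{TKII} one concludes directly that the von Neumann algebra generated by the morphisms $f\in Hom(W_1*\beta_1,W_2*\beta_2)$ lies in the centralizer $\Phi(\CC)_{\varphi}$, hence $\sigma_t^{\varphi}(f)=f=U_tfU_t^{*}$ with no KMS computation on products. In particular $P_{\emptyset}$ is a projection in the centralizer, so $\sigma_t^{\varphi}$ restricted to the corner $\Phi=P_{\emptyset}\Phi(\CC)P_{\emptyset}$ is the modular group of $\varphi_{P_{\emptyset}}=\tau\circ E$, and \cref{module_auto} then gives $\sigma_t^{\varphi}(\Gamma(\xi))=\lambda_{\alpha}^{it}\Gamma(\xi)=U_t\Gamma(\xi)U_t^{*}$. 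Since $\sigma_t^{\varphi}$ and $\mathrm{Ad}\,U_t$ are normal automorphism groups agreeing on a generating set of $\Phi(\CC)$, they coincide. If you insist on your KMS-on-a-dense-subalgebra strategy, the cross-term bookkeeping can in fact be completed (using $\sum_{W,\beta,u}uu^{*}=I$ and the fact that the $u$'s are fixed by both $\alpha_t$ and $\sigma_t^{\varphi}$), but you must actually write it down; the centralizer argument makes it unnecessary.
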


\begin{proof}
    Note that $\varphi(uA) = \braket{I_\beta}{A (uI_\beta)} = \braket{I_\beta}{(Au)I_\beta}= 
    \varphi(A u)$ for every $u \in \OOO^{W * \SSS}_{\beta}$, $\beta \in \SSS$, $W \in W(\CC)$. 
    Then \cite[Theorem 2.6, VIII]{TKII} implies that the von Neumann subalgebra generated by 
    $\{f\in Hom(W_1*\beta_1, W_2*\beta_2):\beta_1,\beta_2 \in \SSS, W_1,W_2 \in W(\CC)\}$ is contained in
    the centralizer $\Phi(\CC)_{\varphi}$ and $\sigma^{\varphi}_t(f) = f = U_t f U_t^*$.

    Since $E(B) = \sum_{\beta \in \SSS} d_\beta^{-1} \braket{I_\beta}{ B I_\beta}I_\beta$, 
    for every $B \in \Phi$, we have $\tau \circ E(B) = 
    \varphi(B)$. Then \cite[Theorem 1.2, VIII]{TKII}
    implies $\sigma^{\varphi}_t(\Gamma(\xi)) = U_t \Gamma(\xi) U_t^*$ for every $\xi \in 
    \bfrac{\beta_2*\alpha}{\beta_1}$. By the definition of $\Phi(\CC)$, the lemma is proved.
\end{proof}

\begin{lemma}\label{embedding_ut}
    For every word $W \in W(\CC)$ there exists a non-unital injective *-endomorphism $\rho_W$ of $\Phi(\CC)$ such that 
    $$\rho_W: f \in Hom(W_1 * \beta_1 , W_2 * \beta_2) \mapsto I_W * f  
    \in Hom(W* W_1 *\beta_1, W* W_2 *\beta_2)$$ 
    and 
    $$\rho_W: \Gamma(\xi) \mapsto \sum_{\gamma_1, \gamma_2 \in \SSS}
    \sum_{u \in \OOO_{\gamma_2}^{W * \beta_2},\, v \in \OOO^{W * \beta_1}_{\gamma_1}}
    u \Gamma( u^* \otimes I_\alpha \circ I_W \otimes \xi \circ v) v^*$$ 
    for every $\xi \in \bfrac{\beta_2 * \alpha}{\beta_1}$, where $\alpha\in\Lambda$, $\beta_1,\beta_2\in \SSS$, $W_1,W_2\in W(\CC)$.
\end{lemma}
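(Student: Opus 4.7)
The plan is to realize $\rho_W$ as conjugation by an isometry $V_W\in\BBB(\HHH(\CC))$, which automatically makes $\rho_W$ a normal *-homomorphism on $\BBB(\HHH(\CC))$, and to verify the stated formulas by direct computation on the two families of generators of $\Phi(\CC)$. Injectivity of $\rho_W$ will follow immediately from $V_W^*V_W=I$, via $A=V_W^*(V_W A V_W^*)V_W$.

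The isometry $V_W$ is defined on the dense subspace spanned by elementary tensors $u\otimes\zeta\in\HHH^{W'}$ (with $u\in\OOO^{W'*\gamma}_\beta$ and $\zeta\in I_\beta L^2(\Phi)$) so as to ``prepend $I_W$ to the categorical part $u$''. Since $W*\beta$ is not simple in general, one decomposes $I_{W*\beta}=\sum_{\delta\in\SSS,\,w\in\OOO^{W*\beta}_\delta}w\circ w^*$ in $Hom(W*\beta,W*\beta)$ and sets
\begin{align*}
V_W(u\otimes\zeta)=\sum_{\delta\in\SSS,\,w\in\OOO^{W*\beta}_\delta}\bigl((I_W*u)\circ w\bigr)\otimes\zeta_w,
\end{align*}
where $\zeta_w\in I_\delta L^2(\Phi)$ is the image of $\zeta$ under a canonical isometric embedding $I_\beta L^2(\Phi)\hookrightarrow\bigoplus_{\delta,w}I_\delta L^2(\Phi)$ associated with the categorical decomposition. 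This embedding is supplied by the standard-form identification $\HHH(\CC)=L^2(\Phi(\CC),\varphi)$ coming from \cref{varphi_def}: each $w$, viewed as a partial isometry in $\Phi(\CC)$, provides the correct intertwining between the relevant summands. The isometry property $V_W^*V_W=I$ then reduces to orthonormality of $\{w\}$ together with $(I_W*u)^*(I_W*u)=I_{W*\beta}$.

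Verification on generators proceeds as follows. For $f\in Hom(W_1*\beta_1,W_2*\beta_2)$, the functoriality $(I_W*f)\circ(I_W*u)=I_W*(f\circ u)$ combined with the action of arrows on $\HHH(\CC)$ yields $V_WfV_W^*=I_W*f=\rho_W(f)$. For $\Gamma(\xi)$ with $\xi\in\bfrac{\beta_2*\alpha}{\beta_1}$, the support condition $\Gamma(\xi)=P_\emptyset\Gamma(\xi)P_\emptyset$ localizes $V_W\Gamma(\xi)V_W^*$ to $V_W(L^2(\Phi))\subset\HHH^W$; its matrix entries between the summands indexed by $u\in\OOO^{W*\beta_2}_{\gamma_2}$ and $v\in\OOO^{W*\beta_1}_{\gamma_1}$ are computed via Frobenius reciprocity together with the matrix decomposition of $I_W\otimes\xi$ in the chosen orthonormal bases, and are identified with $\Gamma(u^*\otimes I_\alpha\circ I_W\otimes\xi\circ v)$. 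Summing over $\gamma_1,\gamma_2,u,v$ reassembles precisely the stated formula for $\rho_W(\Gamma(\xi))$. These two computations also show $V_W\Phi(\CC)V_W^*\subset\Phi(\CC)$, so $\rho_W$ is a well-defined normal non-unital *-endomorphism of $\Phi(\CC)$.

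The main obstacle, I expect, will be giving a rigorous and coordinate-free definition of the transported vector $\zeta_w\in I_\delta L^2(\Phi)$: the intertwiner $w$ goes from a simple $\delta$ into the non-simple $W\beta$, so it does not yield an operator on $L^2(\Phi)$ directly (in particular $I_{W\beta}\notin\Phi$). This is resolved by working inside the standard form $\HHH(\CC)=L^2(\Phi(\CC),\varphi)$, where $w$ acts as a genuine partial isometry in $\Phi(\CC)$ and the transportation of vectors between the summands is induced by the ambient bimodule structure. A more elementary alternative avoids the explicit construction of $V_W$ and verifies directly that the stated formulas respect composition, adjunction, and the mixed commutation relations among the generators, then extends by normality to $\Phi(\CC)$; this is more tedious but bypasses the conceptual overhead of the standard-form identification.
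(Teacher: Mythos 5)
Your overall strategy (implement $\rho_W$ spatially by an isometry $V_W$ and read the formulas off by conjugation) is reasonable in principle, but the proposal has a genuine gap exactly at the point you yourself flag as ``the main obstacle'', and the resolution you offer does not work. The transported vector $\zeta_w$ has to be the image of $\zeta\in I_{\beta}L^2(\Phi)$ under some linear map $T_w:I_{\beta}L^2(\Phi)\to I_{\delta}L^2(\Phi)$, and viewing $w\in Hom(\delta,W*\beta)$ as a partial isometry in $\Phi(\CC)$ supplies no such map: as an operator on $\HHH(\CC)$, $w$ sends $I_{\delta}L^2(\Phi)\subset\HHH^{\emptyset}$ \emph{into} $\HHH^{W}$; it does not carry $I_{\beta}L^2(\Phi)$ to $I_{\delta}L^2(\Phi)$, and there is no canonical ``ambient bimodule'' transport between these two corners of $L^2(\Phi)$. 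If you unwind the required intertwining $V_W\Gamma(\xi)=\rho_W(\Gamma(\xi))V_W$ on $\HHH^{\emptyset}$ for $\xi\in\bfrac{\beta_2*\alpha}{\beta_1}$, you find that the family $\{T_w\}$ must satisfy
\begin{align*}
T_w\circ\Gamma(\xi)\;=\;\sum_{\gamma_1\in\SSS}\sum_{v\in\OOO^{W*\beta_1}_{\gamma_1}}\Gamma\bigl(w^*\otimes I_\alpha\circ I_W\otimes\xi\circ v\bigr)\circ T_v,
\qquad
\sum_{\gamma_2\in\SSS}\sum_{w\in\OOO^{W*\beta_2}_{\gamma_2}}T_w^*T_w=I.
\end{align*}
The first relation determines the $T_w$ recursively on the dense subspace $\Phi_0\Omega$ once their values on the vacuum vectors are fixed, but the well-definedness, boundedness and the isometry identity are then \emph{equivalent} to the moment-matching statement $\varphi(\Gamma(\xi_{2n})\cdots\Gamma(\xi_1))=d_W^{-1}\varphi(\rho_W(\Gamma(\xi_{2n}))\cdots\rho_W(\Gamma(\xi_1)))$, which is the actual analytic content of the lemma. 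Your proposal contains no argument for it: the phrase ``computed via Frobenius reciprocity together with the matrix decomposition of $I_W\otimes\xi$'' is precisely the assertion to be proved, not a proof. The paper establishes it by expanding both sides with the non-crossing-pairing formula for $\tau\circ E$ and inducting over the pairings, and then obtains the normal extension of $\rho_W$ from the resulting unitary between the GNS spaces of $\varphi$ and $d_W^{-1}\varphi$ (using \cref{KMS_aut} and the density argument from \cite{ILP}).

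The ``more elementary alternative'' you mention at the end has the same defect in a different guise: $\Phi(\CC)$ is not presented by generators and relations, so verifying that the stated formulas respect composition, adjunction and the mixed commutation relations on the generating $*$-algebra does not by itself produce a normal $*$-homomorphism of the weak closure, nor injectivity. To ``extend by normality'' one needs a weight- or $L^2$-comparison between the domain and the image (or an explicit spatial implementation), which brings you back to the first gap.
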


\begin{proof}
    Let $d_W = d_{\iota(W)}$. It is easy to check that $\varphi(f) = 1/d_W \varphi(I_W * f)$.
    Let $\xi_i \in \bfrac{\beta_{i+1}*\alpha_i}{\beta_i}$ where $\beta_i \in \SSS$, $\alpha_i \in \Lambda$, 
    $i = 1, \ldots, m+1$, $m\geq 1$. Note that $\varphi(\Gamma(\xi_m) \cdots \Gamma(\xi_1)) =0= d_W^{-1}
    \varphi(\rho_W(\Gamma(\xi_m)) \cdots \rho_W(\Gamma(\xi_1)))$ unless $m = 2n$ and 
    $\beta_1 = \beta_{2n+1}$.
 
    We claim that $\varphi(\Gamma(\xi_{2n}) \cdots \Gamma(\xi_1)) = d_W^{-1}
    \varphi(\rho_W(\Gamma(\xi_{2n})) \cdots \rho_W(\Gamma(\xi_1)))$ if $\beta_1 = \beta_{2n+1}$. 
    By the definition of $\tau$ and $E$ (see \cref{def_E}), we have
    \begin{align*}
        \tau \circ E(\Gamma(\xi_{2n}) \cdots \Gamma(\xi_1))= 
        d_{\beta_1}\sum_{\substack{(\{j_1, i_1\}, \ldots, \{j_n, i_n\})\\ \in 
        NC_2(2n), i_k < j_k}}\prod_{k=1}^{n} \frac{\sqrt{\lambda_{\alpha_{j_k}}}}{d_{\beta_{i_k}}}
        \braket{\overline{\xi}_{j_k}}{\xi_{i_k}},
    \end{align*}
    where $NC_2(2n)$ stands for the set of non-crossing pairings of $\{1, \ldots, 2n\}$, i.e.,
    there does not exist $1 \leq i_k < i_l < j_k < j_l \leq 2n$ (see \cite[Notation 8.7]{Nica-S} for more details).
    Thus we only need to show that, for every $(\{j_1, i_1\}, \ldots, \{j_n, i_n\}) \in NC_2(2n)$, 
    $d_{\beta_1}d_W\prod_{k=1}^{n}\frac{1}{d_{\beta_{i_k}}}
    \braket{\overline{\xi}_{j_k}}{\xi_{i_k}}$ equals
    \begin{align*}
        \sum_{\gamma_1, \ldots, \gamma_{2n}}d_{\gamma_1} 
        \sum_{\substack{u_i \in \OOO_{\gamma_i}^{W * \beta_i},\\i = 1, \ldots,2n}}
        \prod_{k=1}^{n} \frac{1}{d_{\gamma_{i_{k}}}}
        \braket{u_{j_{k}}^* \otimes I_{\overline{\alpha}_{j_{k}}} \circ I_{W} \otimes
        \overline{\xi}_{j_{k}} \circ u_{j_k+1}}{u_{i_{k}+1}^* \otimes I_{\alpha_{i_k}} 
        \circ I_{W} \otimes \xi_{i_k} \circ u_{i_k}},
    \end{align*}
    here $u_{2n+1} = u_1$. We show that the two expressions are equal by induction on $n$. 

    If $n = 1$, then 
    \begin{align*}
        &\sum_{\gamma_1, \gamma_2} 
        \sum_{u_1 \in \OOO_{\gamma_1}^{W * \beta_1}, u_2 \in \OOO_{\gamma_2}^{W * \beta_2}}
        \braket{u_{2}^* \otimes I_{\overline{\alpha}_{2}} \circ I_{W} \otimes
        \overline{\xi}_{2} \circ u_{1}}{u_{2}^* \otimes I_{\alpha_{1}} 
        \circ I_{W} \otimes \xi_{1} \circ u_{1}}\\
        =&\sum_{\gamma_1}\sum_{u \in \OOO^{W * \beta_1}_{\gamma_1}}\frac{d_{\gamma_1}}{d_{\beta_1}}
        \braket{\overline{\xi}_2}{\xi_1} = d_W \braket{\overline{\xi}_2}{\xi_1}.
    \end{align*}

    Assume the claim holds for $n = 2l$. For $n = 2l + 2$, let $k \in \{1, \ldots, 2l+2\}$ such that $j_k = i_k +1$. Note that
    \begin{align*}
        &\sum_{\gamma_{i_k + 1}} 
        \sum_{u_{i_k +1} \in \OOO_{\gamma_{i_k + 1}}^{W * \beta_{i_k + 1}}}\frac{1}{d_{\gamma_{i_k}}}
        \braket{u_{i_k + 1}^* \otimes I_{\overline{\alpha}_{2}} \circ I_{W} \otimes
        \overline{\xi}_{i_k + 1} \circ u_{i_k + 2}}{u_{i_k + 1}^* \otimes I_{\alpha_{1}} 
        \circ I_{W} \otimes \xi_{i_k} \circ u_{i_k}}\\
        =&\frac{1}{d_{\beta_{i_k}}} \delta_{\beta_{i_k}, \beta_{i_{k}+2}} \delta{u_{i_{k}+2}, u_{i_k}}
        \braket{\overline{\xi}_{j_k}}{\xi_{i_k}}.
    \end{align*} 
    Then the inductive hypothesis implies the claim.
 
    Let $\Phi_0$ be the *-subalgebra of $\Phi(\CC)$ generated by 
    $\{f:Hom(W_1 * \beta_1, W_2 *\beta_2), \beta_1,\beta_2 \in \SSS, W_1, W_2 \in 
    W(\CC)\}$ and $\{\Gamma(\xi): \xi \in \bfrac{\beta_2 * \alpha}{\beta_1}, \beta_1,\beta_2\in \SSS,
    \alpha \in \Lambda\}$. Moreover, let $\rho_W(\Phi_0)$ be the *-subalgebra of $P\Phi(\CC)P$ generated by
    $\{\rho_W(f):Hom(W_1 * \beta_1, W_2 * \beta_2), \beta_1,\beta_2 \in \SSS, W_1,W_2 \in W(\CC)\}$
    and $\{\rho_W(\Gamma(\xi)): \xi \in \bfrac{\beta_2 * \alpha}{\beta_1}, \beta_1,\beta_2 \in \SSS, \alpha \in \Lambda\}$,
    where $P = \sum_{V \in W(\CC)}P_{W*V}$. 

    By \cref{KMS_aut} and \cite[Lemma 2.1]{ILP}, $\Phi_0$ and $\rho_W(\Phi_0)$ are respectively dense in 
    $L^2(\Phi(\CC), \varphi)$ and $L^2(\rho_W(\Phi_0)'', d_W^{-1} \varphi)$. By the discussion above, 
    \begin{align*}
        \varphi(u\Gamma(\xi_n) \cdots \Gamma(\xi_1)v^*) = d_W^{-1}
        \varphi(\rho_W(u)\rho_W(\Gamma(\xi_n)) \cdots \rho_W(\Gamma(\xi_1))\rho_W(v^*)),
    \end{align*}
    where $\xi_i \in \bfrac{\beta_{i+1}*\alpha_i}{\beta_i}$, $u \in \OOO^{W_2 * \SSS}_{\beta_{n+1}}$
    and $v \in \OOO^{W_1 * \SSS}_{\beta_1}$, $\beta_i \in \SSS$, $\alpha_i \in \Lambda$, $i = 1, \ldots, n$, $n\geq 1$, and 
    $W_1$, $W_2 \in W(\CC)$.

    Thus there exists a unitary $U$ from $L^2(\Phi(\CC), \varphi)$ to 
    $L^2(\rho_W(\Phi_0)'', d_W^{-1}\varphi)$ such that 
    $U^*\pi(\rho_W(f))U = f$ and $U^* \pi(\rho_W(\Gamma(\xi)))U = \Gamma(\xi)$ where
    $\pi$ is the GNS representation of $\rho_W(\Phi_0)''$ with respect to $d_{W}^{-1}\varphi$.
\end{proof}

\begin{remark}\label{o_ind_re}
    With the notations used in \cref{embedding_ut}, it is routine to check that
    \begin{align*}
     \sum_{u \in \OOO_{\gamma_2}^{W * \beta_2},\, v \in \OOO^{W * \beta_1}_{\gamma_1}}
      u\Gamma( u^* \otimes I_\alpha \circ I_W \otimes \xi \circ v)v^*   
    \end{align*}
    does not depend on the choice of the orthonormal basis of isometries in
    $Hom(\gamma_i, W * \beta_i)$ where the sum over $u$ and $v$ runs. Then it is 
    not hard to check that $\rho_{W_2} \circ \rho_{W_1} = \rho_{W_2*W_1}$ for every $W_1$, $W_2 \in W(\CC)$.
    
    Note that, even for objects $X,Y\in\CC$ regarded as one-letter words in $W(\CC)$, $\rho_{Y*X}$ and $\rho_{YX}$
    need not coincide as endomorphisms of $\Phi(\CC)$, where $Y*X$ is the concatenation of letters, while $YX$ 
    is the tensor multiplication in $\CC$. 
\end{remark}

\begin{lemma}\label{intre_lemma}
    For every pair of words $W_1$, $W_2 \in W(\CC)$ and arrow $f \in Hom(W_1,W_2)$, we denote 
    $$f \otimes I = \sum_{\beta \in \SSS, W \in W(\CC)} f * I_{W * \beta}\, \in\Phi(\CC),$$ 
    where $f * I_{W * \beta} \in Hom(W_1 * W * \beta, W_2 * W * \beta)$.
    Then the following intertwining relation in $End(\Phi(\CC))$ holds 
    $$(f \otimes I) \rho_{W_1}(A) = \rho_{W_2}(A) (f \otimes I),\quad \forall A \in \Phi(\CC).$$ 
    In particular, 
\begin{align*}
    (I_{X_1 * X_2}^{X_1 X_2} \otimes I) \rho_{X_1} \circ \rho_{X_2}(\cdot) = 
    \rho_{X_1 X_2}(\cdot) (I_{X_1 * X_2}^{X_1 X_2} \otimes I),
\end{align*}
where $I_{X_1 * X_2}^{X_1 X_2} = I_{X_1 X_2} \in Hom(X_1 * X_2, X_1 X_2)$, 
$X_1$, $X_2 \in \CC$.
\end{lemma}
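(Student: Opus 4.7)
The plan is to verify the intertwining relation on a generating set of $\Phi(\CC)$ and extend it by multiplicativity, closure under adjoints, and normality. Both maps $A \mapsto (f \otimes I)\rho_{W_1}(A)$ and $A \mapsto \rho_{W_2}(A)(f \otimes I)$ are $\C$-linear and normal, since the $\rho_{W_i}$ are normal $*$-homomorphisms by \cref{embedding_ut} and left/right multiplication by a fixed bounded operator is weakly continuous. Moreover, if the identity holds for $A$ and $B$, it holds for $AB$ by multiplicativity of the $\rho_{W_i}$; and since $\Gamma(\xi)^* = \Gamma(S(\xi))$ (by the definition of $\Gamma$ and the involution $S$ on $H(\ut)$), the family $\{g \in Hom(V_1 * \gamma_1, V_2 * \gamma_2)\} \cup \{\Gamma(\xi)\}$ of generators of $\Phi(\CC)$ is closed under adjoints. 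Hence it suffices to verify the identity for $A$ an arrow and for $A = \Gamma(\xi)$.

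For $A = g \in Hom(V_1 * \gamma_1, V_2 * \gamma_2)$, $\rho_{W_i}(g) = I_{W_i} * g$; in the sums $\sum_{W, \beta}$ defining $f \otimes I$, only one term survives in each product (by matching word-level domains and codomains), namely $(f * I_{V_2 * \gamma_2}) \circ (I_{W_1} * g)$ on the left-hand side, and $(I_{W_2} * g) \circ (f * I_{V_1 * \gamma_1})$ on the right-hand side. Both reduce to the single arrow $f * g : W_1 * V_1 * \gamma_1 \to W_2 * V_2 * \gamma_2$ by the interchange law in the strict tensor category $W(\CC)$.

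For $A = \Gamma(\xi)$ with $\xi \in \bfrac{\beta_2 * \alpha}{\beta_1}$, use the explicit formula from \cref{embedding_ut}. The key observation is that, as operator products in $\Phi(\CC)$, $(f \otimes I)\, u = (f \otimes I_{\beta_2}) \circ u \in Hom(\gamma_2, W_2 \beta_2)$ for each $u \in \OOO^{W_1 * \beta_2}_{\gamma_2}$ (only the $W = \emptyset$, $\beta = \beta_2$ summand of $f \otimes I$ composes with $u$), and symmetrically $v'^*\, (f \otimes I) = v'^* \circ (f \otimes I_{\beta_1}) \in Hom(W_1 \beta_1, \gamma_1)$ for $v' \in \OOO^{W_2 * \beta_1}_{\gamma_1}$. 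Expanding $(f \otimes I_{\beta_2}) \circ u$ in the ONB $\{u'\} \subset \OOO^{W_2 * \beta_2}_{\gamma_2}$ with scalar coefficients $c(u', u) = u'^* (f \otimes I_{\beta_2}) u$, exchanging the sums over $u$ and $u'$, and using that $\sum_u u u^*$ equals the $\gamma_2$-isotypic projection of $W_1 \beta_2$, which is absorbed on the right by any morphism into the simple object $\gamma_2$, the left-hand side becomes
\begin{equation*}
(f \otimes I)\, \rho_{W_1}(\Gamma(\xi)) = \sum_{\gamma_1, \gamma_2, u', v} u' \, \Gamma\bigl( u'^* \otimes I_\alpha \circ (f \otimes \xi) \circ v \bigr) \, v^*,
\end{equation*}
where $f \otimes \xi := (f \otimes I_{\beta_2 \alpha}) \circ (I_{W_1} \otimes \xi) \in Hom(W_1 \beta_1, W_2 \beta_2 \alpha)$ by the interchange law, and the sum runs over $u' \in \OOO^{W_2 * \beta_2}_{\gamma_2}$ and $v \in \OOO^{W_1 * \beta_1}_{\gamma_1}$. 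A dual manipulation on the right-hand side, using that $\sum_{v'} v' v'^*$ is the $\gamma_1$-isotypic projection of $W_2 \beta_1$, absorbed on the left by $(f \otimes I_{\beta_1}) \circ v$, together with the other factorization $f \otimes \xi = (I_{W_2} \otimes \xi) \circ (f \otimes I_{\beta_1})$, yields the same expression.

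The particular case $A = I_{X_1 * X_2}^{X_1 X_2}$ then follows from the general statement with $W_1 = X_1 * X_2$, $W_2 = X_1 X_2$, $f = I_{X_1 X_2}^{X_1 * X_2}$, combined with the identity $\rho_{X_1} \circ \rho_{X_2} = \rho_{X_1 * X_2}$ from \cref{o_ind_re}. The main obstacle I anticipate is the bookkeeping in the $\Gamma(\xi)$ case, where the four ONBs $\{u\}, \{u'\}, \{v\}, \{v'\}$ of intertwiner spaces attached to the different words $W_1, W_2$ must be coherently related through $f$ via the interchange identity and the absorption of isotypic projections by morphisms with simple source or target.
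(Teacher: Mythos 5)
Your proof is correct and follows essentially the same route as the paper: reduce to checking the relation on the generators of $\Phi(\CC)$ (with the $\Gamma(\xi)$ case being the substantive one) and then carry out an orthonormal-basis computation in the fusion Hom-spaces using the interchange law. The paper compresses the $\Gamma(\xi)$ step by assuming $f = u_2 u_1^*$ for basis isometries $u_i \in \OOO^{W_i}_{\beta}$ and invoking the independence of $\rho_W$ from the choice of orthonormal bases (\cref{o_ind_re}); this is the same resolution-of-identity manipulation that you perform explicitly via the absorption of the isotypic projections $\sum_u uu^*$ and $\sum_{v'} v'v'^*$ by morphisms with simple source or target.
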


\begin{proof}
    We only need to show that $(f \otimes I) \rho_{W_1}(\Gamma(\xi)) 
    = \rho_{W_2}(\Gamma(\xi))(f \otimes I)$ for every $\xi \in \bfrac{\beta_2*\alpha}{\beta_1}$,
    $\alpha \in \Lambda$, $\beta_i \in \SSS$, $i=1,2$. Without loss of generality, 
    we assume that $f = u_2 u_1^*$ where $u_i \in \OOO_{\beta}^{W_i}$, $\beta \in \SSS$, $i = 1,2$.
    Note that $\tilde{\OOO}^{W_i *\beta_j }_{\gamma} = \{ u \otimes I_{\beta_j} \circ v: u 
    \in \OOO_{\eta}^{W_i}, v \in \OOO^{\eta * \beta_j}_{\gamma}, \eta \in \SSS\}$
    is an orthonormal basis of $Hom(\gamma, W_i \beta_j)$, $i = 1,2$, $j=1,2$.
    Hence by \cref{o_ind_re} we have the desired statement.
\end{proof}

We choose and fix a partial isometry $V_X \in \Phi(\CC)$ 
such that $V_X^*V_X= P_X$ and $V_X V_X^* = P_{\emptyset}$ (whose existence is guaranteed by \cref{X_equl}) for every object $X \in \CC$.
Note that $\rho_X(P_{\emptyset}) = P_X$. 

\begin{definition}
For every object $X\in\CC$, let $F(X)$ be the unital *-endomorphism of $\Phi$
($=P_{\emptyset}\Phi(\CC)P_{\emptyset}$) defined by
\begin{align*}
    F(X)(A) = V_X \rho_X(A) V_X^*, \quad A \in \Phi.
\end{align*}
In particular, if we set $V_\ut = \sum_{\beta\in\SSS} I_{\ut * \beta}^{\beta}$, 
where $I_{\ut * \beta}^{\beta} = I_\beta \in Hom(\ut * \beta, \beta)$, then $F(\ut) = Id_{\Phi}$.

Furthermore, by \cref{intre_lemma}, for every $X,Y \in \CC$ and $f \in Hom(X, Y)$ we have
\begin{align*}
    (V_{Y} (f \otimes I) V_{X}^*)\,F(X)(A) = F(Y)(A)\,(V_{Y} (f \otimes I) V_{X}^*), \quad 
    \forall A \in \Phi.
\end{align*}
\end{definition}

Let $End(\Phi)$ be the (strict) C$^*$-tensor category of endomorphisms of $\Phi$ 
(normal faithful unital and *-preserving), 
with tensor structure given on objects by the composition of endomorphisms and 
C$^*$-norm on arrows given by the one of $\Phi$. The tensor unit $Id_{\Phi}$ of
$End(\Phi)$ is simple because $\Phi$ is a factor.

By \cref{intre_lemma}, it is clear that 
$$X \mapsto F(X), \quad f \mapsto F(f) = V_{Y}(f \otimes I)V_{X}^*$$ 
is a *-functor from $\CC$ into $End(\Phi)$.
In particular, $F(I_X) = I_{F(X)}$ and $F(f^*) = F(f)^*$ hold.

We will show that $F$ is a fully faithful (non-strictly) tensor functor from $\CC$ to $End(\Phi)$, hence an equivalence 
of C$^*$-tensor categories onto its image. 
Recall that $XY = X\otimes Y$ in $\CC$, by our convention, and we denote by $\otimes$ the tensor product
in $End(\Phi)$ as well. The (unitary) associator of the functor 
\begin{align*}
    J_{X_1, X_2}: F(X_1) & \otimes F(X_2) = Ad(V_{X_1} \rho_{X_1}(V_{X_2}))) \circ \rho_{X_1}(\rho_{X_2}(\cdot))\\
    &\mapsto  AdV_{X_1 \otimes X_2} \circ \rho_{X_1 \otimes X_2}=F(X_1 \otimes X_2)  
\end{align*}
is defined for every $X_1,X_2\in\CC$ by
\begin{align*}
    J_{X_1, X_2} = V_{X_1 \otimes X_2}(I_{X_1*X_2}^{X_1 \otimes X_2} \otimes I) 
    \rho_{X_1}(V_{X_2}^*) V_{X_1}^* \, \in \Phi.
\end{align*}

\begin{lemma}\label{naturlity_lemma}
The family of morphisms $\{J_{X_1, X_2}\}$ is natural in $X_1$ and $X_2$, i.e., the following 
diagram commutes in $End(\Phi)$
   \begin{center}
   \adjustbox{scale=1}{
   \begin{tikzcd}[column sep = large, row sep = large]
        F(X_1) \otimes F(X_2) \arrow [r, rightarrow, "J_{X_1, X_2}"] \arrow [d, rightarrow, swap, "F(f_1) \otimes F(f_2)"] & 
        F(X_1 \otimes X_2) \arrow [d, rightarrow, "F(f_1 \otimes f_2)"] \\
        F(Y_1) \otimes F(Y_2) \arrow [r, rightarrow , "J_{Y_1, Y_2}"] & 
        F(Y_1 \otimes Y_2)
    \end{tikzcd}}
    \end{center}
for every $f_i \in Hom(X_i, Y_i)$, $X_i, Y_i\in\CC$, $i = 1,2$. 
\end{lemma}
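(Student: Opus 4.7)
The plan is to verify the naturality square by expanding both composites as explicit elements of $\Phi$ and reducing each to the same canonical form $V_{Y_1 Y_2}\,(K\otimes I)\,\rho_{X_1}(V_{X_2}^*)\,V_{X_1}^*$, where $K$ is the common arrow $f_1\otimes f_2$ viewed via $\iota$ as an element of $Hom(X_1*X_2, Y_1 Y_2)$. Recall that composition of arrows in $End(\Phi)$ is multiplication in $\Phi$, and the tensor product of arrows obeys $a\otimes b = a\cdot \rho(b)$ for $a:\rho\to\rho'$, $b:\sigma\to\sigma'$. Applied here, $F(f_1)\otimes F(f_2)=F(f_1)\cdot F(X_1)(F(f_2))$; absorbing $V_{X_1}^* V_{X_1} = P_{X_1}$ (since $\rho_{X_1}(\Phi)\subseteq P_{X_1}\Phi(\CC)P_{X_1}$), this simplifies to $V_{Y_1}(f_1\otimes I)\,\rho_{X_1}(F(f_2))\,V_{X_1}^*$.

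For the left-hand composite $J_{Y_1,Y_2}\cdot (F(f_1)\otimes F(f_2))$, I would first telescope $V_{Y_1}^* V_{Y_1}=P_{Y_1}$ and collapse the adjacent factors $\rho_{Y_1}(V_{Y_2}^*)\,\rho_{Y_1}(F(f_2))=\rho_{Y_1}((f_2\otimes I)V_{X_2}^*)$. I then apply \cref{intre_lemma} with $f=f_1\in Hom(X_1, Y_1)$ in the form $\rho_{Y_1}(V_{X_2}^*)(f_1\otimes I)=(f_1\otimes I)\rho_{X_1}(V_{X_2}^*)$ to push $(f_1\otimes I)$ past $\rho_{Y_1}(V_{X_2}^*)$ to the rightmost position. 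The remaining product $\rho_{Y_1}(f_2\otimes I)\cdot (f_1\otimes I)$ rewrites as $((I_{Y_1}*f_2)\otimes I)\cdot (f_1\otimes I)$, which collapses componentwise via the interchange law $(I_{Y_1}*f_2)\circ(f_1*I_{X_2})=f_1*f_2$ in $\CC$, yielding $(f_1*f_2)\otimes I$ with $f_1*f_2\in Hom(X_1*X_2, Y_1*Y_2)$. Finally $(I^{Y_1 Y_2}_{Y_1*Y_2}\otimes I)\cdot ((f_1*f_2)\otimes I) = K\otimes I$ with $K=I_{Y_1 Y_2}\circ(f_1*f_2)\in Hom(X_1*X_2, Y_1 Y_2)$, identified with $f_1\otimes f_2$ via $\iota$. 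For the right-hand composite $F(f_1\otimes f_2)\cdot J_{X_1,X_2}$, the product $((f_1\otimes f_2)\otimes I)\cdot V_{X_1 X_2}^* V_{X_1 X_2}\cdot (I^{X_1 X_2}_{X_1*X_2}\otimes I)$ can be computed directly by matching $Hom$-components (only the $W=\emptyset$ summand of $(I^{X_1 X_2}_{X_1*X_2}\otimes I)$ survives after composing with $P_{X_1 X_2}=V_{X_1 X_2}^* V_{X_1 X_2}$), reducing via the same identification $\iota$ to the same $K\otimes I$ acting on the image of $\rho_{X_1}(V_{X_2}^*)V_{X_1}^*$. Hence the two composites coincide.

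The main obstacle is the careful bookkeeping required to distinguish the word-category parsing $f_1*f_2\in Hom(X_1*X_2, Y_1*Y_2)$ from the tensor-category parsing $f_1\otimes f_2\in Hom(X_1 X_2, Y_1 Y_2)$, which are reconciled through the coherence arrows $I^{X_1 X_2}_{X_1*X_2}$ and $I^{Y_1 Y_2}_{Y_1*Y_2}$, combined with the iterated application of \cref{intre_lemma} to move arrow-components of $\Phi(\CC)$ past the non-unital endomorphisms $\rho_W$.
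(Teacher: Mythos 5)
Your proposal is correct and follows essentially the same route as the paper's proof: both sides are expanded, the partial isometries are telescoped, $(f_1\otimes I)$ is commuted past the $\rho$'s via \cref{intre_lemma}, and the conclusion rests on the identity $I^{Y_1\otimes Y_2}_{Y_1 * Y_2}\circ (f_1 * f_2) = (f_1\otimes f_2)\circ I^{X_1\otimes X_2}_{X_1 * X_2}$ in $W(\CC)$. The only cosmetic differences are that you write the tensor product of arrows in the form $\rho'(b)\,a$ rather than $a\,\rho(b)$, and that you keep the full sum $K\otimes I$ over all words, which is harmless since $\rho_{X_1}(V_{X_2}^*)$ cuts it down to the $W=\emptyset$ components, exactly the expression the paper obtains.
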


\begin{proof}
    Since $\rho_X(P_\emptyset) = P_X$, $X \in \CC$, \cref{intre_lemma} implies that
    \begin{align*}
        J_{Y_1, Y_2}(F(f_1) &\otimes F(f_2))\\ 
        &= V_{Y_1 \otimes Y_2}(I^{Y_1 \otimes Y_2}_{Y_1 * Y_2} \otimes I)
        \rho_{Y_1}(V_{Y_2}^*)(f_1 \otimes I)\rho_{X_1}(V_{Y_2}) \rho_{X_1}(f_2 \otimes I) 
        \rho_{X_1}(V_{X_2}^*)V_{X_1}^*\\
        &=V_{Y_1 \otimes Y_2}[(I^{Y_1 \otimes Y_1}_{Y_1 * Y_2} \otimes I)(f_1 \otimes I)
        \rho_{X_1}(\sum_{\beta \in \SSS} f_2 * I_{\beta})]\rho_{X_1}(V_{X_2}^*)V_{X_1}^*
    \end{align*}
    and $F(f_1 \otimes f_2)J_{X_1, X_2} = V_{Y_1 \otimes Y_2}[((f_1 \otimes f_2) \otimes I)
        P_{X_1 \otimes X_2} (I^{X_1 \otimes X_1}_{X_1 * X_2} \otimes I)]\rho_{X_1}(V_{X_2}^*)V_{X_1}^*$.
    
    Moreover
    \begin{align*}
        &(I^{Y_1 \otimes Y_1}_{Y_1 * Y_2} \otimes I)(f_1 \otimes I)\rho_{X_1}(\sum_{\beta \in \SSS} 
        f_2 * I_{\beta})
        = \sum_{\beta \in \SSS}[I^{Y_1 \otimes Y_1}_{Y_1 * Y_2}(f_1 * f_2)] * I_\beta \\
        =& \sum_{\beta \in \SSS}[(f_1 \otimes f_2) I^{X_1 \otimes X_1}_{X_1 * X_2}] * I_\beta = 
        ((f_1 \otimes f_2) \otimes I)P_{X_1 \otimes X_2} (I^{X_1 \otimes X_1}_{X_1X_2} \otimes I),
    \end{align*}
    since $I^{Y_1 \otimes Y_1}_{Y_1 * Y_2}(f_1 * f_2) = (f_1 \otimes f_2) I^{X_1 \otimes X_1}_{X_1 * X_2}$
    and $f_1 * f_2 = f_1 \otimes f_2 \in Hom(X_1*X_2, Y_1*Y_2)$ by definition. 
    Thus $\{J_{X_1, X_2}\}$ is natural.
\end{proof}

\begin{lemma}\label{p_lemma}
   $F$ is a tensor functor. 
\end{lemma}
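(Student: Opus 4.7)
To verify that $F$ is a tensor functor, beyond the naturality established in \cref{naturlity_lemma} three things must be checked: (i) each $J_{X_1,X_2}$ is a unitary arrow in $End(\Phi)$ from $F(X_1)\otimes F(X_2)$ to $F(X_1\otimes X_2)$; (ii) the unit coherence axiom; (iii) the pentagon coherence axiom.

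For (i), the intertwining property
\begin{align*}
J_{X_1,X_2}\cdot (F(X_1)\otimes F(X_2))(A) = F(X_1\otimes X_2)(A)\cdot J_{X_1,X_2}, \quad A\in\Phi,
\end{align*}
follows from \cref{intre_lemma} applied to $f = I^{X_1\otimes X_2}_{X_1*X_2}$, combined with $\rho_{X_1}\circ\rho_{X_2} = \rho_{X_1*X_2}$ from \cref{o_ind_re} and the relations $V_X^*V_X = P_X = \rho_X(P_\emptyset)$, $V_XV_X^* = P_\emptyset$. Unitarity is then a direct computation: using the partial-isometry identities $(I^{X_1\otimes X_2}_{X_1*X_2})^*I^{X_1\otimes X_2}_{X_1*X_2} = I_{X_1*X_2}$, $I^{X_1\otimes X_2}_{X_1*X_2}(I^{X_1\otimes X_2}_{X_1*X_2})^* = I_{X_1\otimes X_2}$ together with the $V_X$-relations, both $J_{X_1,X_2}J_{X_1,X_2}^*$ and $J_{X_1,X_2}^*J_{X_1,X_2}$ telescope to $P_\emptyset$, which is the identity of $\Phi$.

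For (ii), our convention $V_\ut = \sum_\beta I^\beta_{\ut*\beta}$ together with the already observed identity $F(\ut) = Id_\Phi$, strictness of $\CC$ (which gives $I^{\ut\otimes X}_{\ut*X} = I_X = I^{X\otimes\ut}_{X*\ut}$ and permits the choice $V_{\ut\otimes X} = V_X$) imply $J_{\ut,X} = P_\emptyset = J_{X,\ut}$, i.e., the identity arrow at $F(X)$. For (iii), in the $2$-category $End(\Phi)$ the horizontal composition of $2$-cells satisfies $J_{X_1,X_2}\otimes I_{F(X_3)} = J_{X_1,X_2}$ and $I_{F(X_1)}\otimes J_{X_2,X_3} = F(X_1)(J_{X_2,X_3})$, so the pentagon reduces to
\begin{align*}
J_{X_1\otimes X_2, X_3}\cdot J_{X_1,X_2} \;=\; J_{X_1, X_2\otimes X_3}\cdot F(X_1)(J_{X_2,X_3}).
\end{align*}
Expanding both sides and using \cref{intre_lemma} to move the associators $(I^\bullet_\bullet\otimes I)$ past the strings $\rho_{X_i}(V_{X_j}^*)V_{X_i}^*$, the composition rule $\rho_{X_1}\circ\rho_{X_2} = \rho_{X_1*X_2}$ from \cref{o_ind_re}, and the strictness identity in $\CC$
\begin{align*}
I^{X_1X_2X_3}_{(X_1X_2)*X_3}\circ(I^{X_1X_2}_{X_1*X_2}*I_{X_3}) = I^{X_1X_2X_3}_{X_1*X_2*X_3} = I^{X_1X_2X_3}_{X_1*(X_2X_3)}\circ(I_{X_1}*I^{X_2X_3}_{X_2*X_3}),
\end{align*}
both sides collapse to the same element $V_{X_1X_2X_3}(I^{X_1X_2X_3}_{X_1*X_2*X_3}\otimes I)\rho_{X_1*X_2}(V_{X_3}^*)\rho_{X_1}(V_{X_2}^*)V_{X_1}^*$.

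The main obstacle is the careful bookkeeping of the many independently chosen partial isometries $V_X$ in the pentagon computation. The strategy is to push all $V$'s toward the outer boundary and all associators $(I^\bullet_\bullet\otimes I)$ toward the center, where strictness of $\CC$ forces equality of the two factorizations of the triple coherence arrow.
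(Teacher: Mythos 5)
Your proposal is correct and its core — reducing the associativity coherence to the operator identity $J_{X_1\otimes X_2,X_3}\,J_{X_1,X_2} = J_{X_1,X_2\otimes X_3}\,F(X_1)(J_{X_2,X_3})$ and collapsing both sides, via \cref{intre_lemma}, the rule $\rho_{X_1}\circ\rho_{X_2}=\rho_{X_1*X_2}$ and strictness of $\CC$, to $V_{X_1X_2X_3}(I^{X_1X_2X_3}_{X_1*X_2*X_3}\otimes I)\rho_{X_1}(\rho_{X_2}(V_{X_3}^*))\rho_{X_1}(V_{X_2}^*)V_{X_1}^*$ — is exactly the paper's computation. The only difference is that you additionally verify unitarity of $J_{X_1,X_2}$ and the unit coherence by hand, whereas the paper disposes of everything except the associativity diagram by appealing to \cite[Section 2.4]{PSDV}.
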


\begin{proof}
    By \cite[Section 2.4]{PSDV}, we only need to check that the following diagram commutes
   \begin{center}
   \adjustbox{scale=1}{
    \begin{tikzcd}[column sep = large, row sep = large]
       F(X) \otimes F(Y) \otimes F(Z) \arrow [r, rightarrow, "J_{X,Y} \otimes I_{F(Z)}" ] \arrow[d, rightarrow] & 
       F(X \otimes Y) \otimes F(Z) \arrow [r, rightarrow, "J_{X \otimes Y, Z}"] &
       F(X \otimes Y \otimes Z) \arrow [d, rightarrow] \\
       F(X) \otimes F(Y) \otimes F(Z) \arrow [r, rightarrow, "I_{F(X)} \otimes J_{Y, Z}"] &
       F(X) \otimes F(Y \otimes Z) \arrow [r, rightarrow, "J_{X, Y \otimes Z}"] & 
       F(X \otimes Y \otimes Z). 
    \end{tikzcd}}
    \end{center}
    By \cref{intre_lemma} and by $\rho_X(\rho_Y(P_Z)) = P_{X*Y*Z}$, we have 
\begin{align*}
    &J_{X\otimes Y, Z} (J_{X,Y} \otimes I_{F(Z)})=  
    V_{X \otimes Y \otimes Z}(I_{(X \otimes Y) * Z}^{X \otimes Y \otimes Z} \otimes I)
    (I_{X * Y}^{X \otimes Y} \otimes I)P_{X*Y*Z}\rho_{X}(\rho_{Y}((V_{Z}^*)) \rho_{X}(V_Y^*) V_X^*,\\
    &J_{X, Y\otimes Z}(I_{F(X)} \otimes J_{Y,Z}) = 
    V_{X \otimes Y \otimes Z}(I_{X * (Y\otimes Z)}^{X \otimes Y \otimes Z} \otimes I)
    \rho_X(P_{Y \otimes Z}(I_{Y * Z}^{Y\otimes Z} \otimes I))\rho_{X}(\rho_{Y}(V_Z^*))\rho_{X}(V_Y^*)V_X^*.
\end{align*}
Note that
\begin{align*}
    (I_{(X \otimes Y) * Z}^{X \otimes Y \otimes Z} \otimes I)(I_{X * Y}^{X \otimes Y} \otimes I)
    P_{X * Y * Z}
    &= \sum_{\beta \in \SSS} I^{X \otimes Y \otimes Z}_{X * Y * Z} * I_\beta\\ 
    &= (I_{X * (Y\otimes Z)}^{X \otimes Y \otimes Z} \otimes I)
    \rho_X(P_{Y \otimes Z}(I_{Y * Z}^{Y\otimes Z} \otimes I)),
\end{align*}
hence the diagram commutes.
\end{proof}

We denote by $\AAA(\CC)$ be the von Neumann subalgebra of $\Phi(\CC)$ generated by $\{f \in Hom(\beta_1, W* \beta_2):
\beta_1,\beta_2 \in \SSS, W \in W(\CC)\}$. By \cref{KMS_aut}, there exists a faithful normal conditional expectation 
$\tilde{E}: \Phi(\CC) \rightarrow \AAA(\CC)$ with respect to $\varphi$ defined by \cref{varphi_def}.
Note that $\tilde{E}|_{\Phi} = E : \Phi \rightarrow \AAA(\ut)$ defined in \cref{def_E}. 

\begin{lemma}\label{amg_free_lemma}
    For every $X \in \CC$, let $\Phi(X) = P_{X}\Phi(\CC)P_{X}$ and $\AAA(X) = \AAA(\CC) \cap \Phi(X)$. Then
    \begin{align*}
        (\Phi(X), \tilde{E}) = (\MMM_1(X), \tilde{E}) *_{\AAA(X)} (\MMM_2(X), \tilde{E}),
    \end{align*} 
    where $\MMM_j(X)$, $j=1,2$, is the von Neumann subalgebra of $\Phi(X)$ generated by 
    \begin{align*}
        \{u_2 A u_1^*: A \in \MMM_j, u_i \in Hom(\beta_i, X*\gamma_i), \beta_i, \gamma_i \in \SSS, i = 1,2\},
    \end{align*}
    and $\MMM_1$ and $\MMM_2$ are defined by \cref{M1_eq} and \cref{M2_eq} respectively.
\end{lemma}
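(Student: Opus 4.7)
The strategy is to transfer the decomposition $(\Phi,E) = (\MMM_1,E) *_{\AAA(\ut)} (\MMM_2,E)$ of \cref{M2_eq} up to the ambient algebra $\Phi(\CC)$ and then cut back down to $P_X$. The key structural observation is that $P_X = \sum_{\beta\in\SSS}\sum_{u\in\OOO^{X*\SSS}_\beta} uu^*$ lies in $\AAA(\CC)$, since each $u$ is an element of some $Hom(\beta, X*\gamma)\subset\AAA(\CC)$. Hence any amalgamated free product decomposition of $\Phi(\CC)$ over $\AAA(\CC)$ will automatically descend to a decomposition of $P_X\Phi(\CC)P_X = \Phi(X)$ over $P_X\AAA(\CC)P_X = \AAA(X)$, using the general principle that cutting by a projection in the amalgamating subalgebra commutes with taking amalgamated free products (which, in the present non-$\sigma$-finite generality, relies on the results in \cref{appendix_am_prod}).

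Step 1: I would first establish the lifted decomposition
\begin{align*}
    (\Phi(\CC),\tilde E) = (\NNN_1,\tilde E) *_{\AAA(\CC)} (\NNN_2,\tilde E),
\end{align*}
where $\NNN_j$ is the von Neumann subalgebra of $\Phi(\CC)$ generated by $\AAA(\CC)$ together with $\MMM_j\subset\Phi=P_\emptyset\Phi(\CC)P_\emptyset$. The restriction $\tilde E|_{\NNN_j}$ gives a faithful normal conditional expectation onto $\AAA(\CC)$, since $\tilde E$ fixes $\AAA(\CC)$ pointwise and satisfies $\tilde E|_\Phi = E$ (so $\tilde E(\MMM_j)\subset\AAA(\ut)\subset\AAA(\CC)$). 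For the freeness condition, take an alternating word $T_1\cdots T_n$ with $T_k\in\NNN_{i(k)}$, $i(k)\neq i(k+1)$, and $\tilde E(T_k)=0$. After inserting $P_\emptyset$'s (forced by $\MMM_j\subset P_\emptyset\Phi(\CC)P_\emptyset$) and decomposing each $T_k$ into $\AAA(\CC)$-parts bridging word-components and $\MMM_{i(k)}$-parts living on $\HHH^\emptyset$, the $\AAA(\ut) = P_\emptyset\AAA(\CC)P_\emptyset$-parts can be absorbed into adjacent $\MMM_{i(k)}$-factors; the condition $\tilde E(T_k)=0$ then becomes $E$-centered-ness of the middle pieces, and the vanishing of $\tilde E(T_1\cdots T_n)$ reduces to the freeness of $\MMM_1,\MMM_2$ over $\AAA(\ut)$ in $\Phi$ established in \cref{M2_eq}.

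Step 2: I would then compute $P_X\NNN_jP_X$. A general element of $\NNN_j$ is a limit of sums of words $a_0 m_1 a_1 m_2\cdots m_n a_n$ with $a_i\in\AAA(\CC)$ and $m_i\in\MMM_j$. Cutting by $P_X$ on both sides and inserting $P_\emptyset$'s around each $m_i$, the extreme factors become elements of $P_X\AAA(\CC)P_\emptyset$ and $P_\emptyset\AAA(\CC)P_X$, which are spanned respectively by the generators $u\in Hom(\beta, X*\gamma)$ and their adjoints $u^*$; the internal factors $P_\emptyset a_iP_\emptyset\in\AAA(\ut)\subset\MMM_j$ can be absorbed into adjacent $m_i$'s, so that after this reduction each word takes the form $u_2 A u_1^*$ with $A\in\MMM_j$, which is exactly the generating set of $\MMM_j(X)$. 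Hence $P_X\NNN_jP_X = \MMM_j(X)$, and applying the corner-cutting principle to Step 1 yields the desired decomposition $(\Phi(X),\tilde E) = (\MMM_1(X),\tilde E) *_{\AAA(X)} (\MMM_2(X),\tilde E)$.

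The main obstacle will be the rigorous verification of Step 1: while the heuristic word-reduction argument is straightforward, making it work in the non-$\sigma$-finite setting (and in the generality of weights rather than states) requires carefully using the machinery of \cref{appendix_am_prod}, in particular the characterization of amalgamated free products via conditional expectations and the behaviour of such products under corners by central projections of the amalgamating algebra.
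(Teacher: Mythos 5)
Your two-step strategy (lift the decomposition of $\Phi$ to a decomposition of $\Phi(\CC)$ over $\AAA(\CC)$, then compress by $P_X$) is a genuinely different route from the paper's, and your identification $P_X\NNN_jP_X=\MMM_j(X)$ in Step 2 is essentially correct. The gap is in the final appeal to a \lq\lq general principle that cutting by a projection in the amalgamating subalgebra commutes with taking amalgamated free products'': no such principle holds, and \cref{appendix_am_prod} contains no statement of this kind. What does pass to corners automatically is the \emph{freeness}: for $p$ in the amalgam $\NNN$ and $m\in p\MMM_ip$ one has $E(m)=pE(m)p$, so an $E$-centered alternating word in the corners is already a centered alternating word in the $\MMM_i$'s. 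What fails in general is the \emph{generation}: $p\MMM p$ need not be generated by the $p\MMM_ip$'s. For example, in $(M_2(\C),E)*_{\C^2}(M_2(\C),E)$ with $p=e_{11}$ a minimal projection of the diagonal $\C^2$, each corner $pM_2(\C)p$ equals $\C p$, yet $e_{12}^{(1)}e_{21}^{(2)}$ is a nonzero $E$-centered element of $p\MMM p$, so $p\MMM p\neq\C p$. Hence the statement that $\Phi(X)$ is generated by $\MMM_1(X)$ and $\MMM_2(X)$ must be proved by hand. It does hold here, but only because $\AAA(\CC)\subset\NNN_1\cap\NNN_2$ contains, for every $\beta\in\SSS$, an isometry $u_\beta\in Hom(\beta,X*\gamma_\beta)$ with $u_\beta^*u_\beta=I_\beta$ and $u_\beta u_\beta^*\le P_X$; writing $P_\emptyset=\sum_\beta u_\beta^*u_\beta$, each intermediate $P_\emptyset$ in a word $u\,m_1P_\emptyset m_2\cdots m_kv^*$ (with $m_i\in\MMM_{j_i}$) can be re-routed through $P_X$, splitting the word into a product of generators $u m_iu_\beta^*$ of the $\MMM_{j_i}(X)$'s. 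Your proposal needs this argument explicitly; without it the conclusion does not follow.

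For comparison, the paper avoids the detour through $\Phi(\CC)$ entirely and verifies the defining properties of the amalgamated free product directly in the corner: after asserting the generation statement above, it observes that $\tilde E(u_2Au_1^*)=u_2E(A)u_1^*$ and that $\tilde E[(u_1A_1v_1^*)\cdots(u_nA_nv_n^*)]=u_1E[(A_1v_1^*u_2)(A_2v_2^*u_3)\cdots A_n]v_n^*$ vanishes when $A_k\in Ker E\cap\MMM_{i_k}$ with $i_1\neq\cdots\neq i_n$, because each bridging factor $v_k^*u_{k+1}$ lies in $Hom(\beta_{k+1},\beta_k')\subset\AAA(\ut)$ and can be absorbed into $A_k$ without leaving $Ker E$, reducing everything to the freeness already established in \cref{M2_eq}. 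Your Step 1 (the global decomposition of $\Phi(\CC)$ as $\NNN_1*_{\AAA(\CC)}\NNN_2$) is true, but it is a strictly stronger auxiliary statement that is not needed and whose proof reduces to the same absorption computation; I would drop it and argue directly in $\Phi(X)$.
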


\begin{proof}
    By \cref{M2_eq}, it is not hard to check that $\Phi(X)$ is generated by 
    $\MMM_1(X)$ and $\MMM_2(X)$. Since $\tilde{E}(u_2 A u_1^*) = u_2 E(A) u_1^*$ and  
    \begin{align*}
        \tilde{E}[(u_1 A_1 v_1^*)(u_2 A_2 v_2^*)\cdots (u_n A_n v_n^*)] 
        = u_1 E[(A_1 v_1^*u_2)(A_2 v_2^* u_3) \cdots A_n]v_n^*= 0,  
    \end{align*}
    whenever $A_k \in KerE \cap \MMM_{i_k}$ with 
    $i_1 \neq i_2 \neq \cdots \neq i_n$, $u_i \in Hom(\beta_i, X \gamma_i)$ and 
    $v_i \in Hom(\beta_i', X \gamma_i')$, the lemma is proved.
\end{proof}

\begin{lemma}\label{X_case}
    The endomorphisms $\rho_X$ of $\Phi(\CC)$ defined in \cref{embedding_ut} map $\Phi$ into $\Phi(X)$, moreover $\rho_X(\Phi)' \cap \Phi(X) \cong Hom(X,X)$ for every $X \in \CC$.
\end{lemma}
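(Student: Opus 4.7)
I would handle the easy inclusion first: $\rho_X$ sends $P_\emptyset = \sum_{\beta \in \SSS} I_\beta$ to $\sum_\beta I_X * I_\beta = P_X$ (each summand $I_X * I_\beta \in Hom(X*\beta, X*\beta)$ is supported exactly on $\HHH^X$), whence $\rho_X(\Phi) = \rho_X(P_\emptyset \Phi(\CC) P_\emptyset) \subseteq P_X \Phi(\CC) P_X = \Phi(X)$. For one direction of the relative commutant isomorphism, I would define a unital $*$-homomorphism $\psi: Hom(X,X) \hookrightarrow \rho_X(\Phi)' \cap \Phi(X)$ by $\psi(h) := P_X(h \otimes I)P_X = \sum_{\beta \in \SSS} h * I_\beta$. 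Since each term $h * I_{W*\beta}$ of $h \otimes I$ is supported on $\HHH^{X*W}$, the operator $h \otimes I$ commutes with $P_X$, so $\psi(h) \in \Phi(X)$; and \cref{intre_lemma} applied with $W_1 = W_2 = X$, $f = h$ yields $\psi(h) \in \rho_X(\Phi)'$. Injectivity is immediate, since the $\beta = \ut$ component of $\psi(h)$ already recovers $h$ under the identification $X\ut = X$.

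For surjectivity I would fix a decomposition $X \cong \bigoplus_i n_i \alpha_i$ into distinct simples $\alpha_i \in \SSS$ together with isometries $v_i^j \in Hom(\alpha_i, X)$, $j = 1, \ldots, n_i$, satisfying $v_i^{j*} \circ v_{i'}^{j'} = \delta_{ii'}\delta_{jj'} I_{\alpha_i}$ and $\sum_{i,j} v_i^j \circ v_i^{j*} = I_X$. Extending $\psi$ by the same formula to a $*$-functor $\tilde\psi(f) := \sum_\beta f * I_\beta$ on every $Hom(Y,Z)$, the elements $W_i^j := \tilde\psi(v_i^j) \in P_X \Phi(\CC) P_{\alpha_i}$ are partial isometries satisfying $W_i^{j*} W_{i'}^{j'} = \delta_{ii'}\delta_{jj'} P_{\alpha_i}$, $\sum_{i,j} W_i^j W_i^{j*} = P_X$ and, by \cref{intre_lemma} again, $W_i^j \rho_{\alpha_i}(A) = \rho_X(A) W_i^j$ for every $A \in \Phi$. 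For $T \in \rho_X(\Phi)' \cap \Phi(X)$, the matrix blocks $T_{ii';jj'} := W_i^{j*} T W_{i'}^{j'}$ automatically intertwine $\rho_{\alpha_{i'}}$ with $\rho_{\alpha_i}$ and lie in $P_{\alpha_i}\Phi(\CC)P_{\alpha_{i'}}$. If I can show (i) $T_{ii';jj'} = 0$ whenever $i \neq i'$, and (ii) each $T_{ii;jj'}$ is a scalar multiple of $P_{\alpha_i}$, then $T = \sum_{i,j,j'} c_{i,j,j'}\, W_i^j W_i^{j'*} = \tilde\psi\bigl(\sum c_{i,j,j'} v_i^j v_i^{j'*}\bigr)$, exhibiting $T$ as the image of an element of $Hom(X,X) \cong \bigoplus_i M_{n_i}(\C)$.

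The main obstacle is therefore to prove (i) the mutual disjointness of $\rho_\alpha$ and $\rho_{\alpha'}$ for distinct simples, and (ii) the irreducibility of $\rho_\alpha(\Phi) \subset \Phi(\alpha)$ for each simple $\alpha \in \SSS$. The strategy will mirror the proof of \cref{factor_thm}: I would combine the amalgamated free product decomposition $(\Phi(\alpha), \tilde E) = (\MMM_1(\alpha), \tilde E) *_{\AAA(\alpha)} (\MMM_2(\alpha), \tilde E)$ from \cref{amg_free_lemma} with a Popa-type unitary $U_\alpha \in \rho_\alpha(\MMM_1)$, lying in the centralizer of $\tau \circ \tilde E$ and satisfying the asymptotic freeness condition of \cref{popa_unitary}. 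Invoking \cref{relative_commutant} will then force each block $T_{ii';jj'}$ into $\MMM_1(\alpha) \cap \{U_\alpha\}'$, after which an explicit inspection using \cref{unit_case} simple object by simple object should collapse the blocks to the required scalar form (with off-diagonal blocks vanishing for free because distinct simples live in orthogonal pieces of $\MMM_1(\alpha)$). The technical delicacy is that the analysis now takes place relative to the larger amalgamating algebra $\AAA(\alpha)$ instead of $\AAA(\ut)$, so the Popa-type unitary must be constructed inside the image $\rho_\alpha(\MMM_1)$ with care in order to retain asymptotic freeness against all of $\MMM_2(\alpha)$ over $\AAA(\alpha)$; transplanting the ergodicity argument of \cref{popa_unitary} into this enlarged framework is where I expect most of the labor to concentrate.
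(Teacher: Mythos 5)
Your setup is sound as far as it goes: $\rho_X(P_\emptyset)=P_X$ gives the first assertion, and the embedding $h\mapsto\sum_{\beta}h*I_\beta$ of $Hom(X,X)$ into $\rho_X(\Phi)'\cap\Phi(X)$ via \cref{intre_lemma} is exactly the easy inclusion the paper also uses. But the substance of the lemma is the reverse inclusion, and there you only state a program: you yourself isolate claims (i) and (ii) as the main obstacle and defer the "labor" of transplanting the Popa-type argument to the amalgamation over $\AAA(X)$ — and that labor is never carried out. Two gaps remain. First, the localization $\rho_X(\Phi)'\cap\Phi(X)\subseteq\MMM_1(X)$ is asserted but not proved; the difficulty you flag in fact dissolves once one computes $\rho_X(\Gamma(\xi_\beta))=\sum_{\gamma,\,u\in\OOO^{X*\beta}_{\gamma}}u\,\Gamma(\xi_\gamma)\,u^*$, so that the required unitary is simply $\rho_X(U)$ with $U$ from \cref{popa_unitary}, and the asymptotic-freeness hypothesis of \cref{relative_commutant} over $\AAA(X)$ reduces, via the structure of $\MMM_1(X)$ from \cref{amg_free_lemma}, to expressions of the form $u_1E(A'U_\gamma^nB')u_4^*$ governed by the original condition over $\AAA(\ut)$ — none of which appears in your write-up. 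Second, even granting the localization, your claim that the off-diagonal blocks vanish "for free because distinct simples live in orthogonal pieces of $\MMM_1(\alpha)$" does not parse: for $i\neq i'$ the block $T_{ii';jj'}$ lies in $P_{\alpha_i}\Phi(\CC)P_{\alpha_{i'}}$, which is contained in no single $\MMM_1(\alpha)$, so to kill it you would have to rerun the entire localization-plus-computation inside $\Phi(\alpha_i\oplus\alpha_{i'})$, i.e.\ essentially redo the general case for a two-summand object.

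The decomposition of $X$ into simples is also an unnecessary detour that creates the off-diagonal problem in the first place. The paper works with $X$ directly: having placed $\rho_X(\Phi)'\cap\Phi(X)$ inside $\MMM_1(X)\cong\oplus_{\gamma}\BBB(\oplus_{\beta}Hom(\gamma,X*\beta))\otimes\NNN_\gamma$, it cuts down by the projection $I_{X*\ut}=\rho_X(I_\ut)$ — a reduction that is injective on the relative commutant because $\Phi$ is a factor — and observes that $\rho_X(\Gamma(\xi_\ut))$ acts in this corner as $\sum_{\gamma}I\otimes\Gamma(\xi_\gamma)$, so factoriality of each $\NNN_\gamma$ collapses the commutant to $\oplus_{\gamma}\BBB(Hom(\gamma,X\ut))\otimes\C I\cong Hom(X,X)$ in one stroke, with no case distinction between simples and no separate disjointness statement. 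To complete your proof you must either supply the two missing arguments in your own framework, or replace the simple-object decomposition by the corner reduction by $\rho_X(I_\ut)$, after which both disappear.
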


\begin{proof}
    With the notation as in the previous lemma, it is easy to check that $\MMM_1(X) \cong \oplus_{\gamma \in \SSS} 
    \BBB(\oplus_{\beta \in \SSS} 
    Hom(\gamma, X*\beta)) \otimes \NNN_\gamma$ (see \cref{M1_eq}) where
    $\NNN_\gamma$ is the factor generated by $\Gamma(\xi_\gamma)$, $\xi_\gamma = I_\gamma \in \OOO^{\gamma * \ut}_\gamma$.
    By the definition of $\rho_X$, we have
    \begin{align*}
        \rho_X(\Gamma(\xi_\beta)) = \sum_{\gamma \in \SSS, u \in \OOO^{X*\beta}_{\gamma}}u
        \Gamma(u^* \otimes I_{\ut} \circ I_X \otimes \xi_{\beta} \circ u)u^*
        = \sum_{\gamma \in \SSS, u \in \OOO^{X*\beta}_{\gamma}}u
        \Gamma(\xi_\gamma)u^*.
    \end{align*}
    Let $\{U_\gamma\}_{\gamma \in \SSS}$ be the unitaries defined before \cref{popa_unitary}, it clear that
    \begin{align*}
        \sum_{\gamma \in \SSS} \sum_{\beta \in \SSS, u \in \OOO^{X*\beta}_{\gamma}}u
        U_\gamma u^* \in \rho_X(\Phi), 
    \end{align*}
    Then \cref{popa_unitary} implies $\rho_X(\Phi)' \cap \Phi(X) \subset \MMM_1(X)$.
    
    Since $\Phi$ is a factor, \cite[Chapter 2, Proposition 2]{Dix} implies that the reduction map by 
    $I_{X * \ut} = \rho_X(I_\ut)$ is an isomorphism
    on $\rho_X(\Phi)' \cap \MMM_1(X) \ni A \mapsto AI_{X * \ut} \in
    (I_{X * \ut}\rho_X(\Phi)I_{X * \ut})' \cap I_{X * \ut}\MMM_1(X)I_{X * \ut}$.

    Note that $I_{X * \ut}\MMM_1(X)I_{X * \ut} \cong \oplus_{\gamma \in \SSS}
    \BBB(Hom(\gamma, X*\ut)) \otimes \NNN_\gamma$, and
    $\rho_X(\Gamma(\xi_\ut)) = \sum_{\gamma \in \SSS, u \in \OOO^{X*\ut}_{\gamma}}u \Gamma(\xi_\gamma)u^*$.
    We have
    \begin{align*}
        (I_{X * \ut}\rho_X(\Phi)I_{X * \ut})' \cap I_{X * \ut}\MMM_1(X)I_{X * \ut}
        \subseteq \bigoplus_{\gamma\in\SSS} 
        \BBB(Hom(\gamma, X*\ut)) \otimes I_{\NNN_\gamma} 
        \cong Hom(X, X), 
    \end{align*}
    and the claim is proved by \cref{intre_lemma}.
\end{proof}

\begin{lemma}\label{faithful_lemma}
    $Hom(F(X_1), F(X_2)) \cong Hom(X_1, X_2)$ for every $X_1, X_2 \in \CC$.
\end{lemma}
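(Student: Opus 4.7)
My plan is to first establish the diagonal case $Hom(F(X), F(X)) \cong Hom(X,X)$ by reducing, via conjugation by the partial isometry $V_X$, to the relative commutant computation in \cref{X_case}; the off-diagonal case will then follow by exploiting the existence of finite direct sums in $\CC$.

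For the diagonal case, the three relations $V_X^* V_X = P_X$, $V_X V_X^* = P_\emptyset$ and $V_X \rho_X(A) = F(X)(A) V_X$ (a reformulation of $F(X)(A) = V_X \rho_X(A) V_X^*$) together imply that the map $T \mapsto V_X^* T V_X$ is a bijection between $Hom(F(X), F(X)) = F(X)(\Phi)' \cap \Phi$ and $\rho_X(\Phi)' \cap \Phi(X)$, with inverse $B \mapsto V_X B V_X^*$. Composing with the isomorphism $\rho_X(\Phi)' \cap \Phi(X) \cong Hom(X,X)$ provided by \cref{X_case} yields $Hom(F(X), F(X)) \cong Hom(X,X)$. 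Unwinding the proof of \cref{X_case} (in particular the reduction by $I_{X*\ut}$ and the description of $I_{X*\ut}\MMM_1(X)I_{X*\ut}$), one sees that an element $f \in Hom(X,X)$ corresponds under the composite isomorphism to $V_X \bigl( \sum_{\beta \in \SSS} f * I_\beta \bigr) V_X^* = V_X (f \otimes I) V_X^* = F(f)$, so the composite isomorphism is exactly $F$.

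For arbitrary $X_1, X_2 \in \CC$, choose a direct sum $Y = X_1 \oplus X_2$ in $\CC$ with isometries $w_i \in Hom(X_i, Y)$ satisfying $w_1 w_1^* + w_2 w_2^* = I_Y$. Since $F$ is a $*$-preserving functor, $F(w_i)$ are isometries in $End(\Phi)$ with $F(w_1)F(w_1)^* + F(w_2)F(w_2)^* = F(I_Y) = I_{F(Y)}$, exhibiting $F(Y) \cong F(X_1) \oplus F(X_2)$ in $End(\Phi)$. Hence any $T \in Hom(F(X_1), F(X_2))$ lifts to $\widehat{T} := F(w_2) T F(w_1)^* \in Hom(F(Y), F(Y))$, which by the diagonal case equals $F(\widehat{f})$ for some $\widehat{f} \in Hom(Y,Y)$; setting $f := w_2^* \widehat{f} w_1 \in Hom(X_1, X_2)$ and using $F(w_i)^* F(w_i) = I_{F(X_i)}$ one recovers $T = F(w_2)^* \widehat{T} F(w_1) = F(f)$. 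Faithfulness is immediate: $F(f) = V_{X_2}(f \otimes I) V_{X_1}^* = 0$ gives $P_{X_2}(f \otimes I) P_{X_1} = 0$ after multiplying by $V_{X_2}^*$ on the left and $V_{X_1}$ on the right, and restriction to $\HHH^{X_1}$ then forces $f * I_\beta = 0$ for every $\beta \in \SSS$, so $f = 0$. The only step requiring care is matching the abstract isomorphism of \cref{X_case} with the concrete formula $f \mapsto \sum_\beta f * I_\beta$, but this is transparent from the construction in the proof of that lemma.
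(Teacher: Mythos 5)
Your proof is correct and follows essentially the same route as the paper: both reduce $Hom(F(X_1),F(X_2))$ to the relative commutant computation of \cref{X_case} for the direct sum $X_1\oplus X_2$, using the isometries realizing the direct sum together with \cref{intre_lemma}. The only (cosmetic) difference is that you first isolate the diagonal case and then transport via the isometries $F(w_i)$ inside $\Phi$, whereas the paper conjugates $V_{X_2}^*TV_{X_1}$ directly by $(f_2\otimes I)$ and $(f_1^*\otimes I)$ inside $\Phi(\CC)$; in both arguments the identification of the abstract isomorphism of \cref{X_case} with $f\mapsto (f\otimes I)P_X$, i.e.\ with $F$, is used in the same implicit way.
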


\begin{proof}
    Let $f_i \in Hom(X_i, X_1\oplus X_2)$ be isometries such that $f_{i}^* f_{i} = I_{X_i}$ and 
    $e_1 + e_2 = I_{X_1 \oplus X_2}$, where $e_i = f_{i} f_{i}^*$, $i=1,2$. 
    Here $X_1\oplus X_2\in\CC$ denotes the direct sum of objects in $\CC$. Then $T \in Hom(F(X_1), F(X_2))$ if
    and only if
    \begin{align*}
        (f_2 \otimes I)V_{X_2}^* T V_{X_1} \rho_{X_1}(A)(f_1^* \otimes I) 
        = (f_2 \otimes I)\rho_{X_2}(A)V_{X_2}^*TV_{X_1}(f_1^* \otimes I), \quad \forall A \in \Phi.
    \end{align*}
    Thus $(f_2 \otimes I)V_{X_2}^* T V_{X_1}(f_1^* \otimes I) 
    \in (e_2 \otimes I)[\rho_{X_1 \oplus X_2}(\Phi)' \cap \Phi(X_1 \oplus X_2)](e_1 \otimes I)$
    and \cref{X_case} implies the result. 
\end{proof}

Denoted by $End_0(\Phi)$ the subcategory of finite dimensional (i.e., finite index) endomorphisms of $\Phi$, by \cref{naturlity_lemma}, \cref{p_lemma} and \cref{faithful_lemma} we can conclude

\begin{theorem}\label{thm_realization}
$F$ is a fully faithful tensor *-functor (non-strict but unital), 
hence it gives an equivalence of $\CC$ with (the repletion of) 
its image $F(\CC) \subset End_0(\Phi)$ as C$^*$-tensor categories.
\end{theorem}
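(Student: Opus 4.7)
The plan is to observe that the statement is essentially an assembly of the preceding lemmas, with two points left to verify in detail: strict functoriality of $F$ and finite-indexness of $F(X)$ in $\Phi$. First I would check that $F$ is a unital *-functor. By construction $F(X) = \mathrm{Ad}\,V_X \circ \rho_X$ is a unital normal *-endomorphism of $\Phi$ since $V_X V_X^* = P_\emptyset$ (the unit of $\Phi$) and $\rho_X$ is an injective normal *-endomorphism of $\Phi(\CC)$ by \cref{embedding_ut}. For arrows, $F(I_X) = V_X(I_X \otimes I) V_X^* = V_X P_X V_X^* = I_{F(X)}$, while $F(g \circ f) = F(g) \circ F(f)$ reduces to $V_Z (g \otimes I) V_Y^* V_Y (f \otimes I) V_X^* = V_Z((gf) \otimes I) V_X^*$, which holds since $V_Y^* V_Y = P_Y$ is absorbed into the concatenation $(g \otimes I)(f \otimes I) = (g \circ f) \otimes I$ inside $\Phi(\CC)$. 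Compatibility with the *-involution is immediate from the form $F(f) = V_Y(f \otimes I) V_X^*$.

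Next I would invoke \cref{naturlity_lemma} and \cref{p_lemma} to conclude that the unitaries $J_{X_1,X_2}$ give a natural and coherent tensor structure on $F$; the unit constraints with respect to $F(\ut) = Id_\Phi$ are a direct verification using $V_\ut = \sum_{\beta\in\SSS} I_{\ut*\beta}^{\beta}$. Full faithfulness, i.e., $F : \mathrm{Hom}(X_1,X_2) \to \mathrm{Hom}(F(X_1),F(X_2))$ is a linear isomorphism, is exactly \cref{faithful_lemma}. Injectivity of $F$ on objects is not needed because we are only claiming an equivalence with (the repletion of) the image.

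The main obstacle is to verify that the image actually lands in $End_0(\Phi)$, that is, that every $F(X)$ has finite Jones index. This follows from a general principle, which I would spell out as follows. Since $\CC$ is rigid, pick for each $X \in \CC$ a standard solution $(\eta_X, \varepsilon_X^*)$ of the conjugate equations with conjugate object $\overline X$. Applying $F$ and composing with the associators gives arrows
\begin{align*}
\tilde\eta_X &= J_{\overline X, X}\circ F(\eta_X) \in \mathrm{Hom}(Id_\Phi, F(\overline X)\circ F(X)),\\
\tilde\varepsilon_X &= F(\varepsilon_X)\circ J_{X,\overline X}^{-1} \in \mathrm{Hom}(F(X)\circ F(\overline X), Id_\Phi),
\end{align*}
and coherence of $J$ together with the zig-zag identities for $(\eta_X,\varepsilon_X)$ implies that $(\tilde\eta_X, \tilde\varepsilon_X^*)$ solves the conjugate equations in $End(\Phi)$. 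Since $\Phi$ is a factor by \cref{factor_thm}, this forces $F(X) \in End_0(\Phi)$, with conjugate $F(\overline X)$ and intrinsic dimension equal to $d_X$.

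Once these pieces are in place, the theorem follows: $F$ is a fully faithful unital (non-strict) unitary tensor *-functor $\CC \to End_0(\Phi)$, so declaring the image $F(\CC)$ together with all endomorphisms unitarily equivalent to some $F(X)$ (the repletion) yields a C$^*$-tensor subcategory of $End_0(\Phi)$ equivalent to $\CC$ via $F$.
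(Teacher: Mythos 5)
Your proposal is correct and follows essentially the same route as the paper, which concludes the theorem directly from \cref{naturlity_lemma}, \cref{p_lemma} and \cref{faithful_lemma}. The extra details you supply --- the routine verification that $F$ is a unital *-functor (using $V_Y^*V_Y = P_Y$ and $(g\otimes I)(f\otimes I) = (g\circ f)\otimes I$), and the transport of standard solutions of the conjugate equations to show $F(X)\in End_0(\Phi)$ --- are exactly the points the paper leaves implicit, and your rigidity argument for finite index is the standard and correct way to justify that the image lands in $End_0(\Phi)$.
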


In the case of categories with infinite and non-denumerable spectrum $\SSS$, the factor $\Phi \cong \Phi(H(\ut))''$ is not $\sigma$-finite (and it is either of type II$_\infty$ or III$_\lambda$, $\lambda\in (0,1]$). 
Indeed, $\pi_\tau(\AAA(\ut)) \subset \Phi(H(\ut))''$ and $\pi_\tau(I_\beta)\in\pi_\tau(\AAA(\ut))$, $\beta\in\SSS$, are uncountably many mutually orthogonal projections summing up to the identity. However, $\Phi \cong \widetilde\Phi \otimes \BBB(\HHH)$ where $\widetilde\Phi$ is a $\sigma$-finite factor and $\HHH$ is a non-separable Hilbert space. Moreover, $Bimod_0(\widetilde\Phi \otimes \BBB(\HHH)) \simeq Bimod_0(\widetilde\Phi)$ where, e.g., $Bimod_0(\widetilde\Phi)$ denotes the category of faithful normal $\widetilde\Phi$-$\widetilde\Phi$ bimodules (in the sense of correspondences) with finite dimension (i.e., with finite index) and $\simeq$ denotes an equivalence of C$^*$-tensor categories. In the type III case, we also have $Bimod_0(\widetilde\Phi) \simeq End_0(\widetilde\Phi)$. In \cite{GFY} we give a proof of these last facts, that we could not find stated in the literature, cf.\ \cite[Corollary 8.6]{Rie}, \cite{Mey}. Recalling \cref{thm_freecorner} and summing up the previous discussion, we can conclude

\begin{theorem}\label{thm_ampliation}
Any rigid C$^*$-tensor category with simple unit and infinite spectrum $\SSS$ (not necessarily denumerable) can be realized as finite index endomorphisms of a $\sigma$-finite type III factor, or as bimodules of a $\sigma$-finite type II factor, such as the free group factor $L(F_\SSS)$ that arises by choosing the trivial Tomita structure.
\end{theorem}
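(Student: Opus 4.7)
The plan is to package together the realization already produced in \cref{thm_realization} with a standard ampliation argument. By \cref{thm_realization} we have a fully faithful unitary tensor functor $F:\CC \hookrightarrow End_0(\Phi)$, where $\Phi = \Phi(H(\ut))''$ is a factor (by \cref{factor_thm}) of type II$_\infty$ or III$_\lambda$, $\lambda\in(0,1]$ (by \cref{type_prop} and \cref{decompose_remark}). The only thing that can go wrong in the uncountable spectrum case is $\sigma$-finiteness of $\Phi$, since $\{\pi_\tau(I_\beta):\beta\in\SSS\}$ is an uncountable orthogonal family of projections of sum $I$. The aim is to trade in $\Phi$ for a $\sigma$-finite corner, without losing the category.

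First, I would pick any nonzero $\sigma$-finite projection $p\in\Phi$ (for instance $p = I_\ut$, which has finite trace $\tau\circ E(I_\ut)=d_\ut=1$) and consider a maximal family $\{p_i\}_{i\in I}$ of pairwise orthogonal projections in $\Phi$ each equivalent to $p$, with $\sum_i p_i = I$; such a family exists because $\Phi$ is properly infinite (or of type II$_\infty$). Setting $\widetilde\Phi := p\Phi p$, a standard argument (choose partial isometries $v_i$ with $v_i^* v_i = p$, $v_i v_i^* = p_i$) shows $\Phi \cong \widetilde\Phi \otimes \BBB(\HHH)$, where $\dim\HHH = |I|$; and $\widetilde\Phi$ is $\sigma$-finite because $p$ is. In the trivial Tomita case, taking $p = I_\ut$ and invoking \cref{thm_freecorner} identifies $\widetilde\Phi \cong L(F_\SSS)$; in the general type III case $\widetilde\Phi$ is a $\sigma$-finite type III$_\lambda$ factor with the same $\lambda$.

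Next, I would transport the realization from $\Phi$ to $\widetilde\Phi$. The key fact is that the ampliation $\widetilde\Phi \rightsquigarrow \widetilde\Phi \otimes \BBB(\HHH) \cong \Phi$ induces an equivalence of C$^*$-tensor categories $Bimod_0(\widetilde\Phi) \simeq Bimod_0(\Phi)$ between the categories of finite-index bimodules (correspondences); concretely, a $\widetilde\Phi$-$\widetilde\Phi$ bimodule $K$ is sent to $K\otimes\HHH$, with the obvious right and left actions of $\widetilde\Phi\otimes\BBB(\HHH)$, and this is fully faithful and essentially surjective onto the finite-index part. An endomorphism $\rho\in End_0(\Phi)$ can be viewed as the bimodule $\prescript{}{\rho}{L^2(\Phi)}_{\Phi}$, so we first push $F$ into $Bimod_0(\Phi)$, then transport it back to $Bimod_0(\widetilde\Phi)$ along the ampliation equivalence. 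In the type III case we can go one step further: for a $\sigma$-finite type III factor the category $Bimod_0(\widetilde\Phi)$ is equivalent to $End_0(\widetilde\Phi)$ (any finite-index faithful normal bimodule is induced by an endomorphism, via the absence of nontrivial traces and standard Connes–Sauvageot theory), so we obtain a realization as finite-index endomorphisms of a $\sigma$-finite type III factor. In the type II case we stop at $Bimod_0(\widetilde\Phi)$, which for $\widetilde\Phi \cong L(F_\SSS)$ gives the desired statement.

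The main obstacle is the ampliation equivalence and, in the type III case, the identification of $Bimod_0$ with $End_0$: these are the technical points that the paper explicitly defers to the companion paper \cite{GFY} and compares with \cite[Corollary 8.6]{Rie} and \cite{Mey}. The delicate issues are that $\HHH$ is in general non-separable, so one must be careful that the ampliation still implements an equivalence (this uses properness of infinity of $\widetilde\Phi$ and existence of enough partial isometries between $\sigma$-finite projections), and that the finite-index condition is preserved under the equivalence (which follows because dimension/index in the sense of \cite{L-R-dim} is invariant under Morita-type equivalences coming from corner isomorphisms). Everything else in the proof is formal once these two inputs are in hand.
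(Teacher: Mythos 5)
Your proposal follows essentially the same route as the paper: decompose $\Phi \cong \widetilde\Phi \otimes \BBB(\HHH)$ with $\widetilde\Phi$ a $\sigma$-finite corner (identified with $L(F_\SSS)$ via \cref{thm_freecorner} in the trivial Tomita case), transport $F$ along the ampliation equivalence $Bimod_0(\widetilde\Phi\otimes\BBB(\HHH)) \simeq Bimod_0(\widetilde\Phi)$, and in the type III case pass further to $End_0(\widetilde\Phi)$. You also correctly identify that the two nontrivial inputs (the ampliation equivalence for non-separable $\HHH$ and $Bimod_0 \simeq End_0$ for $\sigma$-finite type III factors) are exactly the facts the paper defers to the companion work \cite{GFY}, so your argument is complete to the same degree as the paper's.
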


We leave open the questions on whether every countably generated rigid C$^*$-tensor category with simple unit can be realized on a hyperfinite factor, and on whether the free Araki-Woods factors defined by Shlyakhtenko in \cite{DSFQ} are universal as well for rigid C$^*$-tensor categories with simple unit, as our construction (see\ \cref{unit_case}, \cref{connes_s} and \cref{thm_freecorner}, the latter in the trivial Tomita structure case) might suggest.

%%%
\appendix \section{Amalgamated free product}\label{appendix_am_prod}
%%%

In \cite{YU-def}, the amalgamated free product of $\sigma$-finite 
von Neumann algebras is constructed. The purpose of this appendix 
is to demonstrate that the method used in \cite[Section 2]{YU-def} can be applied 
to construct the amalgamated free product of arbitrary von Neumann algebras.
 
\begin{definition}\label{def_am_prod}
Let $\{\MMM_s\}_{s \in S}$ be a family of von Neumann algebras having a common von Neumann 
subalgebra $\NNN$ such that each inclusion $I \in \NNN \subset \MMM_s$ has a normal faithful
conditional expectation $E_s: \MMM_s \rightarrow \NNN$.   
The \textbf{amalgamated free product} $(\MMM, E) = *_{\NNN, s \in S}(\MMM_s, E_s)$ of the family $(\MMM_s, E_s)$ 
is a von Neumann algebra $\MMM$ with a conditional expectation $E$ satisfying:
\begin{enumerate}
    \item There exist normal *-isomorphisms $\pi_s$ from $\MMM_s$ into $\MMM$
        and $\pi_s|_{\NNN} = \pi_{s'}|_{\NNN}$, $s, s' \in S$. Let $\pi = \pi_s|_{\NNN}$,
    \item $\MMM$ is generated by $\pi_s(\MMM_s)$, $s \in S$,
    \item $E$ is a faithful normal conditional expectation onto $\pi(\NNN)$ such that
        $E(\pi_s(m)) = \pi(E_s(m))$ and the family 
        $\{\pi_{s}(\MMM_s)\}$ is free in $(\MMM, E)$, i.e., 
        \begin{align*}
        E(\pi_{s_1}(m_1) \cdots \pi_{s_k}(m_k)) =  0
        \end{align*}
        if $s_1 \neq s_2 \neq \cdots \neq s_k$ and $m_i \in Ker E_{s_i}$, $i=1,\ldots,k$, where $k\geq 1$.
\end{enumerate}
\end{definition}

\begin{lemma}\label{unique_lemma}
    Let $\{(\MMM_s, E_s)\}_{s \in S}$ and $\{(\tilde{\MMM}_s, \tilde{E}_s)\}_{s \in S}$ 
    be two families of 
    von Neumann algebras having common unital von Neumann subalgebras $\NNN$ and $\tilde{\NNN}$, 
    respectively, and $E_s: \MMM_s \rightarrow \NNN$ and $\tilde{E}_s: \tilde{\MMM}_s \rightarrow 
    \tilde{\NNN}$ normal faithful conditional expectations.
    Let $(\MMM, E) = *_{\NNN,s \in S}(\MMM_s, E_s)$ and $(\tilde{\MMM}, \tilde{E}) 
    = *_{\tilde{\NNN}, s \in S}(\tilde{\MMM}_s, \tilde{E}_s)$ with
    normal *-isomorphisms $\pi_s$ from $\MMM_s$ into $\MMM$ and $\tilde{\pi}_s$ 
    from $\MMM_s$ into $\tilde{\MMM}$.
    If there is a family of *-isomorphisms $\rho_s: \MMM_s \rightarrow \tilde{\MMM}_s$ such that
    $\rho_s(E_s(m)) = \tilde{E}_s(\rho_s(m))$ and $\rho_{s}|_{\NNN} = \rho_{s'}|_{\NNN}$,
    then there exists a unique *-isomorphism $\rho: \MMM \rightarrow \tilde{\MMM}$ such that
    $\tilde{E} \circ \rho = \rho \circ E$ and
    $\rho: \pi_s(m) \mapsto \tilde{\pi}_s(\rho_s(m))$ for every $m \in \MMM_s$, $s \in S$. 
\end{lemma}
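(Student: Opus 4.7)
The plan is to handle uniqueness and existence separately.

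For uniqueness, note that the $*$-subalgebra $\MMM_0 \subset \MMM$ generated by $\bigcup_{s\in S}\pi_s(\MMM_s)$ is $\sigma$-weakly dense in $\MMM$ by \cref{def_am_prod}(2). The prescription $\rho(\pi_s(m)) = \tilde\pi_s(\rho_s(m))$, together with multiplicativity and $*$-preservation, determines $\rho$ on all of $\MMM_0$; normality then pins it down on $\MMM$. (Note also that $\rho|_{\pi(\NNN)}$ is automatically well-defined as $\pi(n)\mapsto \tilde\pi(\rho_{s_0}(n))$ for any $s_0\in S$, by the assumption $\rho_s|_\NNN = \rho_{s'}|_\NNN$.)

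For existence, fix an n.s.f.\ weight $\phi$ on $\NNN$ and set $\tilde\phi := \phi \circ (\rho_{s_0}|_\NNN)^{-1}$ for any $s_0 \in S$; this is independent of $s_0$ and gives an n.s.f.\ weight on $\tilde\NNN$. Faithfulness of the $E_s$ and $\tilde E_s$ makes $\phi\circ E_s$, $\tilde\phi\circ\tilde E_s$ and (via (3) of \cref{def_am_prod}) $\phi\circ E$, $\tilde\phi\circ\tilde E$ into n.s.f.\ weights on the respective algebras. The intertwining $\tilde E_s \circ \rho_s = \rho_s \circ E_s$ together with the compatibility of $\rho_s|_\NNN$ with $\phi$, $\tilde\phi$ implies that each $\rho_s$ induces a unitary
\[
U_s : L^2(\MMM_s, \phi\circ E_s) \longrightarrow L^2(\tilde\MMM_s, \tilde\phi\circ\tilde E_s)
\]
intertwining the standard representations, intertwining the Jones projections onto $L^2(\NNN,\phi)$ (identified with $L^2(\tilde\NNN,\tilde\phi)$ via $\rho_s|_\NNN$), and acting as an $\NNN$-$\NNN$ bimodule map.

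The amalgamated free product construction set up in this appendix (modeled on \cite[Section 2]{YU-def}) realizes $L^2(\MMM,\phi\circ E)$ as an $\NNN$-$\NNN$ bimodule direct sum indexed by reduced words,
\[
L^2(\NNN,\phi) \;\oplus\; \bigoplus_{k\geq 1}\; \bigoplus_{s_1\neq\cdots\neq s_k}\; \mathring{L}^2(\MMM_{s_1}) \otimes_{\NNN} \cdots \otimes_{\NNN} \mathring{L}^2(\MMM_{s_k}),
\]
where $\mathring{L}^2(\MMM_s) := L^2(\MMM_s,\phi\circ E_s) \ominus L^2(\NNN,\phi)$ and $\otimes_\NNN$ is Connes' relative tensor product; the analogous decomposition holds for $\tilde\MMM$. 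The $U_s$ restrict to $\NNN$-bimodular unitaries $\mathring U_s : \mathring{L}^2(\MMM_s)\to \mathring{L}^2(\tilde\MMM_s)$, and these glue under the functoriality of $\otimes_\NNN$ to a unitary $U : L^2(\MMM,\phi\circ E) \to L^2(\tilde\MMM,\tilde\phi\circ\tilde E)$. Setting $\rho(x) := U x U^*$ and using that $U$ intertwines $\pi_s(\MMM_s)$ with $\tilde\pi_s(\tilde\MMM_s)$ on reduced words gives $\rho(\pi_s(m)) = \tilde\pi_s(\rho_s(m))$ on a $\sigma$-weakly total set, hence everywhere by normality; preservation of the Jones projection onto $L^2(\NNN,\phi)$ yields $\tilde E\circ\rho = \rho\circ E$.

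The main technical obstacle is in justifying the reduced-word decomposition and the relative tensor product $\otimes_\NNN$ when $\NNN$ is not $\sigma$-finite, where the classical Sauvageot/Connes formalism needs adaptation; this is precisely what is done in the rest of this appendix together with \cref{section_nonsigma}, so that the spatial argument above goes through verbatim in the general setting.
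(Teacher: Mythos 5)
Your proposal is correct in outline and is essentially the paper's argument: fix an n.s.f.\ weight $\varphi$ on $\NNN$, transport it to $\tilde\NNN$ via $\rho_s|_\NNN$, build a unitary between the GNS spaces $L^2(\MMM,\varphi\circ E)$ and $L^2(\tilde\MMM,\tilde\varphi\circ\tilde E)$ sending (the image of) an alternating word $m_1\cdots m_k$ to $\rho_{s_1}(m_1)\cdots\rho_{s_k}(m_k)$, and define $\rho$ spatially; your uniqueness argument (density plus normality) is also fine. The one step you should be more careful about is where you invoke the reduced-word direct-sum decomposition of $L^2(\MMM,\varphi\circ E)$ into relative tensor products: that decomposition belongs to the \emph{concrete} construction carried out later in the appendix, whereas the lemma concerns an \emph{abstract} amalgamated free product in the sense of \cref{def_am_prod}, and indeed is what guarantees that the abstract object is unique. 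So you cannot import the decomposition for the given $(\MMM,E)$ without argument; you must derive the needed facts from axiom (3). The paper does this directly: the freeness condition determines all mixed moments $\varphi\circ E((m_1\cdots m_k)^*n_1\cdots n_l)$ in terms of the $E_s$, which is exactly what makes the map $U$ on word vectors isometric, with no appeal to a Fock-type picture. Relatedly, the density of the word vectors in $L^2(\MMM,\varphi\circ E)$ is not automatic when $\varphi$ is only a weight; the paper spends a paragraph on this (cutting by spectral projections $p$ of $E(a^*a)$ with $\varphi(p)<\infty$ and then applying Kaplansky), and your write-up should include an analogous step rather than absorbing it into the decomposition. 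With those two points supplied, your argument goes through and coincides with the paper's.
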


\begin{proof}
    Without loss of generality, we can assume that $\MMM_s \subseteq \MMM$ and 
    $\tilde{\MMM}_s \subseteq \tilde{\MMM}$ and $\pi_s(m)=m$, $\tilde{\pi}_s(\tilde{m})=\tilde{m}$
    for every $s \in S$, $m \in \MMM_s$ and $\tilde{m} \in \tilde{\MMM}_s$.

    Let $\varphi$ be a n.s.f. weight on $\NNN$ and $\AAA_0$ be the *-subalgebra of $\MMM$ generated by
$\cup_{s \in S}(\NN(\MMM_s, \varphi \circ E_s) \cap \NN(\MMM_s, \varphi \circ E_s)^*)$.
For each $m \in \NN(\MMM, \varphi\circ E)$, we use $m \varphi^{1/2}$ to denote the 
vector given by the canonical injection $m \in \NN(\MMM, \varphi\circ E) \mapsto 
L^2(\MMM, \varphi \circ E)$.
Note that $\AAA_0 \subset \NN(\MMM, \varphi \circ E) \cap \NN^*(\MMM, \varphi \circ E)$.
We claim that $\AAA_0 \varphi^{1/2}$ is dense in 
$L^2(\MMM, \varphi \circ E)$.

For $a \in \NN(\MMM, \varphi \circ E) \cap \NN^*(\MMM, \varphi \circ E)$ and 
$\varepsilon > 0$, there is a spectral projection $p$ of $E(a^*a)$ such that
$\|(a - ap)\varphi^{1/2}\| < \varepsilon$ and $\varphi(p)< \infty$.
Since $\AAA_0$ is a dense *-subalgebra of $\MMM$ in the strong operator topology, there exists 
$b \in \AAA_0$ such that $\|(b-a)p\varphi^{1/2}\| < \varepsilon$.
Thus $\AAA_0$ is dense in $L^2(\MMM, \varphi \circ E)$.

Note that $\tilde{\varphi} = \varphi \circ \rho^{-1}_s$ is a n.s.f. weight on $\tilde{\NNN}$.
Similarly, let $\tilde{\AAA}_0$ be the *-subalgebra generated by 
$\cup_{s \in S}(\NN(\tilde{\MMM}_s, \tilde{\varphi} \circ \tilde{E}_s) \cap 
\NN(\tilde{\MMM}_s, \tilde{\varphi}\circ \tilde{E}_s)^*)$, then 
$\tilde{\AAA}_0\tilde{\varphi}^{1/2}$ is dense in $L^2(\tilde{\MMM}, \tilde{\varphi} \circ \tilde{E})$.

Let $U$ be the linear map given by linear extension of 
\begin{align*}
U m_1 m_2 \cdots m_k \varphi^{1/2}
= \rho_{s(1)}(m_1) \rho_{s(2)}(m_2) \cdots \rho_{s(k)}(m_k) \tilde{\varphi}^{1/2} 
\end{align*}
where $m_i \in \NN(\MMM_{s(i)}, \varphi\circ E_{s(i)}) \cap \NN(\MMM_{s(i)}, \varphi\circ E_{s(i)})^*$, 
$i(1) \neq i(2) \neq \cdots \neq i(k)$, $k\geq 1$.
By \cref{def_am_prod}(3), it is not hard to check that $U$ can be extended to a
unitary from $L^2(\MMM,\varphi \circ E)$ onto 
$L^2(\tilde{\MMM}, \tilde{\varphi} \circ \tilde{E})$ such that
\begin{align*}
    U \pi_{\varphi}(m) U^* = \pi_{\tilde{\varphi}} \circ \rho_s(m), \quad 
    \forall m \in \MMM_s, s \in S,
\end{align*}
where $\pi_{\varphi}$ and $\pi_{\tilde{\varphi}}$ are GNS representations of $\MMM$ and
$\tilde{\MMM}$ w.r.t. $\varphi \circ E$ and $\tilde{\varphi} \circ \tilde{E}$.
Then $\rho(\cdot) = \pi_{\tilde{\varphi}}^{-1}(U \pi_{\varphi}( \cdot )U^*)$ satisfies 
the conditions in the lemma.
\end{proof}

To construct the amalgamated free product, we first fix a n.s.f. weight $\varphi$ on $\NNN$ and 
regard $\MMM_s$ as a concrete von Neumann algebra acting on 
$\HHH_s = L^2(\MMM_s, \varphi \circ E_s)$ for every $s\in S$. 
Let $\MMM^{\circ}_s = Ker E_s$ and 
$\NN(\MMM_s, \varphi\circ E_s)=\{m \in \MMM_s: \varphi \circ E_s(m^*m) < \infty \}$. 
For each $m \in \NN(\MMM_s, \varphi\circ E_s)$, we use $m \varphi^{1/2}$ to denote the 
vector given by the canonical injection $\NN(\MMM_s, \varphi\circ E_s) \rightarrow \HHH_s$.
Let $J_s$, $\Delta_s$ be the modular operators and $\{\sigma_t^{\varphi \circ E_s}\}_{t\in\R}$ be 
the modular automorphism group of $\MMM_s$ w.r.t. $\varphi \circ E_s$.
Recall that $\HHH_s$ is a $\MMM_s$-$\MMM_s$ bimodule with left and 
right actions given by
\begin{align*}
    m_1 (a\varphi^{1/2}) m_2 = J_s m_2^*J_s m_1a\varphi^{1/2}, \quad m_1, m_2 \in \MMM_s, 
    a \in \NN(\MMM_s, \varphi \circ E_s).
\end{align*}
By \cite[VIII, Lemma 3.18]{TKII}, $m_1(a\varphi^{1/2})m_2 = 
m_1 a \sigma^{\varphi \circ E_s}_{-i/2}(m_2)\varphi^{1/2}$ if $m_2 \in 
\DDD(\sigma^{\varphi \circ E_s}_{-i/2})$.
In the following, let $\AAA_s$ be the maximal Tomita algebra of the left Hilbert algebra
$\NN(\MMM_s, \varphi\circ E_s) \cap \NN(\MMM_s, \varphi\circ E_s)^*$, i.e.,
\begin{align*}
    \AAA_s = \{m \in \MMM_s: m \in \DDD(\sigma^{\varphi \circ E_s}_{z}) \text{ and }
\sigma^{\varphi \circ E_s}_{z}(m) \in \NN(\MMM_s, \varphi\circ E_s) \cap \NN(\MMM_s, \varphi\circ E_s)^*, \\
\forall z \in \C\},
\end{align*}
and $\AAA_{\NNN} = \NNN \cap \AAA_s = E_s(\AAA_s)$.
For $m_1$, $m_2 \in \AAA_s$, we use $m_1 \varphi^{1/2} m_2$ to denote 
$m_1 \sigma^{\varphi \circ E_s}_{-i/2}(m_2)\varphi^{1/2}$. In particular, $\varphi^{1/2}m_2=
\sigma^{\varphi \circ E_s}_{-i/2}(m_2) \varphi^{1/2}$ and 
$\braket{\varphi^{1/2}m_1}{\varphi^{1/2}m_2} = \varphi \circ E_s(m_2 m_1^*)$
(see \cite[Appendix B]{ACCG}, \cite[Section 2]{TF}, \cite[Chapter IX]{TKII}).

Note that $n \in \NN(\NNN, \varphi) \mapsto n \varphi^{1/2} \in \HHH_s$
gives the natural embedding of $L^2(\NNN, \varphi)$ into $L^2(\MMM_s, \varphi \circ E_s)$.
By \cite{TKCD}, we can identify $L^2(\NNN, \varphi)$ as an $\NNN$-$\NNN$ sub-bimodule of  
$L^2(\MMM_s, \varphi \circ E_s)$, moreover $J_s$ and $\Delta_s$ can be decomposed as
\begin{align*}
   J_s =
    \begin{pmatrix}
        J_\NNN & \\
        & J_{s}^{\circ}
    \end{pmatrix}, \quad
    \Delta_s = 
    \begin{pmatrix}
        \Delta_{\NNN} & \\
        & \Delta_s^{\circ} 
    \end{pmatrix}
    \quad \mbox{on 
    $ \begin{pmatrix}
        L^2(\NNN,\varphi)\\
        \HHH_s^{\circ}
    \end{pmatrix}$},
\end{align*}
where $\HHH_s^{\circ} = \HHH_s \ominus L^2(\NNN, \varphi)$. 

Define $\HHH = L^2(\NNN, \varphi) \oplus \oplus_{k=1}^{\infty} (\oplus_{s_1 \neq \cdots \neq s_k}
\HHH_{s_1}^{\circ} \otimes_{\varphi} \cdots \otimes_{\varphi} \HHH_{s_k}^{\circ})$, where
$\otimes_\varphi$ is the relative tensor product \cite{LS}. By \cite[Remarque 2.2(a)]{LS},
\begin{align*}
    \HHH_0 &= \AAA_{\NNN}\varphi^{1/2} \oplus \oplus_{k=1}^{\infty} (\oplus_{s_1 \neq \cdots \neq s_k}
    span\{a_1 \varphi^{1/2} \otimes \cdots \otimes a_k \varphi^{1/2}: a_i \in \AAA^{\circ}_{s_{i}}\})\\
    &=\AAA_{\NNN}\varphi^{1/2} \oplus \oplus_{k=1}^{\infty} (\oplus_{s_1 \neq \cdots \neq s_k}
    span\{\varphi^{1/2} a_1\otimes \cdots \otimes \varphi^{1/2} a_k: a_i \in \AAA^{\circ}_{s_i}\})
\end{align*}
is dense in $\HHH$, where $\AAA^{\circ}_s = \AAA_s \cap \MMM^{\circ}_s$. 
By the definition of relative tensor
product and \cite[Lemme 1.5 (c)]{LS} (or \cite[Proposition 3.1]{TF}), we have 
\begin{align*}
    &\braket{a_1 \varphi^{1/2} \otimes \cdots \otimes a_k\varphi^{1/2}}{b_1 \varphi^{1/2} 
    \otimes \cdots \otimes b_k\varphi^{1/2}}
     \\ & \hspace{20mm} = \varphi (E_{s_k}(a_k^* E_{s_{k-1}}(\cdots a_2^*E_{s_1}(a_1^*b_1)b_2\cdots )b_k)),\\
    &\braket{\varphi^{1/2} a_1 \otimes \cdots \otimes \varphi^{1/2}a_k}{\varphi^{1/2}b_1 
    \otimes \cdots \otimes \varphi^{1/2}b_k}
     \\ & \hspace{20mm} = \varphi (E_{s_1}(b_1 E_{s_2}(\cdots b_{k-1}E_{s_k}(b_k a_k^*)a_{k-1}^*\cdots )a_1^*)).
\end{align*}

For each $s \in S$, let $\HHH(s,l) = L^2(\NNN, \varphi) \oplus \oplus_{k=1}^{\infty}
(\oplus_{s_1 \neq \cdots \neq s_k, s_1 \neq s} \HHH_{s_1}^{\circ} 
\otimes_{\varphi} \cdots \otimes_{\varphi} \HHH_{s_k}^{\circ})$.
By \cite[Proposition 3.5]{TF} (or \cite[Section 2.4]{LS}), we have unitaries
\begin{align*}
U_s: \HHH_s \otimes_{\varphi} \HHH(s,l) = (L^2(\NNN,\varphi)\oplus \HHH_s^{\circ})
\otimes_{\varphi} \HHH(s,l) \rightarrow \HHH. 
\end{align*}   
For $m \in \MMM_s$, let $\pi_s(m) = U_s m \otimes_{\varphi} I_{\HHH(s,l)}U_s^*$ (see
\cite[Corollary 3.4(ii)]{TF}).
If $m_s \in \AAA_s$, it is not hard to check that $\pi_s(m_s) m \varphi^{1/2} 
= E_s(m_s m)\varphi^{1/2} + (m_s m - E_s(m_s m)) \varphi^{1/2}$ and that 
\begin{align}\label{de_def}
    \pi_s(m_s) a_1 \varphi^{1/2} \otimes \cdots \otimes a_k \varphi^{1/2}=
    \begin{cases}
        \begin{aligned}[b]
            (m_s a_1 &- E_s(m_s a_1))\varphi^{1/2} \otimes a_2 \varphi^{1/2} 
            \cdots \otimes a_k \varphi^{1/2}\\
            +&E_s(m_s a_1)a_2\varphi^{1/2} \otimes \cdots \otimes a_k \varphi^{1/2},
            \quad s_1 = s
        \end{aligned}\\
        \begin{aligned}[b]
            (m_s- & E_s(m_s)) \varphi^{1/2} \otimes a_1 \varphi^{1/2} 
            \otimes \cdots \otimes a_k \varphi^{1/2}\\
            +&E_s(m_s)a_1 \otimes \cdots \otimes a_k \varphi^{1/2}, \qquad \quad\quad  s_1 \neq s 
        \end{aligned}
    \end{cases}
\end{align}
where $m \in \AAA_{s}$ and $a_i \in \AAA^{\circ}_{s_i}$.
Note that $L^2(\NNN,\varphi) \oplus \HHH^{\circ}_s \cong L^2(\MMM_s, \varphi \circ E_s)$ and $\pi_s$ is 
a *-isomorphism. If $n \in \NNN$, then it is clear that $\pi_s(n) = \pi_{s'}(n)$, $s$, $s' \in S$.
And we use $\pi$ to denote $\pi_s|_{\NNN}$.
Define $\MMM$ to be the von Neumann algebra generated by $\{\pi_s(\MMM_s): s \in S\}$.

Let $J$ be the conjugate-linear map defined on $\HHH_0$ by
$J n \varphi^{1/2} = J_{\NNN} n\varphi^{1/2}$, $n \in \AAA_{\NNN}$ and 
$J (a_1 \varphi^{1/2} \otimes \cdots \otimes a_k\varphi^{1/2}) = 
J_{s_k}(a_k \varphi^{1/2}) \otimes \cdots \otimes J_{s_1}(a_1\varphi^{1/2})=
\varphi^{1/2}a_k^* \otimes \cdots \otimes \varphi^{1/2}a_1^*$, where $a_i \in \HHH_{s_i}^{\circ}$.
Note that
\begin{equation}
\label{J_rel}
    \begin{aligned}
    & \braket{\varphi^{1/2} a_k^*\otimes \cdots \otimes \varphi^{1/2}a_1^*}{\varphi^{1/2}
        b_k^* \otimes \cdots \otimes \varphi^{1/2}b_1^*)} \\
    & \qquad \qquad =\braket{b_1 \varphi^{1/2} \otimes \cdots \otimes b_k\varphi^{1/2} }{a_1 
        \varphi^{1/2} \otimes \cdots \otimes a_k\varphi^{1/2}}.    
    \end{aligned}
\end{equation}
Thus $J$ can be extended to a conjugate-linear involutive unitary and we still use $J$ to denote 
its extension. 

The following fact is well-known, at least when the von Neumann algebras are $\sigma$-finite (see
\cite[Lemma 2.2]{YU-def}). 

\begin{proposition}\label{commutant_prop}
    $\pi_{s}(m_s)$ and $J\pi_{s'}(m_{s'}^*)J$ commute for every $m_s \in \MMM_s, m_{s'} \in \MMM_{s'}$, $s, s' \in S$. 
    $\MMM L^2(\NNN,\varphi)$ and $J\MMM J L^2(\NNN,\varphi)$ are both dense in $\HHH$.
\end{proposition}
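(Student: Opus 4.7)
The plan is to prove the density statement first, and then use the explicit description of $J$ together with a case analysis to establish the commutation.

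\textbf{Density.} I will show by induction on $k$ that for $s_1 \neq s_2 \neq \cdots \neq s_k$ and $a_i \in \AAA_{s_i}^{\circ}$ one has
\begin{equation*}
\pi_{s_1}(a_1)\pi_{s_2}(a_2)\cdots \pi_{s_k}(a_k)\varphi^{1/2} = a_1\varphi^{1/2}\otimes a_2\varphi^{1/2}\otimes \cdots \otimes a_k\varphi^{1/2}.
\end{equation*}
The base case $k=0$ is trivial, and at each step the formula \eqref{de_def} produces no ``slide'' term: because $E_{s_i}(a_i)=0$ and the leading index of the preceding vector is $s_{i+1}\neq s_i$, only the first summand in the second case of \eqref{de_def} survives. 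Combined with $\pi(n)\varphi^{1/2}=n\varphi^{1/2}$ for $n\in\AAA_{\NNN}$, this yields $\MMM\varphi^{1/2}\supseteq \HHH_0$, hence $\MMM L^2(\NNN,\varphi)$ is dense in $\HHH$. Since $J|_{L^2(\NNN,\varphi)}=J_{\NNN}$ is an involutive unitary of $L^2(\NNN,\varphi)$, applying $J$ gives density of $J\MMM J\, L^2(\NNN,\varphi)$ as well.

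\textbf{Commutation.} By normality of $\pi_{s}$, $\pi_{s'}$ and weak density of $\AAA_s\subset \MMM_s$, $\AAA_{s'}\subset \MMM_{s'}$, it suffices to check $[\pi_s(m),J\pi_{s'}(m'^{*})J]=0$ on $\HHH_0$ for $m\in\AAA_s$, $m'\in\AAA_{s'}$. To this end I will introduce an auxiliary ``right action'' $\rho_{s'}(m')$ on $\HHH_0$ defined by a formula mirroring \eqref{de_def} but acting on the \emph{rightmost} tensor factor, with $\sigma_{-i/2}^{\varphi\circ E_{s'}}(m')$ playing the role of the acting element (because right multiplication on each $\HHH_{s'}$ is twisted by the modular group, as recalled before \eqref{de_def}). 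A direct computation on a basis vector $a_1\varphi^{1/2}\otimes \cdots \otimes a_k\varphi^{1/2}$, using the explicit formula for $J$ and the inner-product identity \eqref{J_rel}, yields the identification
\begin{equation*}
J\pi_{s'}(m'^{*})J\,\big|_{\HHH_0}=\rho_{s'}(m'),
\end{equation*}
where the $\sigma_{-i/2}$-twist arises precisely from the relation $J_{s'}m'^{*}J_{s'}a\varphi^{1/2}=a\sigma_{-i/2}^{\varphi\circ E_{s'}}(m')\varphi^{1/2}$ on $\HHH_{s'}$.

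Once this identification is in hand, commutation of $\pi_s(m)$ with $\rho_{s'}(m')$ follows by inspection: on a tensor of length $\geq 2$ the two operators affect different ends of the tensor product and the $\NNN$-bimodule property of $E_s,E_{s'}$ ensures that slide-in terms from the conditional expectations commute as well. On a tensor of length $1$ or on the vacuum, the problem reduces, after separating the $s\neq s'$ from the $s=s'$ case, either to a trivial $\NNN$-bimodule computation or to the standard Tomita--Takesaki commutation $[\MMM_{s},J_{s}\MMM_{s}J_{s}]=0$ on $\HHH_{s}$.

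\textbf{Main obstacle.} The unavoidable difficulty is the case analysis in the second step, particularly when $s=s'$ and the tensor has at most one factor of that type: there the slide terms from $E_s$ and $E_{s'}$ can interact, and one has to use the intertwining $E_s\circ\sigma_t^{\varphi\circ E_s}=\sigma_t^{\varphi}\circ E_s$ together with $\AAA_{\NNN}\subseteq \AAA_s\cap \AAA_{s'}$ to match the two sides. This is routine but cumbersome bookkeeping; no new ideas beyond those already present in the $\sigma$-finite case treated in \cite{YU-def} are required, since working with the Tomita algebras $\AAA_s$ eliminates any dependence on $\sigma$-finiteness of $\NNN$.
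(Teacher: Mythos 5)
The paper offers no proof of this proposition: it is stated as well known, with a pointer to \cite[Lemma 2.2]{YU-def} for the $\sigma$-finite case, so there is nothing in the text to compare against. Your outline is precisely the standard argument behind that reference, adapted to the weight setting via the Tomita algebras $\AAA_s$ --- reduced words in the $\AAA_{s_i}^{\circ}$ acting on $\AAA_{\NNN}\varphi^{1/2}$ fill out $\HHH_0$, and $J\pi_{s'}(m^*)J$ is identified with a modular-twisted right action on the last tensor leg, after which commutation is a case analysis whose only delicate instance ($s=s'$ on words of length at most one) collapses to Tomita--Takesaki theory on $L^2(\MMM_s,\varphi\circ E_s)$ --- and I see no gap in the strategy. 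The one point to watch is notational: since $\varphi$ is a weight rather than a state, $\varphi^{1/2}$ is not itself a vector of $\HHH$, so the induction must be run on vectors $n\varphi^{1/2}$ with $n\in\AAA_{\NNN}$, followed by the routine approximation showing that $\operatorname{span}(\AAA_{s_k}^{\circ}\AAA_{\NNN})\varphi^{1/2}$ is dense in $\HHH_{s_k}^{\circ}$; with that adjustment the density claim, and hence the whole plan, goes through.
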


Let $e_{\NNN}$ and $e_{s}$ be the projections from $\HHH$ onto $L^2(\NNN,\varphi)$ and 
$L^2(\NNN,\varphi) \oplus \HHH_{s}^{\circ}$ respectively.

\begin{proposition}\label{cond_proj}
    Let $s\in S$, then
    $e_s \pi_{s_1}(m_{1}) \pi_{s_2}(m_{2}) \cdots \pi_{s_k}(m_{k}) e_s = 0$ 
    if $m_i \in \MMM^{\circ}_{s_i}$, $s_1 \neq s_2 \neq \cdots \neq s_k$ and $k>1$. Moreover,
    $e_s \pi_{s_1}(m_1) e_s = \delta_{s, s_1} \pi_{s}(m_1) e_s$ and
    $e_{\NNN}\pi_{s_1}(m_1) e_{\NNN} =0$.
    If $n \in \NNN$, then $e_s \pi(n) e_s = \pi(n) e_{s}$ and
    $e_\NNN \pi(n) e_\NNN = \pi(n) e_{\NNN}$. 
\end{proposition}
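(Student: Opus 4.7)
My plan is to reduce all four identities to a direct computation on the dense subspace $\HHH_0$ via the explicit action formula~\eqref{de_def}. First I would show that it suffices to treat the case $m_i\in\AAA^{\circ}_{s_i}$: the set $\AAA^{\circ}_{s_i}=\AAA_{s_i}\cap\MMM^{\circ}_{s_i}$ is $\sigma$-strongly-$*$ dense in $\MMM^{\circ}_{s_i}$ (since $\AAA_{s_i}$ is $\sigma$-strongly-$*$ dense in $\MMM_{s_i}$ and $E_{s_i}$ is normal), and combining Kaplansky density with the normality of the $\pi_{s_i}$ guarantees that the corresponding products of operators converge in the $\sigma$-strong-$*$ topology. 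It then suffices to test the operator identities on the total set of basic vectors $n\varphi^{1/2}$ ($n\in\AAA_\NNN$) and $a\varphi^{1/2}$ ($a\in\AAA^{\circ}_s$) spanning $e_s(\HHH)\cap\HHH_0$.

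The technical core is a downward induction on $j$ tracking the ``shape'' of the iterates $V_j:=\pi_{s_j}(m_j)\cdots\pi_{s_k}(m_k)\xi$. The induction claim, to be proved for every $1\leq j\leq k-1$, is that $V_j$ is a finite sum of simple tensors of length $\geq 1$, each having $s_j$ as its first (``leading'') index; in particular, no length-$0$ component (i.e., no summand in $L^2(\NNN,\varphi)$) survives. The base case $V_k=\pi_{s_k}(m_k)\xi$ is a direct substitution into~\eqref{de_def}, where $E_{s_k}(m_k)=0$ kills almost all correction terms: the output is a length-$1$ leading-$s_k$ tensor when $\xi=n\varphi^{1/2}$, a length-$2$ leading-$s_k$ tensor when $\xi=a\varphi^{1/2}$ and $s_k\neq s$, or a length-$1$ leading-$s_k$ tensor plus a stray length-$0$ piece when $\xi=a\varphi^{1/2}$ and $s_k=s$. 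For the inductive step I use $s_j\neq s_{j+1}$: the second case of~\eqref{de_def} sends every length-$l$ leading-$s_{j+1}$ tensor to a length-$(l+1)$ leading-$s_j$ tensor (once more, because $E_{s_j}(m_j)=0$ the length-shortening correction vanishes), while any length-$0$ component $n'\varphi^{1/2}$ of $V_{j+1}$ is promoted to $m_j n'\varphi^{1/2}$, a length-$1$ leading-$s_j$ tensor. This is what makes the induction self-closing.

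With the shape lemma in hand the first identity is automatic when $k\geq 2$: $V_2$ carries no length-$0$ component, so one more application of $\pi_{s_1}(m_1)$ (with $s_1\neq s_2$) turns every length-$l$ leading-$s_2$ summand into a length-$(l+1)$ leading-$s_1$ summand, hence $V_1$ is a sum of tensors of length $\geq 2$, orthogonal to $e_s(\HHH)=L^2(\NNN,\varphi)\oplus\HHH^{\circ}_s$, forcing $e_sV_1=0$. The case $k=1$ of the moreover statement follows from the base-case computation by inspection, and $e_\NNN\pi_{s_1}(m_1)e_\NNN=0$ is immediate because $\pi_{s_1}(m_1)n\varphi^{1/2}=m_1n\varphi^{1/2}\in\HHH^{\circ}_{s_1}\perp L^2(\NNN,\varphi)$. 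Finally, for $n\in\NNN$, inserting $m_s=n$ into~\eqref{de_def} and using $E_s(n)=n$ shows that $\pi(n)$ acts by left multiplication of $n$ on the first tensor factor and preserves both length and leading index; this makes $\pi(n)$ commute with both $e_s$ and $e_\NNN$, yielding the last two identities. The only genuine obstacle I foresee is the careful case-split needed to verify that the stray length-$0$ component produced in the base case when $\xi=a\varphi^{1/2}$ and $s_k=s$ does not leak into the final output for $k\geq 2$; this is precisely what the inductive claim is designed to exclude.
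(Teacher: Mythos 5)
Your proposal is correct and takes essentially the same route as the paper, whose entire proof is ``by \cref{de_def}, an easy calculation verifies all the equations when $m_{s_i}\in\AAA^\circ_{s_i}$ and $n\in\AAA_\NNN$'' followed by strong density of $\AAA_s$ in $\MMM_s$; your shape lemma and induction on the word length are just that calculation written out. One small slip: for $k=2$ the assertion that $V_2$ carries no length-$0$ component is not covered by your inductive claim (it is the base case, and the stray piece does appear when $s_2=s$ and $\xi\in\HHH^\circ_s$), but the argument survives because that piece becomes a length-$1$ tensor with leading index $s_1\neq s$ in $V_1$, hence is still orthogonal to $e_s\HHH=L^2(\NNN,\varphi)\oplus\HHH^\circ_s$.
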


\begin{proof}
    By \cref{de_def}, an easy calculation verifies all the equations 
    when $m_{s_i} \in \AAA^{\circ}_{s_i}$ and $n \in \AAA_\NNN$.
    Since $\AAA_s$ is dense in the strong operator topology in $\MMM_s$, the assertion is 
    proved.
\end{proof}

By \cref{cond_proj}, it is clear that $e_s \in \pi_s(\MMM_s)'$ ($e_\NNN \in \pi(\NNN)'$) and
$e_s \MMM e_s = \pi_s(\MMM_s)e_s$ ($e_\NNN \MMM e_\NNN = \pi(\NNN)e_\NNN$). 
Let $E_{\MMM_s}$ and $E$ be the normal conditional expectations from 
$\MMM$ onto $\pi_s(\MMM_s)$ and $\pi(\NNN)$ respectively defined by
\begin{align*}
    e_s A e_s = E_{\MMM_s}(A)e_s, \quad e_\NNN A e_\NNN = E(A)e_\NNN \quad A \in \MMM. 
\end{align*}
By \cref{commutant_prop}, $E$ and $E_{\MMM_s}$ are faithful. 
Also note that $E(E_{\MMM_s}(A))= E(A)$ and $E(\pi_s(m)) = \pi(E_s(m))$.
Therefore $(\MMM, E)$ is an amalgamated free product of the family $(\MMM_s, E_s)$ as in \cref{def_am_prod}.

\begin{proposition}\label{free_prod_mo_au}
    Let $(\MMM, E) = *_{\NNN, s \in S}(\MMM_s, E_s)$. Then there exist 
    normal conditional expectations $E_{\MMM_s}: \MMM \rightarrow \pi_s(\MMM_s)$ such 
    that $E \circ E_{\MMM_s} = E$.
    Furthermore, for every n.s.f. weight $\phi$ on $\NNN$, we have
    $\sigma^{\tilde{\phi} \circ E}_t(\pi_s(m))
    = \pi_s(\sigma^{\phi \circ E_s}_t(m))$, $m \in \MMM_s$, where $\tilde{\phi} = \phi \circ \pi^{-1}$.
\end{proposition}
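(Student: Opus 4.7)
The plan is to deduce both assertions from the construction already carried out in the paragraphs preceding the proposition, combined with Takesaki's characterization of modular-invariant subalgebras via conditional expectations (\cite[Theorem 4.2, IX]{TKII}).

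First, I would observe that $E_{\MMM_s}$ already appears implicitly in the construction: since $e_s \in \pi_s(\MMM_s)'$ and $e_s \MMM e_s = \pi_s(\MMM_s) e_s$, the formula $e_s A e_s = E_{\MMM_s}(A) e_s$ defines a normal conditional expectation $E_{\MMM_s}: \MMM \to \pi_s(\MMM_s)$, and it is faithful by the same argument used for $E$ (via \cref{commutant_prop}). The compatibility $E \circ E_{\MMM_s} = E$ is the key algebraic point: because $L^2(\NNN,\varphi) \subset L^2(\NNN,\varphi) \oplus \HHH_s^\circ$, the projections satisfy $e_\NNN \leq e_s$, so $e_\NNN e_s = e_s e_\NNN = e_\NNN$. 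Hence for $A \in \MMM$,
\[
E(E_{\MMM_s}(A)) e_\NNN = e_\NNN E_{\MMM_s}(A) e_\NNN = e_\NNN e_s A e_s e_\NNN = e_\NNN A e_\NNN = E(A) e_\NNN,
\]
which gives the desired identity.

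For the second assertion, let $\phi$ be a n.s.f. weight on $\NNN$. Since $E$ is faithful and normal, $\tilde\phi \circ E$ is a n.s.f. weight on $\MMM$, and similarly $\phi \circ E_s$ is a n.s.f. weight on $\MMM_s$. Using $E \circ E_{\MMM_s} = E$ we get $(\tilde\phi \circ E) \circ E_{\MMM_s} = \tilde\phi \circ E$. By Takesaki's theorem, this invariance of the weight under the conditional expectation $E_{\MMM_s}$ implies that $\pi_s(\MMM_s)$ is globally invariant under the modular automorphism group $\sigma^{\tilde\phi \circ E}_t$, and moreover
\[
\sigma^{\tilde\phi \circ E}_t\big|_{\pi_s(\MMM_s)} = \sigma^{(\tilde\phi \circ E)|_{\pi_s(\MMM_s)}}_t.
\]

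Finally, for $m \in \MMM_s$ one computes $(\tilde\phi \circ E)(\pi_s(m)) = \tilde\phi(\pi(E_s(m))) = \phi(E_s(m))$, so the restricted weight $(\tilde\phi \circ E)|_{\pi_s(\MMM_s)}$ is precisely the transport of $\phi \circ E_s$ via the $\ast$-isomorphism $\pi_s$. Since modular groups are covariant under $\ast$-isomorphisms, this yields $\sigma^{\tilde\phi \circ E}_t(\pi_s(m)) = \pi_s(\sigma^{\phi \circ E_s}_t(m))$, as claimed. The main obstacle is really just the projection inequality $e_\NNN \leq e_s$; once that is observed, both statements follow from standard modular-theoretic machinery, with no need to redo the arguments of \cite{YU-def} in the non-$\sigma$-finite setting.
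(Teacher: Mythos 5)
Your proof is correct and follows essentially the same route as the paper: the conditional expectation $E_{\MMM_s}$ is the one cut out by the Jones projection $e_s$ (with $E\circ E_{\MMM_s}=E$ coming from $e_\NNN\le e_s$, which the paper records just before the proposition), and the modular statement is obtained from the factorization $\tilde\phi\circ E=(\tilde\phi\circ E|_{\pi_s(\MMM_s)})\circ E_{\MMM_s}$ together with Takesaki's theorem on weight-preserving conditional expectations and the identification $(\tilde\phi\circ E)\circ\pi_s=\phi\circ E_s$. No gaps.
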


\begin{proof}
    Note that $\tilde{\phi} \circ E = (\tilde{\phi} \circ E|_{\pi_s(\MMM_s)}) \circ E_{\MMM_s}$ and
    $\tilde{\phi} \circ E(\pi_s(m)) = \phi \circ E_s(m)$. Then \cite{TKCD} implies the result.
\end{proof}

By \cref{free_prod_mo_au} and \cite[Lemma 2.1]{ILP}, 
we can identity $L^2(\MMM, \varphi \circ \pi^{-1} \circ E)$ with $\HHH$ by using the 
unitary given in \cref{unique_lemma}.
Let $S$ be the involution on the left Hilbert algebra 
$\NN(\MMM,\varphi \circ \pi^{-1} \circ E) \cap \NN(\MMM,\varphi \circ \pi^{-1} \circ E)^*$,
i.e., $S: m  \mapsto m^*$. 

\begin{proposition}\label{modular_auto}
    $\HHH_0$ is a Tomita algebra with involution $S_0 = S|_{\HHH_0}$ and 
    complex one-parameter group $\{U(z):z \in \C\}$ given by
    \begin{align*}
        U(z)(a_1 \varphi^{1/2} \otimes \cdots \otimes a_k \varphi^{1/2})=
        \sigma^{\varphi \circ E_{s_1}}_{z}(a_1) \varphi^{1/2} \otimes \cdots \otimes 
        \sigma^{\varphi \circ E_{s_k}}_{z}(a_k) \varphi^{1/2},
    \end{align*}
    and $U(z)(n\varphi^{1/2}) = \sigma^{\varphi}_z(n)\varphi^{1/2}$,
    where $a_i \in \AAA^{\circ}_s$ and $n \in \AAA_{\NNN}$.
    In particular, the modular operator $\Delta$ associated with $\varphi \circ \pi^{-1} \circ E$ is 
    the closure of $U(-i)$ and the modular conjugation is $J$.
\end{proposition}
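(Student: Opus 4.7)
The plan is to identify $\HHH_0$ with a $*$-subalgebra of the maximal Tomita algebra of $(\MMM,\tau)$, where $\tau:=\varphi\circ\pi^{-1}\circ E$, and to read off the modular structure from there. First, I would show that for any alternating sequence $s_1\neq s_2\neq\cdots\neq s_k$ and $a_i\in\AAA^\circ_{s_i}$, the simple tensor $a_1\varphi^{1/2}\otimes\cdots\otimes a_k\varphi^{1/2}$ corresponds, under the unitary $\HHH\cong L^2(\MMM,\tau)$ coming from \cref{unique_lemma} together with \cite[Lemma 2.1]{ILP}, to the operator-theoretic vector $\pi_{s_1}(a_1)\cdots\pi_{s_k}(a_k)\varphi^{1/2}$. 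This is an iterated application of \cref{de_def}: the hypothesis $E_{s_i}(a_i)=0$ kills the correction terms in the telescoping at each step. The analogous statement on the $\AAA_\NNN\varphi^{1/2}$ component of $\HHH_0$ is immediate.

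Next, by \cref{free_prod_mo_au} the modular flow on $(\MMM,\tau)$ restricts via $\pi_s$ to $\sigma^{\varphi\circ E_s}$ on each $\MMM_s$, which sends $\AAA_s$ into itself for every real time and hence, by analytic continuation of entire elements of $\AAA_{s_i}$, for every complex time. Multiplicativity of $\sigma_z^\tau$ on entire vectors then shows that each product $\pi_{s_1}(a_1)\cdots\pi_{s_k}(a_k)$ is entire, with analytic continuation obtained factor-wise; combined with the freeness axioms of \cref{def_am_prod}(3), this also places $\HHH_0$ inside $\NN(\MMM,\tau)\cap\NN(\MMM,\tau)^*$ and shows $\HHH_0$ is a $*$-subalgebra of the Tomita algebra of $\MMM$. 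The formula for $U(z)$ claimed in the proposition is then the factor-wise expression of $\sigma_z^\tau$, while the involution $S_0$ acts on a simple tensor by reversing the order and taking adjoints, since $(\pi_{s_1}(a_1)\cdots\pi_{s_k}(a_k))^*=\pi_{s_k}(a_k^*)\cdots\pi_{s_1}(a_1^*)$.

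With these identifications, axioms (1)--(3)(d) of \cref{tomita_bimodule_def} (specialized to a Tomita algebra over $\C$) for $S_0$ and $U(z)$ on $\HHH_0$ are inherited from the ambient Tomita--Takesaki structure of $\MMM$. Since $\HHH_0$ is dense in $\HHH$, the argument of \cite[VI.1.21]{TKII} reproduced in the proof of \cref{core_prop} shows that $\HHH_0$ is a core for each $\Delta^{iz}$, identifying $\Delta$ with the closure of $U(-i)$. To conclude, I would compare the polar decomposition $S=J_{\mathrm{mod}}\Delta^{1/2}$ of the closure of $S_0$ with the explicit $J$ constructed just before \cref{commutant_prop} (which acts by $J_{s_i}(a_i\varphi^{1/2})=\varphi^{1/2}a_i^*$ on each factor and reverses the order of the tensor factors, cf.\ \cref{J_rel}): a direct computation using this pointwise formula shows $J\Delta^{1/2}\xi=S_0\xi$ on $\HHH_0$, hence $J_{\mathrm{mod}}=J$.

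The principal technical point is the first identification: verifying that the abstract relative-tensor vector $a_1\varphi^{1/2}\otimes\cdots\otimes a_k\varphi^{1/2}$ coincides with the operator-theoretic vector $\pi_{s_1}(a_1)\cdots\pi_{s_k}(a_k)\varphi^{1/2}$ under the canonical isomorphism $\HHH\cong L^2(\MMM,\tau)$. The cancellation of the off-diagonal terms in \cref{de_def} is straightforward by induction provided $E_{s_i}(a_i)=0$ is used at every stage, but it is the one place where the specifics of the relative tensor product must be tracked through carefully.
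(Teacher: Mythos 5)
Your proposal is correct, and it reaches the statement by a genuinely different route than the paper. The paper stays entirely at the Hilbert-space level: it verifies the Tomita-algebra axioms for $(\HHH_0,S_0,U(z))$ by direct computation with the iterated conditional-expectation formula for the relative tensor product inner product, obtaining $\braket{\xi}{U(z)\beta}=\braket{U(-\overline{z})\xi}{\beta}$ from the facts that each $E_{s_i}$ commutes with the relevant modular groups and that $\varphi$ is $\sigma^{\varphi}$-invariant, and obtaining condition (3)(d) from the explicit formula for $J$ together with \cref{J_rel}; only at the end does it note that $\HHH_0$ lies in the maximal Tomita algebra inside $\NN(\MMM,\varphi\circ\pi^{-1}\circ E)\cap\NN(\MMM,\varphi\circ\pi^{-1}\circ E)^*$ in order to identify $\overline{S}_0$ with $S$, and hence $\Delta$ with the closure of $U(-i)$ and the modular conjugation with $J$. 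You instead front-load that identification: writing $\tau=\varphi\circ\pi^{-1}\circ E$ as you do, you match each alternating tensor with the GNS vector of $\pi_{s_1}(a_1)\cdots\pi_{s_k}(a_k)$ via \cref{de_def}, import the modular flow through \cref{free_prod_mo_au}, and let the ambient Tomita--Takesaki theory of $(\MMM,\tau)$ supply the axioms. Your route makes the final clause of the proposition nearly automatic once the identification is in place; its cost is the extra bookkeeping you must do up front, namely (i) the limiting argument identifying $a_1\varphi^{1/2}\otimes\cdots\otimes a_k\varphi^{1/2}$ with the corresponding GNS vector, which you correctly single out, and (ii) checking that the linear span of $\AAA_{\NNN}$ and the alternating words in the $\AAA^{\circ}_{s_i}$ is closed under multiplication, which requires re-expanding products of consecutive letters from the same $\MMM_s$ as $a=E_s(a)+(a-E_s(a))$. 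Two harmless imprecisions: membership of the alternating words in $\NN(\MMM,\tau)\cap\NN(\MMM,\tau)^*$ follows from the left-ideal property of $\NN(\MMM,\tau)$ alone (freeness is what makes the two inner products agree, not what gives square-integrability); and the core statement you need at the end is \cite[Theorem 2.2, VI]{TKII} applied to the Tomita algebra $\HHH_0$, exactly the citation the paper uses, rather than the lemma you name.
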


\begin{proof}
    For $a_i$, $b_i \in \AAA^{\circ}_{s_i}$, $s_1 \neq \cdots \neq s_k$,
    let $\xi =a_1 \varphi^{1/2} \otimes \cdots a_k\varphi^{1/2}$ and $\beta = 
    b_1 \varphi^{1/2} \otimes \cdots \otimes b_k \varphi^{1/2}$. 
    Note that $S_0\xi =  a_k^* \varphi^{1/2} \otimes \cdots a_1^*\varphi^{1/2}$. Thus we have
    \begin{align*}
        &\braket{\xi}{U(z)\beta}=\varphi (E_{s_k}(a_k^* E_{s_{k-1}}(\cdots a_2^*E_{s_1}
        (a_1^*\sigma_z^{\varphi \circ E_{s_1}}(b_1))\sigma_z^{\varphi \circ E_{s_2}}(b_2)\cdots )
        \sigma_z^{\varphi \circ E_{s_k}}(b_k)))\\
        =&\varphi (E_{s_k}(\sigma_{-\overline{z}}^{\varphi \circ E_{s_k}}(a_k)^* 
        E_{s_{k-1}}(\cdots \sigma_{-\overline{z}}^{\varphi \circ E_{s_1}}(a_2)^*E_{s_1}
        (\sigma_{-\overline{z}}^{\varphi \circ E_{s_1}}(a_1)^*b_1)b_2\cdots )b_k))\\
        =&\braket{U(-\overline{z})\xi}{\beta}.
    \end{align*}
    By the definition of $J$ and \cref{J_rel}, we have
    \begin{align*}
      \braket{S_0 \xi}{S_0 \beta} = \braket{JU(-i/2)\xi}{JU(-i/2)\beta}=\braket{U(-i/2)\beta}{U(-i/2)\xi} 
        =\braket{U(-i)\beta}{\xi}.
    \end{align*}
    The other conditions in the definition of Tomita algebra can also be checked easily.
    Let $\overline{S}_0$ be the closure of $S_0$. By \cite[Theorem 2.2, VI]{TKII}, $\HHH_0$ is 
    a core for both $\overline{S}_0$ and $S_0^*$. Note that $\HHH_0$ is contained in the maximal 
    Tomita algebra in $\NN(\MMM,\varphi \circ \pi^{-1} \circ E) \cap 
    \NN(\MMM,\varphi \circ \pi^{-1} \circ E)^*$.
    Therefore $\HHH_0 \subset \DDD(S) \cap \DDD(S^*)$ and 

    By \cite[Theorem 2.2, VI]{TKII}, $\Delta$ and $\Delta^{1/2}$ are the closure of 
    $U(-i)$ and $U(-i/2)$ respectively. Since $\HHH_0$ is dense in $\HHH$, the modular 
    conjugation is $J$.
\end{proof}

\section{Technical results on non-$\sigma$-finite algebras}\label{section_nonsigma}
%%%

The results of this appendix are well-known in the case of \emph{$\sigma$-finite} 
von Neumann algebras (i.e., von Neumann algebras that admit normal faithful states), 
but we could not find them stated in the literature in more general situations. 
In the following we generalize them to the case of arbitrary von Neumann algebras, 
as we shall need later on in our construction (when considering categories with
uncountably many inequivalent simple objects).

Let $\NNN$ be a von Neumann subalgebra of a von Neumann algebra $\MMM$ and 
let $E$ be a normal, faithful conditional expectation from $\MMM$ onto $\NNN$. 
Then $\MMM$ is a pre-Hilbert $\NNN$-$\NNN$ bimodule, as defined in \cref{section_prelim}, with the $\NNN$-valued inner product 
$\braket{m_1}{m_2}_{\NNN} = E(m_1^*m_2)$. The left action of $\NNN$ on $\MMM$ is clearly normal. 
For a n.s.f. weight $\varphi$ of $\NNN$, the Hilbert space $H_\varphi$ defined in \cref{section_prelim} (see \cref{notation_Hvarphi}), 
with $H = \MMM$, coincides with the GNS Hilbert space $L^2(\MMM, \varphi \circ E)$. The inner product $\braket{\cdot}{\cdot}$ associated with $H$ and $\varphi$ is the GNS inner product $\braket{\cdot}{\cdot}_{\varphi\circ E}$.
We regard $\MMM$ and $\NNN$ as a concrete von Neumann algebras acting on $L^2(\MMM, \varphi \circ E)$.
Recall that $e_\NNN \in \NNN'$ is the Jones projection from $L^2(\MMM, \varphi \circ E)$ onto 
the subspace $L^2(\NNN,\varphi)$, and $e_{\NNN}Te_{\NNN} = E(T)e_{\NNN}$ for every $T\in \MMM$.
Moreover, there exists a unique n.s.f. operator-valued weight $E^{-1}$ 
from $\NNN'$ to $\MMM'$ characterized by
\begin{align*}
    \frac{d(\varphi \circ E)}{d\phi} = \frac{d\varphi}{d(\phi \circ E^{-1})},  
\end{align*}
where $\varphi$ and $\phi$ are n.s.f. weights on $\NNN$ and $\MMM'$ respectively
(see \cite{HOW-I}, \cite{HOW-II}, \cite{Kosaki-Ind}).

The following generalizes \cite[Lemma 3.1]{Kosaki-Ind} to the non-$\sigma$-finite case.

\begin{lemma}\label{E_v_e_n}
    With the above notations $E^{-1}(e_\NNN) = I$.   
\end{lemma}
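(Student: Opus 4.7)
The plan is to reduce the statement to Kosaki's original result \cite[Lemma 3.1]{Kosaki-Ind} in the $\sigma$-finite setting by cutting down with an appropriate family of projections. The essential tool is the right $\MMM'$-linearity of the operator-valued weight $E^{-1}$: for every projection $p \in \MMM'$ and every $T \in \NNN'_+$ one has $E^{-1}(pTp) = pE^{-1}(T)p$, which is immediate from the bimodularity property of n.s.f. operator-valued weights. Consequently, to prove $E^{-1}(e_{\NNN}) = I$ it suffices, by normality, to produce a family of mutually orthogonal projections $\{p_\lambda\}$ with $\sum_\lambda p_\lambda = I$ such that $p_\lambda E^{-1}(e_\NNN) p_\lambda = p_\lambda$ for every $\lambda$, and for this it is enough that the $p_\lambda$ commute with $e_\NNN$ and cut down the ambient inclusion to a $\sigma$-finite one.

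To secure the commutation of $p_\lambda$ with $e_\NNN$, I will choose the $p_\lambda$ inside $J_{\varphi \circ E}\NNN J_{\varphi \circ E} \subset \MMM'$ rather than in all of $\MMM'$. Indeed, $e_\NNN$ is the projection onto $L^2(\NNN,\varphi) \subset L^2(\MMM,\varphi\circ E)$, and the right $\NNN$-action, implemented by $J_{\varphi\circ E}\NNN J_{\varphi\circ E}$, preserves $L^2(\NNN,\varphi)$; hence $e_\NNN$ lies in $(J_{\varphi\circ E}\NNN J_{\varphi\circ E})'$. A Zorn's lemma argument then furnishes a maximal family $\{p_\lambda\}$ of pairwise orthogonal projections in $J_{\varphi\circ E}\NNN J_{\varphi\circ E}$ such that both $p_\lambda \MMM p_\lambda$ and $p_\lambda \MMM' p_\lambda$ are $\sigma$-finite; maximality forces $\sum_\lambda p_\lambda = I$. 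Since each $p_\lambda$ commutes with $\MMM$ (it lies in $\MMM'$) and with $\NNN$, the reductions $\NNN p_\lambda \subset \MMM p_\lambda$ acting on $p_\lambda L^2(\MMM,\varphi\circ E)$ are $\sigma$-finite inclusions with an induced faithful normal conditional expectation $E_\lambda$ and with Jones projection equal to the restriction of $e_\NNN p_\lambda = p_\lambda e_\NNN$.

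Applying Kosaki's result to each reduced inclusion yields $E_\lambda^{-1}(p_\lambda e_\NNN) = p_\lambda$ inside $(\MMM p_\lambda)'  = p_\lambda \MMM' p_\lambda$. The next step is to identify the dual operator-valued weight $E_\lambda^{-1}$ of the reduced inclusion with the cut-down $p_\lambda E^{-1}(\cdot)p_\lambda$ of the original $E^{-1}$; this follows from Haagerup's spatial derivative characterization together with the straightforward fact that the reference weights satisfy $\varphi_\lambda = \varphi|_{\NNN p_\lambda}$ and $(\varphi \circ E)|_{\MMM p_\lambda} = \varphi_\lambda \circ E_\lambda$. Once this identification is established, the formula above translates into $p_\lambda E^{-1}(e_\NNN) p_\lambda = p_\lambda$, and right $\MMM'$-linearity gives $p_\lambda E^{-1}(e_\NNN) = p_\lambda$; summing over $\lambda$ by normality of $E^{-1}$ yields $E^{-1}(e_\NNN) = I$.

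The main obstacle is the compatibility step, namely that the reduction of $E^{-1}$ by $p_\lambda$ coincides with the dual operator-valued weight of the reduced inclusion. This is a generalization to the non-$\sigma$-finite setting of well-known facts from Haagerup's theory of operator-valued weights (see \cite{HOW-I}, \cite{HOW-II}), and requires carefully checking that the defining spatial derivative identity $d(\varphi\circ E)/d\phi = d\varphi/d(\phi\circ E^{-1})$ is stable under cut-down by projections in $J_{\varphi\circ E}\NNN J_{\varphi\circ E}$; this is where I expect the bulk of the technical work to lie.
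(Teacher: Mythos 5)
There is a genuine gap at the very first step of your reduction, and it is fatal in precisely the situation the lemma is meant to cover. You need your projections $p_\lambda$ to lie in $\MMM'$ in order to invoke the $\MMM'$-bimodularity of $E^{-1}$ (and you place them in $J\NNN J\subset\MMM'$). But if $\MMM$ is a factor, then for every nonzero projection $p\in\MMM'$ the map $m\mapsto mp$ is a *-isomorphism of $\MMM$ onto $\MMM p$, so $\MMM p$ is $\sigma$-finite if and only if $\MMM$ itself is. Hence when $\MMM$ is a non-$\sigma$-finite factor — which is exactly the case arising in this paper, e.g.\ $\MMM=\Phi(H(\ut))''$ with $\NNN=\AAA(\ut)$ over an uncountable spectrum — there is \emph{no} nonzero projection in $\MMM'$ (let alone in $J\NNN J$) whose cut-down makes the inclusion $\sigma$-finite. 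Your maximal family is then empty, maximality does not force $\sum_\lambda p_\lambda=I$, and the reduction to Kosaki's $\sigma$-finite lemma never gets off the ground. Cutting by projections in $\MMM$ instead would destroy both the commutation with $e_\NNN$ and the bimodularity you rely on, so the obstruction is structural, not a matter of more careful bookkeeping in the compatibility step you flag at the end.

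The paper avoids any reduction and instead observes that Kosaki's original argument survives verbatim without $\sigma$-finiteness: by Connes' spatial theory there is a family $\{\xi_\alpha\}$ of $\varphi$-bounded vectors in $L^2(\NNN,\varphi)$ with $e_\NNN=\sum_\alpha R^{\varphi}(\xi_\alpha)R^{\varphi}(\xi_\alpha)^*$; each $\xi_\alpha$ is also $\varphi\circ E$-bounded, with $R^{\varphi\circ E}(\xi_\alpha)\in\MMM'$ commuting with $e_\NNN$ and restricting to $R^\varphi(\xi_\alpha)$ on $L^2(\NNN,\varphi)$; density of $\MMM e_\NNN L^2(\MMM,\varphi\circ E)$ gives $\sum_\alpha R^{\varphi\circ E}(\xi_\alpha)R^{\varphi\circ E}(\xi_\alpha)^*=I$; and applying $E^{-1}$ termwise yields $E^{-1}(e_\NNN)=I$. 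If you want a working proof, this direct route — not a cut-down to the $\sigma$-finite statement — is the one to take.
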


\begin{proof}
    Let $\xi \in L^2(\NNN, \varphi)$ be a $\varphi$-bounded vector (see \cite{AC}) 
    regarded as an element of $L^2(\MMM, \varphi \circ E)$, namely such that the map 
    $R^{\varphi}(\xi): n \in \NN(\NNN,\varphi) \mapsto n\xi \in L^2(\MMM, \varphi \circ E)$
    extends to a bounded linear operator from $L^2(\NNN, \varphi)$ to $L^2(\MMM, \varphi \circ E)$. 
    Note that
    \begin{align*}
        \|m \xi\|_2 = \|E(m^*m)^{1/2}\xi\|_2 \leq \|R^{\varphi}(\xi)\|\|m\|_2, \quad m
        \in \NN(\MMM, \varphi\circ E).
    \end{align*}
    Thus $\xi$ is a $\varphi\circ E$-bounded vector and the map 
    $R^{\varphi \circ E}(\xi):m \mapsto m\xi$ is a bounded operator in $\MMM'$.

    It is clear that 
    $(I-e_\NNN) R^{\varphi \circ E}(\xi)e_\NNN = 0$ for each $\xi \in L^2(\NNN, \varphi)$. 
    Let $m \in \NN(\MMM, \varphi\circ E)$ such that
    $E(m)=0$. Then
    $\braket{\beta}{R^{\varphi \circ E}(\xi)m}_{\varphi\circ E} 
        = \braket{\beta}{E(m)\xi}_{\varphi\circ E} = 0$, for every $\beta \in L^2(\NNN, \varphi)$. 
    Therefore, we have $R^{\varphi \circ E}(\xi)e_\NNN = e_\NNN R^{\varphi \circ E}(\xi)$ and
    $R^{\varphi \circ E}(\xi)e_\NNN|_{L^2(\NNN, \varphi)}=R^\varphi(\xi)$.
 
    By \cite[Proposition 3]{AC}, there exists a family $\{\xi_\alpha\}_{\alpha \in S}$ of 
    $\varphi$-bounded vectors in $L^2(\NNN,\varphi)$ such that
    \begin{align*}
        e_{\NNN} = \sum_{\alpha} R^{\varphi}(\xi_\alpha)R^{\varphi}(\xi_\alpha)^* 
        =\sum_{\alpha} R^{\varphi \circ E}(\xi_\alpha)R^{\varphi \circ E}(\xi_\alpha)^* e_{\NNN}.
    \end{align*}
    Since $\MMM e_\NNN L^2(\MMM,\varphi\circ E) = span\{m\xi: \xi \in L^2(\NNN, \varphi), m \in \MMM\}$ 
    is dense in $L^2(\MMM,\varphi\circ E)$, we have 
    \begin{align*}
    \sum_{\alpha} R^{\varphi \circ E}(\xi_\alpha)R^{\varphi \circ E}(\xi_\alpha)^* = I.   
    \end{align*}
    Therefore
    $E^{-1}(e_\NNN) = \sum_{\alpha} E^{-1}(R^{\varphi}(\xi_\alpha)R^{\varphi}(\xi_\alpha)^*)
    =\sum_{\alpha} R^{\varphi \circ E}(\xi_\alpha)R^{\varphi \circ E}(\xi_\alpha)^* = I$
    (see \cite[Lemma 3.1]{Kosaki-Ind} for more details on this part). 
\end{proof}

Let $J$ be the modular conjugation associated with $\varphi \circ E$ on the Hilbert space
$L^2(\MMM,\varphi\circ E)$. By \cite[IX, Theorem 4.2]{TKII} (or see \cite{TKCD}), $Je_\NNN J = e_\NNN$.
The basic extension of $\MMM$ by $e_\NNN$ is the von Neumann algebra 
generated by $\MMM$ and $e_\NNN$ (\cite{J}), which coincides with $J \NNN' J$. 
For $A \in \BBB(L^2(\MMM,\varphi\circ E))$, let $j(A) = J A^*J$. Then $\hat{E}: A \in J\NNN'J \mapsto
j \circ E^{-1} \circ j(A) \in \MMM$ is a n.s.f. operator-valued weight. By \cref{E_v_e_n}, 
$\hat{E} (e_\NNN) = I$. Moreover, the same arguments as in the proof of \cite[Proposition 2.2]{ILP} lead us to the following:

\begin{lemma}\label{centralizer_lemma}
    $e_\NNN$ is in the centralizer of $\phi=\varphi \circ E \circ \hat{E}$, i.e., 
    $\sigma^{\phi}_t(e_\NNN) = e_\NNN$, $t \in \R$.
\end{lemma}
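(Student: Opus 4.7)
The strategy is to follow the argument of \cite[Proposition 2.2]{ILP} and adapt it to the present non-$\sigma$-finite setting. The main idea is to check that $e_\NNN$ commutes with $\phi$ on a weakly dense set of sufficiently regular elements of $J\NNN'J$, so that the KMS condition forces $\sigma_t^\phi(e_\NNN) = e_\NNN$.

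First, I would use the $\MMM$-bimodularity of $\hat E$ together with \cref{E_v_e_n}, namely $\hat E(e_\NNN) = I$, to obtain the identity $\hat E(a e_\NNN b) = ab$ for $a, b \in \MMM$. Combined with the Jones relation $e_\NNN a e_\NNN = E(a) e_\NNN$ for $a \in \MMM$, this yields, for every $z$ of the form $z = a e_\NNN b$ with $a, b \in \MMM$,
\begin{align*}
\phi(e_\NNN z) &= \phi(E(a) e_\NNN b) = (\varphi \circ E)(E(a) b) = \varphi(E(a) E(b)),\\
\phi(z e_\NNN) &= \phi(a e_\NNN E(b)) = (\varphi \circ E)(a E(b)) = \varphi(E(a) E(b)).
\end{align*}
Hence $\phi(e_\NNN z) = \phi(z e_\NNN)$ on the *-subalgebra $\MMM e_\NNN \MMM$, which by the basic construction theory is weakly dense in $\langle\MMM, e_\NNN\rangle = J\NNN'J$.

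Second, I would invoke Haagerup's theorem on operator-valued weights (\cite{HOW-I}, \cite{HOW-II}) to deduce $\sigma_t^\phi|_\MMM = \sigma_t^{\varphi \circ E}$ for every $t \in \R$. Restricting the previous computation to $a, b$ lying in the Tomita algebra of $\varphi \circ E$, the elements $a e_\NNN b$ form a weakly dense *-subalgebra of entire analytic elements for $\sigma_t^\phi$, contained in the definition domain of $\phi$. The KMS condition rewrites $\phi(e_\NNN z)$ as $\phi(z\, \sigma_{-i}^\phi(e_\NNN))$, so the equality obtained above becomes $\phi(z\,(\sigma_{-i}^\phi(e_\NNN) - e_\NNN)) = 0$ for $z$ running through this dense set. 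Faithfulness of $\phi$ then forces $\sigma_{-i}^\phi(e_\NNN) = e_\NNN$, equivalently $\sigma_t^\phi(e_\NNN) = e_\NNN$ for every real $t$.

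The main obstacle is the analytic extension step: one has to check that the *-subalgebra generated by elements $a e_\NNN b$, with $a, b$ in the Tomita algebra of $\varphi \circ E$, is weakly dense in $J\NNN'J$ and consists of entire analytic elements for $\sigma_t^\phi$ — a step that in the $\sigma$-finite case of \cite{ILP} is transparent because one can work with a single faithful normal state, but which here requires a careful selection of integrable test vectors. Once this is verified, the identity $\phi(e_\NNN z) = \phi(z e_\NNN)$ together with the KMS condition closes the argument, the remaining inputs (normality and $\MMM$-bimodularity of $\hat E$, and $\hat E(e_\NNN) = I$ from \cref{E_v_e_n}) being already available in the non-$\sigma$-finite setting.
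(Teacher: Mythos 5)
Your computation $\phi(e_\NNN z) = \varphi(E(a)E(b)) = \phi(z e_\NNN)$ for $z = a e_\NNN b$ is correct and is the natural ``tracial'' identity to aim for, but the step that converts it into $\sigma_t^\phi(e_\NNN)=e_\NNN$ is where the argument breaks down, and you have only flagged this gap rather than closed it. Writing $\phi(e_\NNN z) = \phi(z\,\sigma_{-i}^\phi(e_\NNN))$ via the KMS condition presupposes that $e_\NNN$ is entire analytic for $\sigma^\phi$ --- which is essentially the conclusion you are trying to reach. The same circularity infects your proposed set of test elements: to know that $a e_\NNN b$ (with $a,b$ in the Tomita algebra of $\varphi\circ E$) is analytic for $\sigma^\phi$ you would need to know how $\sigma_t^\phi$ acts on $e_\NNN$; Haagerup's theorem only gives $\sigma_t^\phi|_{\MMM} = \sigma_t^{\varphi\circ E}$, not the action on the Jones projection. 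The non-circular version of your strategy is the weight-centralizer criterion \cite[VIII, Theorem 2.6]{TKII} (which the paper itself invokes in the proof of \cref{KMS_aut}): one must check $e_\NNN\mathfrak m_\phi\subseteq\mathfrak m_\phi$, $\mathfrak m_\phi e_\NNN\subseteq\mathfrak m_\phi$ and $\phi(e_\NNN x)=\phi(x e_\NNN)$ for \emph{all} $x$ in the definition ideal $\mathfrak m_\phi$, not merely on the weakly dense subalgebra $\MMM e_\NNN\MMM$; for weights, passing from a dense subalgebra to the whole ideal is not automatic and is exactly the delicate point in the non-$\sigma$-finite setting.

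The paper sidesteps all of this by a direct spatial-derivative computation: from $\hat E=(j\circ E\circ j)^{-1}$ (\cite[Lemma 1.3]{Kosaki-Ind}) one gets
\begin{align*}
\frac{d\phi}{d(\varphi\circ j)} \;=\; \frac{d(\varphi\circ E)}{d(\varphi\circ E\circ j)} \;=\; \Delta_{\varphi\circ E},
\end{align*}
so $\sigma_t^\phi$ is implemented by $\Delta_{\varphi\circ E}^{it}$ on $J\NNN'J$ by Connes' theorem \cite[Theorem 9]{AC}, and $\Delta_{\varphi\circ E}^{it}$ commutes with $e_\NNN$ by Takesaki's theorem \cite[IX, Theorem 4.2]{TKII} since $\NNN$ is globally $\sigma^{\varphi\circ E}$-invariant. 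That route produces the modular group of $\phi$ explicitly and never needs the analyticity of $e_\NNN$ as an input. You should either adopt it, or complete your argument by verifying the hypotheses of \cite[VIII, Theorem 2.6]{TKII} in full.
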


\begin{proof}
    By \cite[Lemma 1.3]{Kosaki-Ind}, $\hat{E} = (j \circ E \circ j)^{-1}$. Thus
    \begin{align*}
        \frac{d \phi}{d \varphi \circ j}=
        \frac{d \varphi \circ E}{d(\varphi \circ E \circ j)} = \Delta_{\varphi \circ E}.
    \end{align*}
    By \cite[Theorem 9]{AC} and Takesaki's theorem \cite[IX, Theorem 4.2]{TKII} we conclude $\sigma^{\phi}_t(e_\NNN) 
    = \Delta_{\varphi \circ E}^{it}e_\NNN  \Delta_{\varphi \circ E}^{-it} = e_\NNN$.
\end{proof}

%
\begin{comment}    
%% weak convergence is not enough.
\begin{lemma}\label{expectation_converge_lemma}
    Let $\NNN$ be a von Neumann subalgebra of $\MMM (\subset \BBB(\HHH))$ 
    and $E$ a normal conditional expectation 
    from $\MMM$ onto $\NNN$. If a net $U_\lambda$ of unitaries in $\MMM$ satisfying 
    $E(AU_\lambda B) \rightarrow 0$ in weak-operator topology for all $A$, $B \in \MM$,
    where $\MM \subset \MMM$ is a (possibly non-unital) dense (in any von Neumann algebra topology) 
    *-subalgebra of $\MMM$, then $E(XU_\lambda Y) \rightarrow 0$ in weak-operator topology for
    all $X$, $Y \in \MMM$.
\end{lemma}

\begin{proof}
    Let $\xi$ be a unit vector in $\HHH$ and $H$, $K$ be two self-adjoint operator in $\MMM$.
    Since $\varphi(\cdot) = \braket{\xi}{E(\cdot)\xi}$ is in the predual of $\MMM$, for 
    any $\varepsilon > 0$, there exist two self-adjoint operators $H_1$ and $K_1 \in \MM$ such that 
    $\|H_1\| \leq \|H\|$, $\|K_1\| \leq \|K\|$, $\varphi((H_1-H)^2) \leq \varepsilon^2$ and
    $\varphi((K_1-K)^2) \leq \varepsilon^2$ by Kaplansky density theorem.
    Then
    \begin{align*}
        |\varphi(HU_\lambda K)| &\leq |\varphi((H-H_1)U_\lambda K)| +
        |\varphi(H_1U_\lambda (K-K_1))| + |\varphi(H_1U_\lambda K_1)|\\
        & \leq \varepsilon (\|K\| + \|H_1\|) + |\varphi(H_1U_\lambda K_1)|. 
    \end{align*}
    Therefore $\lim \sup |\varphi(HU_\lambda K)| \leq \varepsilon (\|K\| + \|H\|)$.
    Let $\varepsilon \rightarrow 0$, we have $E(HU_\lambda K) \rightarrow 0$ in weak-operator
    topology.
\end{proof}
\end{comment}
%

Along the same line of arguments used in the proof of \cite[Proposition 3.3]{YU}, we have 
the following result on the structure of amalgamated free products in the non-finite,
non-$\sigma$-finite setting (see \cref{appendix_am_prod} for their definition).  

\begin{proposition}\label{relative_commutant}
Suppose that $\MMM_1 \supseteq \NNN \subseteq \MMM_2$ are von Neumann algebras with normal faithful
conditional expectations $E_i: \MMM_i \rightarrow \NNN$, $i=1$, $2$.
Let $$(\MMM, E)= (\MMM_1, E_1)*_{\NNN}(\MMM_2, E_2)$$ be their amalgamated free product
(see \cref{def_am_prod}) and $\varphi$ be a n.s.f. weight on $\NNN$.

For $i =1,2$, we use $\MM_i \subset \MMM_i$ to denote the set of analytic elements $A \in \MMM_i$ such that 
    $\sigma_{z}^{\varphi \circ E}(A) \in \NN(\MMM_i, \varphi\circ E_i) \cap \NN(\MMM_i, \varphi\circ E_i)^*$, $z \in\C$.

    Let $\tilde{\MM}_1$ be a strongly dense *-subalgebra of $\MM_1$ which is globally invariant
    under the modular automorphism group $\sigma^{\varphi \circ E_1}_t$ and such that $E_1(\tilde{\MM}_1)
    \subset \tilde{\MM}_1$.
    Let $\AAA$ be a unital von Neumann subalgebra of the centralizer 
    $(\MMM_1)_{\varphi \circ E_1}$ such that 
    \begin{enumerate}
        \item $\varphi \circ E_1|_{\AAA' \cap \MMM_1}$ is semifinite,
        \item there is a net $U_\alpha$ of unitaries in $\AAA$ satisfying 
            $E_1(A U_\alpha B) \rightarrow 0$ in the strong operator topology 
            for all $A$, $B \in \{I\} \cup \tilde{\MM}_1$.
    \end{enumerate}
    Then any unitary $V \in \MMM$ satisfying
    $V\AAA V^* \subset \MMM_1$ must be contained in $\MMM_1$. In particular $\AAA' \cap \MMM =
    \AAA' \cap \MMM_1$.

    Analogous statements hold replacing $\MMM_1$, $\MM_1$, $\tilde \MM_1$ with
    $\MMM_2$, $\MM_2$, $\tilde \MM_2$.
\end{proposition}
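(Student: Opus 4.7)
The plan is to adapt Popa's intertwining-mixing technique to the non-$\sigma$-finite amalgamated free product setting, along the lines of \cite[Proposition 3.3]{YU}. The ``in particular'' statement $\AAA'\cap\MMM=\AAA'\cap\MMM_1$ follows immediately from the first one by applying it to unitaries in $\AAA'\cap\MMM$, so I focus on showing that $V\in\MMM_1$ whenever $V\in\MMM$ is a unitary with $V\AAA V^*\subset\MMM_1$. The key objects are: the $\varphi\circ E$-preserving conditional expectation $E_{\MMM_1}:\MMM\to\MMM_1$ given by \cref{free_prod_mo_au}, and the orthogonal decomposition $L^2(\MMM,\varphi\circ E)=L^2(\MMM_1,\varphi\circ E_1)\oplus\mathcal{K}$ coming from the free-product construction of \cref{appendix_am_prod}, where $\mathcal{K}$ is the closed $\MMM_1$-$\MMM_1$ sub-bimodule complementary to $L^2(\MMM_1)$. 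Setting $V_0:=E_{\MMM_1}(V)\in\MMM_1$ and $V_1:=V-V_0$, the goal becomes $V_1=0$.

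The first step is a mixing lemma on $\mathcal{K}$: for all $A,B\in\tilde{\MM}_1\cup\{I\}$ and all bounded reduced tensors $\xi,\eta\in\mathcal{K}$, $\braket{\xi}{AU_\alpha B\eta}\to 0$ in $\C$. The proof is a direct computation from the explicit inner-product formulas for iterated relative tensor products recorded in \cref{appendix_am_prod}; the modular invariance of $\tilde{\MM}_1$ under $\sigma_t^{\varphi\circ E_1}$ permits commuting analytic elements past the $\varphi^{1/2}$'s, after which the inner product collapses to an expression whose innermost factor is $E_1(xU_\alpha y)$ for suitable $x,y\in\tilde{\MM}_1\cup\{I\}$, and hypothesis (2) concludes. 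Density of bounded reduced tensors in $\mathcal{K}$ and the uniform bound $\|U_\alpha\|\le 1$ extend the statement to arbitrary $\xi,\eta\in\mathcal{K}$.

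Next I would rewrite the hypothesis as $VU_\alpha=W_\alpha V$ with $W_\alpha:=VU_\alpha V^*\in\MMM_1$, yielding $V_1 U_\alpha - W_\alpha V_1 = W_\alpha V_0 - V_0 U_\alpha\in\MMM_1$. Because left multiplication by $\MMM_1$ preserves both $L^2(\MMM_1)$ and $\mathcal{K}$, the $\mathcal{K}$-component of $(V_1U_\alpha - W_\alpha V_1)\zeta$ must vanish for every $\zeta\in L^2(\MMM_1)$. Hypothesis (1) produces an increasing net of projections $p_\lambda\in\AAA'\cap\MMM_1$ with $\varphi\circ E_1(p_\lambda)<\infty$ and $p_\lambda\nearrow I$, and since $p_\lambda\in\AAA'$ it commutes with every $U_\alpha$. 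Pairing the vanishing $\mathcal{K}$-component applied to $p_\lambda\zeta$ against vectors of the form $V_1^*\eta$ for $\eta\in p_\lambda L^2(\MMM_1)$ and invoking the mixing lemma to send the resulting coefficients to zero forces $\|V_1 p_\lambda\|_2=0$, hence $V_1 p_\lambda=0$; letting $p_\lambda\nearrow I$ yields $V_1=0$.

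The main obstacle is precisely the non-$\sigma$-finiteness of $\MMM$: the customary $\sigma$-finite versions of this intertwining argument rely globally on cyclic and separating vectors which are not available here. Hypothesis (1) is the device that allows the argument to be localized on the $\sigma$-finite corners $p_\lambda\MMM p_\lambda$, while the commutation $[p_\lambda,\AAA]=0$ ensures that both the intertwining identity and the mixing lemma survive the compression. Carrying out the detailed bookkeeping for the compressed identities, and transferring the conclusion back to $\MMM$ as $p_\lambda\nearrow I$, is the lengthiest but essentially routine part of the argument.
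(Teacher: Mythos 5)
Your overall strategy --- Popa's intertwining/mixing technique, with hypothesis (1) supplying finite-weight projections in $\AAA'\cap\MMM_1$ to localize the non-$\sigma$-finite situation --- is the same as the paper's, and several intermediate observations are correct (in fact $W_\alpha V_0-V_0U_\alpha=E_{\MMM_1}(W_\alpha V)-E_{\MMM_1}(VU_\alpha)=0$, so the intertwining relation is exactly $V_1U_\alpha=W_\alpha V_1$). The gap is at the decisive last step. The identity you arrive at is $\xi_\lambda=W_\alpha\,\xi_\lambda\,U_\alpha^*$ for $\xi_\lambda=V_1p_\lambda\varphi^{1/2}\in\mathcal{K}$, hence $\|\xi_\lambda\|^2=\braket{\xi_\lambda}{W_\alpha\xi_\lambda U_\alpha^*}$, and to conclude $\xi_\lambda=0$ you must show that this coefficient tends to $0$. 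But it involves the \emph{$\alpha$-dependent, uncontrolled} unitaries $W_\alpha=VU_\alpha V^*\in\MMM_1$, whereas your mixing lemma only treats coefficients $\braket{\xi}{AU_\alpha B\eta}$ with $A,B\in\{I\}\cup\tilde{\MM}_1$ \emph{fixed}. Hypothesis (2) gives no information about $E_1(AW_\alpha B)$, and a pointwise (weak) mixing statement cannot be upgraded by density to the uniformity over the unit ball of $\MMM_1$ that is actually needed. So ``invoking the mixing lemma to send the resulting coefficients to zero'' does not apply to any of the coefficients that occur, and the argument does not close.

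The paper's proof is built precisely to eliminate $W_\alpha$. Instead of the element $V_1$, it works in the basic construction $J\MMM_1'J$ with the positive operator $(I-e_{\MMM_1})V^*e_{\MMM_1}V(I-e_{\MMM_1})$: the hypothesis $V\AAA V^*\subset\MMM_1$ translates into \emph{commutation} of this operator (hence of its spectral projections $F$, and of $PFP$ for $P\in\AAA'\cap\MMM_1$) with every $U_\alpha$, so the quantity to be estimated becomes $\phi(T^*U_\alpha TU_\alpha^*)$ with $\phi=\varphi\circ E\circ\hat{E}$ and $T=\sum_iA_ie_{\MMM_1}B_i$ a fixed finite approximant of $(PFP)^{1/2}$. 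After expanding, Cauchy--Schwarz pushes every uncontrolled factor into a term bounded by operator norms times fixed $\|\cdot\|_2$-norms, and the decay is carried entirely by $\|E_{\MMM_1}(A_i^*U_\alpha A_j)\|_2\to0$, which does reduce to hypothesis (2) applied to fixed elements. Making this work without $\sigma$-finiteness requires the dual operator-valued weight $\hat{E}=j\circ E_{\MMM_1}^{-1}\circ j$ together with \cref{E_v_e_n} and \cref{centralizer_lemma} (so that $\hat E(e_{\MMM_1})=I$, $\phi$ is usable, and $e_{\MMM_1}$ lies in the centralizer of $\phi$); these ingredients, absent from your proposal, are where the substance of the generalization lies. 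If you insist on the vector-level formulation, you would need to prove the uniform mixing statement $\sup_{W\in(\MMM_1)_1}|\braket{\xi}{W\eta U_\alpha^*}|\to0$ for reduced $\xi,\eta\in\mathcal{K}$, which amounts to redoing the paper's computation.
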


\begin{proof}
    We assume that $\MMM$ acts on $L^2(\MMM, \varphi \circ E)$ and take $V$ as above.
    Let $J$ be the modular conjugation associated with $\varphi \circ E$ and 
    $\hat{E} = j \circ E_{\MMM_1}^{-1} \circ j$ be the dual operator-valued weight from 
    $J \MMM_1' J$ onto $\MMM$, where $E_{\MMM_1}$ is the 
    conditional expectation from $\MMM$ onto $\MMM_1$ defined as in the discussion after \cref{cond_proj}.
    We use $e_{\MMM_1}$ to denote the Jones projection from $L^2(\MMM, \varphi \circ E)$ 
    onto $L^2(\MMM_1, \varphi \circ E_1)$. 
    Note that $e_{\MMM_1}T e_{\MMM_1} = E_{\MMM_1}(T)e_{\MMM_1}$, 
    $T \in \MMM$, and $L^2(\MMM_1, \varphi \circ E_1)$ is a separating set 
    in $L^2(\MMM, \varphi \circ E)$ (see \cref{commutant_prop,unique_lemma}).
    Thus we only need to show that $(I-e_{\MMM_1})V^* e_{\MMM_1} V(I-e_{\MMM_1}) = 0$.

    Note that $V \AAA V^* \subset \MMM_1$, $V^* e_{\MMM_1} V \in \AAA' \cap J \MMM_1' J$
    and $\hat{E}(V e_{\MMM_1} V^*) = 1$ thanks to \cref{E_v_e_n}.
    By taking spectral projections, it is sufficient to show that if 
    $F$ is a projection in $\AAA' \cap J \MMM_1' J$ such that
    $F \leq I - e_{\MMM_1}$ and $\|\hat{E}(F)\| < +\infty$, then $F=0$.
 
    Let $\NN = \tilde{\MM}_1 \cap \NNN$, $\tilde{\MM}_1^{\circ} = Ker E_1|_{\tilde{\MM}_1}$,
    $\MM_2^{\circ} = Ker E_2|_{\MM_2}$ and 
    $\MM = \NN + span\Lambda(\tilde{\MM}_1^{\circ}, \MM_2^{\circ})$, where 
    $\Lambda(\tilde{\MM}_1^{\circ}, \MM_2^{\circ})$ is the set of all alternating words in 
    $\tilde{\MM}_1^{\circ}$ and $\MM_2^{\circ}$. By \cite[Lemma 2.1]{ILP}, $\MM$ is dense
    as a subspace of $L^2(\MMM, \varphi \circ E)$.
    Let $H_0 = span\{A e_{\MMM_1} B: A, B \in \MM \}$. 
    By \cref{centralizer_lemma} and \cite[Theorem 4.7]{HOW-I}, $H_0$  is globally invariant 
    under $\sigma_t^{\phi}$, where $\phi = \varphi \circ E \circ \hat{E}$ is a 
    n.s.f. weight on $J\MMM_1 'J$.
    Therefore $H_0$ is dense as a subspace of $L^2(J\MMM_1 ' J, \phi)$
    by \cite[Lemma 2.1]{ILP}.

    Let $P \in \AAA' \cap \MMM_1$ be a projection such that $0 < \gamma^2 =\phi(PFP)$.
    Let $T = \sum_{i=1}^{n} A_i e_{\MMM_1} B_i \in H_0$ satisfying 
    $\phi(T^*T)^{1/2} \leq 3\gamma /2$ and $\phi((T- (PFP)^{1/2})^*(T- (PFP)^{1/2}))^{1/2} \leq \gamma/5$.
    Moreover, we can assume that $(I-e_{\MMM_1})T=T=T(I-e_{\MMM_1})$ by \cref{centralizer_lemma} and
    \cite[VIII, Lemma 3.18 (i)]{TKII}.   
    Therefore $E_{\MMM_1}(A_i) = 0 = E_{\MMM_1}(B_i)$, $i = 1, \ldots, n$, and we 
    could assume that $A_i$, $B_i \in span(\Lambda(\tilde{\MM}_1^{\circ}, \MM_2^{\circ}) 
    \setminus \tilde{\MM}_1^{\circ})$.
 
    For any $U_\alpha$ as in our assumptions, we have
    \begin{align*}
        \gamma^2 - |\phi(T^*U_\alpha T U{_\alpha}^*)| 
        \leq |\phi((PFP)^{1/2} U_\alpha (PFP)^{1/2}U_\alpha^*)- 
        \phi(T^*U_\alpha TU_\alpha^*)|\leq \gamma^2/2.
    \end{align*}
    Thus 
    \begin{align*}
        \gamma^2 &\leq 2\sum_{i,j=1}^n |\phi(B_i^* e_{\MMM_1} A_i^* U_\alpha A_j 
        e_{\MMM_1}B_jU_\alpha^*)|\\
                 &=2 \sum_{i,j=1}^n |\varphi \circ E(\sigma_{i}^{\varphi \circ E}
                 (B_j)U_\alpha^*B_i E_{\MMM_1}(A_i^*U_\alpha A_j))|\\
                 &\leq 2 (\max_{i,j}\|B_i^*\|\|\sigma_{-i}^{\varphi \circ E}
                 (B_j^*)\|_{2})(\sum_{i,j}\|E_{\MMM_1}(A_i^*U_\alpha 
                 A_j)\|_{2}),
    \end{align*}
    the second equality is due to \cite[Proposition 2.17]{SS}. Note that each operator in
    $\Lambda(\tilde{\MM}_1^{\circ}, \MM_2^{\circ}) \setminus \tilde{\MM}_1^{\circ}$ 
    can be written as
    $C\delta$ where $C \in \{1\} \cup \tilde{\MM}_1^\circ$ and $\delta$ is a reduced word 
    that starts with an operator in $\MM_2^{\circ}$. Let $C_1\delta_1$ and $C_2 \delta_2$ be two
    such words. By the fact $\varphi \circ E(\delta_i^* \delta_i) < +\infty$ and 
    \cref{cond_proj}, we have 
    \begin{align*}
        \|E_{\MMM_1}(\delta_1^* C_1^* U_\alpha C_2 \delta_2)\|_{2}
        &=\|E_{\MMM_1}(\delta_1^* E_1(C_1^* U_\alpha C_2) \delta_2)\|_{2}\\
        &\leq \|\delta_1^*\|\| E_1(C_1^* U_\alpha C_2) \delta_2 \|_{2} \rightarrow 0.
    \end{align*}
    Recall that $A_i\in span(\Lambda(\tilde{\MM}_1^{\circ}, \MM_2^{\circ}) 
    \setminus \tilde{\MM}_1^{\circ})$. The above estimation implies that $\gamma =0$ and $F=0$.
\end{proof}

We conclude with two facts that are used in \cref{section_lambda1}.

\begin{lemma}\label{con_exp_pre_iso}
    Let $\varphi_i$ be a n.s.f. weight on a von Neumann algebra $\MMM_i$, and 
    let $\NNN_i$ be a von Neumann subalgebra of $\MMM_i$ such that $\varphi_i|_{\NNN_i}$ 
    is semifinite, $i=1$, $2$. Let $E_i: \MMM_i \rightarrow \NNN_i$ be the normal faithful
    conditional expectation satisfying $\varphi_i \circ E_i = \varphi_i$.
    If $\rho: \MMM_1 \rightarrow \MMM_2$ is a *-isomorphism such that
    $\rho(\NNN_1) = \NNN_2$ and $\varphi_1 = \varphi_2 \circ \rho$, then
    $\rho \circ E_1 = E_2 \circ \rho$.
\end{lemma}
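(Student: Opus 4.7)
The plan is to deduce the equality $\rho \circ E_1 = E_2 \circ \rho$ from Takesaki's uniqueness theorem for weight-preserving conditional expectations (see \cite[IX, Theorem 4.2]{TKII}). Recall that Takesaki's theorem asserts that, given a von Neumann subalgebra $\NNN$ of $\MMM$ and a n.s.f. weight $\varphi$ on $\MMM$ such that $\varphi|_\NNN$ is semifinite, a normal faithful conditional expectation $E:\MMM \to \NNN$ satisfying $\varphi \circ E = \varphi$ exists if and only if $\NNN$ is globally invariant under $\sigma^\varphi_t$, and when it exists it is unique.

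First I would transport $E_2$ through $\rho$ to obtain a candidate conditional expectation on the $\MMM_1$-side. Namely, set
\begin{align*}
    E \;=\; \rho^{-1} \circ E_2 \circ \rho : \MMM_1 \longrightarrow \rho^{-1}(\NNN_2) = \NNN_1.
\end{align*}
Since $\rho$ is a $*$-isomorphism with $\rho(\NNN_1) = \NNN_2$, the map $E$ is a normal faithful conditional expectation from $\MMM_1$ onto $\NNN_1$. Using the weight-preservation hypothesis $\varphi_1 = \varphi_2 \circ \rho$ together with $\varphi_2 \circ E_2 = \varphi_2$, we compute
\begin{align*}
    \varphi_1 \circ E \;=\; \varphi_2 \circ \rho \circ \rho^{-1} \circ E_2 \circ \rho \;=\; \varphi_2 \circ E_2 \circ \rho \;=\; \varphi_2 \circ \rho \;=\; \varphi_1.
\end{align*}

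Thus $E$ and $E_1$ are both normal faithful conditional expectations from $\MMM_1$ onto $\NNN_1$ that preserve $\varphi_1$. The existence of $E_1$ (given in the hypothesis) together with the semifiniteness of $\varphi_1|_{\NNN_1}$ places us within the scope of Takesaki's theorem, whose uniqueness clause yields $E = E_1$, i.e., $\rho^{-1} \circ E_2 \circ \rho = E_1$, and hence $\rho \circ E_1 = E_2 \circ \rho$.

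There is no genuine obstacle here: the only point to verify carefully is that the hypotheses of Takesaki's theorem are satisfied, namely that $\NNN_1$ is globally invariant under $\sigma^{\varphi_1}_t$ (which follows from the existence of $E_1$ with $\varphi_1 \circ E_1 = \varphi_1$) and that $\varphi_1|_{\NNN_1}$ is semifinite (which is part of the assumptions). Once these are in place, the argument is a one-line application of uniqueness.
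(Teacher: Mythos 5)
Your proof is correct. Both arguments ultimately rest on the same principle, the uniqueness of the $\varphi_1$-preserving conditional expectation onto $\NNN_1$, but they execute it differently. You transport $E_2$ through $\rho$ to the candidate $E=\rho^{-1}\circ E_2\circ\rho$, verify it is a normal faithful $\varphi_1$-preserving conditional expectation onto $\NNN_1$, and then invoke the uniqueness clause of Takesaki's theorem \cite[IX, Theorem 4.2]{TKII} as a black box. The paper instead proves the relevant instance of uniqueness by hand: for $m\in\NN(\MMM_1,\varphi_1)$ and arbitrary $n\in\NN(\NNN_1,\varphi_1|_{\NNN_1})$ it computes $\varphi_2(\rho(n)^*\,\rho(E_1(m)))=\varphi_1(n^*E_1(m))=\varphi_1(n^*m)=\varphi_2(\rho(n)^*E_2(\rho(m)))$, and since these test vectors span a dense subspace of $L^2(\NNN_2,\varphi_2|_{\NNN_2})$ this forces $\rho(E_1(m))=E_2(\rho(m))$; density of $\NN(\MMM_1,\varphi_1)$ then finishes the argument. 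Your version is shorter and makes the structural reason transparent (conjugating a weight-preserving expectation by a weight-compatible isomorphism gives another one); the paper's version is self-contained and does not need to verify that the transported map is again a conditional expectation, at the mild cost of a density argument. Your one point requiring care, that the hypotheses of Takesaki's theorem hold, is handled correctly: semifiniteness of $\varphi_1|_{\NNN_1}$ is assumed, and the existence of $E_1$ already settles the invariance of $\NNN_1$ under $\sigma^{\varphi_1}_t$, so only the uniqueness clause is actually being used.
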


\begin{proof}
    Let $m \in \NN(\MMM_1,\varphi_1)$. For every $n \in \NN(\NNN_1,\varphi_1|_{\NNN_1})$, we have
    \begin{align*}
        \varphi_2(\rho \circ E_1(n^*m))= \varphi_1(n^* E_1(m)) = \varphi_1(n^* m) = 
        \varphi_2(\rho(n)^* E_2(\rho(m))).
    \end{align*}
    Thus $E_2(\rho(m)) = \rho(E_1(m))$. Since $\NN(\MMM_1,\varphi_1)$ is dense in $\MMM_1$,
    $\rho \circ E_1 = E_2 \circ \rho$.
\end{proof}

\begin{proposition}\label{iso_cor}
    Let $\MMM_1 \supseteq \NNN \subseteq \MMM_2$ be von Neumann algebras.
    Suppose that $\MMM_1$ and $\MMM_2$ are semifinite factors and that $\NNN$ is a 
    discrete abelian von Neumann algebra. Let $\{P_s\}_{s \in S}$ be the set of 
    minimal projections of $\NNN$. Assume that $\tau_1(P_s) = \tau_2(P_s) < \infty$ for every $P_s$,
    where $\tau_1$, $\tau_2$ are tracial weights on $\MMM_1$ and $\MMM_2$ respectively.
    If $P_{s_0}\MMM_1 P_{s_0} \cong P_{s_0}\MMM_2 P_{s_0}$, then there exists a 
    *-isomorphism $\rho: \MMM_1 \rightarrow \MMM_2$ such that $\rho(P_s) = P_s$ and 
    $\rho \circ E_1 = E_2 \circ \rho$, where $E_1$ and $E_2$ are the normal conditional
    expectations satisfying $\tau_1 \circ E_1 = \tau_1$ and $\tau_2 \circ E_2 = \tau_2$.
\end{proposition}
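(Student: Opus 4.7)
The plan is to extend the given corner isomorphism $\pi_0\colon P_{s_0}\MMM_1 P_{s_0}\to P_{s_0}\MMM_2 P_{s_0}$ to a trace-preserving global *-isomorphism $\rho\colon\MMM_1\to\MMM_2$ fixing each minimal projection $P_s$ of $\NNN$, and then invoke \cref{con_exp_pre_iso} to obtain $\rho\circ E_1=E_2\circ\rho$. First observe that any *-isomorphism $\pi_0$ between the finite factors $P_{s_0}\MMM_i P_{s_0}$ automatically preserves the \emph{normalized} trace by uniqueness, hence by the assumption $\tau_1(P_{s_0})=\tau_2(P_{s_0})$ it also preserves the unnormalized traces, i.e., $\tau_2\circ\pi_0=\tau_1$ on $P_{s_0}\MMM_1 P_{s_0}$.

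To construct $\rho$, I set up compatible matrix-unit systems. Since $\tau_i(P_s)<\infty$ is the same for $i=1,2$, I fix, independently of $i$, an index set $J_s$ and a family $\{p_{s,j}:j\in J_s\}$ of mutually orthogonal projections in $P_{s_0}\MMM_1 P_{s_0}$ with $\sum_j\tau_1(p_{s,j})=\tau_1(P_s)$ (for instance $n=\lfloor\tau_1(P_s)/\tau_1(P_{s_0})\rfloor$ pieces of trace $\tau_1(P_{s_0})$ plus, when necessary, one smaller remainder of trace $\tau_1(P_s)-n\tau_1(P_{s_0})$); for $s=s_0$, take $J_{s_0}=\{0\}$ and $p_{s_0,0}=P_{s_0}$. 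Set $p^1_{s,j}=p_{s,j}$, $p^2_{s,j}=\pi_0(p_{s,j})$, so that $\tau_i(p^i_{s,j})$ depends only on $s,j$. In each semifinite factor $\MMM_i$, projections are classified up to Murray--von Neumann equivalence by their trace, so I can find partial isometries $w^i_{s,j}\in\MMM_i$ with $(w^i_{s,j})^*w^i_{s,j}=p^i_{s,j}$ and with ranges $w^i_{s,j}(w^i_{s,j})^*\leq P_s$ mutually orthogonal and summing to $P_s$; put $w^i_{s_0,0}=P_{s_0}$. Define
\begin{equation*}
    \rho(x)\;=\;\sum_{s,t\in S,\,j\in J_s,\,k\in J_t}
    w^2_{s,j}\,\pi_0\!\bigl((w^1_{s,j})^*\,x\,w^1_{t,k}\bigr)\,(w^2_{t,k})^*,\qquad x\in\MMM_1,
\end{equation*}
with ultra-weak convergence; the decomposition $I=\sum_{s,j}w^1_{s,j}(w^1_{s,j})^*$ exhibits $x$ as the sum of its ``matrix entries'' $(w^1_{s,j})^*xw^1_{t,k}\in P_{s_0}\MMM_1 P_{s_0}$.

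A direct computation using the matrix-unit relations $(w^i_{s,j})^*w^i_{t,k}=\delta_{s,t}\delta_{j,k}p^i_{s,j}$ (and $w^1_{t,k}p^1_{t,k}=w^1_{t,k}$) shows that $\rho$ is a normal *-homomorphism; the symmetric formula built from $\pi_0^{-1}$ with the roles of $w^1,w^2$ swapped gives its two-sided inverse, so $\rho$ is a *-isomorphism. One then verifies $\rho(P_s)=\sum_j w^2_{s,j}(w^2_{s,j})^*=P_s$, hence $\rho(\NNN)=\NNN$, and $\tau_2\circ\rho=\tau_1$ by a tracial computation on the defining formula (the off-diagonal terms $s\neq t$ or $j\neq k$ vanish by traciality and orthogonality of the ranges). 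Finally, \cref{con_exp_pre_iso} applied with $\NNN_1=\NNN_2=\NNN$ and $\varphi_i=\tau_i|_\NNN$ (which is n.s.f.\ on the discrete abelian $\NNN$ since all minimal projections have finite trace) delivers $\rho\circ E_1=E_2\circ\rho$. The main technical point is the second paragraph: one must produce the partial-isometry systems $\{w^i_{s,j}\}$ in $\MMM_1$ and $\MMM_2$ in a matched way across the two algebras, which is possible precisely because $\tau_1(P_s)=\tau_2(P_s)$ forces the same ``combinatorial shape'' for the decomposition of $P_s$ on both sides, while semifiniteness of $\MMM_i$ as a factor supplies the required partial isometries with prescribed trace values.
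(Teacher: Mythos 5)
Your proof is correct in substance and follows the same basic strategy as the paper's: amplify the given corner isomorphism to a global, trace-preserving *-isomorphism and then invoke \cref{con_exp_pre_iso}. The implementations differ. The paper chooses subprojections $Q_i \leq P_{s_0}$ with $\tau_1(Q_1)=\tau_2(Q_2)$ and $\MMM_i \cong Q_i\MMM_i Q_i \otimes \BBB(\HHH)$, sets $\rho = \rho_0 \otimes Id$, gets $\tau_1 = \tau_2\circ\rho$ from the uniqueness of the trace on the finite corner, and only afterwards conjugates by a unitary $U=\sum_s v_s$ (assembled from partial isometries implementing $\rho(P_s)\sim P_s$, which exist because these projections have equal finite traces and sum to $I$) to achieve $\rho(P_s)=P_s$. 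You instead build matched matrix-unit systems over each $P_s$ so that the $P_s$ are fixed from the outset; this avoids the final conjugation step and, unlike the tensoring with $\BBB(\HHH)$, does not implicitly rely on the factors being properly infinite, so your argument covers the II$_1$ and type I cases of the statement as well. One misstatement should be corrected: you require the $p_{s,j}$ to be \emph{mutually orthogonal} projections in $P_{s_0}\MMM_1 P_{s_0}$, which is impossible whenever $\tau_1(P_s) > \tau_1(P_{s_0})$ (orthogonal subprojections of $P_{s_0}$ have total trace at most $\tau_1(P_{s_0})$), and which is contradicted by your own recipe of taking several pieces of trace $\tau_1(P_{s_0})$. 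Fortunately orthogonality of the sources is never used: only the mutual orthogonality of the ranges $w^i_{s,j}(w^i_{s,j})^*$ inside $P_s$ is needed, and it already yields the relations $(w^i_{s,j})^*w^i_{t,k}=\delta_{s,t}\delta_{j,k}\,p^i_{s,j}$ on which your computations rest; simply delete that requirement.
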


\begin{proof}
    Let $Q_i \in \MMM_i$, $i=1$, $2$, be a subprojection of $P_{s_0}$ such that 
    $\tau_1(Q_1) = \tau_2(Q_2)$ and $\MMM_i \cong  Q_i \MMM_i Q_i \otimes \BBB(\HHH)$.
    Since $Q_1 \MMM_1 Q_1 \cong Q_2 \MMM_2 Q_2$, $\rho = \rho_0 \otimes Id$ is a *-isomorphism 
    between $\MMM_1$ and $\MMM_2$, where $\rho_0$ is a *-isomorphism from
    $Q_1 \MMM_1 Q_1$ onto $Q_2 \MMM_2 Q_2$. By the uniqueness of the tracial state on 
    $Q_1 \MMM_1 Q_1$, we have
    $\tau_1|_{Q_1 \MMM_1 Q_1} = \tau_2|_{Q_2 \MMM_2 Q_2} \circ \rho$. Note that  
    $\tau_i = \tau_i|_{Q_i \MMM_i Q_i} \otimes Tr$. Thus $\tau_1 = \tau_2 \circ \rho$.
    Therefore there exists a unitary $U \in \MMM_2$ such that $U\rho(P_s)U^* = P_s$.
    Thus $AdU \circ \rho$ satisfies the conditions in \cref{con_exp_pre_iso}. 
\end{proof}

\bigskip

\noindent\textbf{Acknowledgements.}
We would like to thank Yemon Choi for letting us including \cref{density_example} in the paper, and Florin R\v{a}dulescu for suggesting a reference and informing us about an alternative proof of the computation of the fundamental group of an uncountably generated free group factor contained in \cref{fun_grp_free_group}.
We are also indebted to Arnaud Brothier and Roberto Longo for comments and questions on earlier versions of the manuscript (some of which are reported at the end of \cref{section_endos}), and to Rui Liu and Chi-Keung Ng for constructive discussions.
L.G. acknowledges the hospitality of the Academy of Mathematics and Systems Science (AMSS), Chinese Academy of Sciences (CAS) during a one-month invitation to Beijing in January 2017 that made this research possible.
Research by the authors is supported by the Youth Innovation Promotion Association, CAS and the NSFC under grant numbers 11301511, 11321101 and 11371290, and by the ERC Advanced Grant QUEST 669240.

\end{document}